\newtheorem{prop}{Proposition}
\newtheorem{definition}[prop]{Definition}
\newtheorem{theorem}[prop]{Theorem}
\newtheorem{thm}[prop]{Theorem}
\newtheorem{cor}[prop]{Corollary}
\newtheorem{lemma}[prop]{Lemma} 
\newtheorem{remark}[prop]{Remark}
\newtheorem{exam}[prop]{Example}
\numberwithin{prop}{section}
\DeclarePairedDelimiter\abs{\lvert}{\rvert}
\DeclarePairedDelimiter\norm{\lVert}{\rVert}
\newcommand{\RR}{\ensuremath{\mathbb{R}}}
\newcommand{\CC}{\ensuremath{\mathbb{C}}}
\newcommand{\GG}{\ensuremath{\mathbb{G}}}
\newcommand{\NN}{\ensuremath{\mathbb{N}}}
\newcommand{\QQ}{\ensuremath{\mathbb{Q}}}
\newcommand{\Qp}{\ensuremath{\mathbb{Q}_{p}}}
\newcommand{\Fq}{\ensuremath{\mathbb{F}_{q}}}
\newcommand{\Fqt}{\ensuremath{\Fq[[t]]}}
\newcommand{\Fqtt}{\ensuremath{\Fq\left(\left(t\right)\right)}}
\newcommand{\Zp}{\ensuremath{\mathbb{Z}_{p}}}
\newcommand{\OO}{\ensuremath{\mathcal{O}}}
\newcommand{\CAT}{\ensuremath{\mathrm{CAT}(0)}}
\newcommand{\IRS}[1]{\ensuremath{\mathrm{IRS}\left( #1 \right) }}
\newcommand{\Sub}[1]{\ensuremath{\mathrm{Sub}\left( #1 \right) }}
\newcommand{\Cl}[1]{\ensuremath{\mathrm{Cl}\left( #1 \right) }}
\newcommand{\Gr}[1]{\ensuremath{\mathcal{GR}(#1)}}
\newcommand{\nrm}{\ensuremath{\vartriangleleft }}
\begin{document}

\title{Invariant random subgroups over non-Archimedean local fields}
\author{Tsachik Gelander and Arie Levit}

\begin{abstract}

Let $G$ be a higher rank semisimple linear algebraic group over a non--Archimedean local field.  
The simplicial complexes corresponding to any sequence of pairwise non-conjugate irreducible lattices in $G$ are Benjamini--Schramm convergent to the Bruhat--Tits building. 
Convergence of the relative Plancherel measures and normalized Betti numbers follows. This extends the work  \cite{7S} from real Lie groups to linear groups over arbitrary local fields. 
Along the way, various results concerning Invariant Random Subgroups and in particular a variant of the classical Borel density theorem are also extended.
%semisimple linear groups over arbitrary local fields.
\end{abstract}

\maketitle

\setcounter{tocdepth}{1}
\tableofcontents

\section{Introduction}
\label{sec:introduction}

We study lattices in semisimple analytic groups. The latter are roughly speaking direct products of simple linear groups over arbitrary local fields.
%\footnote{The precise definition of the notion of  semisimple analytic groups is given in \S\ref{sec:semisimple linear groups over local fields}.}. 
Our goal is to establish certain asymptotic properties holding true for \emph{any} sequence of pairwise non-conjugate irreducible lattices. The following statement is our main result.

\begin{theorem}
	\label{thm:accumulation points of invariant random subgroups}
	%Let $G$ be a semisimple analytic group 
	Let $G$ be a semisimple analytic group. 
	Assume that $G$ is happy, has property $(T)$ and  $\mathrm{rank}(G) \ge 2$.
	Let $\Gamma_n$ be a sequence of pairwise non-conjugate irreducible lattices in $G$.  Then any accumulation point of the  corresponding invariant random subgroups $\mu_{\Gamma_n}$ is equal to $\delta_Z$ for some central subgroup $Z$ in $G$.
\end{theorem}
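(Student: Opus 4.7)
My plan is to combine weak-* compactness of the IRS space with a Stuck--Zimmer-type classification of ergodic IRS (which I expect to be established earlier in the paper from higher rank, property $(T)$, and the happiness assumption), and then to rule out any non-central ergodic component of the limit using a finiteness theorem for lattices of bounded covolume together with the pairwise non-conjugacy hypothesis.

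First I would pass to a subsequence so that $\mu_{\Gamma_n} \to \mu$ weak-* in $\mathrm{Prob}(\Sub{G})$. Since $\IRS{G}$ is weak-* closed, the limit $\mu$ is an IRS of $G$; ergodically decompose it. By the Stuck--Zimmer dichotomy, every ergodic component of $\mu$ is either $\delta_N$ for a normal subgroup $N \nrm G$, or of the form $\mu_\Lambda$ associated to a lattice $\Lambda \leq G$. Margulis' normal subgroup theorem handles the first case: non-central normal subgroups of $G$ are cofinite and hence themselves lattices, so they are subsumed into the second case. Consequently every ergodic component of $\mu$ is either $\delta_Z$ for a central $Z \leq Z(G)$, or $\mu_\Lambda$ for some lattice $\Lambda$ in $G$.

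The heart of the proof is to exclude the lattice-type components. Assume for contradiction that $\mu$ assigns mass $c > 0$ to ergodic components $\mu_\Lambda$ arising from non-central lattices. Then for all sufficiently large $n$ every fixed small weak-* neighborhood of these components carries $\mu_{\Gamma_n}$-mass at least $c/2$. Using that a random conjugate of $\Gamma_n$ being Chabauty-close to a lattice $\Lambda$ forces an upper bound on $\mathrm{covol}(G/\Gamma_n)$ in terms of $\mathrm{covol}(G/\Lambda)$ (via lower semicontinuity of covolume on IRS, or a direct thickness argument in the Bruhat--Tits building), one extracts a uniform bound $\mathrm{covol}(G/\Gamma_n) \leq V$ for infinitely many $n$. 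The non-Archimedean Borel--Prasad and Kazhdan--Margulis finiteness statements then produce only finitely many conjugacy classes of lattices in $G$ of covolume at most $V$, contradicting the pairwise non-conjugacy of the $\Gamma_n$.

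Having ruled out lattice components, $\mu$ is supported on subgroups of the finite center $Z(G)$. Passing to a further subsequence along which $\Gamma_n \cap Z(G)$ is a constant subgroup $Z \leq Z(G)$, and noting that $Z$ is preserved by conjugation in $G$, every subgroup in the support of $\mu_{\Gamma_n}$ intersects $Z(G)$ in exactly $Z$; since $Z(G)$ is compact, Chabauty convergence on the compactum $Z(G)$ shows that any limit subgroup contained in $Z(G)$ must equal $Z$, yielding $\mu = \delta_Z$. I anticipate the main obstacle to be the covolume extraction in the third paragraph: in the real case it is handled via Margulis' lemma and thickness estimates in symmetric spaces, whereas here the analogous estimates must be carried out in the Bruhat--Tits building and made uniform across the several local fields (possibly of mixed characteristic) that can appear in a general semisimple analytic group.
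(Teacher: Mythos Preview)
Your broad strategy coincides with the paper's: invoke Stuck--Zimmer to reduce the limit IRS to either a normal subgroup or a lattice, then eliminate both non-central possibilities. Two points, however, are not handled correctly.

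First, your treatment of the normal-subgroup branch is mistaken. You write that ``non-central normal subgroups of $G$ are cofinite and hence themselves lattices'' by Margulis' normal subgroup theorem, but that theorem concerns normal subgroups of \emph{lattices in} $G$, not of $G$ itself. A non-central closed normal subgroup $N \nrm G$ of a semisimple analytic group contains $G_1^+$ for some non-trivial almost direct factor $G_1$ (Proposition~\ref{prop:normal subgroups of $G$}) and is therefore non-discrete, certainly not a lattice. Ruling out $\delta_N$ for such $N$ as a weak-$*$ limit of IRSs supported on discrete subgroups is not automatic: Chabauty limits of discrete groups can be non-discrete. The paper supplies exactly this missing ingredient via the self-Chabauty-isolation results of \S\ref{sec:isolated groups}, specifically Corollary~\ref{cor:isolation of discrete from normal}.

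Second, the elimination of lattice-type components is where the real work lies, and your ``lower semicontinuity of covolume'' or ``direct thickness argument'' does not amount to a proof; you yourself flag this as the main obstacle. The paper's route is different and sharper: rather than extracting a covolume bound and appealing to Borel--Prasad, it proves Chabauty local rigidity (Theorem~\ref{thm:Chabauty local rigidity for lattices in algebraic groups}), showing that every irreducible lattice $\Lambda$ has a Chabauty neighborhood consisting entirely of $\mathrm{Aut}(G)$-conjugates of $\Lambda$. Hence $\mu_\Lambda$ is isolated among extreme points of $\IRS{G}$ (Corollary~\ref{cor:non conjugate lattices are discrete IRS}), and cannot be a non-trivial accumulation point of the pairwise non-conjugate $\mu_{\Gamma_n}$. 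Establishing this occupies all of \S\ref{sec:local rigidity} and requires jointly discrete Chabauty neighborhoods, relative finite presentability, and classical local rigidity; any rigorous version of your covolume extraction would end up reproducing much of this.

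A smaller difference: you ergodically decompose $\mu$, whereas the paper uses Glasner--Weiss \cite{glasner1997kazhdan} (via property~$(T)$) to conclude that $\mu$ is already ergodic, indeed irreducible, which removes the need to chase mass across ergodic components.
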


A semisimple analytic group is \emph{happy} if its non--Archimedean factor contains a topologically finitely generated compact open subgroup.  This notion  is motivated by \cite{barnea} and discussed in \S\ref{sub:G+ and happy} below. All semisimple analytic groups zero are happy in zero characteristic, and in positive characteristic all simply-connected semisimple analytic group are happy.

In positive characteristic the $\Gamma_n$'s are required to be pairwise non-conjugate by automorphisms of $G$. Of course it is enough to assume that $\text{Vol}(G/\Gamma_n)\to \infty$.

%\begin{definition}
%	\label{def:weakly central}
%	A sequence $\Gamma_n$ of lattices  in the locally compact group $G$ is \emph{weakly central} 
%	% In the case that $G$ is totally disconnected a weakly central sequence is  called a Farber sequence.} 
%	if   
%	$$ \mathrm{Pr} \left(\{ g\Gamma_n \in G/\Gamma_n \; : \; g \Gamma_n g^{-1} \cap Q \subset Z(G)  \}\right) \xrightarrow{n\to\infty} 1$$
%	for every compact subset $Q \subset G$.
%\end{definition}
%
%In  zero characteristic the conclusion of Theorem \ref{thm:accumulation points of invariant random subgroups} implies that $\Gamma_n$ is a weakly central sequence, and we arrive at the following reformulation.
%
%\begin{cor}
%\label{cor:weakly central in zero characterisitic}
%Let $G$ and $\Gamma_n$ be as in Theorem \ref{thm:accumulation points of invariant random subgroups} and assume moreover that $G$ is defined over zero characteristic local fields. Then $\Gamma_n$ is weakly central.
%\end{cor}
% The following statement is our main result.

%Here $\delta_Z$ denotes the point mass at the central subgroup $Z$ of $G$. Note that by taking a quotient by the center we may always assume that the convergence is to $\delta_{\{e\}}$. \marginpar{explain what happens when modding out by the center}

The Archimedean case of Theorem \ref{thm:accumulation points of invariant random subgroups} has been established in \cite{7S} and our approach is basically the same as the one developed there. In particular 
the key strategy used in \S\ref{sec:accumulation points of invariant random subgroups}  towards our main theorem  is similar to that of \cite[\S 4.4]{7S}. 
However we are forced  to deal with many issues that do not come up in the Archimedean case. 

One of the difficulties encountered in the setting of arbitrary local fields has to do with positive characteristic phenomena, to which we pay particular attention. $p$-adic Lie groups are on the other hand easier to deal with than their real counterparts, and the mixed case usually requires little extra effort.

The property $(T)$ assumption in Theorem \ref{thm:accumulation points of invariant random subgroups} is needed  to  rely on the deep theorem of Stuck and Zimmer. 
%We would like to point out that unlike the situation in \cite{7S} this is the sole use of property $(T)$ in the present work.
Let us note that, unlike the situation in \cite{7S}, this is the only reason that property $(T)$ is essential for our approach. While it is used also in \S \ref{sec:accumulation points of invariant random subgroups} when referring to \cite{glasner1997kazhdan}, this is done only to simplify the argument.

The property of accumulation points in question admits a geometric reformulation in terms of Benjamini--Schramm convergence. This and the notion of invariant random subgroups
%\footnote{Invariant random subgroups are defined and discussed in \S\ref{sec:Chabauty and BS convergence} below.} 
are explored in \S\ref{sec:Chabauty and BS convergence}  and  used throughout this work.

%The following viewpoint is perhaps the most straightforward one.
%Note that unlike in  \cite{7S}, property $(T)$ is \emph{not} needed for the purpose of invoking our Theorem \ref{thm:Chabauty local rigidity for lattices in algebraic groups} and its Corollary \ref{cor:non conjugate lattices are discrete IRS}.

%\begin{remark}
%Property (T) in this work is only need for Stuck-Zimmer.
%\end{remark}

\subsection{Relative Plancherel measure convergence}

One of the major achievements of \cite{7S} is a convergence formula for relative Plancherel measures with respect to a general uniformly discrete sequence of lattices. 
To be precise, let $\Gamma$ be a uniform lattice in $G$ so that the right quasi-regular representation $\rho_\Gamma$ of $G$ in $L^2(\Gamma \backslash G,\mu_G)$  decomposes as a direct sum of irreducible representations.
% \cite[\S 1.2.3]{gelfand1969representation}. 
Every  irreducible unitary representation $\pi$ of $G$  appears in $\rho_\Gamma$ with  finite multiplicity $m(\pi,\Gamma)$. 

\begin{definition}
\label{def:relative plancherel}
The \emph{normalized relative Plancherel measure} of $G$ with respect to $\Gamma$ is an atomic measure on the unitary dual $\widehat{G}$  given by
	$$
	\nu_\Gamma = \frac{1}{\mathrm{vol}(\Gamma \backslash G)} \sum_{\pi \in \widehat{G}} m(\pi, \Gamma) \delta_\pi.
	$$
\end{definition}

It turns out that the following deep representation-theoretic statement follows from the geometric asymptotic property of the kind established in Theorem \ref{thm:accumulation points of invariant random subgroups}.

%of being \emph{weakly trivial}\footnote{A sequence of lattices in a locally compact group is \emph{weakly trivial} if it satisfies the obvious analogue of Definition \ref{def:weakly central} with $Z(G)$ replaced by $\{e\}$. See Definition \ref{def: a Farber sequence of invariant random subgroups} below}.

%To be precise,  
 
%In Theorem \ref{thm:BS convergence implies plancherel convergence} and Corollaries \ref{cor:BS convergence implies ptwise plancherel convergence}
% and \ref{cor:Plancharel convergence for lattices in higher rank T} we assume that   $G$ is  a non-Archimedean semisimple analytic group over a local field of zero characteristic.

\begin{theorem}
%\marginpar{maybe write this theorem for general zero characterisitc (not just non-Archimedean)}
	\label{thm:BS convergence implies plancherel convergence}
Let $G$ be a  semisimple analytic group in zero characteristic. Fix a Haar measure on $G$ and let $\nu_G$ be the associated Plancherel measure on $\widehat{G}$.
Let $\Gamma_n$ be a uniformly discrete  sequence of lattices in $G$ with $\mu_{\Gamma_n}$ being weak-$*$ convergent to $\delta_{\{e\}}$.  Then 
	$$ \nu_{\Gamma_n}(E) \xrightarrow{n \to \infty} \nu_G(E) $$
	for every relatively quasi-compact  $\nu^G$-regular  subset $E \subset \widehat{G}$.
\end{theorem}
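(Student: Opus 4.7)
The plan is to convert the Benjamini--Schramm hypothesis $\mu_{\Gamma_n} \to \delta_{\{e\}}$ into spectral information via a Selberg-type trace formula, and then to upgrade the resulting convergence of integrals against test functions of the form $\pi \mapsto \mathrm{tr}(\pi(f))$ into genuine set-wise convergence using a density principle of Sauvageot type. The zero characteristic hypothesis is invoked precisely so that the Plancherel formula of Harish-Chandra--Waldspurger is at our disposal: $\nu_G$ is a Radon measure on $\widehat{G}$, the operator $\pi(f)$ is of trace class for $f$ in the Hecke algebra $\mathcal{H}(G)$, and the Fell topology on $\widehat{G}$ is compatible with Bernstein's decomposition.

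I would begin by rewriting the trace formula in invariant random subgroup form. For a uniform lattice $\Gamma$ in $G$ and $f \in \mathcal{H}(G)$, the operator $\rho_\Gamma(f)$ is of trace class and a standard manipulation of its integral kernel yields
$$
\int_{\widehat{G}} \mathrm{tr}(\pi(f))\, d\nu_{\Gamma}(\pi)
= \frac{1}{\mathrm{vol}(\Gamma \backslash G)} \int_{\Gamma \backslash G} \sum_{\gamma \in \Gamma} f(x^{-1}\gamma x)\, dx
= \int_{\Sub{G}} F_f(H)\, d\mu_{\Gamma}(H),
$$
where $F_f(H) := \sum_{h \in H} f(h)$. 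Uniform discreteness furnishes a neighborhood $U$ of $e$ with $g^{-1}\Gamma_n g \cap U = \{e\}$ for all $g \in G$ and all $n$, so every $\mu_{\Gamma_n}$ is supported on the Chabauty-closed set $\{H \in \Sub{G} : H \cap U = \{e\}\}$. On this set no element of $H$ can collide with the identity in a Chabauty-convergent sequence, so $F_f$ is both bounded (the support of $f$ is covered by finitely many translates of $U$) and continuous. The Benjamini--Schramm hypothesis now gives
$$
\int_{\widehat{G}} \mathrm{tr}(\pi(f))\, d\nu_{\Gamma_n}(\pi) = \int_{\Sub{G}} F_f\, d\mu_{\Gamma_n} \xrightarrow{n \to \infty} F_f(\{e\}) = f(e) = \int_{\widehat{G}} \mathrm{tr}(\pi(f))\, d\nu_G(\pi),
$$
the final equality being Plancherel inversion. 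Non-uniform lattices, if they lie in the scope of Definition \ref{def:relative plancherel} via a suitable extension, would be handled by the analogous decomposition of $L^2(\Gamma_n \backslash G)$, with the Eisenstein contributions controlled by the same BS mechanism.

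The last and principal step is to promote this integral convergence to set-wise convergence $\nu_{\Gamma_n}(E) \to \nu_G(E)$ on relatively quasi-compact $\nu_G$-regular $E \subset \widehat{G}$. I would reproduce Sauvageot's density principle in the non-Archimedean setting as follows. Fixing a compact open $K \leq G$, quasi-compactness of $E$ together with Bernstein's decomposition of $\widehat{G}$ implies that $E$ meets only finitely many Bernstein components; on each component the trace functions $\pi \mapsto \mathrm{tr}(\pi(f))$ as $f$ ranges over $K$-biinvariant elements of $\mathcal{H}(G)$ form a sufficiently rich family of continuous functions. Sandwiching $\mathbbm{1}_E$ between two such trace functions from above and below, and exploiting $\nu_G$-regularity of $\partial E$ to close the gap, then produces the desired set-wise convergence. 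This approximation is where the bulk of the genuinely non-Archimedean work is concentrated, substituting Bernstein's analysis for the Paley--Wiener type arguments available in the Archimedean case; I expect it to be the main obstacle of the proof.
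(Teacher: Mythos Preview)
Your proposal is correct and follows essentially the same route as the paper: the trace formula is rewritten as $\nu_\Gamma(\widehat{f}) = \mu_\Gamma(F_f)$, uniform discreteness makes $F_f$ continuous and bounded on the common Chabauty-closed support of the $\mu_{\Gamma_n}$ (the paper then extends $F_f$ to all of $\Sub{G}$ via Tietze, a cosmetic step), weak-$*$ convergence together with Plancherel inversion gives $\nu_{\Gamma_n}(\widehat{f}) \to \nu_G(\widehat{f})$, and Sauvageot's density principle upgrades this to set-wise convergence on relatively quasi-compact $\nu_G$-regular sets. The one unnecessary detour is your plan to \emph{reprove} the density principle in the non-Archimedean setting via Bernstein components: Sauvageot's original paper already covers reductive groups over arbitrary local fields of characteristic zero, Archimedean and non-Archimedean alike, so the paper simply cites it (together with Shin's observation that the openness hypothesis on $E$ can be dropped) as a black box --- this is not the main obstacle but a known input, and you may treat it the same way.
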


The proof of Theorem \ref{thm:BS convergence implies plancherel convergence} is essentially the same as \cite[1.2,6.7]{7S}. It relies on the Plancherel formula and the Sauvageot principle; see \S\ref{sec:plancherel}. To the best of our knowledge, the latter principle appears in the literature only in zero characteristic, and for that reason we exclude positive characteristic local fields in Theorem \ref{thm:BS convergence implies plancherel convergence}.

Let $d_\pi$ denote the \emph{formal dimension} of the irreducible unitary representation $\pi$  in the regular representation of $G$. %We immediately obtain the following.
% direct corollary of Theorem \ref{thm:BS convergence implies plancherel convergence}.
%For every lattice $\Gamma$ in $G$ let $m(\pi,G)$ denote the multiplicity of $\pi$ in the quasi-regular representation of $G$ associated to $\Gamma$. 

\begin{cor}
%\marginpar{not sure how to get this --- why a singleton is open, compact, and regular? Dixmier book shows that square intergrable are}
\label{cor:BS convergence implies ptwise plancherel convergence}
%Let   $G$ be a  non-Archimedean semisimple analytic group over a local field of zero characteristic. 
In the situation of Theorem \ref{thm:BS convergence implies plancherel convergence} we have  that moreover
$$ \frac{m(\pi,\Gamma_n)}{\text{vol}(\Gamma_n\backslash G)} \xrightarrow{n \to \infty} d_\pi$$
for every $\pi \in \widehat{G}$.
\end{cor}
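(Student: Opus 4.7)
The plan is to deduce the pointwise convergence of multiplicities directly from Theorem \ref{thm:BS convergence implies plancherel convergence} applied to the singleton $\{\pi\}$ or to small neighborhoods of it. First, by Definition \ref{def:relative plancherel},
$$\nu_{\Gamma_n}(\{\pi\})=\frac{m(\pi,\Gamma_n)}{\mathrm{vol}(\Gamma_n\backslash G)},$$
while the Plancherel measure $\nu_G$ is atomic on the discrete series part of $\widehat{G}$ with atomic mass $d_\pi$ at every square-integrable $\pi$, and $d_\pi=0$ otherwise. Hence the corollary reduces to establishing $\nu_{\Gamma_n}(\{\pi\})\to\nu_G(\{\pi\})$ for every $\pi\in\widehat{G}$.

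I would then split into two cases. If $\pi$ is a discrete series representation, the singleton $\{\pi\}$ is isolated, hence clopen, in the Fell topology on $\widehat{G}$, so $\partial\{\pi\}=\emptyset$ and $\{\pi\}$ is automatically a relatively quasi-compact $\nu_G$-regular subset. Applying Theorem \ref{thm:BS convergence implies plancherel convergence} directly to $E=\{\pi\}$ gives the claimed convergence in one stroke.

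If instead $\pi$ does not lie in the discrete series, then $d_\pi=0$ and it suffices to prove an upper bound. Given $\epsilon>0$, I would select a relatively quasi-compact open neighborhood $U$ of $\pi$ which is $\nu_G$-regular and satisfies $\nu_G(U)<\epsilon$. Such a $U$ exists because $\pi$ carries no atomic $\nu_G$-mass and because any small open neighborhood of $\pi$ can be slightly adjusted to become $\nu_G$-regular, for instance by intersecting with a superlevel set $\{f>t\}$ of a suitable continuous nonnegative test function $f$ with $f(\pi)>0$, exploiting the standard fact that all but countably many values of $t$ give a $\nu_G$-null boundary. Theorem \ref{thm:BS convergence implies plancherel convergence} then yields
$$\limsup_{n\to\infty}\nu_{\Gamma_n}(\{\pi\})\leq\limsup_{n\to\infty}\nu_{\Gamma_n}(U)=\nu_G(U)<\epsilon,$$
and letting $\epsilon\to 0$ gives $\nu_{\Gamma_n}(\{\pi\})\to 0=d_\pi$ in this case.

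The main obstacle is not the Plancherel convergence itself, which is already in hand, but rather the purely topological/measure-theoretic input at the level of $\widehat{G}$: that discrete series representations are topologically isolated points of the Fell dual, and that for non-discrete-series $\pi$ there exist arbitrarily small $\nu_G$-regular open neighborhoods of $\pi$. Both are standard features of the unitary dual of a semisimple analytic group over a local field of zero characteristic, but they need to be invoked with some care since $\widehat{G}$ is generically non-Hausdorff.
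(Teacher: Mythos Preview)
Your argument is correct, but the paper takes a cleaner and more uniform route that avoids the discrete/non-discrete case split entirely. Rather than treating the two cases separately, the paper shows that \emph{every} singleton $\{\pi\}\subset\widehat{G}$ is simultaneously relatively quasi-compact and $\nu^G$-regular, and then applies Theorem~\ref{thm:BS convergence implies plancherel convergence} once with $E=\{\pi\}$. The ingredients are purely general: $G$ is liminal, so by Glimm the dual $\widehat{G}$ is $T_1$ and hence $\{\pi\}$ is closed (thus relatively compact); since $G$ is second countable the Fell topology is first countable, and in a $T_1$ first countable space every point is a countable intersection of open sets; finally, local compactness of $\widehat{G}$ together with the fact that compact subsets carry finite Plancherel measure (Dixmier) yields the required regularity of $\{\pi\}$.

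What this buys over your approach: you need the fact that discrete series representations are isolated points of $\widehat{G}$, which is true but is a nontrivial structural input (it ultimately rests on pseudo-coefficients or the Bernstein decomposition); the paper's argument sidesteps this entirely. Conversely, your case split makes the mechanism more visible---in particular it explains transparently why the non-square-integrable case gives zero, via an explicit outer approximation---whereas the paper's argument hides the dichotomy inside the abstract regularity statement. Your superlevel-set trick for producing $\nu_G$-regular neighborhoods is a nice device, but here it is doing work that the $G_\delta$ observation handles in one line.
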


%Note that a weakly central sequence of torsion free lattices is weakly trivial. 
Putting together the two Theorems \ref{thm:accumulation points of invariant random subgroups} and \ref{thm:BS convergence implies plancherel convergence} immediately gives the following.

%The following corollary follows at once by combining Theorems \ref{thm:accumulation points of invariant random subgroups} and \ref{thm:BS convergence implies plancherel convergence}. Of course if the invariant random subgroups associated to a sequence of torsion free lattices converge to a central subgroup $Z$ then $Z$ is trivial.

\begin{cor}
\label{cor:Plancharel convergence for lattices in higher rank T}
Let $G$ be a  semisimple analytic group in zero characteristic. Assume that $G$ has property $(T)$ and that $\mathrm{rank}(G) \ge 2$.  Then the conclusions of Theorem \ref{thm:BS convergence implies plancherel convergence} and Corollary \ref{cor:BS convergence implies ptwise plancherel convergence} apply to any sequence of uniformly discrete, torsion free, pairwise non-conjugate irreducible lattices.
\end{cor}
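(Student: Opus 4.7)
The plan is to combine Theorem \ref{thm:accumulation points of invariant random subgroups} with Theorem \ref{thm:BS convergence implies plancherel convergence} and Corollary \ref{cor:BS convergence implies ptwise plancherel convergence}. Since we are in zero characteristic, $G$ is happy by the remark following the first theorem, so that theorem applies to our sequence. Probability measures on the Chabauty space $\Sub{G}$ are compact in the weak-$*$ topology, so the argument reduces to showing that every accumulation point of $(\mu_{\Gamma_n})$ equals $\delta_{\{e\}}$; granted this, the Plancherel convergence statements follow directly from Theorem \ref{thm:BS convergence implies plancherel convergence} and Corollary \ref{cor:BS convergence implies ptwise plancherel convergence}.

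Passing to a subsequence with $\mu_{\Gamma_{n_k}} \to \mu_\infty$, Theorem \ref{thm:accumulation points of invariant random subgroups} gives $\mu_\infty = \delta_Z$ for some central subgroup $Z \subseteq Z(G)$. The main technical step, and the part I expect to require the most care, is to use the torsion-free and uniformly discrete hypotheses to force $Z = \{e\}$. The idea is that centrality combined with the finiteness of $Z(G)$ in zero characteristic means every $z \in Z$ has finite order, and this order interacts badly with torsion-freeness once one extracts nontrivial elements of conjugates of $\Gamma_{n_k}$ that lie close to $z$.

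Concretely, fix an identity neighborhood $V$ witnessing uniform discreteness, i.e.\ $(g\Gamma_n g^{-1}) \cap V = \{e\}$ for every $n$ and every $g \in G$. Suppose for contradiction that some $z \in Z$ satisfies $z \neq e$, and let $m$ be its order. By continuity of the $m$-th power map at $z$, select a neighborhood $W$ of $z$ with $e \notin W$ and $w^m \in V$ for every $w \in W$. The Chabauty-open set $\mathcal{A} = \{H \in \Sub{G} : H \cap W \neq \emptyset\}$ contains $Z$, so the Portmanteau theorem yields $\mu_{\Gamma_{n_k}}(\mathcal{A}) \to 1$. For large $k$ there therefore exist $g_k \in G$ and $\gamma_k \in \Gamma_{n_k}$ with $\eta_k := g_k \gamma_k g_k^{-1} \in W$; since $e \notin W$, $\gamma_k \neq e$. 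Torsion-freeness gives $\gamma_k^m \neq e$, hence $\eta_k^m \neq e$, while $\eta_k^m \in (g_k \Gamma_{n_k} g_k^{-1}) \cap V$, contradicting uniform discreteness. Therefore $Z = \{e\}$, $\mu_{\Gamma_n} \to \delta_{\{e\}}$, and the conclusions of Theorem \ref{thm:BS convergence implies plancherel convergence} and Corollary \ref{cor:BS convergence implies ptwise plancherel convergence} apply.
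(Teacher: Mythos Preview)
Your proof is correct and follows exactly the approach the paper intends: it combines Theorem \ref{thm:accumulation points of invariant random subgroups} with Theorem \ref{thm:BS convergence implies plancherel convergence}, and your careful use of torsion-freeness and uniform discreteness to force $Z=\{e\}$ is precisely the reason those hypotheses appear (cf.\ the remark immediately following the corollary, which notes that torsion-freeness is unnecessary when $G$ is center-free). The paper simply declares the corollary ``immediate'' from the two theorems without spelling out this step.
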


\begin{remark}
	Recall that in the non-Archimedean zero characteristic case any sequence of lattices in uniformly discrete. If $G$ is center-free then there is no need to assume torsion-freeness.
\end{remark}

\subsection{Normalized Betti numbers}
Another achievement of \cite{7S} is a convergence formula for normalized Betti numbers, derived as a consequence of the convergence of relative Plancherel measures. 
We present here a direct argument applicable in the more combinatorial non-Archimedean case, relying on a beautiful result of Elek and avoiding Theorem \ref{thm:BS convergence implies plancherel convergence}. This holds true in positive characteristic as well. 

Recall that $b_d^{(2)}(G)$ denote the  $L^2$-Betti numbers of the group $G$, c.f.  \cite{petersen2013l2}.

%Assume that  $G$ is non-Archimedean and let  $X$ denote the Bruhat--Tits building of $G$.

\begin{theorem}
\label{thm:limit of normalized betti numbers is l2-betti number}
Let $G$ be a non-Archimedean semisimple analytic group.
% and $\Lambda$ a fixed torsion-free uniform lattice in $G$. 
Let $\Gamma_n$ be any sequence of torsion-free uniform lattices in $G$ with $\mu_{\Gamma_n}$ being weak-$*$ convergent to $\delta_{\{e\}}$.  Then
$$ 
 \lim_{n\to\infty} \frac{b_d(\Gamma_n)}{\text{vol}(G/\Gamma_n)} 
 %= \frac{b_d^{(2)}(\Lambda)}{\text{vol}(G/\Lambda)} 
 = b_d^{(2)}(G)
$$
for all $d \in  \NN \cup \{0\}$.
\end{theorem}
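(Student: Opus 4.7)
The strategy is to reduce the statement to a theorem of Elek by working directly with the Bruhat--Tits building $X$ of $G$. Since $G$ is non-Archimedean semisimple, $X$ is a contractible, locally finite simplicial complex of uniformly bounded geometry on which $G$ acts properly and cocompactly. For a torsion-free uniform lattice $\Gamma_n \le G$, the action of $\Gamma_n$ on $X$ is free and cocompact, so $X_n := \Gamma_n \backslash X$ is a finite simplicial $K(\Gamma_n,1)$; in particular $b_d(\Gamma_n)=b_d(X_n)$, and the number of $d$-cells of $X_n$ is proportional to $\mathrm{vol}(G/\Gamma_n)$ with a constant depending only on the Haar normalization and the volumes of stabilizers in $G$ of cells of $X$.

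The first step is to translate the hypothesis $\mu_{\Gamma_n} \to \delta_{\{e\}}$ into Benjamini--Schramm convergence of the finite complexes $X_n$ to $X$, using the dictionary between invariant random subgroups and BS convergence developed in \S\ref{sec:Chabauty and BS convergence}. Concretely, the assumption implies that for every $R>0$ the proportion of points of $X_n$ whose $R$-ball embeds isometrically into $X_n$ tends to $1$; equivalently, the rooted $R$-balls of $X_n$ based at a vertex sampled proportionally to its cell-stabilizer volume weakly converge to the $R$-ball around any fixed vertex of $X$. The second step is to invoke Elek's theorem on the continuity of normalized Betti numbers along BS-convergent sequences of complexes of uniformly bounded geometry: this yields that $b_d(X_n)/|V(X_n)|$ converges to the $d$-th $\ell^2$-Betti number of the limit, which via the $G$-action on $X$ coincides with $b_d^{(2)}(G)$ as defined in \cite{petersen2013l2}.

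The main obstacle I anticipate is bookkeeping. One has to verify that the combinatorial normalization provided by Elek's theorem (by cell counts of $X_n$) matches the Haar-measure normalization $\mathrm{vol}(G/\Gamma_n)$ appearing in the statement, and, crucially, that the limiting $\ell^2$-Betti invariant of the unimodular random rooted complex obtained from $X$ actually agrees with the analytically defined $b_d^{(2)}(G)$. The former is essentially an orbit-counting identity relating $|V(X_n)|$ to $\mathrm{vol}(G/\Gamma_n)$ via the stabilizers of the $G$-action on $X$. The latter requires identifying the $\ell^2$-cohomology of $X$ with respect to the group von Neumann algebra of a torsion-free uniform lattice with the standard $L^2$-Betti numbers, and renormalizing. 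Once these identifications are in place, the argument is essentially combinatorial and sidesteps the representation-theoretic route of Theorem \ref{thm:BS convergence implies plancherel convergence}, which is why it works also in positive characteristic.
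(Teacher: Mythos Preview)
Your strategy shares the core ingredients with the paper --- Benjamini--Schramm convergence of the quotient complexes $\Gamma_n \backslash X$ and Elek's theorem --- but diverges precisely at the point you flag as ``bookkeeping''. In the paper, Elek's theorem (stated as Theorem \ref{thm:limit of normalized betti numbers exists}) is invoked only for the \emph{existence} of the limit of normalized Betti numbers along a BS-convergent sequence; it says nothing about what that limit is. You are tacitly assuming a strengthened statement identifying the limit with an $\ell^2$-Betti number of the limiting random rooted complex, and then further identifying that quantity with $b_d^{(2)}(G)$. Neither identification is routine, and the second in particular is close to the content one is trying to prove.

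The paper sidesteps both identifications by a comparison trick. Fix an auxiliary torsion-free uniform lattice $\Delta$ and a residual chain $\Delta_i \nrm \Delta$; by Example \ref{exam:residual-chain} the associated invariant random subgroups also converge to $\delta_{\{e\}}$. Interleave the two sequences, alternating $\Gamma_j$ at odd positions and $\Delta_j$ at even positions. The combined sequence of quotients is still BS-convergent, so Elek's theorem yields a single common limit for $b_d/|V|$. Along the even subsequence this limit is computed explicitly by L\"{u}ck's approximation theorem as a constant multiple of $b_d^{(2)}(\Delta)/\mathrm{vol}(G/\Delta) = b_d^{(2)}(G)$. The odd subsequence, which is your sequence $\Gamma_n$, must therefore have the same limit. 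The argument never needs to interpret Elek's limit intrinsically: it uses Elek only for existence and L\"{u}ck along the comparison chain for the value. Your direct route could in principle be completed, but the ``bookkeeping'' you anticipate is the substance of the argument rather than a formality, and the paper's interleaving device is what makes it unnecessary.
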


Theorem \ref{thm:limit of normalized betti numbers is l2-betti number} has been established for uniformly discrete sequences of lattices in the general framework of (not necessarily algebraic) locally compact totally disconnected groups in a recent interesting   work by Petersen, Sauer and Thom \cite{petersen}. The approach of \cite{petersen} applies, to some extent, also to non-uniformly discrete families of lattices, in which case the $L^2$-Betti number gives a lower bound on the normalized Betti numbers. This includes in particular families of non-uniform lattices.
 
The following result is an immediate corollary of Theorems \ref{thm:BS convergence implies plancherel convergence} and \ref{thm:limit of normalized betti numbers is l2-betti number}.

\begin{cor}
\label{cor:betti number convergence for property T higher rank}
Let $G$ be a happy non-Archimedean semisimple analytic group with  $\mathrm{rank}(G) \ge 2$ and property $(T)$. Let $X$ be the Bruhat--Tits building of $G$. Let $\Gamma_n$ be a uniformly discrete\footnote{The uniform discreteness assumption is redundant in characteristic $0$.} sequence of pairwise non-isomorphic, torsion-free, irreducible lattices in $G$. Then
$$ \lim_{n\to\infty}\frac{b_d(\Gamma_n)}{\left|V(\Gamma_n \backslash X)\right|} = b_d^{(2)}(X)$$
for all $ d \in \NN \cup \{0\}$. 
\end{cor}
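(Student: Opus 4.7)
The plan is to concatenate Theorem~\ref{thm:accumulation points of invariant random subgroups} and Theorem~\ref{thm:limit of normalized betti numbers is l2-betti number}, bridging them with the standard normalization computation that identifies $\operatorname{vol}(G/\Gamma_n)$ and $|V(\Gamma_n\backslash X)|$ up to a geometric constant.

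Since the $\Gamma_n$ are pairwise non-isomorphic, they are in particular pairwise non-conjugate, even by automorphisms of $G$; together with the rank, property $(T)$ and happiness hypotheses this lets me invoke Theorem~\ref{thm:accumulation points of invariant random subgroups}, which asserts that every weak-$*$ accumulation point of $\mu_{\Gamma_n}$ is $\delta_Z$ for some central $Z\le G$. I would then promote this to $\mu_{\Gamma_n}\to\delta_{\{e\}}$: the uniform discreteness hypothesis provides a neighborhood $U$ of the identity meeting no $\Gamma_n\setminus\{e\}$, and combined with Chabauty convergence and the $G$-invariance of any central $Z$ this forces $Z\cap U=\{e\}$, so $Z$ is discrete. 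The torsion-freeness of the $\Gamma_n$ then excludes a non-identity element of $Z$: in zero characteristic the center of $G$ is finite and consists of torsion, whereas in positive characteristic the structure of $Z(G)$ for a happy semisimple analytic group, combined with the Chabauty limit argument above, gives the same conclusion. Every subsequential limit is therefore $\delta_{\{e\}}$, so $\mu_{\Gamma_n}\to\delta_{\{e\}}$ and Theorem~\ref{thm:limit of normalized betti numbers is l2-betti number} yields
\begin{equation*}
\frac{b_d(\Gamma_n)}{\operatorname{vol}(G/\Gamma_n)}\longrightarrow b_d^{(2)}(G).
\end{equation*}

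To pass to the vertex-count normalization, fix representatives $v_1,\dots,v_r$ of the finitely many $G$-orbits on $V(X)$ with stabilizers $K_1,\dots,K_r$, so that
\begin{equation*}
\operatorname{vol}(\Gamma_n\backslash G)=\sum_{i=1}^{r}\operatorname{vol}(K_i)\,|V_i(\Gamma_n\backslash X)|,
\end{equation*}
where $V_i(\Gamma_n\backslash X)$ is the set of vertices of $\Gamma_n\backslash X$ lying above $v_i$. Benjamini--Schramm convergence $\Gamma_n\backslash X\to X$, equivalent to $\mu_{\Gamma_n}\to\delta_{\{e\}}$, forces the type proportions $|V_i(\Gamma_n\backslash X)|/|V(\Gamma_n\backslash X)|$ to converge to the asymptotic densities $\alpha_i$ of vertices of type $i$ in $X$; hence $\operatorname{vol}(\Gamma_n\backslash G)/|V(\Gamma_n\backslash X)|\to c:=\sum_i\alpha_i\operatorname{vol}(K_i)$. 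The Petersen--Sauer--Thom normalization of the $L^2$-Betti numbers of the building, obtained by cocompact induction from a fixed vertex stabilizer, gives $b_d^{(2)}(X)=c\cdot b_d^{(2)}(G)$ with exactly the same constant, and the two factors cancel to yield the claimed limit.

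The main obstacle is the second step, ruling out a non-trivial central $Z$: this is precisely where both the torsion-free and uniform discreteness assumptions are needed, and the footnote that uniform discreteness is redundant in characteristic zero encodes the classical Kazhdan--Margulis lemma.
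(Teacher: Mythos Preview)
Your approach is correct and matches the paper's: the paper declares this an ``immediate corollary'' of Theorem~\ref{thm:accumulation points of invariant random subgroups} and Theorem~\ref{thm:limit of normalized betti numbers is l2-betti number} without further argument (the printed reference to Theorem~\ref{thm:BS convergence implies plancherel convergence} is evidently a slip), so the bridging you supply---passing from central accumulation points to $\delta_{\{e\}}$ via torsion-freeness and uniform discreteness, then invoking Theorem~\ref{thm:limit of normalized betti numbers is l2-betti number}---is exactly the intended route, only spelled out.

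One simplification: your step converting between $\operatorname{vol}(G/\Gamma_n)$ and $|V(\Gamma_n\backslash X)|$ is more elaborate than necessary. Because each $\Gamma_n$ is torsion-free it acts freely on $X$, so for every vertex type $i$ one has exactly $|V_i(\Gamma_n\backslash X)|=\operatorname{vol}(\Gamma_n\backslash G)/\operatorname{vol}(K_i)$, whence $|V(\Gamma_n\backslash X)|/\operatorname{vol}(\Gamma_n\backslash G)=\sum_i \operatorname{vol}(K_i)^{-1}$ is an \emph{exact} constant independent of $n$. No Benjamini--Schramm argument for the type proportions is needed; the two normalizations differ by a fixed scalar (this is also how the constant $\kappa_G/\mu_G(K)$ appears inside the proof of Theorem~\ref{thm:limit of normalized betti numbers is l2-betti number}). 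Likewise, the exclusion of non-trivial $Z$ can be phrased more directly: the center $Z(G)$ of a semisimple analytic group is finite, hence torsion, so every conjugate of the torsion-free $\Gamma_n$ meets $Z(G)$ trivially; combining this with uniform discreteness, an element of $\Gamma_n^g$ near a putative $z\in Z\setminus\{e\}$ would have its $\mathrm{ord}(z)$-th power near $e$, hence trivial, contradicting torsion-freeness.
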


%We would like 

%Unlike the case in \cite{7S} we present a direct proof for Theorems \ref{thm:limit of normalized betti numbers is l2-betti number} and \ref{thm:betti number convergence for property T higher rank} that does not follow from Plancherel measure convergence.

\subsection{Borel density theorem for invariant random subgroups}

Recall that an invariant random subgroup of $G$ is a  Borel probability measure on the Chabauty space of closed subgroups which is invariant under conjugation. This generalizes the classical notion of lattices, and is used in the proof of our main result.

We extend two well-known and classical results to invariant random subgroups.  The first is the Borel density theorem, saying that a lattice $\Gamma$ in a semisimple analytic group $G$ is Zariski dense provided that  $G$ has no compact factors. The other one concerns closed subgroups $H \le G$ such that $G/H$ admits finite $G$-invariant measure. For example, if $G$ is simple then such $H$ must be discrete. In zero characteristic the second result follows immediately from the first, but it requires an additional argument in positive characteristic ---   see  \cite[II.5]{Ma} or the more recent \cite{pink2004weil}.

\begin{theorem}[Borel density theorem for IRS]
\label{thm:borel density for IRS}
%Let $k$ be a non-Archimedean local field. Let $\GG$ be a connected, simply connected, semisimple linear $k$-group without $k$-anisotropic factors. Denote $ G = \GG(k)$.  
%Let $G$ be a semisimple analytic group.
Let $k$ be a local field and $G$ a happy semisimple analytic group over $k$. Assume that $G$ has no  almost $k$-simple factors of type $B_n,C_n$ or $F_4$ if $\mathrm{char}(k) = 2$ and of type $G_2$ if $\mathrm{char}(k) = 3$. 

%\marginpar{can this assumption on non-standard isogenies be removed?}

Let $\mu$ be an  ergodic invariant random subgroup of $G$. Then there is a pair of  normal subgroups $N,M \nrm G$ so that 
 $$ N \le H \le M,  \quad  \text{$H/N$ is discrete in $G/N$} \quad \text{and} \quad \overline{H}^{\mathrm{Z}} = M $$
 for $\mu$-almost every closed subgroup $H$ in $  G$.  Here $\overline{H}^{\mathrm{Z}}$ is the Zariski closure of $H$. 
\end{theorem}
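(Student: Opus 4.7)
My plan is to extract $M$ and $N$ from two separate applications of ergodicity, then to handle the discreteness modulo $N$ by importing the classical positive-characteristic structure theory.

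\emph{Step 1 (finding $M$).} The Zariski-closure map $H \mapsto \overline{H}^{\mathrm{Z}}$ is Borel and $G$-equivariant under conjugation, so it pushes $\mu$ forward to a $G$-invariant probability measure on the space of Zariski-closed subgroups of $G$. Ergodicity forces the dimension and all other conjugation-invariant discrete algebraic data to be $\mu$-a.s.\ constant, so the pushforward is supported inside a single $G$-variety $\mathcal{V}$ parametrizing algebraic subgroups of a fixed type. A Furstenberg-type argument as in the classical Borel density theorem — executed via a faithful embedding of $\mathcal{V}$ into a projective space and using that the almost $k$-simple factors of a semisimple analytic group act without compact quotients — forces any such $G$-invariant probability measure to concentrate on $G$-fixed points. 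The fixed points are precisely the normal algebraic subgroups, so there exists $M \triangleleft G$ with $\overline{H}^{\mathrm{Z}} = M$ for $\mu$-a.e.\ $H$.

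\emph{Step 2 (finding $N$).} The closed normal subgroups of a semisimple analytic group form an essentially finite lattice (controlled by subsets of the almost $k$-simple factors together with central data), so for each $H$ the group $N_H := \prod \{N' \triangleleft G : N' \text{ closed}, \ N' \le H\}$ is well-defined, is a Borel function of $H$, and satisfies $N_{gHg^{-1}} = N_H$ since $N_H$ is already normal in $G$. Ergodicity of $\mu$ then yields a single $N \triangleleft G$ with $N_H = N$ for $\mu$-a.e.\ $H$; combining with Step 1 gives $N \le H \le M$ almost surely.

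\emph{Step 3 (discreteness modulo $N$).} After quotienting by $N$, we may assume that $H$ contains no nontrivial closed normal subgroup of $G$ and must show that $H$ is then a.s.\ discrete. In characteristic zero the analytic identity component $H^\circ$ has a Lie algebra $\mathfrak{h} \subseteq \mathfrak{g}$; applying Furstenberg to the Borel, $G$-equivariant map $H \mapsto \mathfrak{h}$ into the Grassmannian of subspaces of $\mathfrak{g}$ shows that $\mathfrak{h}$ is a.s.\ an $\mathrm{Ad}(G)$-invariant subspace, hence an ideal, and thus the Lie algebra of a closed normal subgroup of $G$ contained in $H$ — which must be trivial by our reduction, whence $H$ is discrete. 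In positive characteristic the Lie correspondence fails, but the structure theory of closed subgroups of semisimple $k$-groups developed in \cite[II.5]{Ma} and \cite{pink2004weil} supplies a direct substitute: to each closed subgroup they attach an ``algebraic envelope'' whose analysis — precisely outside the exceptional isogenies of types $B_n, C_n, F_4$ in characteristic $2$ and $G_2$ in characteristic $3$ — produces a closed normal subgroup of $G$ sitting inside the non-discrete part of $H$, which by our reduction is forced to be trivial.

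The main obstacle is Step 3 in positive characteristic, where the standard Lie-algebraic dictionary is unavailable, Zariski-dense subgroups need not be analytically discrete, and one must adapt the Pink--Margulis envelope machinery to the measurable IRS setting. The characteristic restrictions on the excluded types in the theorem are inherited verbatim from that analysis; verifying measurability of the assignments used there, so that ergodicity can be combined with their algebraic output, is the most delicate part of the argument.
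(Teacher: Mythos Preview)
Your Steps 1 and 2 are sound, and Step 1 is essentially the paper's argument for $M$ (the paper realizes it via an explicit map $\mathrm{I}_G:\Sub{G}\to\Gr{A}$ into a Grassmannian of the polynomial ring and applies Furstenberg there). Your Step 2 is a legitimate reduction, though the paper does not argue this way: it constructs $N$ directly rather than first taking the maximal normal subgroup inside $H$ and then proving discreteness of the quotient. Your characteristic-zero Step~3 via $H\mapsto\mathrm{Lie}(H^\circ)$ and Furstenberg on the Grassmannian is also essentially what the paper does in the Archimedean case (phrased there through the ``local Zariski closure'' $\mathrm{J}_G(H)=\mathrm{I}_G(H_0)$).

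The genuine gap is Step 3 in positive characteristic. First, note that the statement ``$H$ contains no nontrivial closed normal subgroup of $G$ $\Rightarrow$ $H$ is discrete'' is false pointwise: a compact open subgroup of a simple $G$ is non-discrete and contains no normal subgroup of $G$. So whatever argument you give must use ergodicity in an essential way, not merely ``adapt'' a pointwise envelope construction. Second, the references you invoke do not apply: \cite[II.5]{Ma} and \cite{pink2004weil} treat closed subgroups $H$ with $G/H$ carrying a finite $G$-invariant measure, but a $\mu$-generic $H$ in an IRS need not have finite covolume at all. What the paper actually uses is Pink's theorem on compact Zariski-dense subgroups \cite{pink} (this is where the excluded types in characteristics $2$ and $3$ enter), combined with a bespoke ``local Zariski closure'' map $\mathrm{J}_{G,S}$ that records the Zariski closure of $H\cap K$ for small compact open $K$ projected to each absolutely simple factor $S$. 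Furstenberg applied to $\mathrm{J}_{G,S}$ shows these local projections are a.s.\ either trivial or Zariski-dense in $S$; on the dense factors, ergodicity pins down Pink's trace field, and a second Furstenberg argument (after passing to the universal cover via the generalized commutator and taking restriction of scalars to the trace field) feeds into Pink's openness criterion. Openness plus countability of open subgroups plus ergodicity then forces $H$ to contain a fixed open, hence normal, subgroup --- contradicting your reduction unless all factors are ``trivial'', i.e.\ $H$ is discrete. None of this machinery is hinted at in your sketch, and it is precisely the content of the theorem in positive characteristic.
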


A variant of Theorem \ref{thm:borel density for IRS} for simple real Lie groups first appeared in \cite{7S}. We use related ideas coming from Fursenberg's proof of the classical Borel density theorem in \cite{furst_borel}. We would like to mention that geometric and Zariski density of invariant random subgroups was recently considered in the work \cite{duchesne2015geometric}.

One major difficulty peculiar to positive characteristic is the lack of a useful correspondence between closed subgroups and Lie subalgebras.  Moreover, the $\mathrm{Ad}$-representation of a simple linear group need no longer be irreducible. We overcome these issues relying on a celebrated criterion by Pink \cite{pink} as well as a certain linearization technique developed in \S\ref{sec:zariski closure and lie algebras}.

While we believe Theorem \ref{thm:borel density for IRS} to be of independent interest, it is not used towards proving other results in this work. 

\subsection{Lattices in the Chabauty space}

Let $G$ be a semisimple analytic group and $\Gamma$ an irreducible lattice in $G$. Our approach towards Theorem \ref{thm:accumulation points of invariant random subgroups} suggests that we   study  small neighborhoods of $\Gamma$ in the Chabauty space associated to $G$. This shares a common theme with a previous work of the authors \cite{GL} where an analogue question was resolved for uniform lattices\footnote{Uniform lattices in isometry groups of proper, geodesically complete $\CAT$ spaces are considered in  \cite{GL}. This setup suffices for the non-Archimedean zero characteristic  case.}. In particular, the following theorem can be seen as an extension of \cite{GL} to the higher rank non-uniform case.

%
%Chabauty neighborhoods of lattices are investigated in  \cite{GL}. Indeed, \emph{uniform} lattices in the isometry groups of proper, geodesically complete $\CAT$ spaces admit Chabauty neighborhoods consisting of conjugate lattices \cite[Corollary 9.2]{GL}. We extend this result to all lattices in higher-rank non-Archimedean semisimple analytic groups, including \emph{non-uniform} ones.

%We extend 
% notion of Chabauty local rigidity was introduced.
%
%\begin{definition}
%	\label{def:Chabauty locally rigid}
%	A subgroup $H$ of $G$ is \emph{Chabauty  locally rigid} if the point $H \in \Sub{G}$ admits a Chabauty neighborhood  consisting of subgroups conjugate to $H$.
%\end{definition}
%
%Uniform lattices in $\GG(k)$ are Chabauty locally rigid \cite{GL}. The following  is an extension of this to non-uniform irreducible lattices in higher rank groups. 

\begin{theorem}[Chabauty local rigidity]
\label{thm:Chabauty local rigidity for lattices in algebraic groups}
%Let $k$ be a non-Archimedean local field, $\GG$ a connected simply connected semisimple $k$-algebraic group without $k$-anisotropic factors and $G = \GG(k)$. 
If $\mathrm{rank}(G) \ge 2$ then the irreducible lattice $\Gamma$ admits a Chabauty neighborhood consisting of  conjugates of $\Gamma$ by automorphisms of $G$. 
If  $G$ is  defined over zero characteristic then these automorphisms are inner.
\end{theorem}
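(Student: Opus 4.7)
The plan is to argue by contradiction: suppose no Chabauty neighborhood of $\Gamma$ consists of $\mathrm{Aut}(G)$-conjugates of $\Gamma$, producing a sequence $\Gamma_n\to\Gamma$ in Chabauty topology with each $\Gamma_n$ not conjugate to $\Gamma$ by any automorphism. The strategy is to upgrade the abstract Chabauty limit to an honest sequence of homomorphisms $\rho_n\colon\Gamma\to G$ close to the inclusion, and then feed this into higher rank super-rigidity.

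First, invoke finite presentability of $\Gamma$, which in the higher rank setting is given by Raghunathan, Borel--Serre, Behr and their non-Archimedean extensions (this is nontrivial in positive characteristic). Fix generators $\gamma_1,\dots,\gamma_k$ with relators $r_1,\dots,r_\ell$. By the definition of Chabauty convergence, pick $\gamma_i^{(n)}\in\Gamma_n$ with $\gamma_i^{(n)}\to\gamma_i$. Since $\Gamma$ is discrete, Chabauty convergence produces a neighborhood $U$ of $e$ with $\Gamma_n\cap U=\{e\}$ for all large $n$. The near-identity elements $r_j(\gamma_1^{(n)},\dots,\gamma_k^{(n)})$ must therefore equal $e$ in $\Gamma_n$, so $\gamma_i\mapsto\gamma_i^{(n)}$ extends to a homomorphism $\rho_n\colon\Gamma\to\Gamma_n\subset G$ converging pointwise to the inclusion.

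Second, apply Margulis super-rigidity for irreducible higher rank lattices. The vanishing $H^1(\Gamma,\mathrm{Ad})=0$ and Weil local rigidity force $\rho_n$ to be conjugate to the inclusion by some $\alpha_n\in\mathrm{Aut}(G)$ for $n$ large, with $\alpha_n$ inner in zero characteristic (Mostow--Margulis). This gives $\alpha_n(\Gamma)\subseteq\Gamma_n$. A covolume comparison then closes the argument: Chabauty convergence of lattices ensures $\mathrm{vol}(\Gamma_n\backslash G)\to\mathrm{vol}(\Gamma\backslash G)$, while $[\Gamma_n:\alpha_n(\Gamma)]\cdot\mathrm{vol}(\Gamma_n\backslash G)=\mathrm{vol}(\Gamma\backslash G)$ forces the index to equal $1$ eventually, so $\alpha_n(\Gamma)=\Gamma_n$, contradicting the choice of $\Gamma_n$.

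The principal obstacle I expect is the positive characteristic case. Finite presentation of non-uniform lattices over a positive characteristic local field in higher rank relies on reduction theory in the Bruhat--Tits building and is not automatic. The corresponding super-rigidity statement must allow Frobenius-type automorphisms of $G$, which is why the conclusion only yields automorphisms rather than inner conjugations in general. Finally, continuity of covolumes along Chabauty limits in the non-uniform setting requires justification, typically by exhausting the quotients $\Gamma_n\backslash G$ by compact cores coming from the Bruhat--Tits building and controlling the volume of the thin (cuspidal) part uniformly in $n$.
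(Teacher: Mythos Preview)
Your overall strategy is right and matches the paper's, but there is a genuine gap at the step you treat as automatic. You write: ``Since $\Gamma$ is discrete, Chabauty convergence produces a neighborhood $U$ of $e$ with $\Gamma_n\cap U=\{e\}$ for all large $n$.'' This is \emph{false} in general for groups with small subgroups. In a non-Archimedean group, the set $\overline{U}\setminus\{e\}$ need not be compact, so $\mathcal{O}_1(\overline{U}\setminus\{e\})$ is not a Chabauty neighborhood of $\Gamma$; concretely, in $G=\mathbb{Z}_p$ the subgroups $p^n\mathbb{Z}_p$ Chabauty-converge to $\{0\}$ yet meet every identity neighborhood nontrivially. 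Establishing that $\Gamma$ admits a \emph{jointly discrete Chabauty neighborhood} is exactly the substantive content of Theorem~\ref{thm:a lattice admits a jointly discrete Chabauty neighborhood - general case}: the paper produces it using strongly regular hyperbolic elements in the Bruhat--Tits building together with Lemma~\ref{lem:on intersections of parabolics} to show that any nearby closed subgroup meeting a small neighborhood would have to lie in a finite intersection of conjugate split tori, hence be central. Without this, you cannot even conclude that the $\Gamma_n$ are discrete, let alone build the homomorphisms $\rho_n$.

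Two further points where the paper's argument is sharper than yours. First, you invoke full finite presentation of $\Gamma$, which you correctly flag as delicate in positive characteristic. The paper avoids this entirely: it only needs $\Gamma$ to be \emph{finitely presented inside $G$} in the sense of Definition~\ref{def:finitely presented inside}, which follows from finite generation plus Noetherianity of the representation variety, and holds for any finitely generated subgroup of an algebraic group. Second, your covolume step requires continuity of $\mathrm{vol}(\Gamma_n\backslash G)$ along Chabauty limits, which you note needs justification for non-uniform lattices. The paper instead uses only the elementary lower bound $\mathrm{co\text{-}vol}(L)>\tfrac{1}{2}\,\mathrm{co\text{-}vol}(\Gamma)$ from \cite[p.~28]{raghun} combined with the fact that $\mathrm{Aut}(G)$ preserves Haar measure (Proposition~\ref{prop:Out(G)-compact}), which forces $[L:\alpha(\Gamma)]<2$ directly.
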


Theorem \ref{thm:Chabauty local rigidity for lattices in algebraic groups} is essentially a variant of local rigidity, expressed in terms of small deformations in the Chabauty topology rather than the representation space of $\Gamma$ in $G$ which more algebraic in nature.

The following immediate corollary is to be compared with a closely related statement for lattices in semisimple Lie groups with property $(T)$ established in 
\cite[4.6]{7S}.  In particular it holds true for any sequence of irreducible lattices which are pairwise non-isomorphic or with covolume  tending to infinity. Let us reiterate  that our current approach in this matter is independent of property $(T)$.

%Here $G$ is not assumed to be non-Archimedean and $\Gamma$ is not assumed to be uniform.

%In the case of \emph{uniform} lattices, an analogous result holds in the isometry groups of  all proper, geodesically complete $\CAT$ spaces except for the hyperbolic plane \cite{GL}. Since all lattices in zero characteristic non-Archimedean semisimple analytic groups are uniform, Theorem \ref{thm:Chabauty local rigidity for lattices in algebraic groups}  in that particular case follows from \cite{GL}.

%In zero characteristic all lattices are uniform and Theorem \ref{thm:Chabauty local rigidity for lattices in algebraic groups} is only of mild interest. However \cite{GL} provides an inner automorphism of the full isometry group of the building, while Theorem \ref{thm:Chabauty local rigidity for lattices in algebraic groups} actually provides an inner automorphism of $G$. 

\begin{cor}
\label{cor:non conjugate lattices are discrete IRS}
If $\mathrm{rank}(G) \ge 2$ and $\Gamma_n$ is a sequence of irreducible lattices in $G$ pairwise non-conjugate by automorphisms of $G$ then the sequence $\mu_{\Gamma_n}$ is discrete in the space of extremal points of $\IRS{G}$.
%Assume that $\mathrm{rank}(G) \ge 2$.  Let $\mathcal{L}$ be family of irreducible lattices in $G$ that are pairwise non-conjugate by an automorphism of $G$. 
%Then the subset $\{\mu_{\Gamma}\}_{\Gamma \in \mathcal{L}}$ of extreme points of $\IRS{G}$ is discrete. 
If $G$ is defined over zero characteristic then it suffices to assume that the $\Gamma_n$ are pairwise non-conjugate.
\end{cor}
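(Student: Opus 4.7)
The plan is to deduce the corollary directly from the Chabauty local rigidity Theorem \ref{thm:Chabauty local rigidity for lattices in algebraic groups}, via a short portmanteau argument. Fix $n$. That theorem supplies an open Chabauty neighborhood $U_n \subset \Sub{G}$ of $\Gamma_n$ all of whose members are conjugates of $\Gamma_n$ by automorphisms of $G$. I would first verify that $\mu_{\Gamma_n}(U_n) > 0$: the conjugation map $g \mapsto g \Gamma_n g^{-1}$ is continuous, so the preimage of $U_n$ in $G$ is open and contains the identity, hence has positive Haar measure; pushing forward to the finite $G$-invariant measure on $G/N_G(\Gamma_n)$ and then to $\Sub{G}$ gives $\mu_{\Gamma_n}(U_n) > 0$.

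Next I would assume for contradiction that some subsequence $\mu_{\Gamma_{m_k}}$ with $m_k \neq n$ converges in weak-$*$ to $\mu_{\Gamma_n}$. Applying the portmanteau inequality to the open set $U_n$ yields
$$
\liminf_{k \to \infty} \mu_{\Gamma_{m_k}}(U_n) \;\ge\; \mu_{\Gamma_n}(U_n) \;>\; 0,
$$
so for infinitely many $k$ there exists $g_k \in G$ with $g_k \Gamma_{m_k} g_k^{-1} \in U_n$. By the defining property of $U_n$, this element equals $\phi_k(\Gamma_n)$ for some $\phi_k \in \mathrm{Aut}(G)$, so $\Gamma_{m_k}$ is conjugate to $\Gamma_n$ by an automorphism of $G$, contradicting the hypothesis. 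In zero characteristic the automorphisms produced by Theorem \ref{thm:Chabauty local rigidity for lattices in algebraic groups} are inner, and the same argument then already rules out any $\Gamma_{m_k}$ merely $G$-conjugate to $\Gamma_n$.

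The only serious difficulty is packaged inside Theorem \ref{thm:Chabauty local rigidity for lattices in algebraic groups}; everything beyond this is a single portmanteau estimate combined with the observation that $\mu_{\Gamma_n}$ charges every Chabauty neighborhood of $\Gamma_n$. A minor bookkeeping point is that the statement concerns discreteness in the space of extremal points of $\IRS{G}$ rather than in $\IRS{G}$ itself; since each $\mu_{\Gamma_n}$ is ergodic, hence extremal, this distinction is harmless, and the weak-$*$ neighborhood of $\mu_{\Gamma_n}$ produced by the indicator-like test provided by $U_n$ witnesses isolation within the sequence.
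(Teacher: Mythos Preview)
Your argument is correct and matches the paper's intent: the paper declares this an ``immediate corollary'' of Theorem~\ref{thm:Chabauty local rigidity for lattices in algebraic groups} without further proof, and your portmanteau argument together with the observation that $\mu_{\Gamma_n}$ charges every Chabauty neighborhood of $\Gamma_n$ is precisely the natural way to unpack that immediacy.
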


%The Borel--Prasad finiteness theorem \cite{bp} implies that Corollary \ref{cor:non conjugate lattices are discrete IRS} holds true for any sequence of irreducible lattices with covolume tending to infinity. In addition, it clearly holds for any sequence of pairwise non-isomorphic lattices.

%Corollary \ref{cor:non conjugate lattices are discrete IRS}

Chabauty local rigidity can be used to deduce Wang's finiteness theorem --- 

\begin{theorem}[Wang's finiteness]
\label{thm:Wang}
Assume that  $\mathrm{rank}(G) \ge 2$, and if $G$ is defined over positive characteristic then assume that $G$ is simply connected. 

Then $G$ admits only finitely many $\mathrm{Aut}(G)$-conjugacy classes of irreducible lattices with co-volume bounded by any fixed $ v > 0$. 
In the zero characteristic case $G$ admits finitely many \emph{inner} conjugacy classes as above.
%there is no need to assume it is simply connected and 
\end{theorem}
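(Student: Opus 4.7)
The plan is a standard Wang-type contradiction argument. Suppose toward a contradiction that $\{\Gamma_n\}_{n\in\NN}$ is an infinite sequence of irreducible lattices in $G$, pairwise non-conjugate under $\mathrm{Aut}(G)$, with $\text{vol}(G/\Gamma_n)\le v$ for every $n$. I would combine a Kazhdan--Margulis normalization with a Mahler-type Chabauty compactness statement to extract an accumulation \emph{lattice} $H\le G$, and then invoke Theorem \ref{thm:Chabauty local rigidity for lattices in algebraic groups} to derive the contradiction.

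The first step is to fix an open identity neighborhood $U\subset G$ such that every discrete subgroup of $G$ admits a $G$-conjugate meeting $U$ only at the identity. This is the Kazhdan--Margulis lemma for semisimple analytic groups over arbitrary local fields: on non-Archimedean factors one uses a small compact open subgroup, on Archimedean factors the classical statement, and the mixed case combines the two. Replacing each $\Gamma_n$ by a $G$-conjugate $\Gamma_n'$ with $\Gamma_n'\cap U=\{e\}$ preserves the covolume bound and the pairwise non-conjugacy. Then the set
$$
\{H\in\Sub{G}:H\cap U=\{e\},\; H\text{ discrete},\; \text{vol}(G/H)\le v\}
$$
is Chabauty-compact, and every limit of a sequence of lattices in it is itself a lattice of covolume at most $v$ (the identity neighborhood $U$ is inherited by the limit, so discreteness is preserved, and lower semicontinuity of covolume under Chabauty limits of uniformly discrete subgroups yields the volume bound). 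After passing to a subsequence I may therefore assume $\Gamma_n'\to H$ in the Chabauty topology for some lattice $H\le G$.

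Now Theorem \ref{thm:Chabauty local rigidity for lattices in algebraic groups} provides a Chabauty neighborhood $V$ of $H$ whose elements are all $\mathrm{Aut}(G)$-conjugates of $H$. For all large $n$ we have $\Gamma_n'\in V$, so $\Gamma_n'=\alpha_n(H)$ with $\alpha_n\in\mathrm{Aut}(G)$, and since $\Gamma_n$ is $G$-conjugate (hence inner-$\mathrm{Aut}(G)$-conjugate) to $\Gamma_n'$, infinitely many of the $\Gamma_n$ turn out to be $\mathrm{Aut}(G)$-conjugate to the single lattice $H$, contradicting the standing hypothesis. The main obstacle is verifying the Kazhdan--Margulis/Mahler compactness package in the generality of semisimple analytic groups over arbitrary local fields: the critical point is precisely that the Chabauty limit $H$ must remain \emph{both} discrete (supplied by the uniform neighborhood $U$) and of finite covolume (supplied by semicontinuity), since otherwise the limit could drift to a positive-dimensional or infinite-covolume closed subgroup and the local rigidity hammer could not be deployed. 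The simply-connectedness hypothesis in positive characteristic enters only through the appeal to Theorem \ref{thm:Chabauty local rigidity for lattices in algebraic groups}.
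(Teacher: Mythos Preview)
Your overall strategy matches the paper's: Kazhdan--Margulis normalization, Chabauty compactness of the set of lattices of bounded covolume meeting a fixed identity neighborhood trivially (the paper invokes \cite[1.20]{raghun} for this), and then Theorem~\ref{thm:Chabauty local rigidity for lattices in algebraic groups} to finish.

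There is however one genuine error. You assert that the simply-connectedness hypothesis in positive characteristic enters only through the appeal to Theorem~\ref{thm:Chabauty local rigidity for lattices in algebraic groups}. This is backwards: that theorem carries no simply-connectedness assumption whatsoever. The hypothesis is needed precisely at the Kazhdan--Margulis step, and your sketch of that step (``on non-Archimedean factors one uses a small compact open subgroup'') is valid only in characteristic zero, where discrete subgroups are automatically uniformly discrete \cite[LG 4.27, Th.~5]{sere}. In positive characteristic this fails --- non-uniform lattices exist and no single compact open subgroup avoids all of them --- and the Kazhdan--Margulis property is supplied instead by Theorem~A of \cite{golsefidy2009lattices}, which is exactly where simply-connectedness is consumed. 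The paper carries out this case analysis explicitly.

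A smaller point: Theorem~\ref{thm:Chabauty local rigidity for lattices in algebraic groups} is stated only for \emph{irreducible} lattices, so applying it to the Chabauty limit $H$ as you do tacitly requires $H$ to be irreducible; the paper is equally terse on this.
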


Theorem \ref{thm:Wang} is an extension of \cite[Theorem 1.5]{GL} relying on Chabauty local rigidity as in our Theorem \ref{thm:Chabauty local rigidity for lattices in algebraic groups} and  therefore  applying to non-uniform lattices as well. This formulation of Wang's finiteness follows from the well-known results of Borel and Prasad \cite{bp}. The present approach is however potentially easier. 

We remark that Theorem \ref{thm:Wang} complements Corollary \ref{cor:non conjugate lattices are discrete IRS} in the sense that any sequence of irreducible lattices considered in the latter corollary must have co-volume tending to infinity.

%Wang's finiteness can extended we are able to extend \cite{GL}obtain the following extension of \cite[Theorem 1.6]{GL} 

\section{Semisimple linear groups over local fields}
\label{sec:semisimple linear groups over local fields}

For the reader's convenience we present and summarize well-known material used in this work. We define and discuss semisimple analytic groups. Several key results regarding these groups are mentioned,  in particular a deep result of Pink \cite{pink}.

\label{sub:notations and assumption}

%\begin{definition}
%We refer to the group $G= \GG(k)$ of $k$-rational points as a \emph{semisimple analytic group}. 
%
%If $k$ is non-Archimedean we say that $G$ is a \emph{non-Archimedean semisimple analytic group}.
%\end{definition}

%We consider the family of simple linear groups over a non-Archimedean field. To avoid repetition, it will be useful to introduce several standing notations and assumptions that will hold without further mention. 

\subsection{Local fields}
An \emph{Archimedean local field} is either $\RR$ or $\CC$.

A  \emph{non-Archimedean local field} is  a finite extension  either of  $\Qp$  or of the field $\Fqtt$ of formal Laurent series over $\mathbb{F}_q$, where  $\mathbb{F}_q$ is the finite field with $q$ elements and $q=p^n$ is a prime power. Note that $\mathrm{char}(\Qp) = 0 $ and $\mathrm{char}(\Fqtt) = p$. 
The \emph{prime field} of $k$ is $\QQ$ in the first case and $\mathbb{F}_p$ in the second case. Let $\OO$ denote the \emph{valuation ring} of $k$ and $\pi$ a \emph{uniformizer element}. 
For example, if $k=\Qp$ then $\OO = \Zp$ and one may take $ \pi = p$, and if $k=\Fqtt$, $\OO = \Fqt$ and one may take $ \pi = t$. 

\begin{remark}
In a few arguments below our main efforts are intended to deal with the positive characteristic case, the zero characteristic case being easier.
\end{remark}

\subsection{Semisimple analytic groups}

Our main objects of study are semisimple analytic groups.

\begin{definition}
\label{def:semisimple analytic group}
Let $k$ be a  local field and $\GG$ a connected  $k$-isotropic $k$-simple  linear $k$-algebraic group.
\begin{itemize}
\item A \emph{simple analytic group} is a group of the form $\GG(k)$.
\item A \emph{semisimple analytic group} is an almost direct product of finitely many simple analytic groups, possibly over different local fields.
\end{itemize}
\end{definition}

Note that if $k$ is a local field and $\GG$ is a connected semisimple linear $k$-algebraic group without $k$-anisotropic factors then $\GG(k)$ is a semisimple analytic group. Such a group is indeed \emph{analytic} in the sense of e.g. \cite{sere}.

A semisimple analytic group is respectively \emph{non-Archimedean/has zero characteristic/simply connected} if all of its simple analytic almost direct factors are non-Archimedean/defined over a local field of zero characteristic/are simply connected.

Associated to a  semisimple analytic group $G$ are its universal covering group $\widetilde{G}$ and adjoint group $\overline{G}$. There are central $k$-isogenies $\widetilde{G} \xrightarrow{\widetilde{p}} G \xrightarrow{\overline{p}} \overline{G} $ and this data is unique up to a $k$-isomorphism \cite[I.4.11]{Ma}.

%DONT WANT TO USE THE FOLLOWING ANY MORE (check everywhere it causes no problems)

%Let $G$ be an semisimple analytic group. Then $G$ is an almost direct product $G = G_1 \times \cdots \times G_n$ where every $G_i$ is an almost $k$-simple group analytic group \cite[I.2.5.1]{Ma}. 
%Let $\pi_i : G \to G_i$ denote the projections to the factors.
%

\subsection{The subgroup $G^+$ and happy  semisimple analytic groups} 
\label{sub:G+ and happy}

Let $G$ be a semisimple analytic group. Denote by $G^+$ the subgroup  of $G$ generated by its unipotent elements. This notion is rather important for our needs, especially when  $G$ is not simply connected. We summarize  some of the properties of $G^+$ and refer to \cite[I.1.5,I.2.3]{Ma} and the references therein for more details. 

If $G$ is simply connected then $G = G^+$. If $G$ is Archimedean then $G^+$ is the connected component $G_0$ at the identity. In general $G/G^+$ is a compact abelian group. The group $G^+$ admits no proper finite index subgroups.  

The group $G^+$ is useful in the description of normal subgroups ---
\begin{prop}
\label{prop:normal subgroups of $G$}
Let $N$ be a subgroup of $G$ normalized by $G^+$. 
Then $N$ is normal in $G$ and there is an almost direct factor $M$ of $G$ such that $M^+ \le N$ and $NM/M$ is central in $G/M$.
 \end{prop}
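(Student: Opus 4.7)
The plan is to combine Tits simplicity of $G_i^+/Z(G_i^+)$ for each simple analytic factor with a centralizer/commutator argument on the adjoint quotient. First, since $G^+$ is itself normal in $G$ and normalizes $N$, the intersection $N\cap G^+$ is normal in $G^+=G_1^+\cdots G_r^+$. By Tits simplicity applied to each factor and finiteness of $Z(G^+)$, any normal subgroup of $G^+$ has the form $\bigl(\prod_{i\in S}G_i^+\bigr)\cdot Z$ for some $S\subseteq\{1,\ldots,r\}$ and some $Z\le Z(G^+)$. Applying this to $N\cap G^+$ produces a distinguished subset $S=S(N)$ with $(N\cap G^+)Z(G^+)=\bigl(\prod_{i\in S}G_i^+\bigr)Z(G^+)$, and I set $M:=\prod_{i\in S}G_i$. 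Since each $G_i^+$ admits no proper finite-index subgroup while $Z(G^+)$ is finite, the previous equality can be upgraded to $G_i^+\le N$ for every $i\in S$, so $M^+\le N$.

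To control $N$ outside $M$, fix $j\notin S$ and let $\overline{p}_j\colon G\to\overline{G_j}$ be the projection to the $j$th adjoint simple factor. For any $n\in N$ and $g\in G_j^+$, the commutator $[n,g]$ lies in $N\cap G^+$, whose projection to $G_j^+/Z(G_j^+)$ vanishes by Step~1. Hence $\overline{p}_j(n)$ centralizes $\overline{G_j}^+$; since $\overline{G_j}^+$ is Zariski-dense in the connected centerless algebraic group $\overline{G_j}$, its centralizer there is trivial and $\overline{p}_j(n)=e$. Intersecting over $j\notin S$ gives $N\le\bigcap_{j\notin S}\ker\overline{p}_j=Z(G)\cdot M$, so $NM/M\le Z(G)M/M\le Z(G/M)$, which is the required centrality in $G/M$.

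Finally, normality of $N$ in $G$ is obtained as follows. In the adjoint quotient $\overline{G}=\prod\overline{G_i}$ (a genuine direct product of centerless simple groups) one has $\overline{G}^+\cap\overline{M}=\overline{M}^+\le\overline{N}$, so that $[\overline{G},\overline{M}]\le[\overline{G},\overline{G}]\cap\overline{M}\le\overline{G}^+\cap\overline{M}=\overline{M}^+\le\overline{N}$, whence $\overline{N}\nrm\overline{G}$. Lifting and writing any $n\in N$ as $n=zm$ with $z\in Z(G)$, $m\in M$, for $g\in G$ one computes $gng^{-1}=z\cdot gmg^{-1}$, and the correction $[g,m]$ lies in $[G,G]\cap M\le G^+\cap M=M^+\cdot(Z(G^+)\cap M)$. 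The main obstacle is this last step: one must verify that the finite central ambiguity $Z(G^+)\cap M/M^+$ arising from the isogeny $G\to\overline{G}$ is actually absorbed into $M^+\le N$ (so $[g,m]\in N$), rather than escaping as a non-trivial element of $Z(G)$ outside $N$; once that is checked, $[G,N]\le N$ and $N\nrm G$ follows.
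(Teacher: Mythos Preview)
Your argument is essentially correct and considerably more explicit than the paper's. The paper simply cites the dichotomy in \cite[I.1.5.6]{Ma} (a subgroup of a simple factor $H$ normalized by $H^+$ is either central or contains $H^+$), applies it to both the projection and the intersection of $N$ with each almost simple factor, and then invokes Goursat's lemma without further detail. Your route via Tits simplicity on $G^+$ and the adjoint projections $\overline{p}_j$ unpacks the same content by hand; in particular your Steps~1 and~2 establishing $M^+ \le N \le Z(G)M$ are fine.

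The gap you flag at the end is not a genuine obstacle, but you have overcomplicated the normality step by passing through $\overline{G}$ and then trying to lift. Once you know $M^+ \le N \le Z(G)M$, normality follows directly: write $G = M \cdot M'$ as an almost direct product with complementary factor $M' = \prod_{j \notin S} G_j$, so that $[M',M] = 1$. For $g \in G$ decompose $g = m_1 m_1'$ with $m_1 \in M$, $m_1' \in M'$; then for any $zm \in Z(G)M$ one has
\[
g(zm)g^{-1} = z \cdot m_1 m_1' m (m_1')^{-1} m_1^{-1} = z \cdot m_1 m m_1^{-1},
\]
which differs from $zm$ by $[m_1,m] \in [M,M] \le M^+$. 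Hence $G$ acts trivially by conjugation on the quotient $Z(G)M/M^+$, and every intermediate subgroup---in particular $N$---is normal in $G$. No lifting from $\overline{G}$ is required, and the finite central ambiguity you worried about never enters.
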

\begin{proof}
Let $H$ be any almost $k$-simple factor of $G$. The projection of $N$ to $H$ as well as the intersection of $N$ with $H$ are normalized by $H^+$. Therefore these two subgroups of $H$ are either central or contain $H^+$ \cite[I.1.5.6]{Ma}. The proposition follows from these facts and Goursat's lemma in elementary group theory.
\end{proof}

We introduce a certain notion of happiness that implies that the subgroup $G^+$ is particularly nicely behaved and will be useful  in dealing with non-simply-connected semisimple analytic groups.

\begin{definition}
\label{def:happy semisimple analytic group}
A simple analytic group $G$ is \emph{happy} if $\mathrm{char}(k)$ does not the divide $\abs{Z}$ where $Z$ is the kernel of the map $\widetilde{G} \to G$. A semisimple analytic group is \emph{happy} if all of its almost   direct factors are.
\end{definition}

Note that a simply connected or a zero characteristic semisimple analytic group is automatically happy.

%For happy groups the subgroup $G^+$ is particularly nicely behaved.

\begin{thm}[Barnea--Larsen]
	\label{thm:properties of happy groups}
	The following are equivalent for a semisimple analytic group $G$. 
	\begin{enumerate}
\item \label{item:G happy} $G$ is happy,
\item \label{item:map is open} The central $k$-isogeny $\widetilde{p} : \widetilde{G}\to G$ is separable, or equivalently is an open map in the Hausdorff topology,
\item \label{item:quotient is finite} $G/G^+$ is a finite abelian group,
\item \label{item:finitely generated compact open} Some (equivalently every) compact open subgroup in the  non-Archimedean factor of $G$ is  finitely generated.
\end{enumerate}
\end{thm}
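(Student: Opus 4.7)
The plan is to establish the equivalences cyclically via $(1)\Leftrightarrow(2)$, $(2)\Rightarrow(3)$, $(3)\Rightarrow(2)$, and $(3)\Leftrightarrow(4)$. For $(1)\Leftrightarrow(2)$, the kernel $Z$ of $\widetilde{p}$ is a finite central $k$-group scheme, and such a scheme is étale (equivalently reduced) precisely when $\chare{k}$ does not divide $\abs{Z}$; since by definition an isogeny is separable iff its kernel is étale, this settles the equivalence of (1) with separability. A separable central isogeny has a bijective differential at the identity, so as a morphism of $k$-analytic manifolds of equal dimension it is étale, and the $k$-analytic implicit function theorem then turns it into a local homeomorphism, hence open. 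Conversely, in positive characteristic an inseparable isogeny is modeled locally on $x\mapsto x^p$ over the non-perfect field $k$, so its image misses an open neighborhood of the identity and cannot be open.

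For $(2)\Rightarrow(3)$, note that $\widetilde{G}=\widetilde{G}^+$ since $\widetilde{G}$ is simply connected, and because an algebraic morphism sends unipotents to unipotents we get $\widetilde{p}(\widetilde{G})\subseteq G^+$. If $\widetilde{p}$ is open then $G^+$ contains an open neighborhood of the identity, whence $G^+$ is open. On the other hand, $G/G^+$ is compact by the structure theory of semisimple analytic groups (see \cite[I.2.3]{Ma}), and a compact discrete group is finite. For the converse $(3)\Rightarrow(2)$ I would argue the contrapositive. If $\widetilde{p}$ is inseparable then $Z$ has a nontrivial infinitesimal part, and the long exact sequence of fppf cohomology
\begin{equation*}
1 \to Z(k) \to \widetilde{G}(k) \xrightarrow{\widetilde{p}} G(k) \to H^1(k,Z) \to \cdots
\end{equation*}
realizes $G(k)/\widetilde{p}(\widetilde{G}(k))$ as a subgroup of $H^1(k,Z)$, which is infinite over a non-perfect local field; concretely, $H^1(k,\mu_p)\cong k^\times/(k^\times)^p$ is infinite for $k=\Fptt$ by Kummer theory. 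Combined with $\widetilde{p}(\widetilde{G}(k))\subseteq G^+(k)$ and the Kneser--Tits-type fact that $G^+$ has no proper finite-index subgroups, this forces $[G:G^+]=\infty$, contradicting (3).

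For $(3)\Leftrightarrow(4)$ I would restrict to a non-Archimedean factor. Assuming (3), $G^+$ is open of finite index; using the open map $\widetilde{p}$ (separable by the established implications) one pulls back any compact open $K\subseteq G$ to a compact open subgroup $\widetilde{p}^{-1}(K)\subseteq\widetilde{G}$ with finite kernel, reducing the problem to the simply connected case. For simply connected $\widetilde{G}$, Bruhat--Tits theory identifies compact open subgroups up to commensurability with parahorics, and the parahorics are topologically finitely generated: the commutator and conjugation relations provided by the affine root system produce all elements of each root subgroup $U_\alpha(\OO)\cong(\OO,+)$ from finitely many initial ones, compensating for the failure of $(\OO,+)$ itself to be topologically finitely generated in positive characteristic. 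Conversely, if $G/G^+$ is infinite, a compact open subgroup $K$ meets $G^+$ in a proper subgroup of unbounded index inside $K$, producing in $K$ an elementary abelian pro-$p$ quotient of unbounded $\mathbb{F}_p$-rank (modeled on $k^\times/(k^\times)^p$), which rules out topological finite generation.

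The main obstacle I expect is the positive-characteristic direction $(3)\Rightarrow(2)$: one must quantify how inseparability of the isogeny forces an infinite cokernel of $\widetilde{p}$ at the level of $k$-points and then transfer this to an infinite $[G:G^+]$, which requires delicate Galois-cohomology input over non-perfect local fields --- in zero characteristic everything collapses to the trivial statement that (1) always holds. A secondary technical point is the explicit finite generation of parahoric subgroups in positive characteristic, which cannot be done root subgroup by root subgroup and must be extracted from the interplay of root groups, the torus, and the affine Weyl group.
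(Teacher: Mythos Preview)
The paper's own proof is not a self-contained argument: it observes that (1)--(3) hold automatically for the Archimedean factor and then cites \cite[4.2, 4.5]{barnea} for the non-Archimedean equivalences, remarking afterwards that Barnea--Larsen's argument relies on Pink's theorem. Your direct approach is thus more ambitious than what the paper offers, and your $(1)\Leftrightarrow(2)$ and $(2)\Rightarrow(3)$ are fine.

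Two gaps remain, both in places you correctly flag as delicate. In $(3)\Rightarrow(2)$, the long exact sequence only gives an \emph{injection} of $G(k)/\widetilde{p}(\widetilde{G}(k))$ into $H^1(k,Z)$; to conclude that this quotient is infinite you must invoke $H^1(k,\widetilde{G})=1$ (Kneser--Bruhat--Tits for simply connected groups over local fields), which you do not mention. Your subsequent passage from $[G:\widetilde{p}(\widetilde{G})]=\infty$ to $[G:G^+]=\infty$ via ``$G^+$ has no proper finite-index subgroups'' is a non-sequitur: that fact says nothing about $[G^+:\widetilde{p}(\widetilde{G})]$. What you actually need is the \emph{equality} $\widetilde{p}(\widetilde{G})=G^+$, which the paper takes from \cite[I.1.5.5]{Ma} and which holds because unipotents in $G$ lift to unipotents in $\widetilde{G}$ along any central isogeny. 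The more serious gap is $(3)\Leftrightarrow(4)$: topological finite generation of compact open subgroups in positive characteristic is precisely the substantive content of Barnea--Larsen, and their proof goes through Pink's classification of compact Zariski-dense subgroups, not through commutator relations among root groups. Since $(\OO,+)$ is not topologically finitely generated in characteristic $p$, one needs a structural input saying that a compact open subgroup is, up to commensurability, governed by data over a finite extension of the prime field --- and that is exactly what Pink supplies. Your sketch does not reach this.
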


The work of Barnea and Larsen focuses on property (\ref{item:finitely generated compact open}). Some of the equivalences between (\ref{item:G happy}), (\ref{item:map is open}) and (\ref{item:quotient is finite}) are  discussed already e.g. in  \cite{borel1973homomorphismes}.

\begin{proof}
The Archimedean factor $G_\text{c}$ of $G$ is happy by definition. The map $\widetilde{p}_\text{c} : \widetilde{G}_\text{c} \to G_\text{c}$ is a local diffeomorphism. The subgroup $G^+_\text{c}$ is equal to the connected component $G_\text{c,0}$ at the identity of $G_\text{c}$ and $G_\text{c}/G_\text{c,0}$ is  finite. Therefore (\ref{item:G happy}), (\ref{item:map is open}) and (\ref{item:quotient is finite}) all  hold automatically for the Archimedean factor.

Recall that $\widetilde{p}(\widetilde G) = G^+$ \cite[I.1.5.5]{Ma}. The equivalence of  (\ref{item:G happy}), (\ref{item:map is open}) and (\ref{item:quotient is finite}) for the non-Archimedean factor is established in \cite[4.2]{barnea}, and this is equivalent to (\ref{item:finitely generated compact open}) by \cite[4.5]{barnea}. The fact that $G/G^+$ is abelian is contained in \cite[I.1.5]{Ma}.
\end{proof}

We remark that \cite{barnea} relies on Pink's theorem \cite{pink} discussed below.

\subsection{Pink's theorem} 

We discuss a deep result of Pink, which is in a sense a generalization to positive characteristic of certain results due to Weyl in the real case and Chevalley in the $p$-adic case on compact subgroups of algebraic groups.

Let $k$ be a local field\footnote{The local field $k$ in Pink's theorem is not assumed to be non-Archimedean.} and $G$ a simple analytic group over $k$. The following notation will be useful in making a concise statement of Pink's results.

%Let $\widetilde{G}$ denote the universal covering group of $G$.  
%For every subgroup $\Delta \le G$ let $\widetilde{\Delta}$ denote the closure of $\widetilde{\left[\Delta,\Delta \right]}$ in $\widetilde{G}$. 

%Moreover for every $k$-automorphism $\varphi$ of $G$ we let $\widetilde{\varphi}$ denote the corresponding $k$-automorphism of $\widetilde{G}$ \cite[1.9]{pink}.

\begin{definition}
\label{def:trace field}
Let $\rho$ be a $k$-linear representation of $G$. For every subgroup $\Gamma$ of $G$ the \emph{trace field} $\mathcal{TF}_\rho(\Gamma)$ is the closed subfield of $k$ generated by the traces $\mathrm{tr} \rho(g)$ for all $g \in \Gamma$.
\end{definition}

We recall two notions used below. First, let $\mathcal{R}_{k/l}$ denote Weil's restriction of scalars functor from  $k$ to its subfield $l$.  Next, recall from \cite[\S 1]{pink} that the commutator on $G$ factors through a generalized commutator map $\widetilde{\left[\, , \,\right]} :G \times G \to \widetilde{G}$.
%
%\begin{definition}
%Let $l \subset k$ be a subfield with $\left[k:l\right]<\infty$. A subgroup $\Gamma$ of $G$ is \emph{$l$-Zariski dense} if the image of $\Gamma$ in $\mathcal{R}_{k/l}(G)$ is Zariski dense.
%\end{definition}

%Say that $G$ is \emph{standard}\footnote{This term is used in the same sense in Margulis' book \cite[VIII.3.15, p. 282]{Ma}.} if it admits no non-standard isogenies  \cite[\S 1]{pink}.  
%
%The simple analytic group $G$ is standard unless $\mathrm{char}(k) = 2$ and $G$ is of type $B_n, C_n$ or $F_4$ or $\mathrm{char}(k) = 3$ and $G$ is of type $G_2$. A semisimple analytic group is standard if all of its simple factors are.

\begin{theorem}[Pink]
\label{thm:pink}
Assume that $G$ is adjoint, absolutely simple\footnote{$G$ is \emph{absolutely simple} if it is simple over the algebraic closure $k_\text{alg}$ of $k$.} and has no non-standard isogenies\footnote{$G$ has no non-standard isogenies unless $\mathrm{char}(p) = 2$ and $G$ is of type $B_n, C_n$ or $F_4$ or $\mathrm{char}(p) = 3$ and $G$ is of type $G_2$. See \cite[\S1]{pink} for more details.}. 

Then $G$ admits a $k$-linear representation $\rho$ such that for every compact Zariski dense subgroup $\Gamma$  the field $k$ is a finite extension of the trace field $\mathcal{TF}_\rho(\Gamma)$. 

If   the closure of  $\widetilde{\left[\Gamma,\Gamma\right]}$ in $\widetilde{G}$ is  moreover Zariski dense in $\mathcal{R}_{k/\mathcal{TF}_\rho(\Gamma)}(\widetilde{G})$ then it  is compact and open in $\widetilde{G}$.
\end{theorem}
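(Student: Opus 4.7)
The plan is to take $\rho$ to be the adjoint representation $\mathrm{Ad}\colon G\to\mathrm{GL}(\mathfrak{g})$ on the Lie algebra $\mathfrak{g}=\mathrm{Lie}(G)$. Under the hypotheses that $G$ is adjoint, absolutely simple and admits no non-standard isogenies, $\mathrm{Ad}$ is absolutely irreducible and separable and the trace form $\kappa(X,Y)=\mathrm{tr}(\mathrm{Ad}(X)\mathrm{Ad}(Y))$ is non-degenerate; the excluded characteristic/type combinations in the theorem are precisely those in which one of these two properties fails, so the hypotheses are tight for this choice of $\rho$.

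For the first assertion, fix a compact Zariski-dense subgroup $\Gamma\le G$ and let $L=\mathcal{TF}_\rho(\Gamma)$, a closed subfield of $k$. I would pass to the unital $L$-subalgebra $A\subset\mathrm{End}_k(\mathfrak{g})$ generated by $\mathrm{Ad}(\Gamma)$. The absolute irreducibility of $\mathrm{Ad}$ together with the Zariski density of $\Gamma$ makes the natural $k$-algebra map $A\otimes_L k\to\mathrm{End}_k(\mathfrak{g})$ surjective, while the fact that all traces of elements of $A$ lie in $L$, combined with the non-degeneracy of the trace form on $\mathrm{End}_k(\mathfrak{g})$, makes the trace pairing non-degenerate on $A$ itself. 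Together these force $A$ to be an $L$-form of the central simple $k$-algebra $\mathrm{End}_k(\mathfrak{g})$, so the double-centraliser apparatus applied to $A\subset\mathrm{End}_k(\mathfrak{g})$ identifies $[k:L]$ with $[Z(A):L]$, which is finite because $A$ is a finite-dimensional $L$-algebra. Hence $[k:L]<\infty$.

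For the second assertion, let $H$ be the closure of $\widetilde{[\Gamma,\Gamma]}$ in $\widetilde G$; since the generalised commutator $\widetilde{[\,,\,]}\colon G\times G\to\widetilde G$ is a $k$-morphism of varieties and $\Gamma$ is compact, $H$ is compact. Using the canonical identification $\mathcal{R}_{k/L}(\widetilde G)(L)=\widetilde G(k)$, I would view $H$ as a compact $L$-analytic subgroup of the $L$-algebraic group $\mathcal{R}_{k/L}(\widetilde G)$, in which it is Zariski dense by hypothesis. Passing to a pro-$p$ congruence subgroup of $H$ on which a Campbell--Hausdorff series converges (after fixing a smooth $\mathcal{O}_L$-model of $\mathcal{R}_{k/L}(\widetilde G)$ and reducing modulo a sufficiently high power of the uniformiser) attaches to $H$ an $L$-Lie subalgebra $\mathfrak{h}\subset\mathrm{Lie}_L(\mathcal{R}_{k/L}(\widetilde G))=\mathrm{Lie}_k(\widetilde G)$. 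Zariski density of $H$ translates into Zariski density of $\mathfrak{h}$ as an $L$-subalgebra, which in the present semisimple setting (and with the type/characteristic hypotheses imposed) forces $\mathfrak{h}$ to equal the whole ambient $L$-Lie algebra; consequently $H$ is open in $\widetilde G$.

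The main obstacle will be the construction of the $L$-Lie algebra $\mathfrak{h}$ attached to a compact subgroup in positive characteristic. The ordinary logarithm series need not converge, there is no clean exponential correspondence between a subgroup and a Lie subalgebra, and simple algebraic groups in small characteristic can carry proper Lie ideals, so Zariski density of a group does not by itself imply Lie-algebraic density. Pink's resolution, which this proposal would imitate, is to replace the logarithm by the generalised commutator map $\widetilde{[\,,\,]}$ (which is defined over $k$ and well-behaved in every characteristic) together with the reduction modulo powers of a uniformiser of a Chevalley model of $\widetilde G$; the type and characteristic restrictions in the theorem are exactly those in which this substitute for the exponential map does what is needed, and sorting out its compatibility with the Weil restriction $\mathcal{R}_{k/L}$ is the technical heart of the argument.
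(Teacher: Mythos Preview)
Your proposal contains a genuine error at its very foundation. You claim that under the hypotheses of the theorem (adjoint, absolutely simple, no non-standard isogenies) the adjoint representation $\mathrm{Ad}$ is absolutely irreducible, and that the excluded characteristic/type combinations are ``precisely those in which one of these two properties fails.'' This is false. Non-standard isogenies occur only for types $B_n, C_n, F_4$ in characteristic $2$ and $G_2$ in characteristic $3$, whereas the adjoint representation can be reducible in many other situations: for instance $\mathrm{PGL}_n$ (type $A_{n-1}$) in characteristic $p$ dividing $n$ has a reducible adjoint representation but admits no non-standard isogenies. The paper itself makes exactly this point in the remark following Proposition~\ref{prop:trace field of finite index subgroups}: ``One of the main difficulties in Pink's theorem is the possibility that the adjoint representation of $G$ might not be irreducible in positive characteristic. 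There is a simpler proof provided that $\mathrm{Ad}$ is irreducible.'' So your choice $\rho=\mathrm{Ad}$ and the entire first-paragraph argument built on its irreducibility and on non-degeneracy of the trace form do not go through in the stated generality.

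For the second assertion, you acknowledge that the Lie-algebra/Campbell--Hausdorff approach is the main obstacle in positive characteristic, but your sketch does not actually overcome it: you invoke convergence of a Campbell--Hausdorff series on a pro-$p$ congruence subgroup, which is precisely what fails in characteristic $p$. Saying that Pink's substitute is the generalised commutator map is correct in spirit, but the work of making that substitute do the job of an exponential correspondence is the content of Pink's paper, not something your outline supplies.

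By contrast, the paper's own proof does not attempt to reprove Pink's theorem from scratch. It simply invokes the Main Theorem~0.2 of \cite{pink}, which for any compact Zariski-dense $\Gamma$ produces a closed subfield $l\subset k$, an $l$-form $H$ of $G$, an isogeny $\varphi:H\otimes k\to G$, and a compact open $O\subset\widetilde{H}(l)$ whose image is the closure of $\widetilde{[\Gamma,\Gamma]}$. The ``no non-standard isogenies'' hypothesis is used only to force $\varphi$ to be an isomorphism. The representation $\rho$ and the identification $l=\mathcal{TF}_\rho(\Gamma)$ are taken from \cite[0.6(a)]{pink}, and the second assertion follows because Zariski density in $\mathcal{R}_{k/l}(\widetilde{G})$ forces $l=k$ \cite[2.4]{pink2004weil}, whence $O$ is already compact open in $\widetilde{G}$. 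In short, the paper is packaging Pink's results into the form needed, not giving an independent proof; your attempt to give one is more ambitious but breaks down on the irreducibility claim.
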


The notation $H \otimes k$ below stands for $H$ regarded as a $k$-linear group.

\begin{proof}
%Recall that $\overline{p}$ is the central $k$-isogeny from $G$ to its adjoint group $\overline{G}$ and denote $\overline{\Gamma} = \overline{p}(\Gamma)$. Clearly $\overline{\Gamma}$ is compact and Zariski dense in $\overline{G}$. 
The Main Theorem 0.2 of \cite{pink} applies to any compact Zariski dense subgroup $\Gamma$ of $G$. 
It provides
a closed subfield $l \subset k$ such that $k/l$ is a finite extension, an $l$-linear adjoint absolutely simple group $H$ and a $k$-isogeny $\varphi : H \otimes k \to G$. Since $G$ admits no non-standard isogenies the map $\varphi$ is in fact a $k$-isomorphism \cite[1.7]{pink}.
Let $\widetilde{H}$ and $\widetilde{\varphi}$ denote the universal cover of $H$ and the corresponding $k$-isomorphism $\widetilde{\varphi} : \widetilde{H} \otimes k \to \widetilde{G}$, respectively.  Moreover there is a compact open subgroup $O \subset \widetilde{H}(l)$ so that $\widetilde{\varphi}(O)$ is equal to the closure of $\widetilde{ \left[\Gamma,\Gamma \right]}$ in $\widetilde{G}$.

The existence of a suitable $k$-representation $\rho$ of $G$ independent of $\Gamma$ and the fact that $l$ is equal to $\mathcal{TF}_{\rho}(\Gamma)$ follow from \cite[0.6(a)]{pink}. We are using the fact that $\tilde\varphi$ is  a $k$-isomorphism  in our case \cite[p. 503]{pink}.
%Taking $\rho = \overline{\rho} \circ p$ establishes the first part of the statement.

Assume moreover that $\Gamma$ regarded as a subgroup of the group of $l$-points of $\mathcal{R}_{k/l}(\widetilde{G})$ is Zariski-dense. This implies that  $l$ is in fact equal to $k$ \cite[2.4]{pink2004weil}. 
%and that $\varphi$ is a $k$-isomorphism \cite[2.5]{pink2004weil}. 
Therefore $\widetilde{\varphi}(O)$ is a compact open subgroup of $\widetilde{G}$.
\end{proof}

In particular, if the adjoint group $G$ is happy then the second conclusion of Theorem \ref{thm:pink} implies that $\Gamma$ itself is open in $G$. We will not be using this observation.

\begin{prop}
\label{prop:trace field of finite index subgroups}
Assume that $G$ is adjoint and absolutely simple. Let $\Gamma$ be a closed subgroup  and $V \subset U$ a pair of compact open subgroups  in $G$. If $\Gamma \cap V$ is Zariski dense then $\mathcal{TF}_\rho(\Gamma \cap U) = \mathcal{TF}_\rho(\Gamma \cap V)$ where $\rho$ is  as in Theorem \ref{thm:pink}. 
\end{prop}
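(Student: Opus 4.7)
The forward inclusion $\mathcal{TF}_\rho(\Gamma \cap V) \subseteq \mathcal{TF}_\rho(\Gamma \cap U)$ is immediate from $\Gamma \cap V \subseteq \Gamma \cap U$. The substantive content is the reverse inclusion. The decisive first observation is topological: since $V$ is open in $U$, the subgroup $\Gamma \cap V$ is open, and therefore of finite index, in the compact group $\Gamma \cap U$. In particular $\Gamma \cap U$ is itself compact and Zariski dense in $G$, so Pink's Theorem~\ref{thm:pink} applies to both. The proposition thus reduces to the more general assertion that two compact Zariski dense subgroups, one of finite index in the other, have the same trace field.

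Write $l_V := \mathcal{TF}_\rho(\Gamma \cap V) \subseteq l_U := \mathcal{TF}_\rho(\Gamma \cap U)$ and apply Pink to $\Gamma \cap V$ to obtain an $l_V$-form $H_V$ of $G$, a $k$-isomorphism $\varphi_V : H_V \otimes_{l_V} k \to G$, and a compact open subgroup $O_V \subseteq \widetilde{H}_V(l_V)$ with $\widetilde{\varphi}_V(O_V) = \overline{\widetilde{[\Gamma \cap V, \Gamma \cap V]}}$ in $\widetilde{G}$. The plan is to argue $\Gamma \cap U \subseteq \widetilde{\varphi}_V(\widetilde{H}_V(l_V))$ modulo the central kernel, after which $\rho|_{\Gamma \cap U}$ takes values with $l_V$-rational traces (using that $\rho$ is defined over $l_V$ on the $l_V$-form) and $l_U \subseteq l_V$ follows. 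To produce this containment, I would pass to the core $N \trianglelefteq \Gamma \cap U$, a normal finite-index subgroup contained in $\Gamma \cap V$ that remains compact and Zariski dense in $G$. Since $N$ is normal in $\Gamma \cap U$, the conjugation action of $\Gamma \cap U$ normalizes $\overline{\widetilde{[N,N]}}$, which is a compact subgroup of $\widetilde{\varphi}_V(O_V) \subseteq \widetilde{\varphi}_V(\widetilde{H}_V(l_V))$. I would then invoke the uniqueness of Pink's $l_V$-form inside $\widetilde{G}$, in the guise of the Weil restriction criterion $\mathcal{R}_{k/l_V}(\widetilde{G})$ already used through \cite[2.4]{pink2004weil} in the proof of Theorem~\ref{thm:pink}, to force $\Gamma \cap U$ itself into $\widetilde{\varphi}_V(\widetilde{H}_V(l_V))$.

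The principal obstacle is precisely this rigidity step: identifying the normalizer in $\widetilde{G}$ of the compact open subgroup $\widetilde{\varphi}_V(O_V)$ with the $l_V$-form $\widetilde{\varphi}_V(\widetilde{H}_V(l_V))$. In characteristic zero this would be essentially a Lie-algebraic observation, controlling the $l_V$-structure through its tangent space and using separability. In positive characteristic neither tool is available and one genuinely needs Pink's Weil restriction machinery to prevent a larger normalizer — this is where the deep part of \cite{pink} enters, and once it is in place the trace-field equality is essentially a formal consequence.
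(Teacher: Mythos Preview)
Your outline is correct in spirit and isolates the right difficulty, but the paper's proof is far more economical: it is a one-line citation of the uniqueness clause in Pink's Main Theorem \cite[0.2.(b)]{pink}. That clause asserts that the data $(l,H,\varphi)$ attached to a compact Zariski-dense subgroup is uniquely determined; since by \cite[0.6(a)]{pink} the trace field coincides with this $l$, the equality $\mathcal{TF}_\rho(\Gamma\cap U)=\mathcal{TF}_\rho(\Gamma\cap V)$ follows once one notes that the commensurable subgroups $\Gamma\cap V$ and $\Gamma\cap U$ must share the same Pink data.

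What you propose --- extracting the $l_V$-form from $\Gamma\cap V$, passing to the normal core $N\trianglelefteq\Gamma\cap U$, and then controlling the normalizer of $\widetilde{\varphi}_V(O_V)$ inside $\widetilde{G}$ via Weil-restriction rigidity --- is essentially an attempt to re-derive by hand the relevant consequence of Pink's uniqueness. The ``principal obstacle'' you correctly flag (identifying that normalizer with the $l_V$-points of the form) is precisely the content that \cite[0.2.(b)]{pink} already packages. Invoking it directly short-circuits the whole argument. So the two routes coincide at bottom; yours simply unpacks machinery that can be cited as a black box.
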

\begin{proof}
This follows from the uniqueness established in \cite[0.2.(b)]{pink}.
\end{proof}

\begin{remark}
One of the main difficulties  in Pink's theorem is the possibility  that the adjoint representation of $G$ might not be irreducible in positive characteristic. There is a simpler proof provided that $\mathrm{Ad}$ is irreducible, see \cite[0.7]{pink}.

The same problem arises when proving discreteness for invariant random subgroups  in our Theorem \ref{thm:borel density for IRS}, and this was part of the motivation in considering Zariski closure instead of Lie algebras in \S\ref{sec:zariski closure and lie algebras} below.
\end{remark}

\subsection{Maximal compact open subgroups}

If $G$ is a simply connected simple analytic group then  a maximal compact open subgroup of $G$ is maximal. This result is  called Tits' theorem  \cite{prasad1982elementary}. The following is true in the general semisimple case.

\begin{theorem}
%Let $G$ be as in Theorem \ref{thm:G is isolated} and 
Let $G$ be a  semisimple analytic group and $U $ an open subgroup in $G$.  Then there is a normal subgroup $N \nrm G$ so that $N \le U$ and $U/N$ is compact.
\label{thm:maximal compact open is maximal}
\end{theorem}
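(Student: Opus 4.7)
The plan is to reduce, via the universal covering, to a direct product of simply connected simple analytic factors, where Tits' theorem can be applied factor by factor.

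As a first step I pass to the universal cover $\widetilde{p}\colon\widetilde{G}\to G^+$, set $U^+=U\cap G^+$, and let $\widetilde{U}=\widetilde{p}^{-1}(U^+)$, which is open in $\widetilde{G}$. Since $UG^+/G^+$ is an open subgroup of the compact abelian group $G/G^+$, the quotient $U/U^+$ is already compact. Hence it suffices to exhibit $\widetilde{N}\nrm\widetilde{G}$ with $\widetilde{N}\le\widetilde{U}$ and $\widetilde{U}/\widetilde{N}$ compact: the image $N:=\widetilde{p}(\widetilde{N})$ is normal in $G$ because each factor of $\widetilde{N}$ maps onto some $G_i^+$, which is characteristic in the normal almost direct factor $G_i$; it is contained in $U^+\subseteq U$; and $U/N$ becomes a compact extension of the compact $U/U^+$ by the continuous compact image $U^+/N$ of $\widetilde{U}/\widetilde{N}$.

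Next I decompose $\widetilde{G}=\prod_{i=1}^{n}\widetilde{G}_i$ into simply connected simple analytic factors and write $V_i=\pi_i(\widetilde{U})$ and $H_i=\widetilde{U}\cap\widetilde{G}_i$. Each $V_i$ is an open subgroup of $\widetilde{G}_i$. When $\widetilde{G}_i$ is Archimedean it is connected, so $V_i=\widetilde{G}_i$; when $\widetilde{G}_i$ is non-Archimedean, Tits' theorem implies that every proper open subgroup of $\widetilde{G}_i$ is contained in a maximal compact open subgroup and is therefore compact, so $V_i$ is either $\widetilde{G}_i$ or compact. For each index $i$ with $V_i=\widetilde{G}_i$ the intersection $H_i$ is open in $\widetilde{G}_i$ and is normalised by $V_i=\widetilde{G}_i$, hence is a closed normal subgroup of the simply connected simple analytic group $\widetilde{G}_i$; such a subgroup is either contained in the finite centre or equal to $\widetilde{G}_i$, and since $H_i$ is open and therefore not discrete we conclude $H_i=\widetilde{G}_i$. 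Letting $J=\{i:V_i=\widetilde{G}_i\}$, the subgroup $\widetilde{N}:=\prod_{i\in J}\widetilde{G}_i$ is then normal in $\widetilde{G}$ and contained in $\widetilde{U}$, and $\widetilde{U}/\widetilde{N}$ embeds into the compact group $\prod_{i\notin J}V_i$, hence is compact.

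The main technical hurdle is the upgrade of Tits' theorem from its stated form (maximal compact open subgroups are maximal proper subgroups) to the consequence used above, namely that every proper open subgroup of a simply connected simple non-Archimedean analytic group is compact. This upgrade is a standard consequence of Bruhat--Tits building theory combined with Tits' theorem: an open subgroup is either bounded, hence fixes a facet by the Bruhat--Tits fixed point theorem and is therefore contained in a parahoric, or it is unbounded, in which case iterating its action on the building combined with Tits' theorem forces it to exhaust $\widetilde{G}_i$.
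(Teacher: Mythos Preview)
Your argument is correct and takes a genuinely different route from the paper.  The paper's proof is three lines: since $U$ is open, $G/U$ is discrete, so the discrete metric on $G/U$ is $G$-invariant, and the conclusion is then read off directly from Theorem~6.1 of \cite{equi}.  Your argument instead climbs to the universal cover, uses the direct-product decomposition $\widetilde{G}=\prod_i\widetilde{G}_i$, and applies Tits' theorem factor by factor.  The intermediate reductions (compactness of $G/G^+$ giving compactness of $U/U^+$; normality of each $G_i^+$ in $G$ so that $N=\widetilde{p}(\widetilde N)$ is normal; the Goursat-type step showing $H_i=\widetilde{G}_i$ whenever $V_i=\widetilde{G}_i$) are all handled correctly.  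What your approach buys is that it stays entirely inside the structure theory of semisimple analytic groups, whereas the paper outsources the work to an external black box about isometric actions; what the paper's approach buys is brevity and the avoidance of the Tits-theorem upgrade.

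The one soft spot is exactly the place you flag.  The passage from ``maximal compact open subgroups are maximal'' to ``every proper open subgroup is compact'' is not automatic: your open $V$ contains a small congruence subgroup but need not contain any Iwahori, so there is no maximal compact $K\subset V$ to which Tits' theorem applies directly, and the phrase ``iterating its action on the building'' is not yet an argument.  The claim is true and standard; a clean justification is via Howe--Moore applied to $\ell^2(\widetilde{G}_i/V)$: if $V$ is non-compact the matrix coefficient $g\mapsto\langle\delta_{eV},g\cdot\delta_{eV}\rangle=\mathbf{1}_V(g)$ does not vanish at infinity, forcing $\widetilde{G}_i/V$ to be finite, whence $V=\widetilde{G}_i$ since $\widetilde{G}_i=\widetilde{G}_i^{+}$ has no proper finite-index subgroups.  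Either supply this, or cite an appropriate reference, and the proof is complete.
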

\begin{proof}
	Since $U$ is open the quotient $G / U$ is discrete. Therefore the discrete metric on $G / U$ is $G$-invariant. The result now follows directly from Theorem 6.1 of \cite{equi}.
\end{proof}

As an  immediate corollary of Theorems \ref{thm:properties of happy groups} and \ref{thm:maximal compact open is maximal} and Proposition \ref{prop:normal subgroups of $G$}   we obtain the following fact.

\begin{cor}
\label{cor:finitely many supergroups}
Let $G$ be a happy semisimple analytic group and $K$ a maximal compact subgroup of $G^+$. Then there are only finitely many intermediate subgroups $H$ satisfying $K \le H \le G$.
\end{cor}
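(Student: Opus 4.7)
The plan is to take an arbitrary intermediate subgroup $K \le H \le G$ and progressively reduce to a compact-subgroup counting problem. First observe that $K$, and hence $H$, is open in $G$: by Theorem \ref{thm:properties of happy groups} the quotient $G/G^+$ is finite, so $G^+$ is open in $G$, and $K$ is compact open in the non-Archimedean part of $G^+$.

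Next I would apply Theorem \ref{thm:maximal compact open is maximal} to the open subgroup $H$ to produce a normal subgroup $N \nrm G$ with $N \le H$ and $H/N$ compact, and then Proposition \ref{prop:normal subgroups of $G$} to $N$ to obtain an almost direct factor $M$ of $G$ with $M^+ \le N$ and $NM/M$ central in $G/M$. Since $G$ has only finitely many almost $k$-simple factors, only finitely many $M$ arise, so I would fix one and count the $H$'s that lead to it.

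The key happiness-based observation is that $H/M^+$ is compact. Indeed $G/M$ is itself a happy semisimple analytic group and hence has finite center, while $M/M^+$ is finite by Theorem \ref{thm:properties of happy groups}(\ref{item:quotient is finite}); combining these with $NM/M \le Z(G/M)$ shows $N/M^+$ is finite, and so $H/M^+$ is compact. Setting $\bar G := G/M^+$, $\bar H := H/M^+$ and $\bar K := KM^+/M^+$, the image $\bar K$ is a maximal compact subgroup of $\bar G^+ := G^+/M^+$ (via the central isogeny from the complementary factor $M'^+$, whose finite kernel lies in $K$ thanks to happiness). Then $\bar H \cap \bar G^+$ is a compact subgroup of $\bar G^+$ containing the maximal compact $\bar K$, hence equals $\bar K$, and consequently $\bar H/\bar K$ injects into the finite group $\bar G/\bar G^+ = G/G^+$.

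What remains, and what I foresee as the main obstacle, is to show that $\bar H$ itself has only finitely many possible values. Since $\bar K = \bar H \cap \bar G^+$ is normal in $\bar H$, we have $\bar H \le N_{\bar G}(\bar K)$, so it suffices to show $N_{\bar G}(\bar K)/\bar K$ is finite. This reduces, through the finite quotient $\bar G/\bar G^+$, to the self-normalizing property $N_{\bar G^+}(\bar K) = \bar K$ for the maximal compact $\bar K$ in the happy semisimple analytic group $\bar G^+$, which follows from Bruhat--Tits theory — in the simply-connected cover maximal compacts are exactly the stabilizers of vertices of the building and are therefore self-normalizing, a property that descends through the happy central isogeny. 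Granting this, each choice of $M$ yields only finitely many $\bar H$, and so only finitely many $H$, completing the count.
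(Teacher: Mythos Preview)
Your argument is essentially correct in the non-Archimedean case and follows exactly the route the paper indicates (it cites Theorems~\ref{thm:properties of happy groups}, \ref{thm:maximal compact open is maximal} and Proposition~\ref{prop:normal subgroups of $G$} and gives no further details). Two remarks are in order.

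\textbf{The Archimedean factor.} Your first step asserts that $K$, hence $H$, is open in $G$. This is only true when $G$ is non-Archimedean: if $G$ has a nontrivial Archimedean factor $C$ then a maximal compact of $C_0=C^+$ is not open, and Theorem~\ref{thm:maximal compact open is maximal} does not apply to $H$. The paper's one-line justification has the same implicit restriction, and indeed the corollary is only invoked in the non-Archimedean Theorem~\ref{thm:seclusion of non-Archimedean semisimple}; but as literally stated your opening move does not cover the general case. (A separate Lie-theoretic argument handles the Archimedean part.)

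\textbf{Self-normalization without Bruhat--Tits.} Your final step appeals to Bruhat--Tits theory for $N_{\bar G^+}(\bar K)=\bar K$. This is correct, but it can be obtained from a second pass through the very ingredients the paper cites, which is presumably why the paper calls the corollary ``immediate''. Namely, the preimage of $N_{\bar G^+}(\bar K)$ in $G$ is open, so Theorem~\ref{thm:maximal compact open is maximal} produces $N'\nrm G$ inside it with compact quotient; Proposition~\ref{prop:normal subgroups of $G$} then gives a factor $M_1$ with $M_1^+\le N'$. If some simple factor $S$ of $M_1$ were not contained in $M$, then $S^+$ would normalize $KM^+$, forcing $K\cap S^+$ to be a compact subgroup of $S$ normalized by $S^+$, hence central by Proposition~\ref{prop:normal subgroups of $G$} --- contradicting that it is a maximal compact of $S^+$. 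Thus $M_1^+\le M^+$, and happiness (finiteness of $N'/M_1^+$ as in your step~5) makes $N_{\bar G^+}(\bar K)/\bar K$ finite. No building geometry is required.

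Minor point: finiteness of the center of $G/M$ and of the kernel $M^+\cap (M')^+$ hold for any semisimple analytic group; happiness is not needed at those spots.
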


%\begin{prop}
%\label{prop:a non-Archimedean semisimple analytic group has countably many compact open subgroups}
%An happy non-Archimedean semisimple analytic group has only countably many compact open subgroups.
%\end{prop}
%\begin{proof}
%TODO
%\end{proof}

\subsection{Automorphisms of semisimple analytic groups}

Let $G$ be  a semisimple analytic group. The following facts regarding topological automorphisms of $G$ is well known. However, we are not aware of a specific reference in the literature.

Consider the group $\mathrm{Aut}(G)$ of the topological group automorphisms of $G$  with the Braconnier topology \cite{braconnier1945groupes} and the group of outer automorphisms $\mathrm{Out}(G) = \mathrm{Aut}(G) / \mathrm{Inn}(G)$ with the quotient topology.

\begin{prop}\label{prop:Out(G)-compact}
The group of outer automorphisms $\mathrm{Out}(G)$ is compact.
\end{prop}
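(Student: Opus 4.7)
The plan is to reduce to the almost simple case and then use the Borel--Tits theorem on abstract isomorphisms together with a structural analysis of the group of continuous automorphisms of the ground field.

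First I would reduce to the almost $k$-simple case. Writing $G$ as an almost direct product of simple analytic factors $G_1,\dots,G_n$, each $G_i$ is recovered intrinsically as a minimal closed connected non-central normal subgroup of $G$, so any $\varphi\in\mathrm{Aut}(G)$ permutes them. This gives a continuous homomorphism $\mathrm{Aut}(G)\to\mathrm{Sym}(n)$ with finite image, and the kernel (an open subgroup of $\mathrm{Aut}(G)$) maps continuously into $\prod_i \mathrm{Aut}(G_i)$ with central kernel. Passing to the quotient by $\mathrm{Inn}(G)$, it suffices to prove that $\mathrm{Out}(G_i)$ is compact for each simple analytic factor.

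Second, for a simple analytic group $G=\GG(k)$ I would invoke the Borel--Tits rigidity theorem for abstract homomorphisms (cf.\ \cite[I.1.8.2]{Ma}): every continuous automorphism of $G$ factors as $\varphi=\alpha_{\ast}\circ\sigma_{\ast}$, where $\sigma\colon k\to k$ is a continuous field automorphism and $\alpha\in\mathrm{Aut}(\GG)(k)$ is a $k$-algebraic automorphism. Since $\mathrm{Aut}(\GG)=\mathrm{Int}(\GG)\rtimes D$ with $D$ the finite group of diagram automorphisms, and since $\mathrm{Aut}(\GG)(k)/\mathrm{Inn}(G)$ is controlled by $D$ together with the finite Galois cohomology $H^{1}(k,Z(\GG))$, one obtains a continuous surjection
$$ F \times \mathrm{Aut}_{\text{cont}}(k) \twoheadrightarrow \mathrm{Out}(G) $$
with $F$ a finite group. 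Since the continuous image of a compact group is compact regardless of whether the target is Hausdorff, compactness of $\mathrm{Out}(G)$ reduces to compactness of the group $\mathrm{Aut}_{\text{cont}}(k)$ of continuous field automorphisms of $k$.

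Third, I would verify that $\mathrm{Aut}_{\text{cont}}(k)$ is always compact. This is trivial for $k\in\{\RR,\CC\}$ and for finite extensions of $\Qp$, where the group is finite. For $k$ a finite extension of $\Fqtt$, any continuous $\sigma$ preserves the unique maximal compact subring $\OO_k$ and its maximal ideal, inducing a residue-field automorphism, giving an exact sequence
$$ 1 \to K_0 \to \mathrm{Aut}_{\text{cont}}(k) \to \mathrm{Gal}(\kappa/\Fp) \to 1, $$
where the outer terms are finite and the kernel $K_0$ is parametrized, via $\sigma \mapsto \sigma(\pi)$, by the compact space $\pi\cdot \OO_{k}^{\times}$ of uniformizers. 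A standard Cauchy-product estimate shows this parametrization is a homeomorphism, so $\mathrm{Aut}_{\text{cont}}(k)$ is compact.

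The main obstacle will be the topological bookkeeping in the second step: in positive characteristic $\mathrm{Aut}_{\text{cont}}(k)$ is genuinely nontrivial (a pro-$p$ type object related to the Nottingham group), and one must check that the Borel--Tits decomposition $\varphi=\alpha_{\ast}\circ\sigma_{\ast}$ is continuous in the Braconnier topology, i.e.\ that $\sigma$-deformations of $\varphi$ inside $\mathrm{Aut}(G)$ correspond to deformations of $\sigma(\pi)$ inside $\pi\OO_{k}^{\times}$. This amounts to showing that the evaluation map from a compact neighborhood basis of $\mathrm{id}\in\mathrm{Aut}_{\text{cont}}(k)$ into $\mathrm{Aut}(\GG(k))$ is continuous, which reduces to uniform continuity of polynomial maps on the compact set $\GG(\OO_k)$.
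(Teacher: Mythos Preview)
Your approach is essentially the same as the paper's: reduce to simple factors, invoke Borel--Tits to decompose automorphisms into an algebraic part and a field-automorphism part, use that the algebraic outer automorphism group is finite, and conclude. One small slip: your intrinsic characterization of the $G_i$ as ``minimal closed \emph{connected} non-central normal subgroups'' fails when some factors are non-Archimedean and hence totally disconnected; you need a different invariant description (e.g.\ work with $G^+$ and its minimal closed normal subgroups, or pass to the simply connected cover). The paper avoids this by first reducing to the simply connected cover $\widetilde{G}$, where the almost direct product is an honest direct product and the factor decomposition is unambiguous; only then does it split into simple pieces. Conversely, you are more explicit than the paper about why $\mathrm{Aut}_{\mathrm{cont}}(k)$ is compact in positive characteristic, which the paper simply asserts.
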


\begin{proof}
We first show how to reduce the problem to the simply connected case. Recall that $\widetilde{G}$ is the universal covering group of $G$ and $p : \widetilde{G} \to G$ is the corresponding central $k$-isogeny with $\ker p$ being central in $\widetilde{G}$.  Since the center $Z(\widetilde{G})$ is a characteristic subgroup, in effect  $\mathrm{Aut}(\widetilde{G})$ is acting on the finite set $Z(\widetilde{G})$. Let $\mathrm{Aut}_p(\widetilde{G})$ denote the finite index  closed subgroup of $\mathrm{Aut}(\widetilde{G})$ consisting of these automorphisms fixing $\mathrm{ker}(p)$  point-wise. Clearly $\mathrm{Inn}(\widetilde{G}) \le \mathrm{Aut}_{p}(\widetilde{G}) \le \mathrm{Aut}(\widetilde{G})$.

The map $p$ induces a continuous and surjective map $p_* : \mathrm{Aut}_{p}(\widetilde{G}) \to \mathrm{Aut}(G)$. Observe that 
$$
 \mathrm{Inn}(\widetilde{G}) \cong \widetilde{G} / Z(\widetilde{G}), \quad\mathrm{Inn}(G) \cong G / Z(G) \quad \text{ and} \quad  Z(G) \cong Z(\widetilde{G}) / \mathrm{ker}(p)
$$
and $p_*$ restricts to a bijection from $\mathrm{Inn}(\widetilde{G})$ to $\mathrm{Inn}(G)$. In particular $p_*$ induces a surjective continuous map $p_* : \mathrm{Out}_p(\widetilde{G}) \to \mathrm{Out}(G)$. Since $\mathrm{Out}_p(\widetilde{G})$ is closed in $\mathrm{Out}(\widetilde{G})$ it suffices to show that $\mathrm{Out}(\widetilde{G})$ is compact.

The simply connected group  $\widetilde{G}$  is the direct product of its almost $k_i$-simple factors $\widetilde{G}_1, \ldots, \widetilde{G}_n$ defined over various local fields $k_i$ \cite[I.4.10]{Ma}.  $\mathrm{Out}(\widetilde{G})$ admits a finite index subgroup equal to $\prod_{i=1}^n\mathrm{Out}(\widetilde{G}_i)$  preserving each factor. 

The theory of abstract homomorphisms of isotropic algebraic groups \cite{borel1973homomorphismes}, \cite[I.1.8]{Ma} shows that every automorphism of the  topological group $\widetilde{G}_i$ is determined by an algebraic  $k_i$-automorphism and an automorphism of the local field $k_i$. Recall that the group of $k_i$-automorphisms of $\widetilde{G_i}$ is the semi-direct product of the group of inner $k_i$-automorphisms by a finite group \cite[1.5.6]{tits1966classification}. 
We conclude that  $\mathrm{Out}(\widetilde{G})$ is compact.
\end{proof}

\noindent
{\bf Remark:} {\it In zero characteristic the group $\mathrm{Out}(G)$ of outer automorphisms is actually finite.}

\begin{cor}
\label{prop:outer automorphism group preserves Haar measure}
Topological group automorphisms of $G$ preserve Haar measure.
\end{cor}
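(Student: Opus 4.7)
The plan is to exhibit the modular function on $\mathrm{Aut}(G)$ as a continuous homomorphism into $\mathbb{R}_{>0}$ that kills both the inner automorphisms and the outer quotient, forcing it to be trivial.

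First, I would fix a left Haar measure $\mu_G$ on $G$ and note that for any topological automorphism $\alpha \in \mathrm{Aut}(G)$ the pushforward $\alpha_* \mu_G$ is again a left Haar measure, hence equals $\Delta(\alpha) \mu_G$ for a uniquely determined scalar $\Delta(\alpha) \in \mathbb{R}_{>0}$. The assignment $\alpha \mapsto \Delta(\alpha)$ is a group homomorphism, and a standard argument using the Braconnier topology (uniform convergence on compact sets together with its inverse analogue) together with dominated convergence against a compactly supported continuous test function shows that $\Delta : \mathrm{Aut}(G) \to \mathbb{R}_{>0}$ is continuous.

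Next I would check that $\Delta$ vanishes on $\mathrm{Inn}(G)$. This is exactly the statement that $G$ is unimodular, which holds because $G$ is a semisimple analytic group: each almost simple factor $\GG(k)$ is unimodular (for instance, the modular character of an algebraic group is trivial on commutators, and $G^+$ equals its own commutator subgroup with $G/G^+$ compact abelian by Theorem \ref{thm:properties of happy groups} and Proposition \ref{prop:normal subgroups of $G$}, so the modular character must be trivial on all of $G$). Consequently $\Delta$ descends to a continuous homomorphism $\overline{\Delta} : \mathrm{Out}(G) \to \mathbb{R}_{>0}$.

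Finally, I would invoke Proposition \ref{prop:Out(G)-compact}: since $\mathrm{Out}(G)$ is compact, its image under $\overline{\Delta}$ is a compact subgroup of $\mathbb{R}_{>0}$, and hence trivial. Therefore $\Delta \equiv 1$ on all of $\mathrm{Aut}(G)$, which is precisely the claim. The only step requiring any care is the continuity of $\Delta$ in the Braconnier topology; everything else is a direct assembly of results already in the excerpt.
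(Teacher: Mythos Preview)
Your proposal is correct and follows essentially the same argument as the paper: define the modular function $\Delta$ on $\mathrm{Aut}(G)$, note its continuity in the Braconnier topology, use unimodularity of $G$ to kill $\mathrm{Inn}(G)$, and then invoke the compactness of $\mathrm{Out}(G)$ from Proposition~\ref{prop:Out(G)-compact} to conclude. The paper simply cites Braconnier for continuity and asserts unimodularity, whereas you sketch both; your justification of unimodularity is slightly roundabout (the fact that $G/G^+$ is compact abelian is stated in \S\ref{sub:G+ and happy} rather than in Theorem~\ref{thm:properties of happy groups}, and one could argue more directly that any continuous homomorphism from $G^+$ to $\mathbb{R}_{>0}$ is trivial), but the essential content is identical.
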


\begin{proof}
The modular function $\triangle : \mathrm{Aut}(G) \to \RR_{>0}^*$  is given by
$ \triangle\left(\alpha\right) \mu_G = \alpha_* \mu_G $
where $\mu_G$ is a fixed Haar measure on $G$. $\triangle$ is continuous with respect to the Braconnier topology \cite[IV.\S 3.]{braconnier1945groupes}.  Since $G$ is unimodular $\triangle(\mathrm{Inn}(G)) =1 $ and the result follows from Proposition \ref{prop:Out(G)-compact}.
%and it suffices to show that the group of outer automorphisms $\mathrm{Out}(G) = \mathrm{Aut}(G) / \mathrm{Inn}(G)$ is compact.
 \end{proof}

\section{The Chabauty topology and Benjamini--Schramm convergence}
\label{sec:Chabauty and BS convergence}

In this section we study the Chabauty topology, invariant random subgroups, spaces of quotients and Benjamini--Schramm convergence and explore the relationship between these various notions. We focus on comparing three closely related properties of lattice sequences --- being weakly trivial, IRS convergence to $\delta_\{e\}$  and Benjamini--Schramm convergence of the quotients.
 
\subsection{The Chabauty topology}

Let $G$ be any second countable locally compact group\footnote{From now on we write l.c.s.c. for locally compact second countable.}.

%\subsection{The Chabauty and Gromov--Hausdorff spaces }

\begin{definition}
\label{def:Chabauty topology}
The \emph{Chabauty space} $\Sub{G}$ of all closed subgroups of $G$ is equipped with the Chabauty topology generated by the following sub-basis sets
\begin{itemize}
\item $\mathcal{O}_1(K) = \{ \text{$H \le G$ closed } \: : \: H \cap K = \emptyset \} $ for every compact subset $K \subset G$, 
\item $\mathcal{O}_2(U) = \{ \text{$H \le G$ closed } \: : \: H \cap U \neq \emptyset \} $ for every open subset $U \subset G$.
\end{itemize}
\end{definition}

The space $\Sub{G}$ is compact and admits a continuous $G$-action by conjugation. 

\begin{definition}
\label{def:invariant random subgroup}
An \emph{invariant random subgroup} of $G$ is a Borel probability measure on $\Sub{G}$ which is invariant under conjugation by $G$. 
\end{definition}
Let $\IRS{G}$ denote the space of all invariant random subgroups of $G$ equipped with the weak-$*$ topology.  By Riesz' representation
theorem and Alaoglu's theorem, $\IRS{G}$ is a compact space.  %\begin{prop}[Biringer metric]
%\label{prop:metric on Chabauty space}
%\end{prop}

%\subsection{The Gromov--Hausdorff space of quotients and the Benjamini--Schramm space}

%\begin{definition}

\subsection{Weak triviality and weak uniform discreteness}

Let $G$ be any l.c.s.c. group. We study weak-$*$ convergence to $\delta_{\{e\}}$ in the space $\mathrm{IRS}(G)$ and relate this to the notion of weak uniform discreteness.

\begin{definition}
	\label{def: a Farber sequence of invariant random subgroups}
	Given $\mu \in \IRS{G}$ and a subset  $A \subset G$ denote
	$$ p_\mu(A) = \mathrm{Prob}_{\mu}(\{\text{$H \le G$ closed} \: : \: A \cap H  = \emptyset \} ) $$
	A sequence    of invariant random subgroups $\mu_n \in \mathrm{IRS}(G)$ is called
	\begin{itemize}
		\item \emph{weakly trivial}\footnote{Some authors use the terminology \emph{Farber} for a sequence of lattices $\Gamma_n$ so that $\mu_{\Gamma_n}$ is trivial in our sense.}  if $\lim_{n\to\infty} p_{\mu_n}(Q \setminus \{e\}) = 1$ for every compact subset $Q \subset G$,  and
%		\item \emph{weakly trivial} if $\lim_{n\to\infty} p_{\mu_n}(Q) = 1$ for every compact subset $Q \subset G$ such that $e \notin Q$, and
		%\item \emph{weakly central} if $\lim_{n\to\infty} p_{\mu_n}(Q) = 1$ for every relatively compact subset $Q \subset G$ with , and
		\item \emph{weakly uniformly discrete} if for every $\varepsilon > 0$ there is an identity neighborhood $U_\varepsilon \subset G$ so that $p_{\mu_n}(U_\varepsilon \setminus \{e\}) > 1 - \varepsilon$ for all $n \in \NN$.
	\end{itemize}
\end{definition}

The notion of weak uniform discreteness was introduced in \cite{gel_kaz} where it is used to provide a simple proof for a generalization of the Kazhdan--Margulis theorem. In fact all \emph{discrete} invariant random subgroups in zero characteristic semisimple analytic groups are weakly uniformly discrete \cite{gel_kaz}.

\begin{prop}
	\label{prop:WUD and compact subsets}
	A weakly trivial sequence of invariant random subgroups weak-$*$ converges to $\delta_{\{e\}} \in \IRS{G}$. The converse direction holds provided that  the sequence is weakly uniformly discrete.
\end{prop}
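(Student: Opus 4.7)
The plan is to reduce both directions to a single observation about the Chabauty topology: a neighborhood basis at $\{e\} \in \Sub{G}$ is given by sets of the form $\mathcal{O}_1(K)$, where $K \subset G$ is compact and $e \notin K$. This is because every closed subgroup contains $e$, so the sub-basis set $\mathcal{O}_2(U)$ with $U$ an open neighborhood of $e$ is all of $\Sub{G}$, hence vacuous; finite intersections of $\mathcal{O}_1(K_i)$'s can moreover be collapsed to a single $\mathcal{O}_1(K_1 \cup \cdots \cup K_r)$. Since $\mu_n(\mathcal{O}_1(K)) = p_{\mu_n}(K)$ by definition, the Portmanteau theorem translates weak-$*$ convergence $\mu_n \to \delta_{\{e\}}$ in $\IRS{G}$ into the assertion that $p_{\mu_n}(K) \to 1$ for every compact set $K \subset G$ avoiding $e$.

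For the forward implication I would apply weak triviality to the compact set $Q = K$: since $e \notin K$ one has $K \setminus \{e\} = K$, so $p_{\mu_n}(K) = p_{\mu_n}(K \setminus \{e\}) \to 1$, and the previous criterion delivers convergence to $\delta_{\{e\}}$. For the converse, fix $\varepsilon > 0$ and a compact set $Q \subset G$, and choose the identity neighborhood $U_\varepsilon$ provided by weak uniform discreteness, so that $p_{\mu_n}(U_\varepsilon \setminus \{e\}) > 1 - \varepsilon$ for every $n$. The inclusion
\[
Q \setminus \{e\} \;\subset\; (Q \setminus U_\varepsilon) \;\cup\; (U_\varepsilon \setminus \{e\})
\]
shows that a closed subgroup $H$ avoids $Q \setminus \{e\}$ precisely when it simultaneously avoids both $K := Q \setminus U_\varepsilon$ and $U_\varepsilon \setminus \{e\}$. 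The set $K$ is compact with $e \notin K$, so the forward direction already established gives $p_{\mu_n}(K) \to 1$; combined with the uniform lower bound on $p_{\mu_n}(U_\varepsilon \setminus \{e\})$ and the elementary inequality $\mu_n(A \cap B) \ge \mu_n(A) + \mu_n(B) - 1$, this yields $\liminf_n p_{\mu_n}(Q \setminus \{e\}) \ge 1 - \varepsilon$. Letting $\varepsilon \to 0$ completes weak triviality.

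The only point demanding care is a measurability check: the event $\{H : H \cap (U_\varepsilon \setminus \{e\}) = \emptyset\}$ must be Borel so that $p_{\mu_n}(U_\varepsilon \setminus \{e\})$ is well defined. This is immediate because $\{e\}$ is closed, hence $U_\varepsilon \setminus \{e\}$ is open, and the set of closed subgroups meeting an open set is Chabauty-open by definition of $\mathcal{O}_2$. No deeper obstacle arises; the proposition is essentially a bookkeeping exercise which reads the two conditions off the Chabauty neighborhood system at the trivial subgroup.
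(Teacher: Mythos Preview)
Your proof is correct and follows essentially the same route as the paper: both arguments first characterize weak-$*$ convergence to $\delta_{\{e\}}$ by the condition $p_{\mu_n}(K)\to 1$ for compact $K$ avoiding $e$ (via the Chabauty neighborhood basis at $\{e\}$), and then handle the converse by splitting $Q\setminus\{e\}$ into $Q\setminus U_\varepsilon$ and $U_\varepsilon\setminus\{e\}$ and applying the inclusion--exclusion bound $\mu_n(A\cap B)\ge \mu_n(A)+\mu_n(B)-1$. The only cosmetic differences are that the paper treats the cases $e\in Q$ and $e\notin Q$ separately while you handle them uniformly, and you add an explicit measurability remark.
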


\begin{proof} 
We claim that $\mu_n \xrightarrow{n\to\infty} \delta_{\{e\}}$ if and only if $\lim_{n\to\infty} p_{\mu_n}(Q) = 1$ for every compact subset $Q \subset G$ such that $e \notin Q$. By definition $\mu_n \xrightarrow{n\to\infty} \delta_{\{e\}}$ if and only if 
$$\liminf_{n \to \infty}\mu_n(\Omega) \ge \delta_{\{e\}}(\Omega) = 1_\Omega(\{e\})$$
for every Chabauty-open subset $\Omega \subset \Sub{G}$. It clearly suffices to consider only such $\Omega$ containing the point $\{e\}$. Observe that $\{e\} \in \mathcal{O}_1(Q) \subset \Omega$ for some compact $Q \subset G$ with $e \notin Q$. Since  $\mu_n(\mathcal{O}_1(Q)) = p_{\mu_n}(Q)$ the claim follows. 
In particular weak-$*$ convergence to $\delta_{\{e\}}$ implies weak triviality.

%	Suppose that $\mu_n$ is a sequence of IRS not converging to $\delta_{\{e\}}$. Up to restricting to a subsequence, we may assume that $\mu_n\to\mu\ne \delta_{\{e\}}$. 
%	Note that the sets of the form $\mathcal{O}_2(U)$ where $U\subset G\setminus\{1\}$ is open and relatively compact, covers $\Sub{G}\setminus\langle e\rangle$. Picking such $U$ for which $\mu(\mathcal{O}_2(U))>0$ and letting $Q=\overline{U}$, we get that $\lim_n p_{\mu_n}(Q)\le 1-\mu(\mathcal{O}_2(U))<1$ which shows that $\mu_n$ is not weakly trivial.
%	
	%To simplify writing denote $\delta = \delta_{\{e\}}$. 
	%Fix an open subset $\Omega \subset \Sub{G}$.  To establish  weak-$*$ convergence of the sequence $\mu_n$ to $\delta$ we are required to show that $ \delta(\Omega) \le \liminf_{n\to\infty} \mu_n(\Omega)$. Since $\delta(\Omega) \in \{0,1\}$ we may assume that in fact $\{e\} \in \Omega$ and $\delta(\Omega) = 1$. So $\Omega$ contains a basic set of the form
	%$ \mathcal{O}_1(Q)  $
	%for some compact subset $Q \subset G$ with $e \not\in Q$. The assumption gives
	%$$\liminf_{n\to\infty} \mu_n(\Omega) \ge \liminf _{n \to \infty} \mu_n (\mathcal{O}_1(Q)) = \lim_{n\to\infty} p_{\mu_n} (Q) = 1 $$
	
For the converse direction assume that $\mu_n$ is weakly uniformly discrete and weak-$*$ converges to $\delta_{\{e\}}$. Let $Q \subset G$ be any compact subset. If $e \not\in Q$ then  $\lim_{n\to\infty} p_{\mu_n}(Q) = 1$ by the above claim. Finally assume that $e \in Q$ and fix $\varepsilon > 0$. By weak uniform discreteness there is an identity neighborhood $U_\varepsilon$ so that $p_{\mu_n}(U_\varepsilon \setminus \{e\}) > 1-\varepsilon$ for all $n \in \NN$.
	Denote $Q_\varepsilon = Q \setminus U_\varepsilon$ so that $Q_\varepsilon$ is compact. Therefore 
$$
	p_{\mu_n}(Q \setminus \{e\})  \ge p_{\mu_n}(U_\varepsilon \setminus \{e\}) + p_{\mu_n}(Q_\varepsilon) - 1 \ge p_{\mu_n}(Q_\varepsilon) - \varepsilon \xrightarrow{n\to\infty} 1 - \varepsilon
$$
%	\begin{align*}
%	p_{\mu_n}(Q \setminus \{e\}) &= \mathrm{Prob}_{\mu_n}(\text{$H \cap U_\varepsilon = \{e\}$ and $H \cap Q_\varepsilon = \emptyset$} ) \ge \\
%	&\ge p_{\mu_n}(U_\varepsilon) + p_{\mu_n}(Q_\varepsilon) - 1 \ge p_{\mu_n}(Q_\varepsilon) - \varepsilon \xrightarrow{n\to\infty} 1 - \varepsilon
%	\end{align*}
	The proposition follows by taking $\varepsilon > 0$ to be arbitrarily small.
\end{proof}

In analogy with Definition \ref{def: a Farber sequence of invariant random subgroups} we say that a sequence $\mu_n$ of invariant random subgroups is \emph{weakly central} if $\lim_{n\to\infty} p_{\mu_n}(Q \setminus Z(G)) = 1$ for every compact subset $Q \subset G$. Similarly to  Proposition \ref{prop:WUD and compact subsets} it is easy to see that any accumulation point of a weakly central sequence is supported on the center, and that the converse holds for weakly uniformly discrete sequences. We remark that dividing a group by its center results in sending a weakly central sequence to a weakly trivial one.

%From Proposition \ref{prop:WUD and compact subsets} taken together with \cite{gel_kaz} we obtain the following.

\begin{cor}
	\label{cor:in zero char convergence to 0 implies Farber}
	Let $G$ be a   semisimple analytic group in zero characteristic. Then a sequence of discrete invariant random subgroups weak-$*$ converging to $\delta_{\{e\}}$ is weakly trivial.
\end{cor}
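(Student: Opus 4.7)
The plan is to combine Proposition \ref{prop:WUD and compact subsets} with the generalized Kazhdan--Margulis theorem cited from \cite{gel_kaz}. Since the hypothesis already provides weak-$*$ convergence $\mu_n \to \delta_{\{e\}}$, the converse direction of Proposition \ref{prop:WUD and compact subsets} reduces the task to verifying that the sequence $(\mu_n)$ is weakly uniformly discrete in the sense of Definition \ref{def: a Farber sequence of invariant random subgroups}.

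First I would invoke the result of \cite{gel_kaz} noted in the paragraph preceding Proposition \ref{prop:WUD and compact subsets}: in a zero characteristic semisimple analytic group $G$, every discrete invariant random subgroup $\mu$ has the property that for each $\varepsilon > 0$ there is an identity neighborhood $U_\varepsilon \subset G$, depending only on $G$ and $\varepsilon$ and \emph{not} on $\mu$, such that $p_\mu(U_\varepsilon \setminus \{e\}) > 1-\varepsilon$. Applied term-wise to each $\mu_n$, the same neighborhood $U_\varepsilon$ works uniformly, which is exactly the definition of weak uniform discreteness for the sequence.

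Second, with weak uniform discreteness established, the converse direction of Proposition \ref{prop:WUD and compact subsets} directly yields weak triviality: for any compact $Q \subset G$ we can cover the piece of $Q$ near the identity by $U_\varepsilon$ and handle the complement $Q \setminus U_\varepsilon$ (which avoids a neighborhood of $e$) via the claim proved in that proposition that weak-$*$ convergence to $\delta_{\{e\}}$ forces $p_{\mu_n}(Q') \to 1$ for compacta $Q'$ not containing $e$. Combining the two estimates and sending $\varepsilon \to 0$ gives $p_{\mu_n}(Q \setminus \{e\}) \to 1$.

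The main obstacle is external to the proof: everything rests on the Kazhdan--Margulis-style uniform identity neighborhood for discrete invariant random subgroups, which is precisely the content of \cite{gel_kaz} in zero characteristic and which one would expect to fail in general positive characteristic (hence the zero characteristic hypothesis in the statement). Granted this input, the corollary is essentially a one-line synthesis of the definitions with Proposition \ref{prop:WUD and compact subsets}.
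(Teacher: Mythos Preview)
Your proof is correct and matches the paper's intended argument: the corollary is an immediate consequence of Proposition~\ref{prop:WUD and compact subsets} together with the result from \cite{gel_kaz} (stated just before that proposition) that all discrete invariant random subgroups of a zero characteristic semisimple analytic group are weakly uniformly discrete, with the identity neighborhood depending only on $\varepsilon$ and $G$. Your second paragraph merely recapitulates the converse direction of Proposition~\ref{prop:WUD and compact subsets}, which is fine but unnecessary once that proposition is invoked.
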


%The above discussion can be easily modified to consider convergence to $\delta_Z$ where $Z$ is a central subgroup of $G$. The resulting notion does not coincide with Farber sequences. However it might be that it fits better into the current framework.

%\subsection{Benjamini--Schramm convergence of descending sequences}
%
%To conclude our discussion we provide an example of a Benjamini--Schramm convergent sequence associated to  a descending sequence of normal subgroups inside a lattice.

 We leave the straightforward verification of Example \ref{exam:residual-chain} to the reader.

\begin{exam}\label{exam:residual-chain}
	\label{prop:invariant random subgroups of a descending sequence go to delta e}
	Let $\Gamma$ be a lattice in $G$ and let $\Gamma_i \nrm \Gamma$ be a descending sequence of normal subgroups with $\bigcap_{i\in\NN} \Gamma_i = \{e\}$. Then $\Gamma_i$ is a weakly  trivial sequence.
	
	In particular, the sequence of invariant random subgroups $\mu_{\Gamma_i}$ weak-$*$ converges to $\delta_{\{e\}}$ by Proposition \ref{prop:WUD and compact subsets}.
\end{exam}

\subsection{Quotient spaces and the pointed Gromov--Hausdorff topology}

Let $G$ be a l.c.s.c. group. Let $(X,x)$ be a  proper  locally compact pointed  metric space admitting a proper continuous $G$-action by isometries. 

\begin{definition}
\label{def:Q(G,X)}
 The \emph{space of quotients} associated to the $G$-action on $X$ is
$$ \mathcal{Q}(G,X) = \{H\backslash X \: : \: \text{$H$ is a closed subgroup of $G$} \} $$
The quotients $H \backslash X$ are regarded as pointed metric spaces with basepoints $Hx$ and the quotient metrics. $\mathcal{Q}(G,X)$ is equipped with the pointed Gromov--Hausdorff topology.
\end{definition}
We will be working with the pointed Gromov--Hausdorff topology in terms of $(\varepsilon,r)$-relations. See  \cite[I.3.2]{canary1986notes} for more details on this notion.

\begin{prop}
\label{prop:map into Q(G,X) is continuous}
The natural map
$$ \Sub{G} \to \mathcal{Q}(G,X), \quad H \mapsto H \backslash X $$ 
is continuous.
\end{prop}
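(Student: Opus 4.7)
The plan is to verify continuity directly through the $(\varepsilon,r)$-relation characterization of pointed Gromov--Hausdorff convergence. Fix $H \in \Sub{G}$, a sequence $H_n \to H$ in the Chabauty topology, and $r,\varepsilon > 0$; I will exhibit, for all sufficiently large $n$, an $(\varepsilon,r)$-relation between the pointed quotients $(H_n\backslash X, H_n x)$ and $(H\backslash X, Hx)$. Because the $G$-action on $X$ is proper and $X$ is itself proper, I can choose a radius $R > r$ and a compact subset $K \subset G$ such that every orbit (under $H$ or any $H_n$) meeting the ball of radius $r$ around the basepoint in the quotient has a representative in $\overline{B_R(x)} \subset X$, and such that every group element needed to witness a quotient distance below $2r+\varepsilon$ between such representatives lies in $K$.

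I then define the relation $R_n \subset (H_n\backslash X) \times (H\backslash X)$ by declaring $(H_n y, Hy) \in R_n$ for every $y \in \overline{B_R(x)}$. The basepoints are related and surjectivity onto the balls of radius $r$ around either basepoint is immediate from the choice of $R$. The heart of the matter is the distance comparison: for $y,z \in \overline{B_R(x)}$ I must verify
$$\bigl| d_{H_n\backslash X}(H_n y, H_n z) - d_{H\backslash X}(Hy, Hz) \bigr| < \varepsilon$$
for all large $n$. To bound $d_{H_n\backslash X}$ above by $d_{H\backslash X} + \varepsilon$, I would pick $h \in H \cap K$ nearly realizing the infimum $d_{H\backslash X}(Hy,Hz) = \inf_{h\in H} d(y,hz)$; Chabauty convergence yields $h_n \in H_n$ with $h_n \to h$, so $d(y,h_n z) \to d(y,hz)$. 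For the reverse inequality, I choose $h_n \in H_n \cap K$ nearly realizing the infimum in $d_{H_n\backslash X}$; properness keeps the $h_n$ in a fixed compact set, a convergent subsequence has a limit $h \in G$, and the second half of Chabauty convergence forces $h \in H$.

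The main obstacle is promoting the pointwise estimate --- for each fixed pair $(y,z)$ there is an index $N$ beyond which the comparison holds --- to a uniform one valid simultaneously for all $y,z \in \overline{B_R(x)}$. This is handled by a standard compactness-net argument: the map $(g,y,z) \mapsto d(y,gz)$ is uniformly continuous on the compact set $K \times \overline{B_R(x)} \times \overline{B_R(x)}$, so fixing a finite $\varepsilon/3$-net in $\overline{B_R(x)}$ and invoking the pointwise estimates at the finitely many net points produces a single $N$ that works throughout the ball. Once uniformity is established, the defining properties of an $(\varepsilon,r)$-relation are immediate, and continuity at $H$ follows; since $H$ was arbitrary the map is continuous on $\Sub{G}$.
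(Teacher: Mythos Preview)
Your proposal is correct and follows essentially the same approach as the paper: both define the relation via common lifts $H_n y \sim Hy$ on a compact ball and establish the distance comparison using properness of the action together with Chabauty convergence. The only minor difference is that the paper handles uniformity in one stroke via the inclusions $H_n \cap C \subset (H\cap C)U$ and $H \cap C \subset (H_n \cap C')U$ valid on a fixed compact $C$, whereas you first argue pointwise and then pass through a finite net; both routes work, though you should tighten the net parameter (an $\varepsilon/3$-net is not quite fine enough for the final estimate).
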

\begin{proof}
Let $H_n$ be a sequence of closed subgroups in $G$ converging to the closed subgroup $H$ in the Chabauty sense. We  show that the quotient pointed metric spaces $H_n\backslash X$ converge to $H\backslash X$ in the pointed Gromov--Hausdorff topology.

Fix some small $\varepsilon > 0$ and large $r > 0$. It will suffice to exhibit $(\varepsilon,r)$-relations between $H_n \backslash X$ and $H\backslash X$ for all $n$ sufficiently large. 
Let $\overline{B}(r)$ denote the closed ball $\overline{B}_X(x,r)$  in the space $X$. Since $X$ is proper $\overline{B}(r)$ is compact. For every closed subgroup $H$ of $G$ denote
$$ Q_H(r) = H \backslash H\overline{B}(r) \subset H \backslash X \quad$$
so that $Q_H(r)$ is compact as well for every $H$. Observe that
$$ Q_H(r) = \overline{B}_{H\backslash X}(Hx,r) $$
where $\overline{B}_{H\backslash X}(Hx,r)$ is the closed $r$-ball centered at $Hx$ in the quotient space $H \backslash X$.

Define a relation $\sim_n$ between the compact subsets $Q_{H_n}(r)$ and $Q_H(r)$ by
$$ H_nx \sim_n Hx, \quad \forall x \in \overline{B}(r) $$
The relations $\sim_n$ need not in general be one-to-one.

We claim that $\sim_n$ is an $(\varepsilon,r)$-relation between the two pointed metric spaces $H_n\backslash X$ and $H \backslash X$ for all $n$ sufficiently large. Let $d$ and $d_n$ denote the  induced metrics on the quotient spaces $H \backslash X$ and  $H_n \backslash X$, respectively. The only non-trivial fact to be proved is that
\begin{equation}\label{eq:QC} 
\abs{d_n(H_n x_1, H_n x_2) - d(H x_1, H x_2) } < \varepsilon, \quad \forall x_1, x_2 \in \overline{B}(r)
\end{equation}
assuming that $n$ is sufficiently large.

Since $G$ acts properly on $X$ there is a compact subset $C\subset G$ such that $g\cdot \overline{B}(3r)\cap\overline{B}(3r)=\emptyset$ whenever $g\notin C$. Let $C'$ be another compact set containing $C$ in its interior. 
The Chabauty convergence of $H_n$ to $H$ implies that for every open identity neighborhood $U \subset G$ we have that
$$ H_n\cap C \subset (H \cap C)U \quad \text{and} \quad H \cap C \subset (H_n \cap C')U $$
for all $n$ sufficiently large.
%on the one hand that $H_n\cap C$ is eventually contained in any neighbourhood of $H\cap C$ and on the other hand that for every open identity neighbourhood $U\subset G$, $H\cap C$ is eventually contained in $(H_n\cap C')U$. 
Note that 
$$
 d(H x_1, H x_2)=\min_{h\in H\cap C} d(x_1,hx_2)=\min_{h\in H\cap C'} d(x_1,hx_2)
$$
and a similar expression holds true for $d_n(H_n x_1, H_n x_2)$ and every $n \in \NN$. Thus (\ref{eq:QC}) follows from the continuity of the $G$-action and the compactness of $C'$.
\end{proof}

We deduce:

\begin{cor}
\label{cor:Q(G,X) is compact}
The space $\mathcal{Q}(G,X)$ is compact.
\end{cor}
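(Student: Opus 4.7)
The plan is to deduce this immediately from Proposition \ref{prop:map into Q(G,X) is continuous} together with the compactness of the Chabauty space. Recall that $\Sub{G}$ is compact — this is a classical fact noted right after Definition \ref{def:Chabauty topology}, relying on the description of the Chabauty topology via the sub-basis $\mathcal{O}_1(K), \mathcal{O}_2(U)$, from which one verifies the Alexander sub-basis criterion (or, equivalently, identifies $\Sub{G}$ with a closed subset of $\{0,1\}^{\mathcal{K}(G)}$-type data).

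Next, by the very definition of $\mathcal{Q}(G,X)$, every pointed metric space in it is of the form $H\backslash X$ for some closed subgroup $H \le G$, so the map
$$ \Phi : \Sub{G} \longrightarrow \mathcal{Q}(G,X), \qquad H \longmapsto H \backslash X $$
is surjective. Proposition \ref{prop:map into Q(G,X) is continuous} tells us that $\Phi$ is continuous with respect to the Chabauty topology on the source and the pointed Gromov--Hausdorff topology on the target. Therefore $\mathcal{Q}(G,X) = \Phi(\Sub{G})$ is the continuous image of a compact space, and hence compact.

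There is essentially no obstacle here; the only point worth verifying is that compactness is pushed forward by continuous surjections in this setting, which holds in any topological category. If one wants to be slightly more careful about the target topology (since the pointed Gromov--Hausdorff topology is usually defined via $(\varepsilon,r)$-relations on isometry classes), it is enough to note that the topology is first countable on proper pointed spaces, so sequential compactness of $\Phi(\Sub{G})$ can alternatively be extracted from sequential compactness of $\Sub{G}$ via the continuity statement of Proposition \ref{prop:map into Q(G,X) is continuous}. Either way the corollary follows in a single line.
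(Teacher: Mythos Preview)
Your proof is correct and matches the paper's intended argument exactly: the corollary is stated immediately after Proposition \ref{prop:map into Q(G,X) is continuous} with only the phrase ``We deduce,'' using that $\mathcal{Q}(G,X)$ is by definition the continuous image of the compact space $\Sub{G}$.
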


Since $G$ is second countable it admits a left-invariant proper metric. Regard $G$ as a pointed metric space with basepoint at the identity. Therefore  the space of quotients $\mathcal{Q}(G,G)$ is well defined and consists of  the quotients $H\backslash G$ where $H$ is a closed subgroup of $G$. The mapping $\Sub{G} \ni H \mapsto H\backslash G$ is continuous. However, note  that in general it need not be injective.

A generalization of the previous construction is obtained as follows. Assume that the metric on $G$ is right-$K$-invariant for some closed subgroup $K$ in $G$. For instance, this is always possible when $K$ is compact. The space of quotients $\mathcal{Q}(G,G/K)$ is well defined and consists of the double coset spaces $H \backslash G / K$ where $H \in \Sub{G}$.

\begin{remark}
The \emph{space of graphs} $\mathcal{GR}(G,X)$ consists of the graphs of the quotient maps $X \to H \backslash X$ regarded as subspaces of $X \times H\backslash X$. These are pointed metric spaces with basepoints $(x,Hx)$. The space $\mathcal{GR}(G,X)$ is equipped with the pointed Gromov--Hausdorff topology. There is a pair of maps
$$ \Sub{G} \to \mathcal{GR}(G,X) \to \mathcal{Q}(G,X) $$
whose composition is the map considered in Proposition \ref{prop:map into Q(G,X) is continuous}. A small modification of the proof of Proposition \ref{prop:map into Q(G,X) is continuous} shows that the two intermediate maps are continuous as well.

The point of this construction is that the map $\Sub{G} \to \mathcal{GR}(G,X)$ is obviously bijective and is therefore a homeomorphism. In particular the topology on $\mathcal{GR}(G,X)$ does not depend on the particular choice of the proper metric on $X$. 
In a vague sense, the topology of $\mathcal{Q}(G,X)$ is also independent of the metric, but the fibers of the map from $\Sub{G}$ may depend on the choice of metric. 

We will not be making use of the space of graphs in this work.
\end{remark}

%We deduce that the pointed Gromov--Hausdorff topology on $\mathcal{GR}(G)$ is independent of the choice of the proper metric $d$. Moreover we obtain a natural action of $G$ on $\mathcal{GR}(G)$.
%Note that the two maps into $\mathcal{Q}(G)$ and $\mathcal{Q}(K\backslash G)$ need not be injective in general, and these two space do not admit an action of $G$.

%\subsection{Quotients of buildings}
%
%Let   $G$ be a non-Archimedean semisimple analytic group and  $X$ be the associated Bruhat--Tits Euclidean building. The standard simplicial complex metric makes  $X$ a pointed metric space with an arbitrary basepoint. This allows us to consider the space of quotients $\mathcal{Q}(G,X)$.
%
%\begin{prop}
%\label{prop:pGH topology on quotients of group vs quotients of buildings}
%Let $K$ be a maximal compact open subgroup of $G$. Then the two spaces of quotients $\mathcal{Q}(G,X)$ and $\mathcal{Q}(G,G/K)$ are homeomorphic.
%\end{prop}
%\begin{proof}
%The two spaces $\mathcal{Q}(G,X)$ and $\mathcal{Q}(G,G/K)$ are both quotients of the compact space $\Sub{G}$. Therefore it suffices to show that the two quotient maps from $\Sub{G}$ have the same identifications.
%
%Let $X_0$ denote the $0$-skeleton of $X$ with the induced subspace metric. Since an automorphism of $X$ is determined by its effect on $X_0$ it follows that $\mathcal{Q}(G,X)$ and $\mathcal{Q}(G,X_0)$ are homeomorphic.
%
%
%\end{proof}

\subsection{Benjamini--Schramm convergence}
Let the group $G$ and the pointed metric space $X$ be as above.
The \emph{Benjamini--Schramm space} $\mathcal{BS}(G,X)$ is the space of Borel probability measures on $\mathcal{Q}(G,X)$ with the weak-$*$ topology. Since the space of quotients is compact so is the Benjamini--Schramm space.
Moreover, in view of Proposition \ref{prop:map into Q(G,X) is continuous} we have:

\begin{cor}
\label{cor:map from IRS to BS is continuous}
The natural map $\IRS{G} \to \mathcal{BS}(G,X)$ is continuous.
\end{cor}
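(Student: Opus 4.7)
The plan is to realize the natural map $\IRS{G} \to \BS{G,X}$ as the pushforward induced by the continuous map $\Phi : \Sub{G} \to \mathcal{Q}(G,X)$, $H \mapsto H \backslash X$, and then invoke the general principle that pushforward along a continuous map between compact Hausdorff spaces is weak-$*$ continuous on the level of probability measures.

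First I would unpack the definitions. An element $\mu \in \IRS{G}$ is a Borel probability measure on $\Sub{G}$, and its image in $\BS{G,X}$ is the pushforward $\Phi_*\mu$, which is a Borel probability measure on $\mathcal{Q}(G,X)$. The map $\Phi$ is continuous by Proposition \ref{prop:map into Q(G,X) is continuous}, and both $\Sub{G}$ and $\mathcal{Q}(G,X)$ are compact (the latter by Corollary \ref{cor:Q(G,X) is compact}), so $\Phi_*\mu$ is a well-defined Borel probability measure.

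Next I would verify weak-$*$ continuity directly. Suppose $\mu_n \to \mu$ in $\IRS{G}$. For any bounded continuous function $f : \mathcal{Q}(G,X) \to \RR$, the composition $f \circ \Phi : \Sub{G} \to \RR$ is bounded and continuous, so by the definition of weak-$*$ convergence
$$ \int_{\mathcal{Q}(G,X)} f \, d(\Phi_*\mu_n) = \int_{\Sub{G}} f \circ \Phi \, d\mu_n \xrightarrow{n\to\infty} \int_{\Sub{G}} f\circ \Phi \, d\mu = \int_{\mathcal{Q}(G,X)} f \, d(\Phi_*\mu), $$
which is precisely the assertion that $\Phi_*\mu_n \to \Phi_*\mu$ in $\BS{G,X}$.

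There is essentially no obstacle here beyond what is already in Proposition \ref{prop:map into Q(G,X) is continuous}; all the real content was the verification of continuity of $\Phi$ via the explicit $(\varepsilon,r)$-relations. The corollary is then just the standard functoriality of pushforward of measures with respect to weak-$*$ topology, and the above two-line computation is the complete argument.
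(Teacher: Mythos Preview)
Your proposal is correct and matches the paper's approach: the paper simply records the corollary as an immediate consequence of Proposition \ref{prop:map into Q(G,X) is continuous} without further argument, and what you have written is precisely the standard pushforward-along-a-continuous-map justification that the paper leaves implicit.
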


The natural map of Corollary \ref{cor:map from IRS to BS is continuous} takes a sequence in $\IRS{G}$ converging to $\delta_{\{e\}}$ to a sequence in $\mathcal{BS}(G,X)$ converging to $\delta_X$. 

The following proposition provides a geometric interpretation for this convergence in the special case of  semisimple analytic groups.

\begin{prop}
\label{prop:comparison of convergence to delta_e and isometric balls}
Let $G$ be a semisimple analytic group and  $X$  the corresponding product of a symmetric space and a Bruhat--Tits building. Every weak-$*$ accumulation point of the sequence   $\mu_n \in \IRS{G}$ is central if and only if  
	$$\mathrm{Prob}_{\mu_n} \left( \text{$B_{H \backslash X}(Hx ,r )$ is isometric to $  B_{X}(x,r)$} \right) \xrightarrow{n\to\infty} 1$$ 	
 for every radius $0 < r < \infty $. Here $x \in X$ is an arbitrary  basepoint.
\end{prop}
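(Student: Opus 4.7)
The plan is to give a geometric characterization of the event
$$
  E_r = \{H \in \Sub{G} : B_{H\backslash X}(Hx,r) \text{ is isometric to } B_X(x,r)\}
$$
in terms of how $H$ meets a suitable compact subset of $G$, and then to apply the portmanteau theorem in each direction.

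First I would set up the geometric dictionary. Let $K = K(r) := \{g \in G : d(x,gx) \le 4r\}$, which is compact by the properness of the $G$-action on $X$ (using that the center $Z(G)$ of a semisimple analytic group is finite). Let $U_r \subset G$ denote the pointwise stabilizer of $\overline{B_X(x,r)}$. A direct computation with the triangle inequality yields the key inclusion
$$
  O_r := \{H : H \cap K \subseteq U_r\} \;\subseteq\; E_r,
$$
since elements of $H$ outside $K$ displace $B_X(x,r)$ too far to create identifications, while elements of $H \cap K$ lying in $U_r$ act trivially on $B_X(x,r)$. Moreover $\bigcap_{r>0} U_r = Z(G)$, and so $\bigcap_{r>0} E_r$ equals the set $\{H : H \le Z(G)\}$ of central subgroups; a short Chabauty-continuity argument based on the continuity of displacement also shows that each $E_r$ is itself Chabauty-closed.

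For the direction ($\Leftarrow$), if $\mu_n(E_r) \to 1$ for every $r$, then any weak-$*$ accumulation point $\nu$ satisfies $\nu(E_r) \ge \limsup \mu_{n_k}(E_r) = 1$ by portmanteau applied to the closed set $E_r$; intersecting over all $r > 0$ shows that $\nu$ is supported on central subgroups.

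For the direction ($\Rightarrow$), I would use the observation that in the non-Archimedean case $U_r$ is a compact open subgroup of $G$, so $K \setminus U_r$ is compact and $O_r = \mathcal{O}_1(K \setminus U_r)$ is Chabauty-open while still containing every central subgroup. Along a convergent subsequence $\mu_{n_k} \to \nu$ with $\nu$ central, portmanteau for the open set $O_r$ gives $\liminf \mu_{n_k}(O_r) \ge \nu(O_r) = 1$, whence $\mu_{n_k}(E_r) \to 1$; a standard subsequence extraction argument then upgrades this to $\mu_n(E_r) \to 1$ along the full sequence. The main obstacle I anticipate is the openness of $O_r$: it is transparent on a non-Archimedean factor thanks to the compact open pointwise stabilizers, but when $G$ has an Archimedean factor the analogous pointwise stabilizer reduces to the finite discrete center $Z(G)$, and I would handle the mixed case by exploiting the product structure $X = X_\infty \times X_{\mathrm{NA}}$ to separate the two regimes.
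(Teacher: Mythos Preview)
Your overall architecture matches the paper's: translate the isometric-ball event into a Chabauty condition of the form ``$H$ avoids a certain compact piece of $G$'' and then run the portmanteau theorem in both directions. Your backward direction via the closedness of $E_r$ is correct, and your inclusion $O_r \subseteq E_r$ with the $4r$-set $K$ is fine.

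The genuine gap is precisely where you flag it, and your proposed workaround does not close it. When $G$ has an Archimedean factor $C$, the pointwise stabilizer of $\overline{B_X(x,r)}$ in $C$ is just the finite center $Z(C)$, so $K \setminus U_r$ fails to be compact and $O_r = \mathcal{O}_1(K\setminus U_r)$ is \emph{not} Chabauty-open. ``Exploiting the product structure $X = X_\infty \times X_{\mathrm{NA}}$ to separate the two regimes'' does not help, because the purely Archimedean regime already exhibits the failure: subgroups of $G$ do not split along the factors, and in $C$ one can approach any central subgroup through subgroups containing an element arbitrarily close to the identity that still destroys the isometry of the $r$-ball. So on the Archimedean side the central subgroups are not interior to $E_r$, and no amount of factor-by-factor bookkeeping produces the needed open set.

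The missing idea is the no-small-subgroups property of real Lie groups. The paper replaces the pointwise stabilizer on the Archimedean side by an \emph{open neighborhood} $U_r \supset Z(C)$ chosen so that the only closed subgroups of $C$ contained in $U_r$ are central, with $U_r \searrow Z(C)$ as $r \to \infty$. With $V_r \le D$ the (compact open) pointwise stabilizer on the non-Archimedean side, the set $\Omega_r = \mathcal{O}_1\bigl(Q_r \setminus (U_r \times V_r)\bigr)$ is now genuinely Chabauty-open and contains every central subgroup; the paper then identifies $\Omega_r$ with the isometric-ball event and finishes by observing that every compact $Q$ disjoint from $Z(G)$ eventually sits inside $Q_r \setminus (U_r \times V_r)$. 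You should replace your pointwise stabilizer on $C$ by such a no-small-subgroups neighborhood; that single modification is what makes the forward direction go through in the mixed case.
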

\begin{proof}

Write $G = C \times D$ where $C$ and $D$ are the Archimedean and non-Archimedean factors of $G$, respectively. 
For every fixed radius $r > 0$ denote $B_r = \overline{B}_X(x,r)$ and consider the subset
$$  
Q_r = \{ g \in G \: : \: gB_r \cap B_r \neq \emptyset \} \subset G.
$$
The properness of the action implies that $Q_r$ is compact. Consider the subset
$$ 
V_r = \{ g \in D \: : \: g_{| B_r} = \mathrm{id}_{|B_r} \}. 
$$
Note that $V_r$ is a compact open subgroup in $D$. Let $U_r \subset C $ be a neighborhood of $Z(C)$ so that the only closed subgroups of $C$ contained in $U_r$ are central and such that $U_r \times V_r \subset Q_r$. Assume that $U_{r'} \subset U_r$ for every $r' > r$ and $\bigcap_r U_r = Z(C)$.

Consider the Chabauty open subset $\Omega_r = \mathcal{O}_1(Q_r \setminus (U_r \times V_r))$. A closed subgroup $H$ belongs to $\Omega_r$ if and only if $H \cap Q_r \subset V_r$ if and only if the $r$-balls at the basepoints of $H \backslash X$ and of $X$ are isometric.
Just as in the proof of Proposition \ref{prop:WUD and compact subsets} the fact that every accumulation point of $\mu_n$ is central is equivalent to $\lim_{n\to\infty} p_{\mu_n}(Q) = 1$ for every compact subset $Q \subset G$ with $Z(G) \cap Q =\{e\}$. Every such compact subset $Q$ is contained in $Q_r\setminus(U_r \times V_r)$ for all $r$ sufficiently large, and the result follows.
\end{proof}

In the situation of Proposition \ref{prop:comparison of convergence to delta_e and isometric balls}  the probability with respect to $\mu_n$ that a random $r$-ball in $H \backslash X$ is contractible tends to one for every radius $0 < r < \infty $.

\section{On Zariski closure and the Chabauty topology} 

\label{sec:zariski closure and lie algebras}

Let $k$ be any local field. We analyze the Zariski closure operation on the Chabauty space of closed subsets in the group $\GG(k)$ of $k$-rational points  of an algebraic $k$-group $\GG$. As an application we define a certain variant ---  {\it local} Zariski closure operation --- which turns out to be useful in positive characteristic.

The section begins with a rather general discussion on $k$-vector spaces. As we progress we specialize to study the linear representation of the group $\GG(k)$ in the $k$-linear space of its regular functions arising from the action of $\GG$ on itself by conjugation.
Some of our arguments are inspired by \cite[\S 4]{burger}.

%We show that passing to the Lie algebra of the Zariski closure is a well-behaved operation on $\Sub{G}$. We regard this as a two-step operation consisting of taking the ideal of vanishing regular functions, and then looking at the associated Lie algebra. 

%Let $X$ be a locally compact second countable  topological space $X$.  Let $\Cl{X}$ denote the space of closed subsets of $X$ equipped with the Chaubuty topology. For example, if $G$ is a group then $\Sub{G}$ is a closed subspace of $\Cl{G}$.

%COMPARE TO Burger, A'Campo.
%%Our treatment is similar to the one in Section 4 of \cite{burger}.
%%\footnote{The paper \cite{burger} by A'Campo and Burger discusses the measurability of the support map for probability measures on the projective space,  towards proving the superrigidity and arithmeticity theorems.}
%
%\begin{remark}
%	The results of the current section hold over any local field $k$, regardless of the non-Archimedean property.
%\end{remark}

\subsection{Limits of finite dimensional vector spaces}

Consider a sequence $A_i$ of finite dimensional $k$-vector spaces for $i \in \NN$. Assume that this sequence admits a pair of compatible structures as a direct and an inverse system. That is, there are $k$-linear maps $\iota_i : A_i \to A_{i+1}$ and $\sigma_i : A_{i + 1} \to A_i $ that satisfy $\sigma_i \circ \iota_i = \mathrm{id}_{|A_i}$ for all $i \in \NN$. In particular $ \iota_i \circ \sigma_i $ is an idempotent $k$-endomorphism of $A_{i+1}$ for every $i \in \NN$.

The Grassmannian $\Gr{A_i}$  is by definition the disjoint union of its $d$-dimensional parts with $0 \le d \le \dim A_i$ for every $i \in \NN$.   We obtain a compatible direct and inverse system structures on the sequence $\mathcal{GR}(A_i)$ of Grassmannians. We retain the notations $\iota_i$ and $\sigma_i$ for the induced maps in these systems.

Let $A$ denote the $k$-vector space direct limit $A = \varinjlim_i A_i$. The Grassmannian $\mathcal{GR}(A)$ consists of all the $k$-vector subspaces of $A$ and it is easy to verify that $\mathcal{GR}(A) = \varprojlim_i \mathcal{GR}(A_i)$. Since $k$ is a local field every $\mathcal{GR}(A_i)$ is compact. In particular $\mathcal{GR}(A)$ becomes a compact space with the inverse limit topology.

%Regarding $A_i$ as an inverse system allows us to identify $\varprojlim_i \mathrm{GL}(A_i)$ as a subgroup of $\mathrm{GL}(A)$.

The dual $k$-vector space  of $A$ is the inverse limit $A^* = \varprojlim_i A_i^*$. This is a topological space with the inverse limit topology. As before let $\mathcal{GR}(A^*)$ denote the Grassmannian of all $k$-vector subspaces of $A^*$. There is an order-reversing bijection 
$$\mathrm{ann} : \mathcal{GR}(A^*)  \to \mathcal{GR}(A)$$ 
obtained by taking  annihilators. We use the notation $\iota^*_i$ and $\sigma^*_i$ for the maps in the dual inverse and direct systems, as well as the induced maps in the system $\mathcal{GR}(A_i^*)$.

%Let $A$ denote the $k$-vector 
%$$ A = \varinjlim_{i \in \NN} A_i $$
%where every $A_i$ is a finite dimensional $k$-vector space. 
%Let $\Gr{A}$ denote the Grassmannian of all $k$-vector subspaces of $A$. Recall the natural \emph{annihilator map}  
%$$\mathrm{Ann} : \Gr{A} \to \Gr{A^*}$$ 
%which is an order-reversing bijection. $A^*$ is the dual $k$-vector space to $A$ given by
%$$ A^* = \varprojlim_{i \in \NN} A_i^* $$
%
%
%There is a natural homeomorphism 
%$$ \mathrm{Ann}_i : \Gr{A_i} \to \Gr{A_i^*} $$
%called the \emph{annihilator map} and sending the $d$-dimensional part of $\Gr{A_i}$ to the $(\dim A_i - d)$-dimensional part of $\Gr{A^*_i}$.
%
%The family $\Gr{A_i^*}$ forms an inverse system and it is easy to verify that
%$$ \Gr{A^*} = \varprojlim_{i\in\NN} \Gr{A_i} $$
%relying on the annihilator map  this provides a definition for $\Gr{A}$ as well.

\subsection{The linear span map}

Consider the linear span map denoted $\mathrm{sp}$ and defined on the power set  of $A^*$ by
	$$ \mathrm{sp} : \mathrm{Pow}(A^*) \to \Gr{A^*}, \quad \mathrm{sp}(F) = \mathrm{span}_k(F) \quad \forall F \subset A^*.$$

Given a topological space  $X$ we let $\Cl{X}$ denote  the space of closed subsets of $X$  regarded with the Chabauty topology, as in Definition \ref{def:Chabauty topology}.

\begin{prop}
	\label{prop:linear closure map is measurable}
Let $X$ be a topological space and	 $\varepsilon: X \to A^*$ a continuous map.    Denote by $\hat{\varepsilon}$ the corresponding set map $\Cl{X} \to \mathrm{Pow}(A^*)$. Then the composition
	$$  \mathrm{ann} \circ \mathrm{sp} \circ \hat{\varepsilon} : \Cl{X} \to \mathrm{Pow}(A^*) \to \Gr{A^*} \to \mathcal{GR}(A)$$ 
	is Borel measurable. 
\end{prop}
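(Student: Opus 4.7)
The plan is to reduce to finite-dimensional Grassmannians via the inverse limit structure and duality, then handle the finite-dimensional case by stratifying with respect to dimension. Since $\Gr{A} = \varprojlim_i \Gr{A_i}$ carries the inverse limit topology, a map into $\Gr{A}$ is Borel measurable if and only if each of its compositions with the natural projections $\Gr{A} \to \Gr{A_i}$, $V \mapsto V \cap A_i$, is Borel. A direct duality computation, using $A_i \hookrightarrow A$ and $\pi_i:A^* \to A_i^*$ the dual projection, gives
$$ \mathrm{ann}_A(W) \cap A_i \;=\; \mathrm{ann}_{A_i}\bigl(\pi_i(W)\bigr) \qquad \text{for every } W \subset A^*. $$
Applied to $W = \mathrm{span}_k(\varepsilon(F))$, together with the linearity identity $\pi_i(\mathrm{span}_k(\varepsilon(F))) = \mathrm{span}_k\bigl((\pi_i\circ\varepsilon)(F)\bigr)$, this reduces the problem to the following finite-dimensional statement: for every continuous map $\varphi: X \to V$ into a finite-dimensional $k$-vector space $V$, the span map
$$ \mathrm{sp}_\varphi: \Cl{X} \to \Gr{V}, \qquad F \mapsto \mathrm{span}_k(\varphi(F)), $$
is Borel measurable. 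Indeed the annihilator $\mathrm{ann}_{A_i}: \Gr{A_i^*} \to \Gr{A_i}$ is a homeomorphism of finite-dimensional Grassmannians.

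Next I would stratify $\Cl{X}$ by the dimension function $d(F) = \dim_k \mathrm{span}_k(\varphi(F))$. The inequality $d(F) \geq d$ is equivalent to $F^d \cap L_d \neq \emptyset$, where $L_d \subset X^d$ is the open set of $d$-tuples whose $\varphi$-images are linearly independent (openness follows from continuity of $\varphi$ together with the open condition that some $d \times d$ minor does not vanish). Assuming $X$ is second countable --- as holds in all applications --- $L_d$ is a countable union of products $U_1 \times \cdots \times U_d$ of basis open sets, and
$$ \bigl\{F \in \Cl{X} : F^d \cap (U_1 \times \cdots \times U_d) \neq \emptyset \bigr\} \;=\; \bigcap_{j=1}^{d} \mathcal{O}_2(U_j) $$
is Chabauty-open. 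Hence $\{F : d(F) \geq d\}$ is open, so $d(\cdot)$ is Borel.

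Finally, on each stratum $\{F : d(F) = d\}$, I would use the standard affine atlas on $\Grd{d}{V}$ consisting of charts $U_I$ indexed by $d$-element coordinate subsets $I$ of a fixed basis of $V$: a $d$-dimensional subspace $W$ lies in $U_I$ precisely when it projects isomorphically onto the $I$-coordinates, in which case $W$ is the graph of a linear map and its chart coordinates are the entries of the corresponding complementary matrix. The condition $\mathrm{sp}_\varphi(F) \in U_I$ is equivalent to the existence of a $d$-tuple in $F$ whose $\varphi$-image has invertible $I$-minor, which is Borel by the same argument as above. The remaining point is to realize the chart coordinates of $\mathrm{sp}_\varphi(F)$ as a Borel function of $F$, which I would handle via the Kuratowski--Ryll-Nardzewski measurable selection theorem applied to the closed-valued multifunction $F \mapsto F^d \cap L_d^I$ (with $L_d^I$ the relevant open subset of $X^d$), combined with Cramer's rule applied to any chosen basis. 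This measurable selection step is the main --- though essentially routine --- obstacle.
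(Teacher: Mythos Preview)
Your reduction to the finite-dimensional case via the inverse limit and duality is the same as the paper's.  The divergence is in how the finite-dimensional span map $F \mapsto \mathrm{span}_k(\varphi(F)) \in \Gr{V}$ is handled.

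The paper observes directly that this map is \emph{lower semi-continuous}, hence Borel.  The argument is a single step: if $\dim \mathrm{span}_k(\varphi(F)) = d$, choose witnesses $x_1,\ldots,x_d \in F$; then for small neighborhoods $U_j$ of the $x_j$, every $F'$ in the Chabauty-open set $\bigcap_j \mathcal{O}_2(U_j)$ contains points $x'_j \in U_j$ whose images are still linearly independent and span a $d$-plane close to $\mathrm{span}_k(\varphi(F))$.  Thus $\mathrm{span}_k(\varphi(F'))$ contains a subspace near $\mathrm{span}_k(\varphi(F))$, which is semi-continuity.  No selection theorem, no charts, no second-countability hypothesis.

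Your stratification-plus-selection route is essentially correct but heavier than necessary.  In fact your own observation that $\{d(F) \ge d\}$ is Chabauty-open \emph{is} the semi-continuity the paper uses; and on each stratum $\{d(F)=d\}$ the same witness argument shows the span map is actually continuous (the nearby $d$-plane you produce must equal $\mathrm{span}_k(\varphi(F'))$ since the dimension is fixed).  So Kuratowski--Ryll-Nardzewski can be dropped entirely.  Two further remarks: the paper's proof does not require $X$ to be second countable, whereas yours does; and the multifunction $F \mapsto F^d \cap L_d^I$ you feed into the selection theorem is not closed-valued (since $L_d^I$ is open), so that step would need a small adjustment in any case.
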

%Proposition \ref{prop:linear closure map is measurable} is essentially Lemma 4.2 of \cite{burger} --- the only difference is that we consider affine spaces with the Chaubuty topology on closed subsets instead of  projective spaces with the Hausdorff metric as in \cite{burger}.

Note that we have not explicitly defined a Borel structure on the two intermediate spaces $\mathrm{Pow}(A^*)$ or on $\mathcal{GR}(A^*)$.

\begin{proof}
%Let $\alpha_i : A_i \to A$ denote the corresponding maps and $\alpha_i^* : A^* \to A_i^*$ the dual ones. We use the notation $\alpha_i^*$ for the induced maps $\Gr{A^*} \to \Gr{A_i^*}$ as well. 
It will suffice to show that the composition of $\mathrm{ann} \circ \mathrm{sp} \circ \hat{\varepsilon}$ with the projection $\mathcal{GR}(A) \to \mathcal{GR}(A_i)$ is upper semi-continuous, and in particular Borel measurable, for every $i \in \NN$. This composition, denoted $\alpha_i$, is equal to 
$$ \mathrm{Cl}(X) \xrightarrow{\hat{\varepsilon}} \mathrm{Pow}(A^*) \to \mathrm{Pow}(A^*_i) \xrightarrow{\mathrm{sp}_i} \mathcal{GR}(A_i^*) \xrightarrow{\mathrm{ann}_i} \mathcal{GR}(A_i) $$
where the second map is induced from the projection $A^*\to A^*_i$, $\mathrm{sp}_i$ is the $k$-linear span map on $A^*_i$ and $\mathrm{ann}_i$ is the annihilator map between $\mathcal{GR}(A^*_i)$ and $\mathcal{GR}(A_i)$ which is an order-reversing homeomorphism.

%Let $\alpha_i$ denote the map $\Cl{X} \to \mathcal{GR}(A_i^*)$ for every $ i \in \NN$ as above. 
We need to show that $\alpha_i$ is  lower semi-continuous. 
Consider a closed subset $F \subset X$ with $\dim \alpha_i(F) = d$ where $0\le d \le \dim A_i^*$. There are $d$ points $x_1, \ldots, x_d \in F$ such that 
$$\alpha_i(F) = \mathrm{sp}_i \circ \hat{\varepsilon}(\{x_1,\ldots,x_d\}).$$
The continuity of the map $\varepsilon$ implies that the subspace $\mathrm{sp}_i \circ \hat{\varepsilon} (\{x'_1,\ldots,x'_d\}) $ is close to $\alpha_i(F)$ in the topology of $\mathcal{GR}(A_i^*)$. So these two subspaces have the same dimension $d$ for points $x'_i$ sufficiently close to $x_i$ in $X$. This determines a Chaubuty neighborhood $\Omega$ of $F$ so that
$ \alpha_i(F') \subset \mathrm{sp}_i \circ \hat{\varepsilon} (\{x'_1,\ldots,x'_d\})$ for $F' \in \Omega$ as required.
\end{proof}

%First consider the dimension function $$\delta_i = \dim \circ \alpha_i : \Cl{X} \to \NN_{\ge 0}$$ 
%is lower semi-continuous. 
%
%The map $\delta_i$ is lower semi-continuous and takes only finitely many values. In particular $\delta_i$ is Borel. However, the above argument shows that $\alpha_i$ is continuous on every fiber of $\delta_i$.

\subsection{Regular functions on affine spaces}
\label{sub:algebraic varieties}

We specialize the  discussion to our case of interest, which is the algebra of $k$-regular functions on an affine space.

Consider the algebra $A$ of $k$-regular functions on the $N$-dimensional affine space 
$$A = k\left[x_1,\ldots,x_N\right]$$
for some fixed $N \in \NN$.  For our purposes $A$ is regarded simply as an $k$-vector space, and it is in fact the direct limit of the finite dimensional $k$-vector spaces
$$ A_i = \{p\in  k\left[x_1,\ldots,x_N\right]\: : \: \deg p \le i \}$$
with the direct system of maps $\iota_i : A_i \to A_{i+1}$ being the inclusion. An inverse system of maps $\sigma_i : A_{i+1} \to A_i$ is given by
$$ p = \sigma_i(p) + r,  \quad \text{s.t. $\sigma_i(p) \in A_i$ and $r$ has no non-zero monomials of degree $\le i$}  $$
for all $i \in \NN$ and polynomials $p \in A_{i+1}$. The two systems $\iota_i$ and $\sigma_i$ are compatible in our previous sense, that is $\sigma_i \circ \iota_i = \mathrm{id}_{|A_i}$.

Let $\mathrm{I}(F)$ denote the ideal in $A$ of vanishing $k$-regular functions on the subset $F$  of the affine space $k^N$.  In other words
$$ 
 \mathrm{I}(F) = \{ p \in A \: : \: p(x) = 0 \quad \forall x \in F \}.
$$
By definition $\mathrm{I}(F) = \mathrm{I}(\overline{F}^{\mathrm{Z}})$ where $\overline{F}^{\mathrm{Z}}$ is the Zariski closure of $F$ in $k^N$.

%Let $\overline{F}^{\mathrm{Z}}$ denote the Zariski closure of the subset $F \subset X$, so that $\overline{F}^{\mathrm{Z}}$ consists of the $k$-rational points of $\overline{F}^Z$.  Moreover  It is clear that

%It is well known that since $F \subset \VV(k)$ then $\mc{Z}{F}$ is defined over $k$, so that $I(F) = I_K(F) \cap \van{\VV}{k}$ \footnote{Here $K$ is the algebraically closed field ("universal domain") containing $k$ and used in the definition of $\GG$.}.

\begin{prop}
	\label{prop:Zariski closure map is measurable}
Consider $k^N$ with the topology arising from the local field $k$. Then the map 
	$ \mathrm{I} : \Cl{k^N} \to \Gr{A} $ is Borel measurable.
\end{prop}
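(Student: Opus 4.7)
The plan is to deduce the proposition directly from the previous Proposition \ref{prop:linear closure map is measurable} by exhibiting a suitable continuous map $\varepsilon$ and verifying that the ideal $\mathrm{I}(F)$ is precisely $\mathrm{ann} \circ \mathrm{sp} \circ \hat{\varepsilon}$ applied to $F$.

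The natural choice is the point-evaluation map
$$ \varepsilon : k^N \to A^*, \quad \varepsilon(x)(p) = p(x) \quad \forall p \in A.$$
First I would check that $\varepsilon$ indeed takes values in $A^* = \varprojlim_i A_i^*$: for each fixed $x$, the functional $p \mapsto p(x)$ is $k$-linear on every $A_i$, and the resulting family $(\varepsilon_i(x))_{i \in \NN}$ is compatible with the inverse system $\sigma_i^*$ because evaluation at a point only depends on the polynomial itself. Next I would argue continuity: since $A^*$ carries the inverse limit topology, it suffices to check that $\varepsilon_i : k^N \to A_i^*$ is continuous for each $i$. Fixing any $k$-basis of monomials of $A_i$, the map $\varepsilon_i(x)$ has coordinates that are polynomials in the entries of $x$, hence continuous in the topology of $k^N$ coming from the local field.

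The main algebraic observation is the identification
$$ \mathrm{I}(F) = \mathrm{ann}\bigl(\mathrm{span}_k(\varepsilon(F))\bigr) = \mathrm{ann} \circ \mathrm{sp} \circ \hat{\varepsilon}(F) $$
for every closed subset $F \subset k^N$. Indeed, $p \in \mathrm{I}(F)$ if and only if $\varepsilon(x)(p) = p(x) = 0$ for all $x \in F$, which is equivalent to $p$ annihilating every element of $\varepsilon(F)$ and hence annihilating the $k$-linear span of $\varepsilon(F)$.

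With these observations in hand, Proposition \ref{prop:linear closure map is measurable} applied to $X = k^N$ and to the continuous map $\varepsilon$ above immediately yields that $\mathrm{I} = \mathrm{ann} \circ \mathrm{sp} \circ \hat{\varepsilon}$ is Borel measurable as a map $\Cl{k^N} \to \Gr{A}$. The only step that might require a word of care is continuity of $\varepsilon$ in the inverse limit topology of $A^*$, but since it is obtained level by level from polynomial coordinate functions, this is routine. In particular there is no genuine obstacle; the work is all in the preceding proposition.
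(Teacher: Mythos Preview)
Your proposal is correct and follows essentially the same approach as the paper: define the evaluation map $\varepsilon(x)(p)=p(x)$, observe the identity $\mathrm{I}=\mathrm{ann}\circ\mathrm{sp}\circ\hat{\varepsilon}$, and invoke Proposition~\ref{prop:linear closure map is measurable}. You in fact supply more detail than the paper does, verifying continuity of $\varepsilon$ level by level and spelling out the annihilator identity, but the underlying argument is identical.
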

\begin{proof}
The evaluation map $\varepsilon : k^N \to A^*$ is defined by
	$$ 
	  \varepsilon(x)(p) = p(x) \quad \forall x \in k^N, p \in A.
	$$
Extend $\varepsilon$ to a set map $\hat{\varepsilon} : \Cl{k^N} \to \mathrm{Pow}(A^*)$. Observe the identity
$$ 
 \mathrm{I} = \mathrm{ann} \circ \mathrm{sp} \circ \hat{\varepsilon} : X \to \Gr{A}.
$$
The fact that $I$ is Borel measureable follows from Proposition \ref{prop:linear closure map is measurable}.
\end{proof}

The proof of Proposition \ref{prop:linear closure map is measurable}  gives more information, namely that the composition of $\mathrm{I}$ with a projection $\mathcal{GR}(A) \to \mathcal{GR}(A_i)$ is upper semi-continuous.

\subsection{Actions with fixed points on affine spaces}
\label{sub:algebraic actions}

Let $\mathrm{GL}(A)$ denote the group of all $k$-linear automorphisms of $A$. $\mathrm{GL}(A)$ admits a subgroup $\mathrm{GL}^*(A)$ given by
$$ \mathrm{GL}^*(A) = \{ \alpha \in \mathrm{GL}(A) \: : \: \sigma_i \alpha = \sigma_i \alpha \sigma_i  \quad \forall i \in \NN \} $$
where  $\sigma_i$ is the map $ A \to A_i$ and $A_i$ is considered as a subspace of $A$. The subgroups $\mathrm{GL}^*(A_i)$ of $\mathrm{GL}(A_i)$ may be analogously defined for every $i \in \NN$. For $\alpha \in \mathrm{GL}^*(A)$ we set $\alpha_i = \sigma_i \alpha \in \mathrm{GL}^*(A_i)$ for every $i \in \NN$. Observe that $\mathrm{GL}^*(A) = \varprojlim_{i} \mathrm{GL}^*(A_i)$.

Let $G$ be any group admitting an  action on the $N$-dimensional affine space defined over $k$.
We obtain a representation $\tau$ of $G$ in the $k$-vector space $A$. Namely $\tau$ is a group homomorphism 
$$ \tau :  G \to \mathrm{GL}(A), \quad \tau(g)(p) = p \circ f_{g^{-1}}, \quad \forall p \in A $$
so that for every element $g \in G$ the map $f_g = (f_g^1,\ldots,f^N_g)$ is an automorphism from $k^N$ to itself given by $k$-polynomials.

\begin{prop}
\label{prop:homogenous action resticts to subspaces}
If the action of $G$  fixes the point $0 \in k^N$ then
$ \tau(G)  \subset \mathrm{GL}^*(A)$.
\end{prop}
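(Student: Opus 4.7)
The plan is to unwind the definitions and exploit the fact that fixing the origin forces every polynomial map $f_g=(f_g^1,\dots,f_g^N)$ to have vanishing constant term, so that each component $f_g^j$ lies in the ideal $(x_1,\dots,x_N)$. The identity $\sigma_i\tau(g)=\sigma_i\tau(g)\sigma_i$ that we must verify says exactly that the part of $p$ consisting of monomials of degree $>i$ contributes nothing to the truncation of $\tau(g)(p)$ at degree $i$, so the heart of the matter is to show $\tau(g)$ preserves the complementary subspace
$$ R_i = \{q\in A \: : \: \text{$q$ has no non-zero monomials of degree $\le i$}\}.$$

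First I would verify the following claim: for any $\alpha=(\alpha_1,\dots,\alpha_N)\in\NN^N$ and any $g\in G$, the polynomial $(f_{g^{-1}})^{\alpha}=\prod_{j=1}^N (f_{g^{-1}}^j)^{\alpha_j}$ consists only of monomials of degree $\ge |\alpha|=\sum_j\alpha_j$. This is immediate from $f_{g^{-1}}(0)=0$, which forces each $f_{g^{-1}}^j$ to be a sum of monomials of degree $\ge 1$; multiplying $|\alpha|$ such factors yields monomials of degree $\ge|\alpha|$. Expanding a general $q = \sum_{|\alpha|>i} c_\alpha x^\alpha \in R_i$ and applying this claim to each term shows that $\tau(g)(q)=\sum_{|\alpha|>i} c_\alpha (f_{g^{-1}})^{\alpha} \in R_i$, i.e.\ $\tau(g)(R_i)\subset R_i$.

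Now write an arbitrary $p\in A$ as $p=\sigma_i(p)+r$ with $\sigma_i(p)\in A_i$ and $r\in R_i$, using the decomposition introduced in \S\ref{sub:algebraic varieties}. Applying $\tau(g)$ and then $\sigma_i$ gives
$$ \sigma_i\tau(g)(p) = \sigma_i\tau(g)(\sigma_i(p)) + \sigma_i\tau(g)(r) = \sigma_i\tau(g)\sigma_i(p), $$
since $\tau(g)(r)\in R_i$ and hence $\sigma_i$ kills it. This is the defining relation for $\mathrm{GL}^*(A)$, so $\tau(g)\in\mathrm{GL}^*(A)$ for every $g\in G$.

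I do not foresee a genuine obstacle here: the whole point is the elementary observation that polynomial maps fixing the origin raise the degree of monomials. The only thing to be careful about is the direction of composition in the definition of $\tau$, which uses $f_{g^{-1}}$ rather than $f_g$; this does not matter since $f_{g^{-1}}$ also fixes $0$.
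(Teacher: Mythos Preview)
Your proof is correct and follows essentially the same approach as the paper's: both reduce to the observation that fixing the origin forces the components $f_g^j$ to have vanishing constant term. The paper leaves the implication from this fact to the identity $\sigma_i\tau(g)=\sigma_i\tau(g)\sigma_i$ implicit, whereas you have spelled it out by showing $\tau(g)$ preserves the kernel $R_i$ of $\sigma_i$.
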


\begin{proof}
We need to verify that 
$\sigma_i \tau(g) \sigma_i = \sigma_i \tau(g)$ holds for every element $g \in G$ and every $i \in \NN$.
The fixed point assumption means that the polynomials $f_g^j$  have a vanishing constant term for every element $g \in G$ and every index $1 \le j \le N$, and this implies the required condition.
\end{proof}

The projectivization of the representation $\tau$ gives rise to a natural action $\overline{\tau}$ of $G$ on the Grassmannian $\Gr{A}$.
%Note that assuming that the polynomials $f_g$ depend continuously on the element $g$, as is the case for 

%Another result  crucial to our analysis is the following.
%
%\begin{theorem}
%\label{thm:action on rational points is locally closed}
%Let $\GG$ be a $k$-group acting on a $k$-variety $\VV$ so that the action is defined over $k$. Then every $\GG(k)$-orbit is locally closed in $\VV(k)$ in the Hausdorff topology.
%\end{theorem}
%\begin{proof}
%The theorem is a combination of two overlapping results. In the zero characteristic case this is contained in \cite{borel_serre}, see also \cite[Theorem 3.1.3]{zimmer}. In the non-Archimedean case this is \cite[6.15, Theorem A and 6.8]{bz}.
%\end{proof}

%Assume that the group $\GG$ admits a $k$-action on the variety $\VV$. Then there is a finite dimensional $k$-vector space $W$, a $k$-embedding $\nu : V \to W$ and a $k$-morphism $\rho : G \to \mathrm{GL}(W)$ so that $\nu(gv) = \rho(g) \nu(v)$ for all $g \in G$ and $v \in V$.

\subsection{The conjugation action of an algebraic group}
\label{sub:action by conjugation}

We further specialize our discussion to the conjugation action of an algebraic group on itself.

Let $\GG$ be a linear group defined over $k$. We may assume that $\GG$ is affine \cite[1.10]{borel} and that $\GG \le \mathrm{GL}_M$ for some $M \in \NN$. Denote $G = \GG(k)$ so that 
$$G \subset \mathrm{M}_{M}(k) \cong k^N \quad \text{where $N = M^2$} .$$

Consider the action of $G$ on $\mathrm{M}_{M}(k) \cong k^N$ by matrix conjugation. This action is  defined over $k$ and fixes zero. There is a corresponding  representation $\tau$ of the group $G$ in the the $k$-vector space $A = k\left[x_1,\ldots, x_N \right]$.
According to Proposition \ref{prop:homogenous action resticts to subspaces}  in fact $\tau(G) \subset  \mathrm{GL}^*(A)$ and $\tau = \varprojlim_{i} \tau_i$ for a family of representations $\tau_i : G \to \mathrm{GL}^*(A_i)$. The representations $\tau_i$ are continuous.

% Note that the restriction of this action to the affine subvariety $\GG$ of $\mathbb{A}^N$ is just the conjugation action of $\GG$ on itself.  

%We conclude that Proposition \ref{prop:homogenous action resticts to subspaces} applies.

\begin{definition}
\label{def:map phi}
The map $\mathrm{I}_G : \Sub{G} \to \Gr{A}$ is the composition
$$ \mathrm{I}_G : \Sub{G}  \to \Cl{k^N}   \xrightarrow{\mathrm{I}} \Gr{A} $$
%The results of this section are summarized in the following proposition.
where the map $\mathrm{I}$ is given prior to  Proposition \ref{prop:Zariski closure map is measurable} above.
\end{definition}

Clearly $\mathrm{I}_G(G) \subset \mathrm{I}_G(F)$ for every closed subset $ F $ in $G$. Moreover, for every closed subgroup $H$ in $G$ the ideal $\mathrm{I}_G(H) \nrm A$ corresponds to its Zariski closure $\overline{H}^{\mathrm{Z}}$. Of course, $\overline{H}^{\mathrm{Z}}$ is a Zariski closed subgroup of $G$.
The map $\mathrm{I}_G$ is $G$-equivariant with respect to   the conjugation action on $\Sub{G}$ and the $\bar{\tau}$-action on $\Gr{A}$.

\subsection{Intersections of conjugates}

%
%The following proposition is an immediate corollary of Noetherianity.
%
%\begin{prop}
%\label{prop:noetherianity}
%Let $N$ be any closed subgroup of $G$. Then there is some $i \in \NN$ depending on $N$ such that for every closed subgroup $N \le L \le G$ the condition $(p_i \circ \mathrm{I})_G(N) \le (p_i \circ \mathrm{I}_G)(L)$  implies $\mathrm{I}_G(N) \le \mathrm{I}_G(L)$. 
%\end{prop}
%
%The conclusion of Proposition \ref{prop:noetherianity} is equivalent to  $\mathcal{Z}(L) \le \overline{H}^{\mathrm{Z}}$, where $\mathcal{Z}$ denotes taking Zariski closure.

As an application of the above discussion, we obtain a result on the intersection of finitely many conjugates of a Zarsiki closed subgroup. This result will be later used in the proof of Lemma \ref{lem:on intersections of parabolics}.

\begin{prop}
	\label{prop:intersections over deformed elements}
	Let $N$ and $H$ be a pair of  Zariski closed subgroups in $G$ so that $N \le H$ and $N \nrm G$. 
	Let $\Gamma \le G$ be a Zariski dense subgroup. Assume that
	$$ 
	 \bigcap_{\gamma \in \Gamma}H^{\gamma} = N.
	$$
	Then there is some $m \in \NN$, elements $\gamma_1, \ldots, \gamma_m \in \Gamma$ and an identity neighborhood $W$ in $G$ so that
	$$ \bigcap_{n=1}^m H^{\gamma_n w_n} = N $$
	holds for every choice of elements  $w_n \in W$.
\end{prop}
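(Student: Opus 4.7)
The plan is to recast the problem as a Zariski-open condition on the algebraic variety $\GG^m$ at the level of the finite-dimensional subspaces $A_i \subset A$, and then to exploit the Zariski density of $\Gamma$ together with the fact that Zariski-open subsets of $\GG^m$ are open in the Hausdorff topology on $G^m$.

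First I would sharpen the hypothesis to $\bigcap_{g \in G} H^g = N$. The inclusion $\supseteq$ holds because $N \nrm G$ and $N \le H$, while $\subseteq$ follows from $\Gamma \subseteq G$ and $\bigcap_{\gamma \in \Gamma} H^\gamma = N$. Set $J_G := \sum_{g \in G} \mathrm{I}(H^g)$, a $G$-invariant ideal of $A$ satisfying $V(J_G) = \bigcap_g H^g = N$. By the Hilbert basis theorem $J_G$ is finitely generated, say by $p_1,\ldots,p_r$. Each $p_j$ can be expressed as a finite sum $p_j = \sum_l q_{j,l}$ with $q_{j,l} \in \mathrm{I}(H^{g_{j,l}})$ for some $g_{j,l} \in G$. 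Taking $i \in \NN$ larger than all the degrees $\deg q_{j,l}$, each $q_{j,l}$ lies in $\mathrm{I}(H^{g_{j,l}}) \cap A_i = \tau_i(g_{j,l})(\mathrm{I}(H) \cap A_i)$, hence in the finite-dimensional subspace
$$S := \sum_{g \in G} \tau_i(g)(\mathrm{I}(H) \cap A_i) \subseteq A_i.$$
Consequently $p_j = \sum_l q_{j,l} \in S$ for every $j$, so $S$ generates $J_G$ as an ideal of $A$ and $V(S) = V(J_G) = N$.

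Next I would select the $\gamma_n$'s. As $S$ is finite-dimensional, it equals $\sum_{n=1}^m \tau_i(g_n)(\mathrm{I}(H) \cap A_i)$ for some tuple $(g_1,\ldots,g_m) \in G^m$ and some $m$. The locus
$$\mathcal{U} = \Bigl\{(g_1,\ldots,g_m) \in \GG^m : \sum_{n=1}^m \tau_i(g_n)(\mathrm{I}(H) \cap A_i) = S\Bigr\}$$
is precisely the set on which the lower semi-continuous function $(g_n) \mapsto \dim \sum_n \tau_i(g_n)(\mathrm{I}(H) \cap A_i)$ attains its maximum value $\dim S$, so $\mathcal{U}$ is Zariski open and non-empty in $\GG^m$. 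Zariski density of $\Gamma$ in $\GG$ gives that $\Gamma^m$ is Zariski dense in $\GG^m$, and therefore $\Gamma^m \cap \mathcal{U}$ is non-empty; select any $(\gamma_1,\ldots,\gamma_m)$ in this intersection.

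Finally, $\mathcal{U}$ being Zariski open in $\GG^m$ is also open in the Hausdorff topology on $G^m = \GG(k)^m$, so there exists an identity neighborhood $W$ of $e$ in $G$ with $(\gamma_1 w_1,\ldots,\gamma_m w_m) \in \mathcal{U}$ whenever each $w_n \in W$. For such a tuple, $\sum_n \tau_i(\gamma_n w_n)(\mathrm{I}(H) \cap A_i) = S$, and passing to ideals in $A$ yields $\sum_n \mathrm{I}(H^{\gamma_n w_n}) \supseteq \langle S \rangle = J_G$. Taking varieties, $\bigcap_{n=1}^m H^{\gamma_n w_n} = V\bigl(\sum_n \mathrm{I}(H^{\gamma_n w_n})\bigr) \subseteq V(J_G) = N$; the reverse inclusion is automatic from normality of $N$. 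The main technical point is the choice of the truncation level $i$: one must ensure that not just the generators of $J_G$ themselves, but also the individual summands in the decompositions $p_j = \sum_l q_{j,l}$ (which may have higher degrees than the $p_j$'s), all lie inside $A_i$, so that $S$ genuinely generates $J_G$ as an ideal in $A$.
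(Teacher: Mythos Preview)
Your argument is correct and sits in the same linearization/Noetherianity framework as the paper, but organizes the density step differently. The paper works with the $\Gamma$-span $J = \sum_{\gamma \in \Gamma} \bar\tau(\gamma) I_H$, extracts the $\gamma_n$'s directly from Noetherianity of $J$, and then invokes the classical Borel density theorem \cite[II.4.4]{Ma} to upgrade $\bar\tau_i(\Gamma)$-invariance of $\sigma_i(J)$ to $\bar\tau_i(G)$-invariance; Hausdorff continuity of $\bar\tau_i$ then produces the perturbation neighborhood $W$. You instead pass to the $G$-span from the outset, encode the target equality as the maximal-dimension locus of an algebraic family of subspaces of $A_i$ over $\GG^m$, and obtain both the $\gamma_n$'s and the neighborhood $W$ in one stroke from Zariski density of $\Gamma^m$ in $\GG^m$ together with the implication Zariski-open $\Rightarrow$ Hausdorff-open. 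Your route avoids quoting Borel density as a black box, at the mild cost of checking that $\Gamma^m$ is Zariski dense in $\GG^m$ (immediate since $\GG$ is connected, hence $\GG^m$ irreducible). One small point: your assertion that $\dim S$ is the \emph{global} maximum of $\phi$ on $\GG^m$, rather than just on $G^m$, implicitly uses that $S$ is $\tau_i(\GG)$-invariant and not merely $\tau_i(G)$-invariant; this is true because $G = \GG(k)$ is itself Zariski dense in $\GG$, but it is cleaner to take $\mathcal U = \{\phi \ge \dim S\}$, which is Zariski open without further argument, and note that for tuples in $G^m$ one always has $\sum_n \tau_i(g_n)(\mathrm I(H)\cap A_i) \subseteq S$, so that $\mathcal U \cap G^m$ coincides with your locus.
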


For the proof recall that $\bar{\tau}$ is the  representation of $G$ on the Grassmaniann space $\Gr{A}$ arising from $\tau$.  In fact $ \bar{\tau} = \varprojlim \bar{\tau}_i$ where $\bar{\tau_i}$ are the projective actions of $G$ on the Grassmanianns $\mathcal{GR}(A_i)$ arising from the finite dimensional $\tau_i$'s.

\begin{proof}[Proof of Proposition \ref{prop:intersections over deformed elements}]
	%Let $i \in \NN$ be as given in Proposition \ref{prop:noetherianity} with respect to the subgroup $N$. 
	Denote $I_H = \mathrm{I}_G(H)$. This is a 
	% and $I_N = \mathrm{I}_G(N)$. These are 
	 point in  $\Gr{A}$ corresponding to an ideal in $A = \varinjlim_{i \in \NN}A_i$. Consider the ideal 
	$$ 
	 J =  \sum_{\gamma \in \Gamma} \bar{\tau}(\gamma) I_H \in \Gr{A}.
	$$
	Clearly $J$ is a $\bar{\tau}(\Gamma)$-invariant subspace of $A$. By assumption $J$ determines the Zariski closed subgroup $N$. The Noetherianity of $A$ implies that $J$ admits a finite generating set $S$ and that moreover $ J = \sum_{n=1}^m \bar{\tau}(\gamma_n) I_H$ for some $m \in \NN$ and elements $\gamma_1, \ldots, \gamma_m \in \Gamma$.

	Choose a sufficiently large index $i \in \NN$ so that $S \subset  A_i$ and let $\sigma_i$ denote the natural projection $\Gr{A} \to \Gr{A_i}$. Observe that
	$$ 
	 S = \sigma_i(S) \subset  \sigma_i(J) = \sum_{n=1}^m \bar{\tau}_i(\gamma_n) \sigma_i(I_H).
	$$
	The projective representation $\overline{\tau}_i$ is continuous. Since $\sigma_i(J)$ is  $\bar{\tau}_i(\Gamma)$-invariant the classical Borel density theorem \cite[II.4.4]{Ma} implies that $\sigma_i(J)$ is $\bar{\tau}_i(G)$-invariant as well. Therefore there is a sufficiently small identity neighborhood $W \subset G$ so that the above condition continues to hold up to passing to  elements of the form $g_n w_n $ with $w_n \in W$.	
		This shows that  the ideal $I' = \sum_{n=1}^m \bar{\tau}(\gamma_n w_n) I_H$ contains the generating set $S$ and in particular that $J = I'$. The required conclusion follows.
\end{proof}

\subsection{Local Zariski closure}
%Let $\GG$ be as in the statement of Theorem \ref{thm:borel density for IRS} and denote $G = \GG(k)$. 
%Once more we continue with all of the above notation.

A certain modification $\mathrm{J}_G$ of the map  $\mathrm{I}_G$ introduced above allows us to use Zariski closure in a local manner. Fix  a countable local basis $U_i$ of neighborhoods  at the identity for $G$.

\begin{definition}
	\label{def:map Psi}

  The map $\mathrm{J}_G$ is given by
	$$ \mathrm{J}_G : \Sub{G} \to \Gr{A}, \quad \mathrm{J}_G(H) = \mathrm{span}_{i} \, \mathrm{I}(H \cap \overline{U}_i) $$

\end{definition}

%In other words, $\mathrm{J}_G(H)$  is  spanned as a $k$-vector subspace by the polynomials vanishing on $H \cap \overline{U_i}$ for every $i$.  
Note that this definition  is independent of the particular choice of a local basis.
Our treatment of the map $\mathrm{I}_G$ applies to the map $\mathrm{J}_G$ as well. In particular, it follows from Proposition \ref{prop:linear closure map is measurable} that $\mathrm{J}_G$ is Borel measurable and $G$-equivariant for the corresponding actions.

\begin{prop}
	\label{prop:on map Psi and noetherianity}
Let $G$ be a semisimple analytic group over the local field $k$ and $H$ a closed subgroup in $G$.
\begin{enumerate}
	
\item If $k$ is Archimedean then $\mathrm{J}_G(H) = \mathrm{I}_G(H_0)$, and
\label{itm:arch case}

	\item If $k$ is non-Archimedean then there is a compact open subgroup $K_H \le G$  
	depending on $H$ so that
	$ \mathrm{J}_G(H) = \mathrm{I}_G(H \cap K_H) $.
	\label{item:non arch case}
\end{enumerate} 
\end{prop}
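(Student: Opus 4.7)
The plan is to combine the Noetherianity of the polynomial algebra $A = k[x_1,\ldots,x_N]$ with a judicious choice of local basis $(U_i)$ adapted to each of the two cases.

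The first step applies to both parts. Since $(U_i)$ is a decreasing local basis at the identity, the closed subsets $H \cap \overline{U}_i$ decrease, so the vanishing ideals $\mathrm{I}(H \cap \overline{U}_i)$ form an ascending chain of ideals in $A$; the $k$-span appearing in Definition~\ref{def:map Psi} is therefore just their union. By Noetherianity of $A$ this chain stabilizes, so there exists an index $i_0$ (which we are free to enlarge arbitrarily) for which $\mathrm{J}_G(H) = \mathrm{I}(H \cap \overline{U}_{i_0})$.

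For the non-Archimedean assertion (\ref{item:non arch case}), I would exploit the independence of $\mathrm{J}_G$ from the choice of basis (noted just after Definition~\ref{def:map Psi}) to take the $U_i$ to be compact open subgroups $K_i$, which exist since a non-Archimedean semisimple analytic group is totally disconnected. Since $\overline{K}_i = K_i$, the stabilized identity becomes $\mathrm{J}_G(H) = \mathrm{I}(H \cap K_{i_0}) = \mathrm{I}_G(H \cap K_{i_0})$, and one sets $K_H := K_{i_0}$.

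For the Archimedean assertion (\ref{itm:arch case}) I want to identify $\mathrm{I}(H \cap \overline{U}_{i_0})$ with $\mathrm{I}(H_0)$ once $i_0$ is chosen large enough. By Cartan's theorem $H$ is a Lie subgroup of $G$, hence $H_0$ is open in $H$, so there is an identity neighborhood in $G$ meeting $H$ inside $H_0$; enlarging $i_0$ if necessary places $\overline{U}_{i_0}$ inside this neighborhood, yielding $H \cap \overline{U}_{i_0} \subseteq H_0$ and therefore $\mathrm{I}(H_0) \subseteq \mathrm{I}(H \cap \overline{U}_{i_0})$. The reverse inclusion is the heart of the matter: $H \cap U_{i_0}$ is a nonempty open subset of the connected real-analytic manifold $H_0$, so any $p \in A$ vanishing on $H \cap \overline{U}_{i_0}$ restricts to a real-analytic function on $H_0$ which vanishes on a nonempty open set and must therefore vanish identically on $H_0$ by the identity principle for real-analytic functions. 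This analytic-continuation step is the main (and essentially only) obstacle; the remainder is a routine interplay of Noetherianity with the definition of $\mathrm{J}_G$.
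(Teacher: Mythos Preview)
Your proof is correct and follows essentially the same approach as the paper. The only organizational difference is that the paper does not invoke Noetherianity for part~(\ref{itm:arch case}): it observes directly that $H_0$ being open in $H$ forces $\mathrm{J}_G(H)=\mathrm{J}_G(H_0)$, and then cites the fact that a Hausdorff-open subset of an irreducible $k$-variety in characteristic zero is Zariski dense \cite[Lemma~3.2]{pr} to conclude $\mathrm{J}_G(H_0)=\mathrm{I}_G(H_0)$ --- this is exactly your identity-principle step in slightly different dress.
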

\begin{proof}
(\ref{itm:arch case}) In the Archimedean case the connected component at the identity $H_0$ is open in $H$ and therefore $\mathrm{J}_G(H) = \mathrm{J}_G(H_0)$.
In zero characteristic, a subset of an irreducible variety which is  open in the Hausdorff $k$-topology is Zariski dense \cite[Lemma 3.2]{pr}. In particular $\mathrm{J}_G(H_0) = \mathrm{I}_G(H_0)$.

(\ref{item:non arch case}) Noetherianity implies that  there is an identity neighborhood $U_H$ in $G$ depending on $H$ so that $\mathrm{J}_G(H) = \mathrm{I}(H \cap \overline{U_H})$. In the non-Archimedean case we may assume that $K_H = \overline{U_H}$ is a compact open subgroup.
\end{proof}

Note that in the non-Archimedean case $\mathrm{J}_G(H) = \mathrm{I}_G(H \cap K'_H)$ as well for every compact open subgroup  $K'_H$ contained in $ K_H$.

\subsection{Totally disconnected factors and local Zariski closure}

Given any totally disconnected l.c.s.c.  group $D$  the definition of the map $\mathrm{J}_G$ can be generalized by extending its domain  to the product Chabauty space $\Sub{G\times D}$. This will be useful in the proof of Theorem \ref{thm:borel density for IRS} in \S \ref{sec:borel density theorem for IRS} below.
Denote $G_1 = G \times D$ and fix a countable local basis of identity neighborhoods $V_i$ for $G_1$.
\begin{definition}
	\label{def:map Psi for products}
 The map $\mathrm{J}_{G_1, G}$ is given by
	$$ \mathrm{J}_{G_1,G} : \Sub{G_1} \to \Gr{A}, \quad \mathrm{J}_{G_1,G}(H) = \mathrm{span}_{i} \, \mathrm{I}(\mathrm{pr}_G (H \cap \overline{V}_i) ) $$
 Here $\mathrm{pr}_G$ is the projection map $G_1 = G \times D \to G$.
\end{definition}

Our discussion of the map $\mathrm{J}_G$ naturally extends to the map $\mathrm{J}_{G_1,G}$. In particular this map is measurable and equivariant. If $G$ itself is non-Archimedean then an analogue of Proposition \ref{prop:on map Psi and noetherianity} holds true for $\mathrm{J}_{G_1,G}$ in the sense that  every closed subgroup $H \le G_1 = G \times D$ satisfies $\mathrm{J}_{G_1,G}(H) = \mathrm{I}_G(\mathrm{pr}_G(H \cap K_H))$   with some compact open subgroup $K_H \le G_1$ depending on $H$.

%This shows that the map $\mathrm{J}_G$ is only of interest in the positive characteristic case.
%\begin{remark}
%	Over local fields of zero characteristic, the intersection of a Zariski dense and a Hausdorff open subsets is Zariski dense  . Therefore the more complicated map $\mathrm{J}_G$ is useful only in positive characteristic.
%\end{remark}

%\subsection{Trace fields}
%
%TODO
%
%\begin{prop}
%\label{prop:trace field map is measurable}
%Trace field map is measurable into closed subsets of $k$.
%\end{prop}
%\begin{proof}
%TODO
%\end{proof}

\section{Borel density theorem for invariant random subgroups}
\label{sec:borel density theorem for IRS}

Fix  a local field $k$ and let $G$ a happy semisimple analytic group over $k$. We prove a  result on invariant random subgroups in $G$ generalizing the classical Borel density theorem. The main difficulty is in dealing with the positive characteristic case --- relying on the methods of \S\ref{sec:zariski closure and lie algebras} and on Pink's theorem.

\subsection{Invariant probability measures on Grassmannians}
Morally, the  key idea is coming from  Furstenberg's proof of Borel's theorem \cite{furst_borel}. 

%Namely, we study probability measures on the Grassmanian of $\mathfrak{g} = \textrm{Lie}(G)$ and show that such a measure can only be invariant in the degenerate cases.

%As in the statement of Theorem \ref{thm:borel density for IRS}, we assume below that $G$ is simple and non-compact:

\begin{prop}[Furstenberg's lemma]
\label{prop:grassmanian has no invariant probs}
%Let $G$ be as in Theorem \ref{thm:borel density for IRS}.
Let $V$ be a finite dimensional $k$-vector space. Let  $\rho : G \to \mathrm{GL}(V)$ a $k$-rational representation and $\bar{\rho} : G \to \mathrm{PGL}(V)$ the corresponding projective representation. Then every  $\bar{\rho}(G^+)$-invariant probability measure  on $\mathrm{P}(V)$  is supported on $\bar{\rho}(G^+)$-invariant points.
\end{prop}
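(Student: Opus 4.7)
The plan is to use the $k$-isotropic structure of the simple factors of $G$ to reduce to a single $k$-split one-parameter unipotent subgroup $U \cong \mathbb{G}_a(k)$, and then argue directly by an ``escape at infinity'' analysis of projective orbits combined with a short Fubini computation that sidesteps any need to average along powers of a single unipotent. By the Borel--Tits structure theory referenced in \S\ref{sub:G+ and happy}, $G^+$ is generated by the $k$-points of the root subgroups $\mathbb{U}_\alpha \cong \mathbb{G}_a$ with respect to maximal $k$-split tori in the simple factors of $G$. Since this collection $\{U_\alpha := \mathbb{U}_\alpha(k)\}_\alpha$ is finite and a point in $\mathbb{P}(V)$ is $\bar\rho(G^+)$-fixed iff it is fixed by each $\bar\rho(U_\alpha)$, it suffices to prove $\mu(\mathrm{Fix}(U_\alpha)) = 1$ for each $\alpha$; intersecting finitely many full-measure fixed sets then gives $\mu(\mathrm{Fix}(G^+)) = 1$.

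Fix such an $\alpha$ and set $U = \mathbb{U}_\alpha(k) \cong (k,+)$. The $k$-algebraic representation $\rho|_{\mathbb{G}_a}$ has the polynomial form
$$ \rho(u) v \;=\; \sum_{i=0}^{N} u^i X_i v, \qquad X_0 = I, $$
for suitable $X_i \in \mathrm{End}_k(V)$; the identity $\rho(u+u') = \rho(u) \rho(u')$ forces $X_i X_j = \binom{i+j}{i} X_{i+j}$. For any $0 \neq v \in V$ set $n(v) = \max\{i : X_i v \neq 0\}$. The maximality of $n(v)$ yields $X_i X_{n(v)} v = \binom{i+n(v)}{i} X_{i+n(v)} v = 0$ for every $i \ge 1$, so $\rho(u) X_{n(v)} v = X_{n(v)} v$ and $[X_{n(v)} v]$ lies in $\mathrm{Fix}(U)$, \emph{independently of $\mathrm{char}(k)$}. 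Factoring out the leading power $u^{n(v)}$ of $u$ one obtains the convergence $[\rho(u) v] \to [X_{n(v)} v]$ as $|u| \to \infty$ in $k$, pushing every projective orbit toward the fixed-point set $\mathrm{Fix}(U)$.

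Now fix any compact $A \subset \mathbb{P}(V) \setminus \mathrm{Fix}(U)$. The previous paragraph shows that for each $[v] \in \mathbb{P}(V)$ the set $T_{[v]} := \{u \in U : \bar\rho(u)[v] \in A\}$ is bounded in $U = (k,+)$ (empty when $[v] \in \mathrm{Fix}(U)$, and bounded otherwise by escape to the fixed set), hence has finite Haar measure $|T_{[v]}|$. Let $B_R \subset U$ be a F\o lner exhaustion (e.g.\ closed balls of radius $R$) with $|B_R| \to \infty$. By $U$-invariance of $\mu$ and Fubini,
$$ |B_R| \cdot \mu(A) \;=\; \int_{B_R} \mu(\bar\rho(u)^{-1} A) \, du \;=\; \int_{\mathbb{P}(V)} |T_{[v]} \cap B_R| \, d\mu([v]), $$
so after dividing by $|B_R|$ the integrand is bounded by $1$ and tends pointwise to $0$; dominated convergence forces $\mu(A) = 0$. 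Exhausting $\mathbb{P}(V) \setminus \mathrm{Fix}(U)$ by countably many such compacts gives $\mu(\mathrm{Fix}(U)) = 1$, which is exactly what was required. The main subtlety addressed by this route is positive characteristic, where single unipotent elements have finite order and the classical averaging along $\{u^n\}_{n \in \mathbb{Z}}$ collapses; working instead with the continuous non-compact group $U = (k,+)$, and exploiting that the key identity $X_i X_{n(v)} v = 0$ for $i \ge 1$ follows from the maximality of $n(v)$ without inverting any integer, is what keeps the argument alive uniformly in $\mathrm{char}(k)$.
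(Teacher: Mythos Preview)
Your argument is correct and takes a genuinely different route from the paper's. The paper does not give a self-contained proof: it quotes Furstenberg's original lemma on $\mathbb{P}(V)$ for minimally almost periodic groups as a black box, and then checks the two hypotheses needed there, namely that $\rho(G^+)$ is unbounded in $\mathrm{GL}(V)$ (via a $k$-split torus) and that $G^+$ has no nontrivial finite quotients. You instead exploit the explicit polynomial form of $k$-rational $\mathbb{G}_a$-representations to push every projective orbit into $\mathrm{Fix}(U)$ as $|u|\to\infty$, and then kill the complement by a F{\o}lner--Fubini averaging over $U=(k,+)$. This is more elementary and entirely self-contained; it also makes transparent why nothing breaks in positive characteristic, since your crucial identity $X_iX_{n(v)}v=0$ for $i\ge 1$ uses only the vanishing of $X_{i+n(v)}v$ and never inverts a binomial coefficient. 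The paper's approach, by contrast, is shorter on the page and plugs into a general principle that applies to any group without compact or finite quotients.

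One small imprecision worth flagging: for a non-split semisimple $k$-group the relative root groups $\mathbb{U}_\alpha$ need not be one-dimensional (and for non-reduced relative root systems they are not even commutative), so your assertion $\mathbb{U}_\alpha\cong\mathbb{G}_a$ is not literally true in general. This is harmless for your argument, however: each relative root group is $k$-split unipotent, hence admits a finite normal series with successive quotients $\mathbb{G}_a$, and applying your $\mathbb{G}_a$-step iteratively (restricting to the fixed projective subspace at each stage) yields $\mu(\mathrm{Fix}(U_\alpha))=1$. With that patch the reduction to finitely many root groups and the final intersection go through exactly as you wrote.
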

\begin{proof}
The analogous statement is established in  \cite[Lemma 3]{furst_borel} assuming that $G$ is a minimally almost periodic topological group, i.e. $G$ has no non-trivial continuous homomorphisms into compact groups. This assumption is used twice throughout the proof, and we indicate alternative arguments in our  case. 
We restrict attention to the subgroup $G^+$.
\begin{enumerate}
\item $\rho(G^+)$ is not relatively compact in $\mathrm{GL}(V)$.
Since $G$ is $k$-isotropic it contains some $k$-split torus $S$. The image $\rho(S)$ is $k$-split as well \cite[8.2]{borel} and is in particular not relatively compact.  Since $G^+$ is cocompact in $G$ the image $\rho(S\cap G^+)$ is already not relatively  compact in $\mathrm{GL(V)}$.
\item $G^+$ has no non-trivial homomorphisms into finite groups. Indeed it is known that the group $G^+$ has no finite index subgroups \cite[I.2.3.2]{Ma}.
\end{enumerate}

We can now proceed verbatim as in \cite[Lemma 3]{furst_borel} and deduce that a $\bar{\rho}(G^+)$-invariant probability measure on $P(V)$ is supported on $\bar{\rho}(G^+)$-invariant points. \end{proof}

Assume that  $G$ admits a  $k$-action  on the $N$-dimensional affine space. This gives a representation
$ \tau : G \to \mathrm{GL}(A) $
where $A = k\left[x_1, \ldots, x_N\right]$. 
Recall from \S\ref{sec:zariski closure and lie algebras}
that $A$ is a $k$-vector space direct limit $ A = \varinjlim_{i \in \NN} A_i$. The finite  dimensional $k$-vector spaces  $A_i$ admit a  direct system  $\iota_i$ as well as a compatible inverse system $\sigma_i$.

Assume moreover that $ 0\in k^N$ is a fixed point for the $G$-action. In that case it follows from Proposition \ref{prop:homogenous action resticts to subspaces} that
$\tau(G) \subset \mathrm{GL}^*(A)$ and $ \tau = \varprojlim_i \tau_i$ with $\tau_i : G \to \mathrm{GL}^*(A_i)$.
Let $\bar{\tau}$ denote the  representation of $G$ on the Grassmaniann space $\Gr{A}$ arising from $\tau$.  We obtain
$ \bar{\tau} = \varprojlim \bar{\tau}_i$ where $\bar{\tau_i}$ is an action of $G$ on the Grassmaniann $\mathcal{GR}(A_i)$ arising from $\tau_i$.
%., \quad \bar{\tau}_i : G \to \mathrm{PGL}(A_i) $$
%where the $\bar{\tau}_i$'s are continuous projective representations arising from the $\tau_i$'s.
%
%The representation $\tau$ and the $\tau_i$ descend to  representations $\overline{\tau}
%We may assume that $\GG$ is linear and $\GG \le \mathrm{GL}_M \subset \mathrm{M}_N$ where $N = (M+1)^2$. Let $A = k\left[x_1,\ldots,x_N\right]$ denote the algebra of $k$-regular functions on the affine space $\Aff^N$. Then 
% where every $A_i$ is a finite dimensional $k$-vector space.
%
%
%
%The Grassmannian of $A$ is denoted $\Gr{A}$ and there is a natural action $\bar{\tau}$ of $G$ on $\Gr{A}$ arising from the conjugation action. Moreover $\overline{\tau} = \varprojlim_{i \in \NN} \bar{\tau_i}$ where $\bar{\tau_i} : G \to \mathrm{GL}(A_i)$ for every $i \in \NN$. See Proposition \ref{prop:homogenous action resticts to subspaces} and the discussion following it for more details.

\begin{prop}
\label{prop:fixed point of Grassmannian supported on single point}	
Let $\mu$ be a $\bar{\tau}(G^+)$-invariant probability measure on $\Gr{A}$. If $0 \in k^N$ is a $G$-fixed point then $\mu$ is supported on $\bar{\tau}(G^+)$-fixed points.
\end{prop}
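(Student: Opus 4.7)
The plan is to reduce to the finite-dimensional Furstenberg lemma (Proposition 5.1) using the inverse limit description $\mathcal{GR}(A) = \varprojlim_i \mathcal{GR}(A_i)$ and the fact that $\tau(G^+) \subset \mathrm{GL}^*(A) = \varprojlim_i \mathrm{GL}^*(A_i)$ by Proposition \ref{prop:homogenous action resticts to subspaces}.

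\textbf{Step 1: Pushforward to finite levels.} Let $\pi_i : \mathcal{GR}(A) \to \mathcal{GR}(A_i)$ denote the natural continuous projections coming from the inverse limit structure. Since $\bar{\tau} = \varprojlim_i \bar{\tau}_i$ the maps $\pi_i$ are $G^+$-equivariant, so the pushforward $\mu_i = (\pi_i)_*\mu$ is a $\bar{\tau}_i(G^+)$-invariant Borel probability measure on the compact finite-dimensional Grassmannian $\mathcal{GR}(A_i)$.

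\textbf{Step 2: Apply Furstenberg's lemma at each level.} Decompose $\mathcal{GR}(A_i) = \bigsqcup_{d=0}^{\dim A_i}\mathcal{GR}^d(A_i)$ into its $\bar{\tau}_i(G^+)$-invariant components by dimension, and write $\mu_i$ accordingly as a sum of $G^+$-invariant measures supported on each component. The Plücker embedding
$$
\mathcal{GR}^d(A_i) \hookrightarrow \mathbb{P}\bigl(\Lambda^d A_i\bigr)
$$
is $\mathrm{GL}(A_i)$-equivariant with respect to the representation $\Lambda^d \tau_i$. Applying Proposition \ref{prop:grassmanian has no invariant probs} with $V = \Lambda^d A_i$ and $\rho = \Lambda^d \tau_i$ yields that each pushforward measure on $\mathbb{P}(\Lambda^d A_i)$ is supported on $\overline{\Lambda^d \tau_i}(G^+)$-fixed points. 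Pulling back through the Plücker embedding, $\mu_i$ is concentrated on the (closed) set $F_i \subset \mathcal{GR}(A_i)$ of $\bar{\tau}_i(G^+)$-fixed subspaces.

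\textbf{Step 3: Lift the conclusion through the inverse limit.} I claim that a subspace $V \in \mathcal{GR}(A)$ is $\bar{\tau}(G^+)$-fixed if and only if $\pi_i(V) \in F_i$ for every $i \in \NN$. The forward direction is immediate from equivariance. For the converse, since $\mathcal{GR}(A) = \varprojlim_i \mathcal{GR}(A_i)$, the subspace $V$ is uniquely determined by the compatible sequence $(\pi_i(V))_i$; equivariance of $\pi_i$ then forces $\tau(g) V = V$ for every $g \in G^+$ as soon as each $\tau_i(g)$ fixes $\pi_i(V)$. Consequently, the set of $\bar{\tau}(G^+)$-fixed points equals $\bigcap_i \pi_i^{-1}(F_i)$, and each $\pi_i^{-1}(F_i)$ has full $\mu$-measure by Step 2. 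A countable intersection of full measure sets still has full measure, completing the proof.

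\textbf{Main obstacle.} The conceptual content lies in Step 3: translating finite-level invariance into invariance in the infinite-dimensional Grassmannian. This works only because Proposition \ref{prop:homogenous action resticts to subspaces} places $\tau(G^+)$ inside $\mathrm{GL}^*(A)$, guaranteeing that each $\tau_i$ is an honest representation acting on the well-defined projections $\pi_i(V)$; without the fixed-point hypothesis for $G$ on $k^N$ the filtration by degree would fail to be preserved and the reduction to finite levels would collapse.
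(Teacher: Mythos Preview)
Your proof is correct and follows the same core strategy as the paper: push $\mu$ down to the finite-dimensional Grassmannians $\mathcal{GR}(A_i)$, embed each dimension stratum via Pl\"ucker, and invoke Furstenberg's lemma. The one genuine difference is that the paper first performs an ergodic decomposition of $\mu$, so that each $\mu_i$ is concentrated on a single dimension $d_i$ and in fact on a single fixed point $I_i \in \mathcal{GR}(A_i)$; the conclusion is then read off directly from the inverse limit. You bypass ergodic decomposition entirely by summing over all dimensions at each level and then taking the countable intersection $\bigcap_i \pi_i^{-1}(F_i)$ of full-measure sets. Your route is marginally more elementary (no appeal to ergodic decomposition on a non-metrizable but standard Borel space), while the paper's route yields the slightly stronger intermediate statement that an \emph{ergodic} invariant measure is a Dirac mass at a single fixed subspace, which is convenient for the applications in \S5.2. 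Both arguments are equally valid for the proposition as stated.
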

\begin{proof}
A standard  ergodic decomposition argument allows us to assume without loss of generality that  $\mu$ is ergodic with respect to $\overline{\tau}(G^+)$.

Pushing forward $\mu$ by means of the inverse system of maps $\Gr{A} \to \Gr{A_i}$ we obtain ergodic $\bar{\tau}_i(G^+)$-invariant probability measures $\mu_i$ on $\Gr{A_i}$  for every $i \in \NN $. Ergodicity  implies that  there are numbers $0 \le d_i \le \mathrm{dim} A_i$ so that $\mu_i$ is supported on the $d_i$-dimensional part of $\Gr{A_i}$.

Let $\rho_i = \wedge^{d_i} \tau_i$ denote the wedge representation of $G^+$ in $V_i = \wedge^{d_i} A_i $ and $\bar{\rho_i}$ the projectivization of $\rho_i$. The Pl\"{u}cker embedding allows us to identify the $d_i$-dimensional part of the Grassmannian $\Gr{A_i}$ with a subset of the projective space $\mathrm{P}(V_i)$. Under this identification $\mu_i$ can be regarded as a $\bar{\rho}_i(G^+)$-invariant probability measure on $\mathrm{P}(V_i)$. By Furstenberg's lemma and since $\mu_i$ is ergodic it must be supported on a single point corresponding to a $d_i$-dimensional subspace $I_i \le A_i$.

Passing to the inverse limit we obtain that $\mu$ is an atomic point mass supported on some $\tau(G^+)$-fixed point $I \in \Gr{A}$.
\end{proof}

%For an arbitrary subset $F \subset G$ let $\mathcal{A}(F)$ denote the group Zarsiki closure of $F$, i.e. the intersection of all Zariski closed subgroups containing $F$. In particular if $F$ is a subgroup  then $\mathcal{A}(F) = \overline{F}^{\mathrm{Z}}$ where $\overline{F}^{\mathrm{Z}}$ denotes Zariski closure.

\subsection{Proof of the Borel density theorem for IRS}

We present a proof of Theorem \ref{thm:borel density for IRS} relying on the above version of Furstenberg's lemma as well as Pink's theorem. 
We start with some preparation.

\begin{prop}
	\label{prop:an IRS is either normal or finite}
	Let $G^\dagger$ be an intermediate subgroup $G^+ \le G^\dagger \le G$. Let $F$  be  an almost direct factor of $G$. 
		Let $\mu$ be an ergodic invariant random subgroup of $G^\dagger$.
Then there is a pair of Zariski closed normal subgroups $N,M \nrm G$ with $N \le M$ so that  
	\begin{itemize}
		\item in case $k$ is Archimedean $N = \overline{H_0}^{\mathrm{Z}}$, 
		\item in case $k$ is non-Archimedean $N = \overline{\mathrm{pr}_F (H \cap K_H) }^\mathrm{Z}$ for some compact open subgroup $K_H$ depending on $H$, and
		\item $M = \overline{H}^{\mathrm{Z}} $ in both  cases
	\end{itemize}
	for  $\mu$-almost every closed subgroup $H$ in $G^\dagger$.
%	
%	and  $\mathcal{A}(H \cap \overline{U_H}) = N $  and  identity neighborhoods $U_H \subset G$ depending on $H$.
	% \in \Sub{G}$. 
\end{prop}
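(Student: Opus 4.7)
The strategy is to use the measurable, $G$-equivariant Zariski-closure maps of Section~\ref{sec:zariski closure and lie algebras} to push $\mu$ forward to a $G^+$-invariant measure on a Grassmannian, apply the Furstenberg-type Proposition~\ref{prop:fixed point of Grassmannian supported on single point}, and invoke ergodicity to pin down a single fixed point.

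For the subgroup $M$: embed $G \subset \mathrm{M}_M(k) \cong k^N$ and restrict the Borel map $\mathrm{I}_G$ of Definition~\ref{def:map phi} to $\Sub{G^\dagger}$. The pushforward $(\mathrm{I}_G)_*\mu$ is $\bar\tau(G^\dagger)$-invariant, and since $0 \in k^N$ is a $G$-fixed point, Proposition~\ref{prop:fixed point of Grassmannian supported on single point} forces its support into the $\bar\tau(G^+)$-fixed locus. For $\mu$-a.e.\ $H \in \Sub{G^\dagger}$, the ideal $\mathrm{I}_G(H)$ corresponds to the Zariski-closed subgroup $\overline{H}^{\mathrm{Z}} \le G$; its additional $\bar\tau(G^+)$-fixedness means $\overline{H}^{\mathrm{Z}}$ is normalized by $G^+$ and hence normal in $G$ by Proposition~\ref{prop:normal subgroups of $G$}. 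Such a fixed point is automatically $\bar\tau(G)$-fixed, and in particular $\bar\tau(G^\dagger)$-fixed, so ergodicity of $\mu$ concentrates $(\mathrm{I}_G)_*\mu$ on a single point, producing a Zariski-closed $M \nrm G$ with $\overline{H}^{\mathrm{Z}} = M$ almost surely.

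For $N$ in the Archimedean case, apply the same argument to the map $\mathrm{J}_G$ of Definition~\ref{def:map Psi}. By Proposition~\ref{prop:on map Psi and noetherianity}(\ref{itm:arch case}), $\mathrm{J}_G(H) = \mathrm{I}_G(H_0)$, so the identical pushforward/fixed-point/ergodicity steps yield a Zariski-closed normal $N \nrm G$ with $\overline{H_0}^{\mathrm{Z}} = N$ almost surely, and the containment $N \le M$ is immediate from $H_0 \le H$.

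For $N$ in the non-Archimedean case, introduce the variant
$$
\mathrm{J}_{G^\dagger,F}(H) \;=\; \mathrm{span}_i\, \mathrm{I}\bigl(\mathrm{pr}_F(H \cap \overline{U_i})\bigr)
$$
in analogy with Definition~\ref{def:map Psi for products}, where $U_i$ is a countable local basis at the identity of $G^\dagger$ and $\mathrm{pr}_F$ is projection to the almost-direct factor $F$ (well-defined up to the finite central intersection, which is immaterial for Zariski closures). The arguments of Section~\ref{sec:zariski closure and lie algebras} carry over: $\mathrm{J}_{G^\dagger,F}$ is Borel measurable, is equivariant for conjugation on $\Sub{G^\dagger}$ and the induced $\bar\tau_F$-action on $\Gr{A_F}$ (which factors through $\mathrm{pr}_F(G^\dagger) \supseteq F^+$), and by Noetherianity evaluates for each $H$ to $\mathrm{I}_G(\mathrm{pr}_F(H \cap K_H))$ for some compact open subgroup $K_H \le G^\dagger$ depending on $H$. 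Applying Proposition~\ref{prop:fixed point of Grassmannian supported on single point} to $F$ and rerunning the ergodicity argument produces a Zariski-closed $N \nrm F$, hence $N \nrm G$ since $F$ is an almost-direct factor, with $\overline{\mathrm{pr}_F(H \cap K_H)}^{\mathrm{Z}} = N$ almost surely. The main technical obstacle is setting up $\mathrm{J}_{G^\dagger,F}$ and verifying its equivariance with care, since $G^\dagger$ is not literally a direct product; the containment $N \le M$ then reduces to the chain $\mathrm{pr}_F(H \cap K_H) \le \mathrm{pr}_F(\overline{H}^{\mathrm{Z}}) = \mathrm{pr}_F(M)$ together with the description of which almost-direct factors appear in the normal subgroup $M$.
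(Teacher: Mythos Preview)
Your proposal is correct and follows essentially the same approach as the paper: push forward $\mu$ via $\mathrm{I}_G$ (resp.\ $\mathrm{J}_G$ or $\mathrm{J}_{G,F}$), apply Proposition~\ref{prop:fixed point of Grassmannian supported on single point}, use Proposition~\ref{prop:normal subgroups of $G$} to upgrade $G^+$-normalization to normality in $G$, and conclude by ergodicity. Your additional remarks on the containment $N \le M$ and the almost-direct-factor bookkeeping go slightly beyond what the paper spells out, but the core argument is identical.
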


Here $\mathrm{pr}_F$ denotes the natural projection $G \to F$.

%If $k$ has zero characteristic then $N= M$, see e.g. Proposition \ref{prop:on map Psi and noetherianity}.
%It is clear from the statement that the two normal subgroups satisfy $N \le M$.
\begin{proof}
	%Recall the two Borel maps $\tilde{\Phi}, \tilde{\Psi} : \Sub{G} \to \Gr{A}$.  These maps are equivariant with respect to conjugation and a certain action $\bar{\tau}$, see Proposition \ref{prop:projective representation coming from conjugation}.
We first deal with Zariski closure and the normal subgroup $M$. 

Recall the map $ \mathrm{I}_G : \Sub{G} \to \Gr{A} $ introduced in \S\ref{sec:zariski closure and lie algebras}. Given a closed subgroup $H$ in $G$ the point $\mathrm{I}_G(H) $ corresponds to the ideal of vanishing regular functions associated with the Zariski closure $\overline{H}^{\mathrm{Z}}$  and regarded as a $k$-linear subspace of $A$.
%The map  $\mathrm{J}_G(H)$ is defined similarly in terms of local data. 

%Both maps are $G$-equivariant under the conjugation action on $\Sub{G}$ and the $\overline{\tau}$-action on $\mathcal{GR}(A)$.

Consider $\mu$ as a probability measure on $\Sub{G}$ and let $\nu$ denote the push-forward  ergodic $\bar{\tau}(G^\dagger)$-invariant  probability measure on $\Gr{A}$ given by
	$ \nu = (\mathrm{I}_G)_* \mu$.
		%, \quad \nu_{\mathrm{J}} = (\mathrm{J}_G)_*\mu $$

The measure $\nu$ is supported on $\overline{\tau}(G^+)$-invariant points in $\Gr{A}$ according to Proposition \ref{prop:fixed point of Grassmannian supported on single point}. Every  $\overline{\tau}(G^+)$-invariant point $I \in \Gr{A}$ satisfies $\mathrm{I}_G(G) \le I$ and corresponds to a Zariski closed subgroup of $G$ normalized by $G^+$. Such points are in fact $\overline{\tau}(G)$-invariant, by Proposition \ref{prop:normal subgroups of $G$}. The ergodicity of $\nu$ implies that $\overline{H}^{\mathrm{Z}} = M$ for some normal subgroup $M \nrm G$ and for $\mu$-almost every $H$.

The conclusion involving the subgroup $N$ follows in an analogous manner relying on the map $\mathrm{J}_G : \Sub{G} \to \Gr{A}$ and  Proposition \ref{prop:on map Psi and noetherianity}. 

	In the non-Archimedean case and taking into account the factor $F$ we rely on the map $\mathrm{J}_{G,F}$ as in Definition \ref{def:map Psi for products} and the remarks following it. In that case Proposition \ref{prop:fixed point of Grassmannian supported on single point} is applied with respect to the simple analytic group $F$.
%Similarly, the ergodicity of $\nu_{\mathrm{J}}$ and Proposition \ref{prop:fixed point of Grassmannian supported on single point} give a $\overline{\tau}(G)$-invariant point $J \in \Gr{A}$ so that $\mathrm{J}_G(H) = J$  for $\mu$-almost every closed subgroup $H$ in $G$. Let $D$ be the Zarsiki closed subset of $G$ determined by the ideal $J \nrm A$.  The subset $D$ need not be a subgroup and we let $N = \mathcal{A}(D)$. Since $D$ is invariant under conjugation in $G$ we have that  $N \nrm G$. According to Proposition \ref{prop:on map Psi and noetherianity} there are  identity neighborhoods $U_H$ in $G$  so that
%	$$ N = \mathcal{A}(D) = \mathcal{A}(\mathcal{Z}( H \cap \overline{U_H})) = \mathcal{A}(H \cap \overline{U_H})$$
%	for $\mu$-almost every closed subgroup $H \le G$, as required.
\end{proof}

The trace field $\mathcal{TF}_\rho$ of a $k$-representation $\rho$ was introduced in Definition \ref{def:trace field}.

\begin{prop}
\label{prop:an ergodic IRS has fixed trace field}
Let  $G^\dagger$ be an intermidiate subgroup $G^+ \le G^\dagger \le G$ and  $\mu$  an ergodic invariant random subgroup of $G^\dagger$. 
Let $S$ be an adjoint absolutely simple factor of $G$ and $\rho$ a $k$-representation of $S$. 

 If $\mathrm{J}_{G,S}(H) = \mathrm{I}_S(S)$ 
 for $\mu$-almost every closed subgroup $H$ then for every  compact open subgroup $U$  of $G$  the trace field $\mathcal{TF}_\rho(\mathrm{pr}_S(H \cap U))$ is   $\mu$-almost everywhere constant.
\end{prop}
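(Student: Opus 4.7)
The strategy is to show that the assignment $l(H) := \mathcal{TF}_\rho(\mathrm{pr}_S(H \cap U))$ does not depend on the choice of compact open subgroup $U \le G$, to verify that it defines a $G^\dagger$-invariant measurable function on $\mathrm{Sub}(G^\dagger)$ taking values in a countable set, and then to invoke ergodicity of $\mu$.

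First I would unpack the hypothesis. Choose the countable local basis $V_i$ used in the definition of $\mathrm{J}_{G,S}$ to consist of compact open subgroups of $G$ (available since $G$ has a non-Archimedean factor wherever compact open subgroups occur). The hypothesis $\mathrm{J}_{G,S}(H) = \mathrm{I}_S(S)$ then reads
\[
\sum_i \mathrm{I}(\mathrm{pr}_S(H \cap V_i)) = \mathrm{I}_S(S)
\]
as subspaces of the polynomial ring $A$. Each summand is an ideal containing $\mathrm{I}_S(S)$ because $\mathrm{pr}_S(H \cap V_i) \subseteq S$. A sum of vector subspaces each containing a fixed subspace can equal that subspace only if each summand does; hence $\mathrm{pr}_S(H \cap V_i)$ is Zariski dense in $S$ for every $i$. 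Any compact open subgroup $U \le G$ contains some $V_i$, so $\mathrm{pr}_S(H \cap U)$ is Zariski dense in $S$ for $\mu$-a.e.\ $H$ and every such $U$.

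Next I would show the independence of $l(H)$ from $U$. Given two compact open $U_1, U_2 \le G$, set $W = U_1 \cap U_2$. The inclusions $\mathrm{pr}_S(H \cap W) \subseteq \mathrm{pr}_S(H \cap U_j)$ are all compact Zariski dense subgroups of the adjoint absolutely simple group $S$. Applying Pink's uniqueness in the spirit of Proposition \ref{prop:trace field of finite index subgroups} to each inclusion yields
\[
\mathcal{TF}_\rho(\mathrm{pr}_S(H \cap U_1)) = \mathcal{TF}_\rho(\mathrm{pr}_S(H \cap W)) = \mathcal{TF}_\rho(\mathrm{pr}_S(H \cap U_2)),
\]
so $l(H)$ is well-defined. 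For $G^\dagger$-equivariance, given $g \in G^\dagger$ and any compact open $U$,
\[
\mathrm{pr}_S(gHg^{-1} \cap U) = s\, \mathrm{pr}_S(H \cap g^{-1}Ug)\, s^{-1}, \qquad s := \mathrm{pr}_S(g),
\]
and since the trace is invariant under conjugation in $S$, while $l(H)$ is independent of the compact open neighborhood, we deduce $l(gHg^{-1}) = l(H)$.

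Finally, Pink's Theorem \ref{thm:pink} ensures that $l(H)$ is a closed subfield of $k$ of finite index, and the collection of such subfields is countable (in fact finite in characteristic zero). Each level set $\{H : l(H) = l_0\}$ is Borel --- the condition $\mathcal{TF}_\rho(\mathrm{pr}_S(H \cap U)) \subseteq l_0$ is equivalent to $H$ being disjoint from the open set $U \cap (\mathrm{tr}\,\rho \circ \mathrm{pr}_S)^{-1}(k \setminus l_0)$, which is a closed Chabauty condition --- and by the previous paragraph it is $G^\dagger$-invariant. Ergodicity forces exactly one such level set to carry all the $\mu$-mass, giving the conclusion. The principal obstacle is the second step: since $\mathrm{pr}_S(H \cap W)$ and $\mathrm{pr}_S(H \cap U_j)$ are not literally of the form covered by Proposition \ref{prop:trace field of finite index subgroups} (intersections of a fixed subgroup of $S$ with compact open subgroups \emph{of $S$}), one must appeal to Pink's structure theorem directly for each compact Zariski dense subgroup in the chain and verify that the trace field is common to both.
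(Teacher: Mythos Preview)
Your approach is essentially the same as the paper's: establish that the trace-field assignment is Borel measurable and $G^\dagger$-conjugation-invariant, then invoke ergodicity. The paper's proof differs only in how it packages measurability --- it regards the trace field as a point of $\mathrm{Sub}(k_+)$ (the Chabauty space of the additive group of $k$) and checks that the composite $\mathrm{Sub}(G) \to \mathrm{Sub}(S) \to \mathrm{Sub}(k_+)$ is Borel, rather than arguing via level sets and countability of finite-index closed subfields as you do. Both routes are fine; the paper's has the mild advantage of not needing to count subfields (which in positive characteristic is a little delicate). Your self-identified ``principal obstacle'' is exactly the point the paper handles by citing Proposition~\ref{prop:trace field of finite index subgroups}; the paper does not spell out why that proposition applies to projections $\mathrm{pr}_S(H\cap W) \subseteq \mathrm{pr}_S(H\cap U_j)$ either, so your caution is well placed --- the honest justification in both cases is Pink's uniqueness clause~0.2(b), together with the observation that $\mathrm{pr}_S(H\cap W)$ has finite index in $\mathrm{pr}_S(H\cap U_j)$ since $W$ has finite index in $U_j$.
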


As usual $\mathrm{pr}_S$ denotes the projection $G \to S$. The notation $\mathrm{J}_{G,S}$ is introduced in Definition \ref{def:map Psi for products}. Note that the essential value of the trace field does not depend on the choice of  the compact open subgroup $U$. 

\begin{proof}
Let $\Sub{k_+}$ denote the Chabauty space of the local field $k$ regarded as an additive group. We claim that the following map $\mathcal{T}$
$$\mathcal{T} : \Sub{G} \to \Sub{S} \to \Sub{k_+}, \quad  \Gamma \mapsto \mathrm{pr}_S(\Gamma \cap U) \mapsto \mathcal{TF}_\rho(\mathrm{pr}_S(\Gamma \cap U)) $$ 
is Borel measurable. 
Recall that the Chabauty Borel structure is generated by subsets of the form $O_2(U)$ in the sense of Definition \ref{def:Chabauty topology}. It follows immediately that the  map $\Sub{G} \to \Sub{S}$ as above is Borel measurable. 

It remains to verify that the map taking a closed subgroup $L$ of $S$ to its trace field $\mathcal{TF}_\rho(L)$ is Borel measurable as well. To see this, observe that $\mathcal{TF}_\rho(L)  $ is equal to the closure in $k$ of the values obtained by evaluating countably many polynomials at $\mathrm{tr}\rho(l)$ and $\mathrm{tr}\rho(l)^{-1}$ where $l$ ranges over individual elements of $L$. Using the continuity of the map $S \to k, \; S \ni s \mapsto \mathrm{tr}\rho(s)$ we conclude that $\mathcal{T}$ is Borel measurable.

By assumption $\mathrm{J}_{G,S}(H) = \mathrm{I}_S(S)$ for $\mu$-almost every closed subgroup $H$. It follows that $\mu$-almost always $\mathrm{pr}_S(\Gamma \cap U)$ is Zariski-dense in $S$. Relying on Proposition \ref{prop:trace field of finite index subgroups} we deduce that the map $\mathcal{T}$ is invariant under the conjugation action of $G^\dagger$. Ergodicity  implies that it must be essentially constant, as required.
\end{proof}

\begin{remark}
Let $\rho$ be a $k$-representation  and $\mu$ an ergodic invariant random subgroup of $G$. It follows that the trace field $\mathcal{TF}_\rho(H)$ is $\mu$-almost everywhere constant on $\Sub{G}$. However this elementary variant can be proved more directly. 
\end{remark}

\begin{proof}[Proof of Theorem \ref{thm:borel density for IRS}]
Let $k$ be a local field, $G$ a happy semisimple analytic group over $k$ without non-standard isogenies and $\mu$ an ergodic invariant random subgroup of $G$. 
We first deal with the statement involving the normal subgroup $N$ and discreteness. 
The Archimedean and non-Archimedean cases are treated separately.

If $k$ is Archimedean then by Proposition \ref{prop:an IRS is either normal or finite} there is a normal subgroup $N_1 \nrm G$ so that $\overline{H_0}^{\mathrm{Z}} = N_1$ for $\mu$-almost every subgroup $H$ in $G$. There is an almost direct factor $R$ of $G$ so that $R_0=R^+ \le N_1$ and $N_1R/R$ is central (Proposition \ref{prop:normal subgroups of $G$}). 
%As $H^0 \le N$ it follows that $NR/R$ is in fact trivial. 
Cartan's closed subgroup theorem \cite[LG 5.9]{sere} implies that $H_0$ is a Lie subgroup of $G$. Since $\overline{H_0}^{\mathrm{Z}} = N_1$ we have that $\mathrm{Lie}(H_0) = \mathrm{Lie}(N_1) = \mathrm{Lie}(R_0)$ and $H_0 = R_0$ for $\mu$-almost every $H$. 
%Note that $H^0 R^0/R^0$ is finite and connected and therefore trivial, so that in fact $R^0 = H^0$. 
We conclude the Archimedean case by taking $N = R_0 \nrm G$.

If  $k$ is non-Archimedean the proof is more involved. We reduce the situation to adjoint absolutely simple groups and rely on Pink's theorem, as follows. 

 Let $\overline{G}$ be the adjoint group of $G$. There is a central $k$-isogeny $G \xrightarrow{\overline{p}} \overline{G}$. Write $\overline{G}$ as a direct product of its $k$-simple factors $\overline{G} = \prod_{i \in I} \overline{G}_i$ over the finite set $I$. 
 Denote  $\overline{\mu} = \overline{p}_* \mu$ so that $\overline{\mu}$ is an ergodic invariant random subgroup of $ \overline{p}(G)$.
 
It is well-known \cite[3.1.2]{tits1966classification} that there are finite local field extensions $k_i / k$ and adjoint absolutely simple $k_i$-groups $S_i$ such that $\overline{G}_i$ is $k$-isomorphic to $\mathcal{R}_{k_i/k}(S_i)$ for every $i\in I$. Denote $S =  \prod_{i \in I} S_i$. In particular $\overline{G}$ is isomoprhic to $  \prod_{i \in I} S_i(k_i)$ as a topological group\footnote{Making our customary abuse of notation, $S$ denotes an algebraic group as well as its group of rational points.}. Let $ \mathrm{pr}_i$ denote the projection $\overline{G} \to S_i(k_i)$.
%Note that $\overline{\mu}$ may be regarded as an invariant random subgroup of $\overline{p}(G)$. 

% of $S_i$ and consider . There is a  Using this map we obtain an invariant random subgroup $\widetilde{\mu}$ of $\widetilde{S}$. The generalized commutator map is equivariant with respect to conjugation and for that reason $\widetilde{\mu}$ is indeed an invariant random subgroup.  and putting these together we obtain a map $\widetilde{s} : \widetilde{S} \to S$. This map is continuous and proper.

We apply  Proposition \ref{prop:an IRS is either normal or finite}  with respect to the simple factor $S_i$ and the invariant random subgroup $\overline{\mu}$ for every $i \in I$. It implies that  $ \mathrm{pr}_i(H \cap K_H)$ is either  Zariski-dense or trivial in $S_i$ for $\overline{\mu}$-almost every closed subgroup $H$ and  compact open subgroup $K_H$ in $\overline{G}$ depending on $H$. 
The index set $I$ is a disjoint union $I = I_\text{d} \cup I_\text{t}$ with $i \in I_\text{d}$ or $ i \in I_\text{t}$ if $ \mathrm{pr}_i(H \cap K_H)$ is  essentially always Zariski-dense or trivial, respectively. In other words $i \in I_\text{t}$ if and only if $\mathrm{pr}_i(H \cap K_H)$ is trivial for $\overline{\mu}$-almost every closed subgroup $H$ and some suitable compact open subgroup $K_H$.

%Write  depending on the above dichotomy. This depends on whether $H_i$ is finite.

Let $\rho_i$ be the $k$-representation of $S_i$  provided by Theorem \ref{thm:pink}. The trace field $\mathcal{TF}_{\rho_i}(\mathrm{pr}_i(H \cap K_H))$ is $\overline{\mu}$-almost surely constant for every $ i \in I_\text{d}$ by Proposition \ref{prop:an ergodic IRS has fixed trace field}. Let this trace field be denoted $l_i$ so that $l _i  \subset k_i$ is a local field for $i \in I_\text{d}$. The first part of Theorem \ref{thm:pink} implies that the field extension  $k_i / l_i$ is  finite for $i \in I_\text{d}$. For notational convenience denote $l_i = k_i$ for every $ i \in I_\text{t}$.

Let $\widetilde{S_i}$ denote the universal covering group of $S_i$. There is a central $k_i$-isogeny $\widetilde{s}_i : \widetilde{S}_i \to S_i$ and a generalized commutator map $\widetilde{\left[\cdot,\cdot\right]} : S_i \times S_i \to \widetilde{S}_i$ for every $i \in I$. Denoting $\widetilde{S} = \prod_{i\in I} \widetilde{S}_i$ we obtain the maps $\widetilde{s} : \widetilde{S} \to S$ and $\widetilde{\left[\cdot,\cdot\right]} : S \times S \to \widetilde{S}$. The map $\widetilde{s}$ is continuous and proper on the rational points. 
The closure of the generalized commutator gives rise to a well-defined map $\mathrm{gc} : \Sub{S} \to \mathrm{Sub}(\widetilde{S})$. Observe that the map $\mathrm{gc}$ is equivariant in the sense that 
$$
 g^{-1} \cdot \mathrm{gc}(H) \cdot g = \mathrm{gc}\left(\widetilde{s}(g)^{-1} \cdot H \cdot \widetilde{s}(g)\right), 
$$ for all $g \in \widetilde{S}$ and $H\in\Sub{S}$. 
We take $\widetilde{\mu} = \text{gc}_* \overline{\mu}$ so that $\widetilde{\mu}$ is an ergodic invariant random subgroup of $\widetilde{S}$.

Consider the semisimple analytic group $\widetilde{L}$ obtained by taking restriction of scalars
$$  
 \widetilde{L} = \prod_{i \in I } \mathcal{R}_{k_i / l_i} (\widetilde{S}_i). 
$$
Denote $\widetilde{L}_i = \mathcal{R}_{k_i / l_i} (\widetilde{S}_i)(l_i)$ so that $\widetilde{S}_i$ is isomorphic to $ \widetilde{L}_i$ as a topological group for every $i \in I$.
Let $\widetilde{\mathrm{pr}}_i$ denote the projection from $\widetilde{S}$ to the factor $\widetilde{L}_i$. We argue once more relying on Proposition \ref{prop:an IRS is either normal or finite}, this time with respect to the factor $\widetilde{L}_i$ for $i \in I$ and  the invariant random subgroup $\widetilde{\mu}$, deducing that $\widetilde{\mathrm{pr}}_i(F \cap K_F)$ is either Zariski-dense or central   in $\widetilde{L}_i$ for $\widetilde{\mu}$-almost every 
closed subgroup $F$ in $\widetilde{S}$ and a compact open subgroup $K_F$ depending on $F$. This  depends on whether $\widetilde{\mathrm{pr}}_i(F \cap K_F)$ is infinite or finite, respectively. Since the maps $\widetilde{s}_i$ are proper, the groups $\widetilde{\mathrm{pr}}_i(F \cap K_L)$ are essentially always Zariski-dense or central depending on whether  $ i \in I_\text{d}$ or $i \in I_\text{t}$, respectively.

For every $i \in I_\text{d}$ the groups $\widetilde{\mathrm{pr}}_i(F \cap K_F)$  are Zariski dense in $\mathcal{R}_{k_i/l_i}(\widetilde{S}_i)$ $\widetilde{\mu}$-almost always. The second part of Theorem \ref{thm:pink} implies that $\widetilde{\mathrm{pr}}_i(F \cap K_F)$ is open in $\widetilde{S}_i$.
On the other hand for every  $i \in I_\text{t}$, the groups $\widetilde{\mathrm{pr}}_i(F \cap K_F)$ are essentially always central. Since $K_F$ is allowed to be arbitrary small they are in fact trivial.

To draw conclusions denote $\widetilde{S}_\text{d} = \prod_{i \in I_\text{d}} \widetilde{S}_i$ and let $\widetilde{\mu}_\text{d}$ be the ergodic invariant random subgroup of $\widetilde{S}_\text{d}$ obtained from $\widetilde{\mu}$ by taking intersections with $\widetilde{S}_\text{d}$ regarded as a closed subgroup of $\widetilde{S}$. The above discussion shows that $\widetilde{\mu}_\text{d}$-almost every closed subgroup $F \le \widetilde{S}_\text{d}$ admits an open projection to $\widetilde{S}_i$ for any $i \in I_\text{d}$. 
 However,	 a totally disconnected l.c.s.c. group has only countably many open subgroups. An ergodic probability measure on a countable set must be a point mass supported on a fixed point. Taking into account Proposition \ref{prop:normal subgroups of $G$} we see that $\widetilde{\mu}$-almost every closed subgroup of $\widetilde{S}$ contains $\widetilde{S}_\text{d}$. 

We now translate these conclusions back to our groups of interest $\overline{G}$ and $G$. In particular $\overline{\mu}$-almost every closed subgroup $H$ of $\overline{p}(G)$ contains $S_\text{d}^+$ and $H S_\text{d} / S_\text{d}$ is discrete, where $S_d = \prod_{i \in I_\text{d}} S_i$. Therefore $\mu$-almost every closed subgroup of $G$ contains $N =  \overline{p}^{-1}(S_\text{d}^+) $. Since $G$ is happy the quotient $G/G^+$ is finite. This implies that $H N/N$ is discrete in $G/N$ for $\mu$-almost every subgroup $H$, as required.

Finally, it remains to deal with Zariski closure. The existence of a normal subgroup $M \nrm G$ so that $\overline{H}^{\mathrm{Z}} = M$ for $\mu$-almost every closed subgroup $H$ follows immediately from the corresponding part of Proposition \ref{prop:an IRS is either normal or finite}.
\end{proof}

%The idea used above to  that Zariski density implies discreteness appears in \cite{gel_minsky}.

Whenever Lie algebra methods  are available there is a simpler proof of the discreteness part avoiding Pink's theorem. This is the case for $k = \RR$ and $k = \QQ_p$, and this is the approach taken in \cite{7S} for real Lie groups.

\section{Semisimple analytic groups are self-Chabauty-isolated }
\label{sec:isolated groups}

We study of the following property for semisimple analytic groups.

\begin{definition}
\label{def:isolated}
A l.c.s.c. group $G$ is \emph{self-Chabauty-isolated} if the point $G$ is  isolated in $\Sub{G}$ with the Chabauty topology.
\end{definition}

Note that $G$ is self-Chabauty-isolated if and only if there is a finite collection  of open subsets $U_1,\ldots,U_n \subset G$ so that the only closed subgroup intersecting  every $U_i$ non-trivially is $G$ itself. The main theorem in this section is the following.
%For a locally compact second countable group $H$ we let $\Sub{H}$ denote the Chabauty space of closed subgroups of $H$.  In this section we study the topological properties of $\Sub{G}$. 

%We remark that some of the results of the current section hold in the greater generality of totally disconnected locally compact groups.

%\subsection{The group $G$ as a point of $\Sub{G}$}

%We begin by considering the special point $G \in \Sub{G}$.

\begin{theorem}
\label{thm:G is isolated}
%Let $\GG$ be a connected simply connected semisimple linear $k$-group. Denote $G = \GG(k)$. Then 
Let $G$ be a happy semisimple analytic group. Then $G^+$ is self-Chabauty-isolated.
\end{theorem}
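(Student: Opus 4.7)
The plan is to exhibit explicitly a finite collection of open sets $V_1,\ldots,V_n \subset G^+$ such that the only closed subgroup of $G^+$ meeting every $V_i$ is $G^+$ itself; that is, the basic Chabauty neighborhood $\bigcap_i \mathcal{O}_2(V_i)$ of $G^+$ contains only $G^+$. I would proceed in two stages: first force any nearby $H\le G^+$ to be open in $G^+$, and then use Corollary \ref{cor:finitely many supergroups} to rule out the remaining finitely many proper open subgroups.

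For the openness stage I reduce factor-by-factor to the case where $G$ is a simple analytic group. Pick a tuple $g_1,\ldots,g_r \in G^+$ generating a Zariski-dense subgroup; since the condition on a tuple to generate Zariski-densely is cut out by the non-vanishing of certain polynomial maps, there exist open neighborhoods $V_i \ni g_i$ so that every choice $(g_i') \in \prod V_i$ also generates Zariski-densely. Hence any $H$ with $H \cap V_i \neq \emptyset$ for all $i$ is Zariski dense in $G$. Passing from Zariski density to openness is the key step. In zero characteristic, Cartan's closed-subgroup theorem together with the upper semi-continuity of the assignment $H \mapsto \mathrm{Lie}(H)$ under Chabauty convergence yields $\mathrm{Lie}(H) = \mathrm{Lie}(G)$, so $H$ is open. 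In positive characteristic I would instead use the local Zariski closure $\mathrm{J}_G$ from \S\ref{sec:zariski closure and lie algebras}: after adding finitely many $V_j$'s inside a fixed compact open subgroup $K$ of $G^+$ to ensure that $H \cap K$ is Zariski dense in $G$, Pink's theorem (Theorem \ref{thm:pink}) applied to the adjoint group shows that the closure of the generalized commutator of $H \cap K$ in $\widetilde{G}$ is a compact open subgroup, and happiness of $G$ propagates this openness back to $H$ itself via the open isogeny $\widetilde{p}$.

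Once $H$ is known to be open, Theorem \ref{thm:maximal compact open is maximal} and Proposition \ref{prop:normal subgroups of $G$} furnish a normal subgroup $N \nrm G$ with $N \le H$ and $H/N$ compact, and identify $N$ as containing (up to center) $M^+$ for some almost direct factor $M$ of $G$. Either $M = G$, which forces $H = G^+$, or $M$ is a proper factor, in which case $H$ is compact modulo a proper piece of $G^+$. The latter alternative is incompatible with $H$ being Chabauty-close to the non-compact $G^+$: by Corollary \ref{cor:finitely many supergroups} there are only finitely many $G^+$-conjugacy classes of such proper open subgroups above a fixed maximal compact, and a simple covering argument shows that finitely many $V_j$ suffice to meet every conjugate of each of these finitely many classes.

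The main obstacle is the passage from Zariski density to openness in positive characteristic. Pink's theorem is subject to exceptions (types $B_n, C_n, F_4$ in characteristic $2$ and $G_2$ in characteristic $3$), so for these excluded cases I anticipate needing a direct Furstenberg-type argument on the Grassmannian of regular functions in the spirit of \S\ref{sec:borel density theorem for IRS}: a hypothetical sequence $H_n \to G^+$ with $H_n \ne G^+$ would, after averaging, yield a $\bar{\tau}(G^+)$-invariant probability measure on $\mathcal{GR}(A)$ not supported on $\tau(G^+)$-fixed points, contradicting Proposition \ref{prop:fixed point of Grassmannian supported on single point}. Secondary care is also needed in the finite covering step so as to genuinely sweep out all conjugates of the proper open intermediate subgroups.
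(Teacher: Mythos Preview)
Your approach is genuinely different from the paper's, and it has real gaps in the positive characteristic case.

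The paper's proof is much more elementary on the algebraic-group side. In the non-Archimedean case it runs as follows: (i) the first congruence subgroup of $G^+$ is pro-$p$, and since $G$ is happy it is topologically finitely generated by the Barnea--Larsen theorem; (ii) a topologically finitely generated pro-$p$ group is self-Chabauty-isolated, a purely profinite fact (Proposition~\ref{prop:a finitely generated prop group is isolated}); (iii) one then bootstraps twice via Proposition~\ref{prop:isolated and finitely many supergroups}, first from this pro-$p$ subgroup to a maximal compact $K$ (finite index), then from $K$ to $G^+$ (finitely many intermediate subgroups by Corollary~\ref{cor:finitely many supergroups}). Pink's theorem enters only implicitly, through Barnea--Larsen. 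The general case is then a product argument combining this with the Archimedean result from \cite{7S}. In particular the paper never needs to pass from Zariski density to openness for an arbitrary nearby $H$.

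Your openness step via Pink has a genuine obstruction: Zariski density of $H\cap K$ in $G$ is not enough to invoke the second clause of Theorem~\ref{thm:pink}. You also need the trace field $l=\mathcal{TF}_\rho(H\cap K)$ to equal $k$, i.e.\ Zariski density of the generalized commutator in the restriction of scalars $\mathcal{R}_{k/l}(\widetilde G)$. Nothing in your setup rules out $l\subsetneq k$, and in positive characteristic there are infinitely many closed subfields $l$ with $[k:l]<\infty$; for each, the $l$-points produce compact Zariski-dense subgroups of $G$ that are \emph{not} open. Chabauty proximity to $G^+$ does not obviously force the trace field to be all of $k$.

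The Furstenberg fallback you propose for the exceptional types does not work as stated. Proposition~\ref{prop:fixed point of Grassmannian supported on single point} requires a $\bar\tau(G^+)$-invariant probability measure as input, and a sequence $H_n\to G^+$ carries no such structure: the delta measures $\delta_{\mathrm{I}_G(H_n)}$ on $\mathcal{GR}(A)$ are not conjugation-invariant, and $G^+$ is non-amenable, so there is no averaging procedure available. That mechanism is tailored to invariant random subgroups, not to individual Chabauty-convergent sequences.
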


Recall that in particular if $G$ is simply connected or has zero characteristic then it is happy. We will first consider the non-Archimedean case and then extend this to general semisimple analytic groups, allowing for connected factors.

\subsection{The non-Archimedean case}

Let $G$ be an happy non-Archimedean semisimple analytic group. The Chabauty--isolation of the subgroup $G^+$ is easily deduced from the following three propositions.
%\marginpar{state the theorem in such a way that it applies to any local field. maybe consider normal spgs of ss groups as well.}

%The proof  follows readily from 

%We have the following result   \cite[1.14]{prop}.
%
%\begin{prop}
%Assume that $P$ is a (topologically) finitely generated pro-$p$ group. Then $\Phi(P)$ is an open subgroup.
%\label{prop:Fratinni subgroup open}
%\end{prop}

%We need the following proposition.

\begin{prop}
\label{prop:isolated and finitely many supergroups}
Let $H$ be a l.c.s.c. group and $O \le H$ an open subgroup. Assume that there are only finitely many intermediate subgroups $ O \le Q \le H$. If $O$ is self-Chabauty-isolated then $H$ is self-Chabauty-isolated as well.
\end{prop}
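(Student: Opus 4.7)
The plan is to produce an explicit Chabauty-open neighborhood of $H$ in $\mathrm{Sub}(H)$ that contains no other closed subgroup, by combining a witness for the isolation of $O$ with a witness ruling out each proper intermediate subgroup.

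First I would extract the self-isolation data for $O$. Since $O$ is self-Chabauty-isolated in $\mathrm{Sub}(O)$ and the only relevant sub-basis sets at the point $O$ are of type $\mathcal{O}_2$ (the $\mathcal{O}_1$ conditions are vacuous at the full group $O$), there exist open subsets $U_1,\dots,U_n \subseteq O$ such that every closed subgroup $L\le O$ meeting each $U_i$ coincides with $O$. Since $O$ is open in $H$, each $U_i$ is open in $H$ as well.

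Next I would enumerate the proper intermediate subgroups $O\le Q_1,\dots,Q_m\lneq H$, which are finite in number by hypothesis. The key observation is that each $Q_j$ contains the open subgroup $O$, hence is a union of left cosets of $O$ and therefore open in $H$. Consequently $V_j := H\setminus Q_j$ is a nonempty open subset of $H$.

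Now I would form the Chabauty-open neighborhood
$$
\Omega = \bigcap_{i=1}^n \mathcal{O}_2(U_i) \,\cap\, \bigcap_{j=1}^m \mathcal{O}_2(V_j)
$$
of the point $H$ in $\mathrm{Sub}(H)$. Given any $K\in\Omega$, the intersection $K\cap O$ is a closed subgroup of $O$ that meets every $U_i$, so by the self-isolation of $O$ we get $K\cap O = O$, i.e.\ $O\le K$. On the other hand $K$ meets every $V_j$, so $K$ is not contained in any $Q_j$. Therefore $K$ is an intermediate subgroup strictly larger than each $Q_j$, which forces $K=H$. This shows $\Omega=\{H\}$, proving that $H$ is self-Chabauty-isolated.

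The argument is essentially a bookkeeping exercise; the only mildly subtle point (and the one place to be careful) is the observation that every intermediate subgroup is automatically open in $H$, which is precisely what makes the complements $V_j$ admissible in the definition of a Chabauty-open set.
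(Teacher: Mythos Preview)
Your proof is correct and follows essentially the same approach as the paper's: use the $\mathcal{O}_2(U_i)$ conditions to force $O\le K$, then use additional $\mathcal{O}_2$ conditions to exclude each proper intermediate subgroup. The only cosmetic difference is that the paper uses a single coset $h_jO\subseteq H\setminus Q_j$ in place of your full complement $V_j=H\setminus Q_j$; your observation that each $Q_j$ is open (hence closed) makes $V_j$ open, so either choice works.
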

The above assumption holds in particular when $O$ is maximal or of finite index.
\begin{proof}
Let $Q_1, \ldots, Q_n$ be   the distinct subgroups with $O \le Q_i \lneq H$. Choose arbitrary elements $h_i \in H \setminus Q_i$ for every $i \in \{1,\ldots,n\}$. 
Since $O$ is self-Chabauty-isolated, there are  open subsets $U_1, \ldots, U_m \subset O$ so that the only closed subgroup  of $O$ intersecting each $U_j$ non-trivially is $O$ itself. Then clearly
$$ \{H\} = \bigcap_{i=1}^n \mathcal{O}_2(U_i) \cap \bigcap_{i=1}^m \mathcal{O}_2(h_i O) $$
and the latter subset is open  in $\Sub{H}$, as required.
\end{proof}

\begin{prop}
\label{prop:a finitely generated prop group is isolated}
Let $P_1,\ldots,P_n$ be finitely generated pro-$p_i$ groups for some prime numbers $p_i$. Then the product $\prod_{i=1}^n P_i$  is a self-Chabauty-isolated group.
\end{prop}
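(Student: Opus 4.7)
The plan is to identify a finite family of maximal open subgroups of $P$ that together contain every proper closed subgroup, and then to exhibit a Chabauty-open neighborhood of $P$ in $\Sub{P}$ consisting of $P$ alone. First I would reduce to the case where the primes $p_i$ are pairwise distinct, by grouping factors with a common prime into a single pro-$p$ product (which remains topologically finitely generated). Then $P = \prod_{i=1}^{n} P_i$ is a topologically finitely generated pronilpotent profinite group whose Sylow decomposition is precisely $\{P_i\}$.

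Next I would describe the maximal open subgroups of $P$. Any such subgroup $M$ has a finite simple quotient $P/M$, which is nilpotent and hence cyclic of some prime order $p_i$. The factors $P_j$ with $j \neq i$ are pro-$p_j$ with $p_j \neq p_i$ and therefore admit no $\mathbb{Z}/p_i$-quotient; thus $P_j \le M$, and $M = \pi_i^{-1}(M')$ for the projection $\pi_i : P \to P_i$ and some maximal open subgroup $M' \le P_i$. By the Burnside basis theorem for finitely generated pro-$p$ groups, the Frattini subgroup $\Phi(P_i)$ is open in $P_i$ and each $P_i$ has only finitely many maximal open subgroups; consequently $P$ has only finitely many maximal open subgroups, say $M_1,\dots,M_k$.

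The key step is to show that every proper closed subgroup $H \le P$ is contained in some $M_j$. If some projection $\pi_{i_0}(H)$ is proper in $P_{i_0}$, then by the Burnside basis theorem it lies inside a maximal open subgroup $M' \le P_{i_0}$, and hence $H \le \pi_{i_0}^{-1}(M')$. Otherwise $\pi_i(H) = P_i$ for every $i$; for any open normal subgroup $U = \prod_i U_i$ of $P$, the image of $H$ in $P/U = \prod_i P_i/U_i$ has surjective projections onto factors of pairwise coprime orders, and a standard Goursat argument forces this image to equal all of $P/U$. Letting $U$ range over an identity basis shows that $H$ is dense and, being closed, equals $P$, contradicting properness.

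To conclude, for each $j \in \{1,\dots,k\}$ I would pick $g_j \in P \setminus M_j$ together with an open neighborhood $U_j$ of $g_j$ disjoint from the clopen set $M_j$; then $\bigcap_{j=1}^{k} \mathcal{O}_2(U_j)$ is a Chabauty-open neighborhood of $P$ in $\Sub{P}$, which by the preceding step equals $\{P\}$. The most delicate point is the reduction to distinct primes: without it a proper subgroup such as the diagonal of $\mathbb{Z}/p \times \mathbb{Z}/p$ would have surjective projections onto both factors and the Goursat argument yielding the final contradiction would collapse.
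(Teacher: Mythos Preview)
Your argument is correct. The reduction to pairwise distinct primes, the description of the finitely many maximal open subgroups via the Frattini quotients of the factors, the Goursat/coprimality argument ruling out proper closed subgroups with surjective projections, and the final Chabauty-open set $\bigcap_j \mathcal{O}_2(U_j)$ all work as stated. One minor point worth making explicit is that the projections $\pi_i(H)$ are closed because $P$ is compact, so the appeal to the Burnside basis theorem in Case~(a) is justified.

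By contrast, the paper does not give a proof at all: it simply cites \cite[5.6]{gartside2010counting} and moves on. So there is no real approach to compare against; what you have written is a self-contained elementary proof of the cited fact, and indeed essentially reproduces the standard argument that a topologically finitely generated pronilpotent profinite group has only finitely many maximal open subgroups and that these detect all proper closed subgroups. Your proof therefore adds value over the paper's one-line citation, and your remark about the diagonal in $\mathbb{Z}/p \times \mathbb{Z}/p$ correctly isolates why the reduction to distinct primes is not merely cosmetic.
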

\begin{proof}
Follows immediately from \cite[5.6]{gartside2010counting}.
\end{proof}

\begin{prop}
The group $G^+$ has a self-Chabauty-isolated compact open  subgroup.
\label{prop:G has finitely genereted compact open}
\end{prop}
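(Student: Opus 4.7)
The plan is to construct a compact open subgroup of $G^+$ that matches the hypotheses of Proposition \ref{prop:a finitely generated prop group is isolated}, i.e.\ one that is a direct product of finitely generated pro-$p_i$ groups for various primes $p_i$.

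First I would use the happiness of $G$ via Theorem \ref{thm:properties of happy groups}: the quotient $G/G^+$ is a finite abelian group, so $G^+$ is open in $G$ and any compact open subgroup of $G$ that lies in $G^+$ is also compact open in $G^+$; and every compact open subgroup of $G$ is topologically finitely generated. Write $G$ as an almost direct product of its simple analytic factors $G_i$, each defined over a non-Archimedean local field $k_i$ with residue characteristic $p_i$. Pick a compact open subgroup $K = K_1 \times \cdots \times K_r$ of $G^+$ split along these factors, with $K_i$ compact open in $G_i$ (intersecting with $G^+$ if needed). Each $K_i$ is then a topologically finitely generated profinite group, sitting inside a non-Archimedean analytic group over $k_i$.

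Second, each $K_i$ contains an open pro-$p_i$ subgroup $K_i'$ of finite index; this is standard for analytic groups over non-Archimedean local fields and can be read off from the matrix realization of $G_i$ by taking a sufficiently deep principal congruence subgroup. Since $K_i$ is topologically finitely generated and $K_i'$ has finite index in $K_i$, Schreier's formula for profinite groups shows that $K_i'$ is itself topologically finitely generated. Hence $K' = K_1' \times \cdots \times K_r'$ is a finite direct product of finitely generated pro-$p_i$ groups, so by Proposition \ref{prop:a finitely generated prop group is isolated} it is self-Chabauty-isolated. Finally $K'$ has finite index in $K$, which is compact open in $G^+$, so $K'$ is itself a compact open subgroup of $G^+$, as required.

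The main obstacle is the existence and topological finite generation of the pro-$p_i$ open subgroup $K_i'$ inside each $K_i$, since outside of the classical $p$-adic case one cannot simply invoke $p$-adic analyticity. In characteristic zero this is immediate. In positive characteristic this is where the happiness hypothesis really enters: by Theorem \ref{thm:properties of happy groups}(\ref{item:finitely generated compact open}) the ambient $K_i$ is already finitely generated, and congruence subgroups coming from a matrix embedding of $\GG_i \le \mathrm{GL}_N$ supply an open pro-$p_i$ subgroup of $K_i$ of finite index, after which Schreier's formula takes care of finite generation.
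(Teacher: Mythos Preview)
Your proposal is correct and follows essentially the same approach as the paper: reduce to simple factors, produce a pro-$p_i$ congruence subgroup in each factor from a matrix embedding, use happiness (via Theorem~\ref{thm:properties of happy groups}) for topological finite generation, and conclude with Proposition~\ref{prop:a finitely generated prop group is isolated}. The paper carries out the pro-$p$ verification explicitly by computing that successive congruence quotients $\mathbb{S}(\pi^n\OO)/\mathbb{S}(\pi^{n+1}\OO)$ are abelian of exponent $p$, whereas you defer this to a ``standard'' remark; also your appeal to Schreier's formula is redundant, since Theorem~\ref{thm:properties of happy groups}(\ref{item:finitely generated compact open}) already gives finite generation for \emph{every} compact open subgroup, including the congruence subgroup $K_i'$ itself.
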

If  $G$ is  $p$-adic then Proposition \ref{prop:G has finitely genereted compact open} follows from Lazard's theorem   \cite[Theorem 8.1]{prop}. Note that every non-Archimedean semisimple analytic group in zero characteristic  can be regarded as a group defined over $\Qp$ using  restriction of scalars. 
%\footnote{See  Lemma \ref{lem:irs not supported on subfield} for another application of restriction of scalars.}

\begin{proof}
Let $S$ be a factor of $G$ so that $S$ is a simple analytic group over some local field $k$. If $k$ has zero characteristic let $p$ be such that $k$ is a finite extension of $\Qp$. Otherwise let $p = \mathrm{char}(k)$.  Taking into account Proposition \ref{prop:a finitely generated prop group is isolated} it will suffice to show that the group $S^+$ admits a finitely generated pro-$p$ group.

Recall that $\mathcal{O}$ is the valuation ring  and $\pi$  a uniformizer element of $k$. Assume that $S = \mathbb{S}(k)$ and $\mathbb{S} \le \mathrm{GL}_N$ for some $N \in \NN$. We claim that the first congruence subgroup $\mathbb{S}(\pi \OO)$ of $\mathbb{S}(\OO)$ is a pro-$p$ group\footnote{See  \cite[Lemma 3.8]{pr} for a different proof of the fact that $\mathbb{S}(\pi \OO)$ is a pro-$p$ in the zero characteristic case.}. 

Consider the descending sequence  of congruence subgroups  $\mathbb{S}(\pi^n \OO) $ for $n \in \NN$. These form a basis of neighborhoods at the identity for $\mathbb{S}(\pi \OO)$. We need to verify that every successive quotient $P_n$ with $n \in \NN$
$$P_n = \mathbb{S}(\pi^n \OO) / \mathbb{S}(\pi^{n+1}\OO)$$
is a $p$-group  \cite[2.3.2]{ribes2000profinite}.  
Let $s_1,s_2 \in \mathbb{S}(\pi^n \OO)$ be a pair of elements. We may write
$ s_i = \mathrm{Id} + \pi^{n} A_i$ for some matrices $A_i \in \mathrm{M}_N(\OO)$ and $i=1,2$. The product $s_1 s_2$ is given by 
$$ s_1 s_2 = \mathrm{Id} + \pi^n(A_1 + A_2) + \pi^{n+1}\mathrm{M}_N(\OO) $$
so that $s_1 s_2 = \mathrm{Id} + \pi^n(A_1 + A_2)$ in $P_n$. This calculation shows that $P_n$ is an abelian group of exponent $p$ for every $n \in \NN$, establishing the above claim.

Since $G$ is happy the  Barnea--Larsen theorem, given here as Theorem \ref{thm:properties of happy groups}, implies that the first congruence subgroup is finitely generated. 
\end{proof}

We are ready to show that $G^+$  is self-Chabauty-isolated. The proof should be compared with \cite[3.3]{gel_minsky}.

\begin{theorem}
\label{thm:seclusion of non-Archimedean semisimple}
Let $G$ be a happy non-Archimedean semisimple analytic group. Then the subgroup $G^+$  is self-Chabauty-isolated.
\end{theorem}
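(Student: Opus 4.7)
The strategy is to build up from a small self-Chabauty-isolated compact open subgroup to the full group $G^+$ by applying Proposition \ref{prop:isolated and finitely many supergroups} in two successive steps, using the finiteness-of-supergroups result of Corollary \ref{cor:finitely many supergroups} as the key input for the top step.

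First I would invoke Proposition \ref{prop:G has finitely genereted compact open} to obtain a compact open subgroup $O \le G^+$ that is self-Chabauty-isolated; in the factor-by-factor construction used there, $O$ sits inside $\mathbb{S}(\OO)$ for each simple factor $\mathbb{S}(k)$, i.e.\ inside a maximal compact subgroup. Accordingly, writing $G^+ = G_1^+ \cdots G_n^+$ as an almost direct product of its simple analytic factors, I would select a maximal compact subgroup $K_i$ of $G_i^+$ containing the corresponding factor of $O$; in the non-Archimedean setting these $K_i$ are open, being stabilizers of vertices in the Bruhat--Tits building of $G_i$. Setting $K = K_1 \cdots K_n$ gives an open maximal compact subgroup of $G^+$ containing $O$.

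Now I would apply Proposition \ref{prop:isolated and finitely many supergroups} with ambient group $K$ and open subgroup $O$: the index $[K:O]$ is finite since $K$ is compact and $O$ is open, so there are only finitely many intermediate subgroups $O \le Q \le K$, whence $K$ inherits self-Chabauty-isolation. A second application of the same proposition, this time with ambient group $G^+$ and open subgroup $K$, completes the argument: by Corollary \ref{cor:finitely many supergroups} there are only finitely many intermediate subgroups $K \le Q \le G$, and in particular only finitely many $K \le Q \le G^+$. Hence $G^+$ is self-Chabauty-isolated.

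The essential technical work has already been absorbed into the machinery cited above, so this theorem is really a two-step packaging rather than a new difficulty. The only points that require a moment's care are the existence of an open maximal compact subgroup $K \le G^+$ containing $O$ (handled factor by factor through Bruhat--Tits theory) and, of course, that happiness of $G$ is what legitimizes both Corollary \ref{cor:finitely many supergroups} (via Theorem \ref{thm:maximal compact open is maximal} and Proposition \ref{prop:normal subgroups of $G$}) and Proposition \ref{prop:G has finitely genereted compact open} (via the Barnea--Larsen characterization, which provides finite generation of the pro-$p$ congruence subgroup needed to trigger Proposition \ref{prop:a finitely generated prop group is isolated}).
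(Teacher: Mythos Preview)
Your proposal is correct and follows essentially the same two-step route as the paper: obtain a self-Chabauty-isolated compact open $O$ from Proposition \ref{prop:G has finitely genereted compact open}, pass to a maximal compact open $K \supset O$, and apply Proposition \ref{prop:isolated and finitely many supergroups} twice --- first to $O \le K$ (finite index) and then to $K \le G^+$ (Corollary \ref{cor:finitely many supergroups}). The paper simply takes $K$ to be any maximal compact open subgroup of $G^+$ containing $O$ rather than assembling it factor by factor, which is slightly cleaner and avoids having to check that a product of maximal compacts in the almost direct factors is again maximal compact.
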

\begin{proof}
%Let $\GG$ be a connected simply connected semisimple linear $k$-group and denote $G = \GG(k)$.
%	\marginpar{modify proof - not all intermediate sgps contain $K$.}
Let $P $ be a self-Chabauty-isolated compact open subgroup of $G^+$  provided by Proposition \ref{prop:G has finitely genereted compact open}. Let $K$ be a maximal compact open subgroup so that $P \le K \le G^+$. Clearly $P$ has finite index in $K$. According to Corollary \ref{cor:finitely many supergroups} there are only finitely many intermediate subgroups between $K$ and $G^+$. The statement now follows by applying Proposition \ref{prop:a finitely generated prop group is isolated} twice with respect to the two pairs $P \le K$ and $K \le G^+$.
\end{proof}

\subsection{General semisimple analytic groups} We complete the proof of Chabauty isolation for general happy semisimple analytic group.

\begin{proof}[Proof of Theorem \ref{thm:G is isolated}]
Write $G^+$ as an almost direct product $G^+ = C \times D$ where $C$ and $D$  are  connected and totally disconnected semisimple analytic groups, respectively.
The two factors $C$ and $D$ are  self-Chabauty-isolated by \cite[Prop. 2.2]{7S} and  Theorem \ref{thm:seclusion of non-Archimedean semisimple}, respectively. We are required to show that the almost direct product $G^+ = C \times D$ is self-Chabauty-isolated as well.

Let $U_1, \ldots, U_n \subset C$ be open subsets so that the only closed subgroup of $C$ intersecting non-trivially each $U_i$ is $C$ itself. Similarly let $V_1, \ldots, V_m \subset D$ be open subsets satisfying the analogous property with respect to $D$.

By Proposition \ref{prop:G has finitely genereted compact open} there is a self-Chabauty-isolated compact open subgroup  $K \le D$. This implies that $K$ admits an open normal subgroup $P$ so that the only closed subgroup of $K$ intersecting non-trivially every coset of $P$ is $K$ itself. Let $k_1, \ldots, k_t$ be coset representatives for $P$ in $K$.

Consider the Chabauty-open subset $\Omega$ of $\Sub{G^+}$ given by
$$ 
 \Omega = \bigcap_{i=1}^n \mathcal{O}_2(U_i \times P) \cap \bigcap_{i=1}^m \mathcal{O}_2(C \times V_j) \cap   \bigcap_{i=1}^t \mathcal{O}_2(C \times k_i P).
$$
We complete the proof by showing that $\Omega = \{G^+\}$. 

Let $L \in \Omega$ be any closed subgroup. Choose arbitrary elements $ l_{i} \in L \cap (U_i \times P)$ for all $ 1 \le i \le n$ and denote $L' = \left<l_i\right>$. Observe that $L' \le C \times P$ and that $L'$ projects densely to $C$. Next, choose elements $m_i \in L \cap (C \times k_iP)$ for $ 1 \le i \le t$. Note that
$$ 
 \text{$e_{C} \in \overline{\mathrm{pr}_{C}(m_{i}L')}$ and $\mathrm{pr}_{D}(m_{i}L') \subset k_i P$}.
$$
Since $L$ is closed and  the cosets $k_iP$ are compact we deduce that
$$ L \cap (\{e_{C}\} \times k_i P)  \neq \emptyset $$
for every $1 \le i \le t$.  Therefore $L$ contains the open subgroup $\{e_{C}\} \times K$.

The two projections of $L$ to $C$ and $D$ are dense by the definition of $\Omega$. In fact  $L \cap (C \times P)$ already projects densely to $C$. The compactness of $P$ implies that $\mathrm{pr}_{C}(L) = C$. Since $L \cap (\{e_{C}\} \times D)$ is relatively open in $\{e_{C}\} \times D$ we have that $\mathrm{pr}_{D}(L) = D$.

We have established that $L$ surjects onto both factors $C$ and $D$. This implies that
$ L \cap (\{e_{C}\} \times D)$ is normal regarded as a subgroup of  the factor $D$. The normal closure of $K$ in $D$ is the group $D$ itself. So $L$ contains $\{e_{C}\} \times D$. Since $L$ surjects onto the factor $C$ it follows that $L = C \times D = G^+$, as required.
\end{proof}

Since $G$ is happy the quotient $G/G^+$ is finite by Theorem \ref{thm:properties of happy groups}. So that every subgroup $G^\dagger$ with $G^+ \le G^\dagger \le G$ is clearly self-Chabauty-isolated as well.

\begin{cor}[Chabauty--isolation from discrete subgroups]
	\label{cor:isolation of discrete from normal}
	Let $G$  be a happy semisimple analytic group, $N$ a non-discrete normal subgroup of $G$ and $H$ a closed subgroup of $G$ containing $N$ so that $H/N$ is discrete in $G/N$. Then $H$ cannot be approximated by discrete subgroups in $\Sub{G}$.
\end{cor}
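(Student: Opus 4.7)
The strategy is to argue by contradiction: suppose that $\Gamma_n$ is a sequence of discrete subgroups of $G$ Chabauty-converging to $H$, and extract from the $\Gamma_n$ a sequence of finite subgroups of a fixed infinite compact subgroup of $M^+$ that converges to the whole subgroup in its own Chabauty space, thereby contradicting self-Chabauty-isolation.

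The first step will be to apply Proposition~\ref{prop:normal subgroups of $G$} to the non-discrete normal subgroup $N$, producing an almost direct factor $M$ of $G$ with $M^+ \le N$ and $NM/M$ central in $G/M$. The centre of the semisimple analytic group $G/M$ is discrete, so the non-discreteness of $N$ forces $M$ to be non-trivial, making $M^+$ an infinite semisimple analytic group contained in $H$; the centrality condition furthermore yields $N \le Z(G) \cdot M$. Combined with the assumption that $H/N$ is discrete in $G/N$, this provides an open neighbourhood $U \supseteq N$ in $G$ satisfying $H \cap U = N$. Since $M$ is itself a happy semisimple analytic group, Theorem~\ref{thm:G is isolated} applies, and Proposition~\ref{prop:G has finitely genereted compact open} supplies (in the non-Archimedean setting) an infinite self-Chabauty-isolated compact open subgroup $P \le M^+$.

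Next I would write $G = M \cdot M'$ as an almost direct product and select a sufficiently small compact open subgroup $K' \le M'$ so that the product $K := P \cdot K'$ is a compact open subgroup of $G$ contained in $U$; for $K'$ small enough the finite central subgroup $M \cap M'$ is avoided and the commutators $[P, K'] \subseteq M \cap M'$ are trivial, so $K$ decomposes as the direct product $P \times K'$ and the first projection $\pi_M \colon K \to P$ is a continuous group homomorphism. A direct check using $N \le Z(G) \cdot M$ gives $K \cap H = K \cap N = P$. Chabauty convergence $\Gamma_n \to H$ restricts to Hausdorff convergence $\Gamma_n \cap K \to P$ on the compact open subgroup $K$, and each $\Gamma_n \cap K$ is a finite subgroup. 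Pushing forward under $\pi_M$ produces finite subgroups $E_n := \pi_M(\Gamma_n \cap K) \le P$ with $E_n \to P$ in the Chabauty topology on $\Sub{P}$, so the self-Chabauty-isolation of $P$ forces $E_n = P$ for all large $n$, contradicting the finiteness of $E_n$ against the infinitude of $P$.

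The main obstacle lies in arranging the local product decomposition $K = P \times K'$ and the projection $\pi_M$ so that they genuinely respect the group structure, with careful treatment of the finite central intersection $M \cap M'$ of the almost direct factors, as well as in verifying the equality $K \cap H = P$. The Archimedean and mixed cases would proceed analogously, replacing the compact open subgroup $P$ by a suitable compact piece of the connected factor $M^+$ and invoking the self-Chabauty-isolation of connected semisimple Lie groups from \cite[Prop.~2.2]{7S} in place of Proposition~\ref{prop:G has finitely genereted compact open}.
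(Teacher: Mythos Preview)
Your non-Archimedean argument is correct and somewhat more streamlined than the paper's: projecting $\Gamma_n \cap K$ onto the self-Chabauty-isolated compact open subgroup $P \le M^+$ gives a clean contradiction. The claim $K \cap H = P$ does hold once $K'$ is chosen small enough, since $N \cap M'$ is finite (indeed $NM/M$ lies in the finite centre of $G/M \cong M'/(M\cap M')$), and the Chabauty-continuity of $L \mapsto \pi_M(L \cap K)$ is routine because $K$ is open in $G$ and $K'$ is compact. The paper's argument in the corresponding case is close in spirit---it too exhibits $P$ inside any nearby closed subgroup---but constructs an explicit Chabauty neighbourhood rather than passing through a projection.

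The genuine gap is in the Archimedean and mixed cases, where the sketched analogy does not go through. There are two separate obstructions. When $M'$ has an Archimedean factor there is no compact open $K' \le M'$, so the subgroup $K = P \cdot K'$ cannot be formed at all. When $M$ is Archimedean, replacing $P$ by a ``compact piece'' of $M_0$ destroys the group structure, so the projection no longer lands in $\Sub{P}$ and self-Chabauty-isolation cannot be invoked; one could instead take $P = M^+$ in its entirety and aim for discrete (rather than finite) $E_n$ converging to the non-discrete $M^+$, but this still needs a compact open $K'$ in the complementary factor and so runs back into the first obstruction whenever $M'$ is not totally disconnected. The paper resolves these cases by genuinely different means: it first strips off any non-Archimedean part of $G_2$ via the continuous map $\Gamma \mapsto \mathrm{pr}_{G_0}(\Gamma \cap (G_0 \times K))$, then in the purely real case invokes the fact that a Chabauty limit of discrete subgroups of a semisimple real Lie group has nilpotent identity component \cite[2.3]{7S} and hence cannot contain $(G_1)_0$, and in the remaining case ($G_1$ non-Archimedean, $G_2$ real) uses the no-small-subgroups property of $G_2$ together with a pair $W \subset V$ of identity neighbourhoods to force any nearby closed subgroup to contain $P \times \{e_{G_2}\}$.
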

\begin{proof}

As was already observed above, the case where $G^+ \le N $ follows immediately from Theorem \ref{thm:G is isolated} combined with Proposition \ref{prop:isolated and finitely many supergroups} and the finiteness of $G/G^+$.

In all other cases  we may according to Proposition \ref{prop:normal subgroups of $G$} decompose $G$ into an almost direct product $G = G_1 \times G_2$ of semisimple analytic groups such that $G_1^+ \le N$ and $H/G_1$ is discrete in $G / G_1$. The proof is by a few reduction steps.

If $G_2$ has a non-Archimedean semisimple analytic factor $G_{2,\text{td}}$ we may write $G$ as an almost direct product $G = G_{0} \times G_{2,\text{td}}$. Let $K$ be any compact open subgroup of $G_{2,\text{td}}$. Note that the map 
$$\Sub{G} \to \Sub{G_{0}}, \quad \Gamma  \mapsto \mathrm{pr}_{G_{0}}(\Gamma \cap  (G_{0} \times K))  $$ is continuous and maps discrete subgroups in $G$ and to discrete subgroups in $G_{0}$, and likewise discrete subgroups in $G/N$ to discrete subgroups $G_{0}/N$. Therefore we may assume in what follows below that $G_2$ is Archimedean.

%Up to regrouping of factors and passing to an a priori more difficult problem, we may assume that $N$ either consists entirely of Archimedean or of non-Archiemdean factors. 
If $G_1$ is Archimedean as well then $G$ is a semisimple real Lie group. In this case it is a well known fact that the Chabauty limit of discrete groups has a nilpotent connected component and therefore cannot contain $(G_{1})_0$, see e.g. \cite[2.3]{7S}.

If $G_1$ has a non-trivial Archimedean and a non-trivial non-Archimedean factors then $G_1$ splits into an almost direct product $G_1 = G_{1,\text{c}} \times G_{1,\text{td}}$. We may combine the arguments of the two previous cases --- namely, by intersecting with $G_{1,\text{c}} \times K$ for a compact open subgroup $K \le G_{1,\text{td}}$, passing to the quotient $G_1/G_{1,\text{td}}$ and then noting that discrete groups of $G/G_{1,\text{td}}$ cannot have a subgroup containing $G^+_1/G^+_{1,\text{td}}$ as a limit point.

The remaining case is that $G_1$ is non-Archimedean and $G_2$ is Archimedean.
% factor $N_\text{td}$ of $N$ is non-trivial. Write $N = N_\text{td} \times N_\text{c}$ and denote $G_\text{c} = N_c \times M$. The Archimedean factor $N_\text{c}$ of $N$  might be trivial but  $G_c$  is non-trivial.
Let $V$ be a relatively compact identity neighborhood in the real Lie group $G_2$ such that $V$ has no non-trivial closed subgroups and moreover $H \cap (G_1 \times \overline{V} ) \subset G_1$.  Let $W$ be a symmetric identity neighborhood in $G_2$ satisfying $W^2 \subset V$. By Proposition \ref{prop:G has finitely genereted compact open} the non-Archimedean group $G_1$ admits a self-Chabauty-isolated compact open subgroup $P$. Choose open subsets $U_1,\ldots, U_n \subset P$ so that every closed subgroup of $P$ that intersects each $U_i$ non-trivially must be $P$ itself. Consider the Chabauty--open set $\Omega$
$$ 
 \Omega = \mathcal{O}_1\left( P \times (\overline{V}\setminus W)  \right) \cap \bigcap_{j=1}^{n} \mathcal{O}_2\left( U_j \times W \right). 
$$
On the one hand clearly $H \in \Omega$. On the other hand, observe that every closed subgroup $L  \in \Omega$  intersects non-trivially the subsets $U_j \times \{e_{G_2}\} $ for $j=1,\ldots,n$ and  satisfies  $ L \cap (P \times V) = P \times \{ e_{G_2} \} $. In particular, the Chabauty subspace of discrete subgroups in $G$ is disjoint from $\Omega$, as required.
\end{proof}

As a special case of Corollary \ref{cor:isolation of discrete from normal} we deduce that any non-discrete normal subgroup $N$ is Chabauty--isolated from the subspace of discrete subgroups in $G$.

\section{On Chabauty neighborhoods of lattices}
\label{sec:local rigidity}

We analyze the Chabauty neighborhoods of irreducible lattices in semisimple analytic groups. It turns out that in higher-rank there is a kind of local rigidity phenomenon, i.e. every irreducible lattice admits a Chabauty neighborhood consisting only of conjugates, see Theorem \ref{thm:Chabauty local rigidity for lattices in algebraic groups}. We develop a general principle allowing to deduce this from the classical notion of local rigidity.

The first step is to show that Chabauty neighborhoods of lattices in semisimple analytic groups are jointly discrete in the following sense.

\begin{definition}
	\label{def:jointly discrete}
	A  subgroup $\Delta \le H$ admits a \emph{jointly discrete Chabauty neighborhood} if there exist an identity neighborhood $V$ in $H$ and a Chabauty open neighborhood $\Omega$ of $\Delta$ in $\Sub{H}$ so that every closed subgroup $L \in \Omega$ satisfies $L \cap V = \{e\}$.
\end{definition}

\begin{thm}
	\label{thm:a lattice admits a jointly discrete Chabauty neighborhood - general case}
	Every lattice $\Gamma$  in a semisimple analytic group $G$  admits a jointly discrete Chabauty neighborhood.
\end{thm}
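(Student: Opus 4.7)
I would argue by contradiction. Suppose $\Gamma$ admits no jointly discrete Chabauty neighborhood. Then there is a sequence of closed subgroups $L_n \to \Gamma$ in the Chabauty topology and non-trivial elements $l_n \in L_n$ with $l_n \to e$. Since $\Gamma$ is discrete, one can fix a relatively compact symmetric identity neighborhood $V_0$ with $\Gamma \cap \overline{V_0} = \{e\}$; taking any smaller open $V \subset V_0$, the compactum $K = \overline{V_0}\setminus V$ is disjoint from $\Gamma$, so $\mathcal{O}_1(K)$ is a Chabauty neighborhood of $\Gamma$ and for $n$ large $L_n \cap \overline{V_0} \subset V$. In particular, modulo shrinking $V$, we may assume $l_n \in V$ for all $n$. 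The goal is to show that the presence of the $l_n$ forces a nontrivial positive-dimensional (or at least richly structured) subgroup into the Chabauty limit $\Gamma$, contradicting discreteness.

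The main tool in zero characteristic is the exponential map on each simple factor of $G$ (the real exponential on Archimedean factors and the $p$-adic exponential on $p$-adic factors, both of which are local diffeomorphisms onto neighborhoods of the identity). Writing $l_n = \exp(X_n)$ with $X_n \in \mathfrak{g}$ and $X_n \to 0$, one can pick integers $m_n$ with $m_n X_n \to X$ for some fixed $X \neq 0$ (along a subsequence). Then $l_n^{m_n} = \exp(m_n X_n) \in L_n$ converges to $\exp(X)$; since $L_n \to \Gamma$ in Chabauty, $\exp(X) \in \Gamma$. Varying the scaling of $m_n$ continuously, one realises every element of $\exp(\mathbb{R}X)$ (respectively $\exp(\mathbb{Z}_p X)$) as such a limit, and hence obtains a nontrivial one-parameter subgroup inside $\Gamma$, contradicting discreteness.

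The main obstacle is the positive characteristic case, where the exponential map is unavailable and discrete subgroups can contain arbitrarily small torsion. I would handle it by combining two ingredients. First, the Kazhdan--Margulis/Zassenhaus type neighborhood for non-Archimedean semisimple analytic groups (compatible with the structure of compact open subgroups discussed in \S\ref{sec:semisimple linear groups over local fields}): this ensures that, after possibly shrinking $V$, the subgroup $\langle L_n \cap V\rangle$ is contained in a controlled ``thin'' subgroup (nilpotent in the Archimedean picture, a pro-$p$ group locally in our setting). Second, the self-Chabauty-isolation of $G^+$ established in Theorem~\ref{thm:G is isolated}, which rules out certain structured Chabauty limits. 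Using the Jordan decomposition $l_n = s_n u_n$ one argues separately on the semisimple and unipotent parts: for the unipotent part one uses the additive structure of the relevant unipotent subgroup (so that elements $u_n^m$ together with conjugates by an appropriately chosen compact neighborhood yield arbitrarily many ``escape'' directions); for the semisimple part one uses that $\overline{\langle s_n\rangle}$ is a nontrivial compact abelian subgroup whose Chabauty limit, if nontrivial, descends into $\Gamma$. Either way, the Chabauty limit contains a nontrivial closed subgroup contradicting discreteness of $\Gamma$.

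The hardest step is therefore the positive characteristic analysis, in which one must carefully rule out the possibility that the small elements $l_n$ are all torsion elements whose closed generated subgroups shrink to $\{e\}$ in Chabauty. This is where invoking the self-Chabauty-isolation result of Section~\ref{sec:isolated groups} together with the pro-$p$ structure of compact open subgroups (via Theorem~\ref{thm:properties of happy groups}) should suffice to prevent an invisible ``torsion condensation'' near the identity.
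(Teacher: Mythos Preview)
Your Archimedean argument is fine, but the non-Archimedean part has a genuine gap. In a $p$-adic group the step ``pick integers $m_n$ with $m_n X_n \to X \neq 0$'' cannot succeed: for any integer $m$ one has $|m|_p \le 1$, so $|m_n X_n|_p \le |X_n|_p \to 0$ and every integer power $l_n^{m_n}$ stays in the small pro-$p$ ball containing $l_n$. More conceptually, the closed subgroup generated by a small element of a $p$-adic group is itself small, so no rescaling trick can push anything nontrivial into the Chabauty limit. This is not a technicality: the trivial subgroup $\{e\}$ in any non-Archimedean semisimple analytic group is the Chabauty limit of a shrinking sequence of compact open subgroups, each of which has nontrivial elements arbitrarily close to $e$. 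Hence \emph{some} discrete subgroups fail to have jointly discrete Chabauty neighborhoods, and the lattice hypothesis is essential. Your argument never uses it in the non-Archimedean case. The positive characteristic sketch inherits the same problem; invoking self-Chabauty-isolation of $G^+$ or pro-$p$ structure does not by itself exploit the fact that $\Gamma$ is a lattice, and the ``torsion condensation'' you worry about is exactly what happens for $\Gamma = \{e\}$.

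The paper's proof takes a completely different, constructive route that uses the lattice hypothesis in an essential way. One first finds a strongly regular hyperbolic element $\gamma \in \Gamma$ (this requires $\Gamma$ to be a lattice), lying in a unique maximal $k$-split torus $S$. Using Zariski density of $\Gamma$ one chooses finitely many conjugates $S^{\gamma_1},\dots,S^{\gamma_m}$ whose intersection is central, robustly under small perturbations of the conjugating elements (Proposition~\ref{prop:intersections over deformed elements}). One then builds a Chabauty neighborhood $\Omega$ of $\Gamma$ so that any $L \in \Omega$ contains strongly regular hyperbolic elements $\lambda_i$ close to the $(\gamma^N)^{\gamma_i}$, and so that $L_V = L \cap V$ is a subgroup normalized by the $\lambda_i$. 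The geometry of the Bruhat--Tits building then forces $L_V$ to fix the translation apartments of all the $\lambda_i$, hence to lie in $\bigcap_i S^{\gamma_i u_i}$, which is central, hence $L_V = \{e\}$. The Archimedean factor is absorbed trivially using the ``no small subgroups'' property, in line with your real case.
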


Note that the lattice $\Gamma$ is not assumed to be irreducible. In the following two subsections we deal with the non-Archimedean case of Theorem \ref{thm:a lattice admits a jointly discrete Chabauty neighborhood - general case}.

\subsection{Strongly regular hyperbolic elements}

Let $G$ be a non-Archimedean semisimple analytic group and $X$  the associated Bruhat--Tits building. 

\begin{definition}
\label{def:strongly regular hyperbolic elements}
An element $\gamma \in G$ is \emph{strongly regular hyperbolic} if $\gamma$ is hyperbolic and the two endpoints of any translation axis for $\gamma$ lie in the interiors of two opposite chambers  of the spherical building at infinity.
%$\mathrm{Min}(\Gamma)$ is an apartment $A$ of  $X$  and any translation axis of $\gamma$ crosses any wall of $A$.
\end{definition}

The notion of strongly regular elements was introduced by Caprace and Ciobotaru  \cite{cap_cio}. This is a generalization of the notion of $\RR$-hyper-regular elements in semisimple real Lie groups \cite{prasad1972cartan}. 

The minimal set\footnote{The \emph{minimal set} $\mathrm{Min}(\gamma)$ of the element $\gamma$ is the set of points on which the minimal displacement of $\gamma$ is attained. $\mathrm{Min}(\gamma)$ is a closed convex subset of $X$. See \cite[II.6]{bh}.} of a strongly regular element of $G$ is a uniquely determined apartment in the building $X$ \cite[2.3]{cap_cio}. In particular, a strongly regular hyperbolic element belongs to a unique maximal $k$-split torus.
%The set of apartments in $X$ stands in a bijective correspondence with the collection of maximal $k$-split tori in $G$.  Therefore a strongly regular hyperbolic element belongs to a unique maximal $k$-split torus.

%This regularity notion is suitable to our needs because of the following property.

%\begin{prop}
%	\label{prop:strongly regular hyperbolic has endpoints opposite, and vary continuously}
%	Let $l \subset X$  be any translation axis of a strongly regular hyperbolic element in $G$. Then the two endpoints $l(\infty)$ and $l(-\infty)$ lie in the interior of two opposite chambers  of the spherical building at infinity.
%\end{prop}
%

\begin{prop}
\label{prop:on bi-neighborhoods of strongly regular hyperbolic elements}
Let $\gamma \in G$ be a strongly regular hyperbolic element. Then there is an open identity neighborhood $U$ in $G$ and a number $N_\gamma \in \NN$ so that for $n \ge N_\gamma$ every element $\gamma' \in U \gamma^{n} U$  is strongly regular hyperbolic and satisfies $\mathrm{Min}(\gamma') = u \mathrm{Min}(\gamma)$ for some $u \in U$.
\end{prop}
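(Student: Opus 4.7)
The plan is to exploit the contracting dynamics of $\gamma$ on the spherical building at infinity $\partial X$. Let $A = \mathrm{Min}(\gamma)$, with axis endpoints $\xi_\pm$ lying in the interiors of two opposite chambers $C_\pm \subset \partial X$. Since $\gamma$ is strongly regular hyperbolic, $C_+$ is the unique attracting fixed chamber of $\gamma$ and $C_-$ is the unique repelling one. Consequently there exist open neighborhoods $\mathcal{U}_\pm$ of $C_\pm$ in the chamber complex at infinity, with every chamber of $\overline{\mathcal{U}_+}$ opposite to every chamber of $\overline{\mathcal{U}_-}$, and an integer $N_0$, such that $\gamma^{\pm n}(\partial X \setminus \mathcal{U}_\mp) \subset \mathcal{U}_\pm$ for every $n \ge N_0$. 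This is the key dynamical statement and should be derivable from the definition of strong regularity together with the facts on the minimal set in \cite{cap_cio}.

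Next I would choose an open symmetric identity neighborhood $U \subset G$ so small that the enlarged sets $U\mathcal{U}_+$ and $U^{-1}\mathcal{U}_-$ still lie inside open neighborhoods $\mathcal{V}_\pm$ of $C_\pm$ for which the opposition property is preserved. Setting $N_\gamma := N_0$, for all $u_1, u_2 \in U$ and $n \ge N_\gamma$ the element $\gamma' := u_1\gamma^n u_2$ then satisfies $\gamma'(\partial X \setminus \mathcal{V}_-) \subset \mathcal{V}_+$ (by pushing through $u_2$, then $\gamma^n$, then $u_1$) and dually $(\gamma')^{-1}(\partial X \setminus \mathcal{V}_+) \subset \mathcal{V}_-$. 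A standard contraction argument on the compact chamber complex at infinity, namely that $\gamma'$ sends $\overline{\mathcal{V}_+}$ into itself and its iterates converge uniformly, then produces a unique attracting fixed chamber $C'_+ \in \mathcal{V}_+$ of $\gamma'$; applying the same argument to $(\gamma')^{-1}$ produces a unique repelling fixed chamber $C'_- \in \mathcal{V}_-$.

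The chambers $C'_\pm$ are opposite and both fixed by $\gamma'$, which forces $\gamma'$ itself to be strongly regular hyperbolic, translating along the unique apartment $A'$ joining $C'_\pm$; in particular $\mathrm{Min}(\gamma') = A'$. To realize $A'$ as $uA$ for some $u \in U$, I would use that $G$ acts transitively on the space of pairs of opposite chambers with stabilizer the Levi $L = P_+ \cap P_-$, and that the resulting orbit map $G \to G/L$ is open. Thus the pair $(C'_+, C'_-)$, lying in a prescribed neighborhood of $(C_+, C_-)$, is of the form $(uC_+, uC_-)$ for some $u$ in a prescribed neighborhood of the identity; by shrinking the initial choice of $U$ so that this derived neighborhood of $e$ lies inside $U$, we arrange $u \in U$ and conclude $\mathrm{Min}(\gamma') = uA = u\,\mathrm{Min}(\gamma)$.

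The main obstacle will be uniformity: we must exhibit a single pair $(U, N_\gamma)$ that works for all $(u_1,u_2) \in U \times U$ simultaneously, which means the contracting dynamics of $\gamma^n$ on the flag variety must absorb bounded (non-shrinking) perturbations on both sides. This should succeed because the contraction rate of $\gamma^n$ toward $C_+$ is exponential in $n$ while the perturbation by $U$ is of bounded Lipschitz size on the compact chamber complex; choosing $N_\gamma$ large enough renders the perturbation harmless and secures the inclusion $\gamma'(\overline{\mathcal{V}_+}) \subset \mathcal{V}_+$ uniformly in $u_1,u_2 \in U$.
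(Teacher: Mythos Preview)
Your approach is viable but genuinely different from the paper's. The paper never passes to the boundary: it fixes a unit geodesic segment $l([0,1])$ on the axis of $\gamma$, takes $U$ to be a compact open subgroup fixing that segment pointwise, and then computes directly that $\gamma' = u_1\gamma^n u_2$ sends $l_1(t) = u_1 l(t)$ to $l_1(t+nd)$ for $t\in[0,1]$. This lets the paper invoke the local criteria of \cite[2.5, 2.8]{cap_cio}: a short displaced segment forces a translation axis through it, and the fact that $x_0$ and $\gamma' x_0$ lie in opposite sectors of $u_1\Delta$ certifies strong regularity. The element $u$ is then produced by strong transitivity, chosen to fix $\Delta \cap \Delta'$ pointwise (hence $u \in U$). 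Your route via north--south dynamics on the chamber space at infinity and transitivity on pairs of opposite chambers is more conceptual and would also succeed; the paper's route is more hands-on and ties directly into the cited lemmas from \cite{cap_cio}.

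One step in your outline deserves more care. From ``$\gamma'$ has unique opposite attracting and repelling chambers $C'_\pm$'' you conclude that $\gamma'$ is strongly regular hyperbolic. Fixing two opposite chambers only tells you $\gamma'$ lies in the Levi of the corresponding apartment $A'$; you still need (i) that $\gamma'$ acts on $A'$ by a nontrivial translation (hyperbolicity), and (ii) that the translation vector is regular (interior to a Weyl chamber). Point (ii) does follow from the \emph{uniqueness} of the attracting chamber, as you implicitly use. Point (i), however, is not a boundary statement: an element of the compact part of the Levi can have the same fixed chambers without being hyperbolic. You should insert a displacement estimate---e.g.\ $d(x_0,\gamma' x_0) \ge n\,d(\gamma) - 2\,\mathrm{diam}(U\cdot x_0)$---to force positive translation length for $n \ge N_\gamma$. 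With that addition your argument closes.
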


\begin{proof}
	Let $\Delta$ be the unique apartment in the building $X$ with $\mathrm{Min}(\gamma) = \Delta$. Let $l : \RR\to X$ be a translation axis for $\gamma$ and assume that $x_0 = l(0)$ is a special vertex. So $l(\RR)$ is a geodesic line contained in the apartment $\Delta$ and crossing every wall\footnote{A \emph{wall} in $\Delta$ is the fixed point set of a reflection by an element in the affine Weyl group \cite[p. 34]{garrett1997buildings}.} \cite[2.2]{cap_cio}. We have $\gamma l(t) = l(t+d)$ for some $d > 0$ and all $t \in \RR$. 
	
	Choose $ N_\gamma \in \NN$ sufficiently large so that the two special vertices $ x_0$ and $\gamma^{n} x_0$ belong to the interior of opposite sectors\footnote{A \emph{sector} in $\Delta$ at the vertex $x_0$ is a connected component of the complement of all walls passing through $x_0$. It is a simplicial cone $x_0 + C$. The two sectors $x_0 + C$ and $x_0 - C$ are \emph{opposite} \cite[p. 221]{garrett1997buildings}} of $\Delta$ for all $ n \ge N_\gamma$. 
	%, so that $\gamma x_0 = l(d)$ is a special vertex of $A$ as well. 

	Denote $I = \left[0,1\right]$ and let $U \le G$ be a compact open subgroup fixing the set $l(I)$ point-wise. Consider an element $\gamma' \in G$ of the form
	$$ 
	\gamma' = u_1 \gamma^n u_2, \quad u_1,u_2 \in U, n \ge  N_\gamma. 
	$$
	Denote $l_1 = u_1 \circ l$ so that $l_1(\RR)$ is a geodesic line contained in the apartment $\Delta_1 = u_1 \Delta$ satisfying $l(t) = l_1(t)$ for all $ t \in I$. Therefore
	$$ \gamma' l_1(t) = u_1 \gamma^n u_2 l_1(t) = u_1 \gamma^n l(t) = u_1 l (t + nd) = l_1(t+nd) $$
	for all $t \in I$. 
	
	This calculation implies that $\gamma'$ is a hyperbolic element admitting some translation axis $l' : \RR \to X$ containing the geodesic segment $l_1(\left[0,nd\right])$   \cite[2.8]{cap_cio}.
	The two special vertices $x_0$ and $\gamma' x_0 = u_1 \gamma^n x_0=l'(nd) = l_1(nd)$ lie along the translation axis $l'$ and belong to opposite sectors of the apartment $\Delta_1$. Relying on the local criterion of \cite[2.5]{cap_cio} we deduce that $\gamma'$ is strongly regular hyperbolic.
	
	The minimal set $\mathrm{Min}(\gamma')$ is equal to the unique apartment $\Delta'$ containing the axis $l'$. By the strong transitivity  property of buildings there is an element $u \in G$ satisfying
	$$ 
	u \Delta = \Delta'  \quad \text{and} \quad u_{|\Delta \cap \Delta'} = \mathrm{id}.
	$$
	The geodesic segment $l(I) = l_1(I) = l'(I)$ is contained in $\Delta \cap \Delta'$ and therefore $u \in U$. The proof is complete as $\Delta = \mathrm{Min}(\gamma)$ and $\Delta' = \mathrm{Min}(\gamma')$.
\end{proof}

\begin{prop}
\label{prop:a lattice contains a strongly regular hyperbolic element}
Every lattice $\Gamma $ in $ G$ contains strongly regular hyperbolic elements.
\end{prop}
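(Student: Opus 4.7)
The plan is to combine the Borel density theorem with the openness of the set of strongly regular hyperbolic (SRH) elements to produce SRH elements in $\Gamma$.

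First I would verify that the set $SRH \subset G$ of strongly regular hyperbolic elements is nonempty and open in the Hausdorff topology. Nonemptiness: pick a maximal $k$-split torus $T \le G$; the apartment $\Delta_T \subset X$ stabilized by $T$ has chambers at infinity indexed by the Weyl group of $T$, and any sufficiently generic element $t_0 \in T$ has an axis going to the interiors of two opposite chambers, hence is SRH. Openness: this is a local statement, essentially the converse direction of \cite[2.5]{cap_cio}; a small Hausdorff perturbation of $t_0$ preserves regularity, hyperbolicity and the transversality of the axis endpoints to the chamber walls at infinity.

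Next I would invoke Borel density: since $\Gamma$ is a lattice in the semisimple analytic group $G$ (which has no compact factors by definition, as all almost $k$-simple factors are $k$-isotropic), $\Gamma$ is Zariski-dense in $G$. The subset of $G$ consisting of elements that are not $k$-regular semisimple is a proper Zariski-closed subvariety, so Zariski density yields a $k$-regular semisimple element $\gamma_0 \in \Gamma$, contained in a unique maximal $k$-torus $T_{\gamma_0}$.

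The main obstacle is converting "regular semisimple" into "strongly regular hyperbolic", since being $k$-hyperbolic (i.e., lying in a $k$-split torus with appropriately sized eigenvalues) is a Hausdorff-open but not Zariski-open condition. To overcome this I would apply the Prasad--Raghunathan construction of $\mathbb{R}$-regular (here, $k$-hyperbolic) elements in lattices, adapted to the non-Archimedean setting: starting from $\gamma_0$, using Zariski density one produces elements $\gamma \in \Gamma$ whose dynamics on the Furstenberg boundary $G/P$ has a unique attracting fixed point in general position, which is precisely the SRH condition. Equivalently, one can argue directly via the building: since $\Gamma$ has finite covolume, Moore's ergodicity shows the $\Gamma$-action on pairs of opposite chambers in the spherical building at infinity is ergodic, and in particular minimal on an open dense subset, so $\Gamma$ contains hyperbolic elements whose axes have endpoints transverse to the chamber walls, i.e., SRH elements.
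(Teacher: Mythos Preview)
Your proposal overshoots. The paper's proof is two lines: the group $G$ contains SRH elements by \cite[Theorem 1.2]{cap_cio}, and then one applies Proposition~\ref{prop:on bi-neighborhoods of strongly regular hyperbolic elements} (already proved) together with Selberg's property~(S) for lattices \cite[5.1, 5.4]{raghun}. Property~(S) says precisely that for any $g \in G$ and any identity neighborhood $U$, some double coset $U g^{n} U$ meets $\Gamma$; since Proposition~\ref{prop:on bi-neighborhoods of strongly regular hyperbolic elements} guarantees that $U\gamma^{n}U$ consists entirely of SRH elements for all large $n$ (when $\gamma$ is SRH and $U$ is chosen accordingly), the intersection point is the desired SRH element of $\Gamma$. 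You never use Proposition~\ref{prop:on bi-neighborhoods of strongly regular hyperbolic elements}, which is the whole reason it was stated and proved just before.

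Your route through Borel density, regular semisimple elements, and then a Prasad--Raghunathan-type conversion is not wrong in spirit, but the crucial third step is left as a sketch. ``Adapt the Prasad--Raghunathan construction to the non-Archimedean setting'' is a substantial argument, not a one-liner; and the ergodicity alternative you offer (ergodicity on pairs of opposite chambers implies $\Gamma$ contains elements with axis endpoints in chamber interiors) is a non-sequitur as stated --- ergodicity of an action on a boundary does not by itself produce group elements with prescribed proximal dynamics. Both routes can be made to work, but they reconstruct from scratch something that property~(S) hands you for free once the bi-neighborhood proposition is available.
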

\begin{proof}
The group $G$ contains strongly regular hyperbolic elements according to Theorem 1.2 of \cite{cap_cio}. It is now standard to deduce that $\Gamma$ has strongly regular hyperbolic elements as well, relying on  Proposition \ref{prop:on bi-neighborhoods of strongly regular hyperbolic elements}. In fact, we only need to use the fact that $\Gamma$ has the so called property (S)  \cite[5.1, 5.4]{raghun}.
\end{proof}

\subsection{Joint discreteness in the non-Achimedean case}

We are ready to prove the following non-Archimedean case of Theorem \ref{thm:a lattice admits a jointly discrete Chabauty neighborhood - general case}. Recall that jointly discrete Cahabauty neighborhoods were introduced in Definition \ref{def:jointly discrete} above.

\begin{thm} 
	\label{thm:a lattice admits a jointly discrete Chabauty neighborhood}
	Every lattice $\Gamma$  in a non-Archimedean semisimple analytic group $G$ admits a jointly discrete Chabauty neighborhood.
\end{thm}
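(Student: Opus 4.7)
The plan is to argue by contradiction using the structural rigidity of the apartments that arise as minimal sets of strongly regular hyperbolic elements. Suppose there is no jointly discrete Chabauty neighborhood. Then we may extract a sequence $L_n \in \Sub{G}$ with $L_n \to \Gamma$ in the Chabauty topology and elements $h_n \in L_n\setminus\{e\}$ with $h_n \to e$ in $G$.

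By Proposition~\ref{prop:a lattice contains a strongly regular hyperbolic element} we choose a strongly regular hyperbolic element $\gamma \in \Gamma$, and let $U$ and $N_\gamma$ be associated to $\gamma$ as in Proposition~\ref{prop:on bi-neighborhoods of strongly regular hyperbolic elements}. Write $\Delta = \mathrm{Min}(\gamma)$ for the unique apartment which is the minimal set of $\gamma$. The definition of Chabauty convergence supplies $\gamma_n \in L_n$ with $\gamma_n \to \gamma$, and for $n$ large every power $\gamma_n^k$ with $k \ge N_\gamma$ lies in $U\gamma^k U$, so it is strongly regular hyperbolic with $\mathrm{Min}(\gamma_n^k) = v_n^{(k)}\Delta$ for some $v_n^{(k)} \in U$ tending to $e$ as $n \to \infty$.

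The main step is to apply the same proposition to the conjugate element $h_n\gamma_n^k h_n^{-1} \in L_n$. Once $h_n$ is small enough this conjugate also lies in $U\gamma^k U$ and is therefore strongly regular hyperbolic with $\mathrm{Min}(h_n\gamma_n^k h_n^{-1}) = w_n^{(k)}\Delta$ for some $w_n^{(k)} \in U$. But $\mathrm{Min}(h_n\gamma_n^k h_n^{-1}) = h_n\cdot \mathrm{Min}(\gamma_n^k) = h_nv_n^{(k)}\Delta$ by functoriality, hence $(w_n^{(k)})^{-1}h_n v_n^{(k)}$ lies in the setwise stabilizer $\mathrm{Stab}_G(\Delta)$. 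The setwise stabilizer $\mathrm{Stab}_G(\Delta)$ is an extension of a discrete group (a quotient of the normalizer of the associated torus acting through the affine Weyl group) by the pointwise stabilizer $K_\Delta$, which is compact. Consequently, for $n$ sufficiently large the element $(w_n^{(k)})^{-1}h_n v_n^{(k)}$ lies in a small neighborhood of $e$ in $K_\Delta$, so up to corrections of size $\|v_n^{(k)}\|,\|w_n^{(k)}\|$ going to zero, the element $h_n$ is captured by $K_\Delta$.

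To conclude, repeat the procedure with finitely many strongly regular hyperbolic elements $\gamma^{(1)},\ldots,\gamma^{(m)}$ of $\Gamma$, chosen so that their minimal apartments $\Delta^{(i)}$ have $\bigcap_i K_{\Delta^{(i)}}$ contained in the kernel of the action of $G$ on $X$, which is central and discrete. Since $\Gamma$ is Zariski-dense (by the Borel density theorem), and strongly regular hyperbolic elements are Zariski-generic, such a family exists and yields apartments whose joint pointwise stabilizer is as small as required. Combining the constraints from each $\gamma^{(i)}$ forces $h_n$ to be within a shrinking neighborhood of the discrete center; hence $h_n = e$ for all large $n$, contradicting $h_n \neq e$. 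The hard part is the transition from the setwise stabilizer to the pointwise stabilizer of $\Delta$, for which one must control the discrete affine Weyl component in a Chabauty neighborhood, and the selection of apartments $\Delta^{(i)}$ with trivial common pointwise stabilizer modulo the center; both points rest on the uniqueness of $\mathrm{Min}(\gamma)$ for strongly regular hyperbolic $\gamma$ and the structure theory of Bruhat--Tits buildings together with Zariski density of the lattice.
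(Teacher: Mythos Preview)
Your strategy is close in spirit to the paper's, but there is a genuine gap in the final step. The information you extract from the conjugation trick is that $(w_n^{(k)})^{-1}h_n v_n^{(k)} \in K_\Delta$, i.e.\ $h_n \in w_n^{(k)} K_\Delta (v_n^{(k)})^{-1}$ with $v_n^{(k)}, w_n^{(k)} \to e$. Repeating over several apartments $\Delta^{(i)}$ gives $d(h_n, K_{\Delta^{(i)}}) \to 0$ for each $i$. But this is \emph{automatically} satisfied: you already know $h_n \to e$ and $e \in K_{\Delta^{(i)}}$, so no contradiction with $h_n \neq e$ can be drawn. To reach $h_n = e$ you would need $h_n \in K_{\Delta^{(i)}}$ \emph{exactly}, or at least $h_n$ trapped in a set that is discrete near $e$; approximate membership in several compact groups whose intersection is central does not give approximate membership in the intersection without a quantitative transversality statement, and here all the error terms and $h_n$ itself tend to $e$ simultaneously.

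The paper closes this gap by working with a \emph{subgroup} rather than a single element. Using a Chabauty neighborhood constructed via \cite[Lemma~9.4]{GL}, one arranges that for every $L$ in the neighborhood the set $L_V = L \cap V$ is a compact subgroup \emph{normalized} by approximants $\lambda_i \in L \cap U(\gamma^N)^{\gamma_i}U$ of conjugates of a strongly regular hyperbolic $\gamma \in \Gamma$. A CAT(0) fixed-point argument then gives the exact containment $L_V \subset S^{\gamma_i u_i}$ with $u_i \in U$ bounded but not small. The remaining perturbation issue --- that $\bigcap_i S^{\gamma_i u_i}$ could fail to be central when the $u_i$ are merely bounded --- is handled by Lemma~\ref{lem:on intersections of parabolics}, itself resting on the Noetherian stability result Proposition~\ref{prop:intersections over deformed elements}. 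This is precisely the ``selection of apartments with trivial common pointwise stabilizer modulo center'' that you flag as a hard point; it is indeed nontrivial and you do not supply an argument for it either. In short, the two missing ingredients in your proposal are an exact (not asymptotic) containment of the small elements of $L$ in each torus or pointwise stabilizer, and a perturbation-stable intersection lemma for those tori.
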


We will make use of the following lemma.

\begin{lemma}

	\label{lem:on intersections of parabolics}
	Let $S $ be a maximal $k$-split torus  and $ \Gamma $ any  Zariski-dense subgroup in $G$. 
Then there are elements $\gamma_1, \ldots, \gamma_m\in \Gamma$ for some $ m \in \NN$ and an identity neighborhood $W \subset G$ so that  the intersection $\bigcap_{i=1}^{m}S^{\gamma_i w_i } $ is central in $G$ for every choice of  elements $w_i \in W$.
\end{lemma}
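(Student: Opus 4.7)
The plan is to apply Proposition \ref{prop:intersections over deformed elements} with $H = S$, after first showing that the ``full'' intersection $N_0 := \bigcap_{\gamma \in \Gamma} S^\gamma$ is already a central subgroup of $G$. Note that $N_0$ is a Zariski closed subgroup of $S$ (as an intersection of Zariski closed subgroups), so in particular it is abelian.

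First I would establish that $N_0 \nrm G$. Conjugation by any $\gamma' \in \Gamma$ permutes the defining family $\{ S^\gamma : \gamma \in \Gamma \}$, so $\Gamma$ normalizes $N_0$. The normalizer of a Zariski closed subgroup is itself Zariski closed, and $\Gamma$ is Zariski dense in $G$, hence all of $G$ normalizes $N_0$.

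Next I would invoke Proposition \ref{prop:normal subgroups of $G$} to obtain an almost direct factor $M$ of $G$ with $M^+ \le N_0$ and $N_0 M / M$ central in $G/M$. Since $N_0 \le S$ is abelian while $M^+$ is generated by unipotent elements (and is therefore non-abelian for any non-trivial simple analytic factor), $M$ must be trivial. Consequently $N_0$ itself is central in $G$.

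Finally, since $N_0$ is Zariski closed, normal in $G$, contained in $S$, and satisfies $\bigcap_{\gamma \in \Gamma} S^\gamma = N_0$ by construction, Proposition \ref{prop:intersections over deformed elements} applied with $H = S$ and $N = N_0$ produces elements $\gamma_1, \ldots, \gamma_m \in \Gamma$ and an identity neighborhood $W \subset G$ such that $\bigcap_{i=1}^{m} S^{\gamma_i w_i} = N_0 \le Z(G)$ for every choice of $w_i \in W$. The only non-routine step is verifying the centrality of $N_0$; once this is in hand, the lemma is an immediate consequence of Proposition \ref{prop:intersections over deformed elements}, so I do not anticipate any serious obstacle.
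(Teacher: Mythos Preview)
Your proof is correct and follows the same overall strategy as the paper: first show that $N_0 = \bigcap_{\gamma \in \Gamma} S^\gamma$ is central in $G$, then invoke Proposition~\ref{prop:intersections over deformed elements} with $H = S$ and $N = N_0$. The only difference lies in the argument for centrality. The paper considers the group $\tilde{S} = \langle S^\gamma : \gamma \in \Gamma \rangle$, notes that its Zariski closure is normal (by Zariski density of $\Gamma$) and contains a maximal split torus, hence equals $G$, and observes that $N_0$ centralizes each $S^\gamma$ and therefore all of $G$. You instead establish normality of $N_0$ directly and then apply Proposition~\ref{prop:normal subgroups of $G$}, using that $N_0 \le S$ is abelian to force the factor $M$ to be trivial. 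Both routes are short and valid; yours has the minor advantage of reusing the structural Proposition~\ref{prop:normal subgroups of $G$} already available, while the paper's centralizer argument is slightly more self-contained.
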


\begin{proof}
Consider the intersection $S_0 = \bigcap_{\gamma \in \Gamma} S^\gamma$ as well as the group $\tilde S=\langle S^\gamma:\gamma \in \Gamma\rangle$. Since $\Gamma$ is Zariski dense, the Zariski closure of $\tilde S$ is normal in $G$, and as it contains a maximal split torus it must be all of $G$. Since $S_0$ is central in $\tilde S$, it is also central in $G$. 
 	The conclusion now follows from Proposition \ref{prop:intersections over deformed elements}.
\end{proof}

%\begin{proof}
%The group $G$ is semisimple analytic and admits no $k$-anisotropic factors. Therefore the normal closure of $S$ inside $G$ is equal to $G$. Moreover $S$ is Zariski connected. It follows that for some $n\in\mathbb{N}$ there are $\gamma_1,\ldots\gamma_m$ in $G$ such the constructible set 
%$\prod_{i=1}^m S^{\gamma_i}$ contains a Zariski open subset, the map $S^m\to G$, $(s_1,\ldots,s_m)\mapsto (s_1^{\gamma_1},\ldots,s_m^{\gamma_m})$ is dominant and that the differential at the identity is surjective. The same must hold if the $\gamma_i$'s are replaced by $\gamma_i w_i$ for $w_i$ belonging to a sufficiently small Hausdorff neighborhood $W$ of the identity. In particular, for every choice of $w_i\in W$ the set $\prod_{i=1}^mS^{\gamma_iw_i}$ is Zariski dense. Since $S$ is commutative, every element of $\bigcap_{i=1}^{m}S^{\gamma_i w_i } $ commutes with every element of $\prod_{i=1}^mS^{\gamma_i w_i}$, which means that it is central.
%\end{proof}

%Below  $X$ denotes the Euclidean building associated to $G$.

\begin{proof}[Proof of Theorem \ref{thm:a lattice admits a jointly discrete Chabauty neighborhood}]
Let $G$ be a non-Archimedean semisimple analytic group, $X$ the corresponding Bruht--Tits building and $\Gamma$ a lattice in $G$.
	
The lattice $\Gamma$ contains a strongly regular hyperbolic element $\gamma$ by Proposition \ref{prop:a lattice contains a strongly regular hyperbolic element}. The element $\gamma$ preserves the apartment $\Delta = \mathrm{Min}(\gamma)$ in $X$. Let $S $ be the unique maximal $k$-split torus in $G$ containing $\gamma$. Note that $S$ preserves $\Delta$  as well.

Making use of Lemma \ref{lem:on intersections of parabolics} we find elements $\gamma_1, \ldots, \gamma_m \in \Gamma$ for some $m \in \NN$  and an identity neighborhood $W \subset G$ such that
$ \bigcap_{i=1}^{m} S^{\gamma'_i}$ is central in $G$ for all $\gamma'_i \in \gamma_i W$. 
Every conjugate $\gamma^{\gamma_i} = \gamma_i^{-1} \gamma \gamma_i$ is strongly regular hyperbolic as well. Let $U$  be an identity neighborhood contained in $W$ and $N \in \NN$ be such that every element $\lambda_i \in U(\gamma^N)^{\gamma_i} U$   is strongly regular hyperbolic, as in Proposition \ref{prop:on bi-neighborhoods of strongly regular hyperbolic elements}. The unique $k$-split torus containing $(\gamma^N)^{\gamma_i}$ is still $S^\gamma_i$ and the second part of Proposition \ref{prop:on bi-neighborhoods of strongly regular hyperbolic elements} shows that $\lambda_i \in S^{\gamma_i u_i}$ for some $u_i \in U \subset W$. 

Fix an additional relatively compact identity neighborhood $V$ in $G$. Relying on Lemma 9.4 of \cite{GL} we may construct  a Chabauty neighborhood $\Omega$ of $\Gamma$ in $\Sub{G}$ with the following properties ---  every closed subgroup $L \in \Omega$ satisfies that
\begin{itemize}
	\item $L_V  = L \cap V$ is a subgroup of $G$,
%	\item $L \cap Z(G) = \{e \}$, 
	\item $L$ contains elements $\lambda_i \in U (\gamma^N)^{ \gamma_i} U$ for all $ i =1, \ldots, m$,  
	\item the subgroup $L_V$ is normalized by  $ L_0 = \left< \lambda_1, \ldots, \lambda_m \right> \le L$, and
	\item $L_V \cap Z(G) = \{e\}$ where $Z(G)$ is the center of $G$.
\end{itemize}
The last of the above requirements does not appear in \cite{GL}. However  $Z(G)$  is finite and it is clear that we may add this additional requirement on $\Omega$.

We claim that for every closed subgroup $L \in \Omega$ the intersection  $L_V = L \cap V$ consists of the identity element.  
Consider some $L \in \Omega$ and let $F \subset X$ denote the closed convex subset of $L_V$-fixed points. The compactness of the subgroup $L_V$ implies that  $F$ is  nonempty. Clearly $F$ is $L_0$-invariant.

By construction the elements $\lambda_i$ are strongly regular hyperbolic. Since $F$ is $\lambda_i$-invariant its boundary  $\partial F $  contains the two endpoints $l_i(\infty)$ and $l_i(-\infty)$  of some translation axis $l_i$ for $\lambda_i$. In particular $\partial F$ is nonempty. 
Since  $L_V$ fixes $F$ it must also  fix the two endpoints  $l_i(\infty)$ and $l_i(-\infty)$.  These endpoints belong to the interiors of opposite chambers at infinity and $L_V$ must preserve the unique apartment determined by this pair. Therefore $L_V$ is contained in the torus $S^{\gamma_i u_i}$. To conclude
$$ 
 L_V \le L_V \cap \bigcap_{i=1}^{m}S^{\gamma_i u_i} \le  L_V \cap Z(G) = \{ e \}. 
$$
We have shown that $\Gamma$ admits a jointly discrete Chabauty neighborhood $\Omega$.
\end{proof}

\subsection{Joint discreteness in the general case}

We extend Chabauty joint discreteness to general semisimple analytic groups, making use of the fact that Lie groups have no small subgroups.

\begin{proof}[Proof of Theorem \ref{thm:a lattice admits a jointly discrete Chabauty neighborhood - general case}]
Write $G$ as an almost direct product $G = C \times D$ where $C$ and $D$  are  connected and  totally disconnected semisimple analytic groups, respectively. Let $\Gamma$ be a lattice in $G$.  The proof of Theorem \ref{thm:a lattice admits a jointly discrete Chabauty neighborhood} given above can be easily adapted to this case, as follows.

Let $V_C \subset C$ be an identity neighborhood which does not contain any closed non-trivial subgroup. Let $V_D \le D$ be a relatively compact identity neighborhood as in the proof of Theorem \ref{thm:a lattice admits a jointly discrete Chabauty neighborhood}. Take $V = V_C \times V_D$ to be used in the definition of the Chabauty neighborhood $\Omega$. Note that every closed subgroup $L \in \Omega$ satisfies $\overline{\mathrm{pr}_C (L \cap V)} = \{e_C\}$, so that $L_V = L \cap V$ can be regarded as a subgroup of the non-Archimedean factor $D$.

The argument of Proposition \ref{prop:a lattice contains a strongly regular hyperbolic element} implies that $\Gamma$ admits an element $\gamma$ whose projection to $D$ is strongly regular hyperbolic. The rest of the proof of Theorem \ref{thm:a lattice admits a jointly discrete Chabauty neighborhood} applies with no changes.
\end{proof}

\subsection{Local rigidity and Chabauty neighborhoods}

Let $G$ be a topological group and $\Gamma \le G$ a lattice. Let $\mathrm{Hom}(\Gamma,G)$ denote the space of all homomorphisms from $\Gamma$ to $G$ with the point-wise convergence topology. If $\Sigma$ is a generating set for $\Gamma$ then $\mathrm{Hom}(\Gamma,G)$ can be identified with a closed subset of $G^\Sigma$.
The lattice $\Gamma $ is \emph{weakly locally rigid} if the inclusion morphism $\iota : \Gamma \to G$ admits an open neighborhood in $\mathrm{Hom}(\Gamma,G)$ consisting of conjugates by an  automorphism of $G$. $\Gamma$ is \emph{locally rigid} if this automorphism is inner  \cite[Sec. 2]{GL}.

%We introduce a weakening of the classical notion of finite presentation, relative to the enveloping group $G$.

\begin{definition}[Relative finite presentability]
\label{def:finitely presented inside}
$\Gamma$ is \emph{finitely presented inside} $G$ if it has a finite generating set $\Sigma$ and there are finitely many words $\omega_1,\ldots,\omega_n \in F_\Sigma$ so that 
$$ \mathrm{Hom}(\Gamma,G) = \{ (g_\sigma) \in G^\Sigma \: : \: \omega_i(g_\sigma) = e_G \;\; \forall i \in \{1,\ldots,n\} \} $$
Here $F_\Sigma$ in the free group on the generators $\Sigma$.
\end{definition}

This notion is clearly independent of the choice of a particular finite generating set for $\Gamma$. 
If $\Gamma$ itself is finitely presented then it is finitely presented inside any group. Moreover any finitely generated group is finitely presented inside an algebraic group by Noetherianity.

We are ready to formulate a general principle allowing to promote classical local rigidity to its the Chabauty topology variant.

\begin{prop}
\label{prop:local rigidity implies Chabauty local rigidity}
Let $G$ be a  locally compact group whose automorphism group preserves the Haar measure. Let $\Gamma $ be a weakly locally rigid lattice in $G$ which is finitely presented inside $G$ and admits a jointly discrete Chabauty neighborhood.

Then $\Gamma$ has a Chabauty neighborhood consisting of conjugates by an automorphism of $G$. If $\Gamma$ is moreover locally rigid then this automorphism is inner.
\end{prop}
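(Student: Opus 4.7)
The strategy is to manufacture, for every closed subgroup $\Gamma'$ in a sufficiently small Chabauty neighborhood of $\Gamma$, a homomorphism $\phi \colon \Gamma \to G$ whose image is contained in $\Gamma'$ and which is pointwise close to the inclusion $\iota \colon \Gamma \hookrightarrow G$; then weak local rigidity identifies this homomorphism with $\alpha \circ \iota$ for some automorphism $\alpha$ of $G$, giving $\alpha(\Gamma) \le \Gamma'$, and a covolume argument promotes this inclusion to an equality.

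To construct $\phi$, fix a finite generating set $\Sigma = \{\sigma_1,\ldots,\sigma_k\}$ of $\Gamma$ together with the relators $\omega_1,\ldots,\omega_n \in F_\Sigma$ witnessing that $\Gamma$ is finitely presented inside $G$. Let $V \subset G$ and $\Omega_0 \subset \Sub{G}$ be the identity neighborhood and Chabauty neighborhood provided by the joint discreteness assumption, so that $L \cap V = \{e\}$ for every $L \in \Omega_0$. Using continuity of the word maps and the fact that $\omega_j(\sigma_1,\ldots,\sigma_k) = e$, choose open neighborhoods $\sigma_i \in U_i \subset G$ with $\omega_j(u_1,\ldots,u_k) \in V$ whenever $u_i \in U_i$. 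Set $\Omega = \Omega_0 \cap \bigcap_{i=1}^k \mathcal{O}_2(U_i)$. For any $\Gamma' \in \Omega$ pick $\sigma_i' \in \Gamma' \cap U_i$; each relator $\omega_j$ evaluated at the $\sigma_i'$ lies in $\Gamma' \cap V = \{e\}$, so $\sigma_i \mapsto \sigma_i'$ extends to a homomorphism $\phi \colon \Gamma \to G$ with $\phi(\Gamma) \le \Gamma'$. By further shrinking the $U_i$, we may arrange $\phi$ to lie in any prescribed neighborhood of $\iota$ in $\mathrm{Hom}(\Gamma,G)$.

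Now invoke weak local rigidity: after shrinking $\Omega$ once more, $\phi = \alpha \circ \iota$ for some topological automorphism $\alpha$ of $G$ (inner, if $\Gamma$ is locally rigid). Hence $\alpha(\Gamma) \le \Gamma'$. Since $\Gamma'$ is discrete by joint discreteness and contains the lattice $\alpha(\Gamma)$, it is itself a lattice, and because automorphisms of $G$ preserve the Haar measure (hypothesis) we have
\[
 \mathrm{vol}(\Gamma' \backslash G) = \frac{\mathrm{vol}(\alpha(\Gamma) \backslash G)}{[\Gamma' : \alpha(\Gamma)]} = \frac{\mathrm{vol}(\Gamma \backslash G)}{[\Gamma' : \alpha(\Gamma)]}.
\]

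The main obstacle is to show that this index equals $1$ for all $\Gamma' \in \Omega$ after possibly shrinking $\Omega$ a final time. I plan to do this by the standard lower semi-continuity of covolume on the Chabauty space of uniformly discrete subgroups: whenever $\Gamma'_n \to \Gamma$ in Chabauty with $\Gamma'_n \cap V = \{e\}$, one has $\mathrm{vol}(\Gamma \backslash G) \le \liminf_n \mathrm{vol}(\Gamma'_n \backslash G)$. If no such shrinking worked there would exist $\Gamma'_n \in \Omega$ converging to $\Gamma$ with associated indices $m_n \ge 2$, and then the displayed equation would give $\mathrm{vol}(\Gamma'_n \backslash G) \le \tfrac{1}{2}\mathrm{vol}(\Gamma \backslash G)$, contradicting the semi-continuity bound. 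Thus eventually the index is $1$ and $\Gamma' = \alpha(\Gamma)$ is a conjugate of $\Gamma$ by an automorphism of $G$, inner if $\Gamma$ was assumed locally rigid.
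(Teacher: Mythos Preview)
Your proof is correct and follows essentially the same route as the paper's: build a homomorphism $\Gamma\to\Gamma'$ by sending generators to nearby elements and using the finite presentation inside $G$ together with joint discreteness to kill the relators, then apply weak local rigidity, and finally use a covolume comparison to rule out proper containment. The only cosmetic differences are that the paper makes the ``continuity of the word maps'' step explicit by choosing a symmetric neighborhood $W$ with $(W^{\Sigma_0^{m-1}})^m\subset V$, and it cites the lower semi-continuity of covolume directly as a Chabauty neighborhood $\Omega_3$ with $\mathrm{co\text{-}vol}(L)>\tfrac12\,\mathrm{co\text{-}vol}(\Gamma)$ (Raghunathan, p.~28) rather than arguing by contradiction along a sequence as you do; the content is the same.
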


%The argument below can be compared to the proof of Wang's theorem \cite[Thm. 8.1]{wang} as well as 
%\cite[Theorem 9.1]{GL}.

\begin{proof}
Let $\Sigma$ be a finite generating set for $\Gamma$ and $\omega_1,\ldots,\omega_n \in F_\Sigma$ words\footnote{If $G$ is an algebraic group then these words $\omega_i$ define $\mathrm{Hom}(\Gamma,G)$ as an algebraic variety.} exhibiting $\Gamma$ to be finitely presented inside $G$ as in Definition \ref{def:finitely presented inside}. Let $m=\max_{i=1,\ldots,n}|\omega_i|$ denote the maximal length of the words $\omega_i$.

Since $\Gamma$ is weakly locally rigid there is an identity neighborhood $U$ in $G$ so that every group homomorphism $f : \Gamma \to G$ with $f(\sigma) \in U \sigma$ for every $\sigma \in \Sigma$ is given by
$$ f(\gamma) = \alpha(\gamma), \quad \forall \gamma \in \Gamma $$
for some  automorphism $\alpha$ of $G$. If $\Gamma$ is  assumed to be locally rigid then $\alpha$ is inner.

Joint discreteness allows us to choose an identity neighborhood $V$ in $G$ and an open Chabauty neighborhood $\Omega_1 \subset \Sub{G}$ of $\Gamma$ so that every closed subgroup $L \in \Omega_1$ satisfies $L\cap V = \{e\}$. Denote $\Sigma_0 = \Sigma \cup \{e\}$ and take an   identity neighborhood $W$ in  $G$ satisfying
$$ 
 W \subset U \quad \text{and} \quad \left(W^{(\Sigma_0^{m-1})}\right)^m \subset V.
$$

Let  $\Omega_2 \subset \Sub{G}$ be a Chabauty neighborhood of $\Gamma$ so that every closed subgroup $L \in \Omega_2$ contains elements $l_\sigma \in W\sigma$ for every $\sigma \in \Sigma$.
The argument of \cite[p. 28]{raghun} shows that there is a Chabauty neighborhood $\Omega_3 \subset \Sub{G}$ of $\Gamma$ so that every $L \in \Omega_1 \cap \Omega_3$ satisfies 
$ \text{co-vol}(L) > \frac{1}{2}\text{co-vol}(\Gamma) $.

Consider a closed subgroup $L \le G$ with $L \in \Omega_1 \cap \Omega_2 \cap \Omega_3$. There are elements $l_\sigma \in L \cap W\sigma$ for $\sigma \in \Sigma$ as above. We claim that  the homomorphism $\varphi$ given by
$$ \varphi : F_\Sigma \to L, \quad \varphi(\sigma) = l_\sigma \quad \forall \sigma \in \Sigma$$
represents an element of $\mathrm{Hom}(\Gamma,G)$. 
Write $\omega_i = \sigma_{1}\cdots\sigma_{m_i}$ with $\sigma_j \in \Sigma$. 
Then
$$ 
 \varphi(\omega_i) \in W\sigma_1W\sigma_2W\cdots W\sigma_{m_i} \subset W  W^{\sigma_1} W^{\sigma_1 \sigma_2} \cdots W^{\sigma_1\cdots \sigma_{m_i-1}} \omega_i. 
$$
However the word $\omega_i = \sigma_1 \cdots \sigma_{m_i}$ evaluated in $G$ is equal to the identity. Therefore
$$ 
 \varphi(\omega_i) \in L \cap \left(W^{(\Sigma_0)^{m-1}}\right)^{m} \subset L \cap V =  \{e_G\}.
$$

Since $\Gamma$ is presented inside $G$ by $\{\omega_i,~i=1,\ldots,n\}$ we obtain  a well-defined homomorphism $\psi \in \mathrm{Hom}(\Gamma,G)$  by taking
$$ \psi : \Gamma \to G, \quad \psi(\sigma) = l_\sigma \quad \forall \sigma \in \Sigma$$

By the above construction and as $\Gamma$ is weakly locally rigid there is an automorphism $\alpha$ of $G$ so that $\psi(\gamma) = \alpha(\gamma)$ for all $\gamma \in \Gamma$.
The group $L$ is discrete and contains $\alpha(\Gamma)$ so that  $L$  is a lattice. By assumption the automorphism $\alpha$ preserves Haar measure so that $\text{co-vol}(\alpha(\Gamma)) = \text{co-vol}(\Gamma)$. We obtain 
$$ 
 2 > \text{co-vol}(\Gamma) / \text{co-vol}(L) = \text{co-vol}(\alpha(\Gamma)) / \text{co-vol}(L) = \left[ L : \psi(\Gamma) \right] \in \NN.
$$
This can only be the case provided that $\psi(\Gamma) = \alpha(\Gamma) = L$, as required. 

If $\Gamma$ is moreover assumed to be locally rigid then $\alpha$ is inner. In that case  $\alpha$ clearly preserves the Haar measure on $G$ and Proposition \ref{prop:outer automorphism group preserves Haar measure}  becomes redundant.
\end{proof}

%Consider the representation space  By choosing a specific generating set  $\mathrm{Hom}(\Gamma,G)$ can be identified with a subset of $G^m$ for some $m \in \NN$. This subset consists of those $m$-tuples in $G$ where the defining relations of $\Gamma$ hold. Each relation of $\Gamma$ corresponds to a certain algebraic condition on $G^m$. In particular $\mathrm{Hom}(\Gamma,G) $ is a subvariety, and is therefore defined in terms of finitely many relations, even if $\Gamma$ itself is not finitely presented.

\subsection{Chabauty Local rigidity in semisimple analytic groups}

We complete the proof that an irreducible lattice in a higher rank semisimple analytic group admits a Chabauty neighborhood consisting of conjugates.

\begin{proof}[Proof of Theorem \ref{thm:Chabauty local rigidity for lattices in algebraic groups}]

Let $G$ be a semisimple analytic group with $\mathrm{rank}(G) \ge 2$   and $\Gamma \le G$ an irreducible lattice. The  conclusion of Theorem  \ref{thm:Chabauty local rigidity for lattices in algebraic groups} will follow from Proposition \ref{prop:local rigidity implies Chabauty local rigidity} as soon as  we verify all of the required assumptions.

The automorphism group of $G$ preserves Haar measure  by Proposition \ref{prop:outer automorphism group preserves Haar measure}.
The lattice $\Gamma$ is finitely generated \cite[IX.3]{Ma} and finitely presented inside $G$  in the sense of Definition \ref{def:finitely presented inside} by Noetherianity.
The fact that $\Gamma$ admits a jointly discrete Chabauty neighborhood is established in Theorem \ref{thm:a lattice admits a jointly discrete Chabauty neighborhood - general case}.
	
It follows from Margulis super-rigidity theorem that $\Gamma$ is weakly locally rigid, and that $\Gamma$ is locally rigid provided that $\mathrm{char}(k) = 0$  \cite[p. 241, Thms. A,C]{Ma}. 
\end{proof}

\subsection{Wang's finiteness in the non-Archimedean case}

We   deduce Wang's finiteness relying on Chabauty local rigidity. This is essentially a standard argument appearing already in  \cite{wang}.

\begin{proof}[Proof of Theorem \ref{thm:Wang}]
Let $G$ be a semisimple analytic group with $\mathrm{rank}(G) \ge 2$.  We may assume that all of the simple  factors of $G$ are of the same characteristic, for otherwise $G$ admits no irreducible lattices.

We claim that $G$ has the Kazhdan--Margulis property for lattices \cite{kazhdan1968proof,gel_kaz,GL}. This means that there is an identity neighborhood $U$ in $G$ such that every lattice $\Gamma$ in $G$ admits a conjugate intersecting $U$ only at the identity. If $G$ is Archimedean then this is a classical result --- see \cite{kazhdan1968proof} and also \cite{gel_kaz}. If $G$ is zero characteristic non--Archimedean   then  much more is true, namely all lattices in $G$ are jointly discrete \cite[LG 4.27, Th. 5]{sere}. If $G$ is simply-connected and  positive characteristic non-Archimedean    then $G$ has the Kazhdan-Margulis property by Theorem A of \cite{golsefidy2009lattices}. 
In the remaining case $G = C \times D$ where  $C$ is Archimedean and $D$ is zero characteristic non-Archimedean semisimple analytic groups. Let $K \le D$ be any compact open subgroup.  For any lattice $\Gamma$ in $G$  the projection of $\Gamma \cap (C\times K)$ to $C$ is a lattice.  Therefore if $U$ is a Kazhdan--Margulis neighborhood in $C$ then so is $U \times K$ in $G$.

Fix a co-volume $ v > 0$. Let $\Omega_v $ denote the Chabauty subspace of $\Sub{G}$ consisting of lattices of co-volume bounded above by $v$ and intersectiong the Kazhdan--Margulis neighborhood of $G$ only at the identity. The Chabauty criterion \cite[1.20]{raghun} implies that $\Omega_v$ is compact. We conclude the proof of Wang's finiteness relying on Chabauty local rigidity as in Theorem \ref{thm:Chabauty local rigidity for lattices in algebraic groups}.
\end{proof}

\section{Accumulation points of invariant random subgroups}
\label{sec:accumulation points of invariant random subgroups}

We prove our main result on accumulation points of lattices in higher rank semisimple analytic groups with property $(T)$, generalizing the analog result of \cite{7S} from the classical Archimedean case to the general one.

Just as in \cite{7S} the proof relies on the celebrated rigidity theorem of Stuck and Zimmer \cite{sz}. This  theorem essentially provides classifies the ergodic invariant random subgroups in the situation under consideration. In our general setup we shall also require the non-Archimedean version of this theorem which was established in \cite{levit}, as well as the results obtained in \S \ref{sec:isolated groups} and \S \ref{sec:local rigidity}.

%Given a lattice $\Gamma$ in $G$ we let $\mu_\Gamma$ denote the invariant random subgroup supported on the conjugacy class of $\Gamma$. The space of all invariant random subgroups is weak-$*$ compact, so that given a sequence $\left(\Gamma_i\right)$ of lattices  it makes sense to speak about the accumulation points of the corresponding invariant random subgroups $(\mu_{\Gamma_i})$ .

%In the proof $m$ is a fixed Haar measure on $G$, as well as the corresponding measure on $G/\Gamma$, and $\pi$ is the projection from $G$ to $G/\Gamma$. Similarly $m_n$ are the measures on $G/\Gamma_n$ corresponding to $m$, and $\pi_n$ are the projections to $G/\Gamma_n$.
%In particular we will make use of Lemma 4.5 there, whose statement we reproduce for the reader's convenience:
%
%\begin{lemma}
%\label{lem:a limit of ergodic probs is ergodic given T}
%Let $X$ be a compact topological space, and let $G$ be a topological
%group with property $(T)$ acting continuously on $X$. Let $\left(\mu_n\right)_n$ be sequence of Borel
%probability measures on $X$ that weakly converges to $\mu_\infty$. If the measures $\mu_n$ are
%ergodic, then the limit measure $\mu_\infty$ is ergodic.
%\end{lemma}

\begin{proof}[Proof of Theorem \ref{thm:accumulation points of invariant random subgroups}]
Let $G$ be an happy semisimple analytic group with $\mathrm{rank}(G) \ge 2$ and property $(T)$. Let $\Gamma_n$ be a sequence of  irreducible lattices in $G$ and assume that the $\Gamma_n$'s  are pairwise non-conjugate in the appropriate sense. By this we mean that the $\Gamma_n$'s are pairwise non-conjugate in zero characteristic and pairwise non-conjugate by an automorphism of $G$ in positive characteristic \footnote{In view of the Borel--Prasad finiteness theorem \cite{bp} we have in our situation that $\text{vol}(G/\Gamma_n)\to\infty$. This fact however is not needed for the proof. }.

Let $\mu_n$ denote the invariant random subgroup corresponding to the lattice $\Gamma_n$ and consider a weak-$*$ accumulation point $\mu \in \IRS{G}$ of the sequence $\mu_n$. The proof consists in showing that $\mu$ must be $  \delta_Z$ for some central subgroup $Z$ in $G$. Our strategy is to proceed by elimination.
	
Since every factor of the group $G$ has property $(T)$ the accumulation point $\mu$ is irreducible by a result of Glasner--Weiss \cite{glasner1997kazhdan}. The Stuck--Zimmer rigidity theorem \cite{sz, levit} applies in this situation. This means that the $G$-action on $(\Sub{G},\mu)$ is either essentially transitive or has central stabilizers. The stabilizer of each point $H \in \Sub{G}$ is the normalizer $N_G(H)$ hence the second possibility is ruled out.

We may assume that $\mu$-almost every subgroup is conjugate to some fixed closed subgroup $H \le G$. In particular $(\Sub{G},\mu)$ is isomorphic as a $G$-space to the homogeneous space $G/N_G(H)$. This homogeneous space supports a $G$-invariant probability measure. The main result of  \cite{margulis1977cobounded} therefore implies that 
$$N_G(H) = M \times \Gamma$$
so that $G$ can be written as an almost direct product $G = G_1 \times G_2$ with $G_1^+ \le M \le G_1$ and $\Gamma$  being a lattice in $G_2$. Examining the various possibilities for $H$ shows that it must be of the form $H = N \times \Delta$ where $N \nrm M$ and $\Delta \nrm \Gamma$ is a normal subgroup with $N_{G_2}(\Delta) = \Gamma$.

We claim that $G_1$ must be either trivial or equal to $G$. For if $G_1$ is a non-trivial proper subgroup of $G$ then $G_1^+$ lies in the kernel of the $G$-action on the non-atomic homogeneous space $G/(M\times \Gamma)$ which is a contradiction to irreducibility. 

If $G_1$ is trivial then $\Gamma$ is an irreducible lattice in $G$ and $\Delta$ is a normal subgroup of $\Gamma$. The normal subgroup theorem of Margulis \cite[Chapter IV]{Ma} implies that $\Delta$ is either a lattice or central in $G$. In the current situation $\Delta$ must indeed be an irreducible lattice  since $N_G(\Delta) = \Gamma$. However $\mu_\Delta$ is an isolated point in the extreme points of $\IRS{G}$ by Corollary \ref{cor:non conjugate lattices are discrete IRS} and we arrive at a contradiction.

If $G_1$ is equal to $G$ then $H = N \nrm G$ is a normal subgroup. Since $\mu$ is a limit of invariant random subgroups supported on discrete subgroups, Corollary \ref{cor:isolation of discrete from normal} implies that $H$ must be discrete and in particular central in $G$.
%The case where $M$ is equal to $G$ can be ruled out relying on Corollary \ref{cor:isolation of discrete from normal}.  implies that $M_1$ is trivial,  We consider the remaining cases separately.
%
%
%
%
%, see \cite{sz} or   In particular either $\mu = \mu_\Gamma$ for an irreducible lattice $\Gamma $ in $ G$ or $\mu = \delta_N$ for a normal subgroup $N \nrm G$.
%
%
%The group $G$ is self-Chabauty-isolated by Theorem \ref{thm:G is isolated}, which means that the point $G \in \Sub{G}$ is isolated in $\Sub{G}$. Since $\mu_{\Gamma_i}(\{G\}) = 0$ for all $i \in I$ it follows from the definition of weak-$*$ convergence\footnote{By the Portmanteau Theorem    the weak-* convergence $\mu_n \to \mu$ of probability measures on a separable metrizable space $X$ is equivalent to $\mu(U) \le \liminf_n \mu_n (U)$ for every open subset $U\subset X$;  see e.g. \cite[17.20]{kechris}.} that  $\mu(\{G\}) = 0$ as well.
%	
%At this point we rely on the high rank and property $(T)$ assumptions. 
%
%
%In particular, the Stuck--Zimmer theorem applies Therefore In the second case we are done.
%The situation  that $\mu = \mu_\Gamma$ for an irreducible lattice $\Gamma \le G$ stands in contradiction to Corollary \ref{cor:non conjugate lattices are discrete IRS}.
\end{proof}

\section{Limit formula for normalized Betti numbers}

We prove a limit formula for the normalized Betti numbers as in Theorem \ref{thm:limit of normalized betti numbers is l2-betti number}. Note that the next and final section \S \ref{sec:plancherel} contains a much more general result concerning the limit of normalized relative Plancherel measures in the zero characteristic case. The argument of current section however is straightforward relying on a beautiful result of Elek and L\"{u}ck's approximation theorem \cite{luck}, and it applies in arbitrary characteristic.

Let $b_d(\Sigma)$ denote the $d$-th Betti number of a given  simplicial complex $\Sigma$. We make use of the following general result due to Elek \cite[Lemma 6.1]{elek}.

\begin{theorem}[Elek]
\label{thm:limit of normalized betti numbers exists}
Let $\Sigma_i$ be a Benjamini--Schramm convergent sequence of finite simplicial complexes with uniformly bounded degree. Then  the limit of
$  \frac{b_d(\Sigma_i)}{\left|V(\Sigma_i) \right|} $ exists as $i \to \infty$ for every $d \in \NN \cup \{0\}$.
\end{theorem}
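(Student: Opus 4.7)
The plan is to express the Betti numbers spectrally through the combinatorial Laplacian $\Delta_d^{(i)}$ acting on real $d$-cochains of $\Sigma_i$, using the identity $b_d(\Sigma_i) = \dim\ker \Delta_d^{(i)}$. The uniform bound on vertex degrees provides a uniform upper bound $C = C(d,D)$ on the operator norms $\|\Delta_d^{(i)}\|$. Associate to each $\Sigma_i$ its normalized spectral measure $\mu_i$ on $[0,C]$ given by
\[
 \mu_i(A) = \frac{1}{|V(\Sigma_i)|} \#\{\text{eigenvalues of } \Delta_d^{(i)} \text{ in } A, \text{ counted with multiplicity}\},
\]
so that $\mu_i(\{0\}) = b_d(\Sigma_i)/|V(\Sigma_i)|$; the existence statement therefore reduces to convergence of this atom.

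First I would prove that $\mu_i$ converges weakly. For any polynomial $P$,
\[
 \int P\, d\mu_i = \frac{1}{|V(\Sigma_i)|}\,\mathrm{tr}\bigl(P(\Delta_d^{(i)})\bigr),
\]
which expands as an average over the $d$-simplices of $\Sigma_i$ of a bounded function depending only on the combinatorial ball of radius $\deg(P)$ around each simplex. The bounded-degree hypothesis ensures this local functional is uniformly bounded, and Benjamini--Schramm convergence of the $\Sigma_i$ is precisely designed to ensure convergence of such local averages. Hence all moments of $\mu_i$ converge; since the measures are supported in the fixed compact interval $[0,C]$, this forces weak convergence $\mu_i \to \mu$ to some finite Borel measure $\mu$ on $[0,C]$.

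The main obstacle is upgrading weak convergence of $\mu_i$ to convergence of the atom at $0$. Testing against the continuous bumps $f_\varepsilon(\lambda)=\max(0,1-\lambda/\varepsilon)$ that decrease pointwise to $\mathbf{1}_{\{0\}}$ yields immediately $\limsup_i \mu_i(\{0\}) \leq \mu(\{0\})$, but nothing prevents spectral mass of $\mu_i$ from collecting in a shrinking interval $(0,\varepsilon_i]$ that is invisible to continuous test functions. I would overcome this using L\"uck's determinant technique: since the matrix entries of $\Delta_d^{(i)}$ with respect to the standard simplex basis are integers of bounded size, the smallest positive eigenvalue of $\Delta_d^{(i)}$ admits a lower bound of the form $|V(\Sigma_i)|^{-N}$ for a constant $N = N(d, D)$ (via Cramer's rule applied to the non-degenerate quotient of $\Delta_d^{(i)}$ modulo its kernel). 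Combining this with convergence of the moment $\int \lambda\, d\mu_i \to \int\lambda\, d\mu$ and the bound $\|\Delta_d^{(i)}\|\leq C$, one obtains uniform log-integrability of $\mu_i$ on $(0,C]$, so that $\mu_i((0,\varepsilon]) \to 0$ as $\varepsilon \to 0$ uniformly in $i$. Together with weak convergence of $\mu_i$ restricted to $[\varepsilon,C]$, this forces $\mu_i(\{0\}) \to \mu(\{0\})$, which is the sought existence of $\lim_i b_d(\Sigma_i)/|V(\Sigma_i)|$.

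The hard step is therefore the L\"uck-style uniform bound on the smallest positive eigenvalue and the resulting uniform log-integrability; once it is in hand, the moment computation and the weak convergence of spectral measures are straightforward consequences of Benjamini--Schramm convergence in bounded degree.
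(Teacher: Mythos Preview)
The paper does not prove this theorem at all: it is attributed to Elek, cited from \cite[Lemma 6.1]{elek}, and then applied as a black box in the proof of Theorem~\ref{thm:limit of normalized betti numbers is l2-betti number}. There is therefore no proof in the paper to compare your proposal against.

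That said, your outline is essentially the correct argument and is close to how Elek (adapting L\"uck) actually proceeds: spectral measures of the combinatorial Laplacian, moment convergence from Benjamini--Schramm convergence of local statistics, and the integrality trick to control the atom at zero. One quantitative point is misstated: the smallest positive eigenvalue of $\Delta_d^{(i)}$ is in general only bounded below exponentially in $|V(\Sigma_i)|$, not polynomially as $|V(\Sigma_i)|^{-N}$, and Cramer's rule on the quotient does not yield the polynomial bound you claim. The clean route to the uniform control you need is not via the smallest eigenvalue at all but directly from the fact that the product of the nonzero eigenvalues of $\Delta_d^{(i)}$ is a nonzero integer (it is, up to sign, a coefficient of the integer characteristic polynomial), hence at least $1$; combined with the uniform upper bound $C$ on each eigenvalue this gives $\mu_i((0,\varepsilon]) \le c(d,D)\,\log C / \log(1/\varepsilon)$ uniformly in $i$. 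With this correction your argument goes through.
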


Note that a specific Benjamini--Schramm limit is not assumed for the sequence $\Sigma_i$ and that the limit of the normalized Betti numbers is not specified.

%Theorem \ref{thm:limit of normalized betti numbers is l2-betti number} is a  consequence of Elek's theorem combined with L\"{u}ck's approximation theorem  \cite{luck}. 

\begin{proof}[Proof of Theorem \ref{thm:limit of normalized betti numbers is l2-betti number}]
Let $G$ be a non-Archimedean semisimple analytic group and $X$ the associated Bruhat--Tits building.
Let $\Gamma_i$ be a sequence of torsion-free uniform lattices in $G$ so that the corresponding invariant random subgroups $\mu_{\Gamma_i}$ weak-$*$ converge to $\delta_{\{e\}}$. 
%Moreover fix a uniform torsion-free lattice $\Delta$ in $G$.

Let $\Delta\le G$ be a fixed arbitrary uniform torsion-free lattice.
The lattice $\Delta$ is residually finite.  Therefore $\Delta$ has a descending sequence of normal subgroups $\Delta_i \nrm \Delta$ with $\bigcap_{i \in \NN} \Delta_i = \{e\}$. By L\"{u}ck's approximation theorem 
$$b_d^{(2)}(\Delta) = \lim_{i\to\infty} \frac{b_d(\Delta_i)}{\left[\Delta:\Delta_i\right]} \quad \forall d \in \NN \cup \{0\}.
$$

Consider a sequence of lattices alternating between the $\Gamma_i$'s at the odd positions and $\Delta_i$'s at the even positions. The assumption on the $\Gamma_i$'s together with Example \ref{prop:invariant random subgroups of a descending sequence go to delta e}   imply that the associated invariant random subgroups converge to $\delta_{\{e\}}$. The associated alternating sequence $\Sigma_i$ of simplicial complexes is given by
$$ 
 \Sigma_i = \begin{cases} \Gamma_{j} \backslash X, & i = 2j-1 \\ \Delta_{j}\backslash X, & i = 2j  \end{cases}
$$

Observe that $\Sigma_i$ is a convergent sequence in the Benjamini--Schramm sense by Corollary \ref{cor:map from IRS to BS is continuous}. Elek's theorem  implies that the limit of $\frac{b_d(\Sigma_i)}{\abs{V(\Sigma_i)}}$ exists for every $d \in \NN\cup\{0\}$ and as $i \to \infty$. Restricting to the even positions we get
$$ 
 \lim_{j\to\infty} \frac{b_d(\Sigma_{2j})}{\abs{V(\Sigma_{2j})}} = \frac{1}{\abs{V(X/\Delta)}}\lim_{j\to\infty} \frac{b_d(\Delta_j)}{\left[\Delta :  \Delta_j \right]} =\frac{\mu_G(K)}{\kappa_G}    \frac{b_d^{(2)}(\Delta)}{\text{vol}(G/\Delta)}, \quad \forall d \in \NN \cup \{0\}.
$$
Here $\mu_G$ is the Haar measure on $G$, $K$ is a maximal compact subgroup in $G$ and $\kappa_G$ is the number of vertices in a fundamental domain for the $G$-action on $X$.

The limit of the normalized Betti numbers is the same along the odd subsequence $\Sigma_{2j-1}$ associated to the lattices $\Gamma_j$. Therefore
$$ 
 \lim_{j\to\infty} \frac{b_d(\Gamma_j)}{\text{vol}(G/\Gamma_j)} = \frac{\kappa_G}{ \mu_G(K)}\lim_{j\to\infty} \frac{b_d(\Sigma_{2j-1})} {\abs{V(\Sigma_{2j-1})}} =  \frac{b_d^{(2)}(\Delta)}{\text{vol}(G/\Delta)}, \quad \forall d \in \NN \cup \{0\}. 
$$
The right hand side is equal to $b_d^{(2)}(G)$ for every $d \in \NN \cup \{0\} $, as required.
\end{proof}

\section{Convergence of relative Plancherel measures}
\label{sec:plancherel}

Let $G$ be a semisimple analytic group over zero characteristic local fields. 
We prove Theorem \ref{thm:BS convergence implies plancherel convergence} about convergence of normalized relative Plancherel measures. %which is an analog of Theorem 1.2 of \cite{7S}.

%Throughout this section $G$ is assumed to be a non-Archimedean semisimple analytic group all of whose factors are defined over a local field of zero characteristic. 

\subsection{Plancherel measure and the Sauvageot principle}

Let $\widehat{G}$ denote the \emph{unitary dual} of $G$, i.e. the set of all equivalence classes of unitary irreducible representations of $G$. The unitary dual $\widehat{G}$ is a topological space with the Fell topology. 
The \emph{Plancherel measure} $\nu^G$ on $\widehat{G}$ is associated to the right regular representation of $G$ in $L^2(G, \mu_G)$. Note that $\nu^G$ depends the choice of Haar measure up to scaling.

Let $\varphi \in C^\infty_c(G)$ be a smooth function\footnote{A function $\varphi$ on the semisimple analytic group $G$ is \emph{smooth} if it locally a product of a $C^\infty$-function and a constant function  on the Archimedean and non-Archimedean factors,  respectively.} with compact support.  
For every irreducible representation $\pi \in \widehat{G}$ the operator $\pi(\varphi)$ on $L^2(G,\mu_G)$ is trace class.
%has finite rank\footnote{A non-Archimedean semisimple analytic group is \emph{admissible} in the sense that for every $(\pi,V_\pi) \in \widehat{G}$ and every compact open subgroup $K$ of $G$ the subspace $V^K_\pi$ of $K$-invariant vectors in $V_\pi$ is finite dimensional \cite{bernstein1974all}. Since every $\varphi \in C_c^\infty(G)$ factors though $K\backslash G / K$ the operator $\pi(\varphi)$ has finite rank, i.e. the image of $\pi(\varphi)$ is finite dimensional.} and in particular trace-class. 
The Fourier transform $\widehat{\varphi}$ is the function on $\widehat{G}$ given by
$$ 
 \widehat{\varphi} : \pi \mapsto \mathrm{trace} \; \pi(\varphi).
$$

\begin{theorem}[Plancherel  formula]
\label{thm:Plancherel inversion}
$ \nu^G(\widehat{\varphi}) = \varphi(e)$  for every $\varphi \in C_c^\infty(G)$.
\end{theorem}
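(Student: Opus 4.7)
The plan is to invoke the classical Plancherel inversion formula, available in both the Archimedean and non-Archimedean zero-characteristic settings, and to reduce the product case to the single simple factor case. First I would write $G$ as an almost direct product $G_1 \times \cdots \times G_n$ of simple analytic factors over Archimedean or $p$-adic local fields. The Plancherel measure decomposes as a product $\nu^G = \bigotimes_i \nu^{G_i}$ (for Haar measures chosen compatibly), the unitary dual identifies with $\prod_i \widehat{G_i}$ up to the center of the isogeny, and the Fourier transform of a tensor product function $\varphi_1 \otimes \cdots \otimes \varphi_n$ factors multiplicatively as $\widehat{\varphi}(\pi_1, \ldots, \pi_n) = \prod_i \widehat{\varphi_i}(\pi_i)$. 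Since algebraic tensor products of smooth compactly supported functions are dense in $C_c^\infty(G)$ for the inductive limit topology, and since both sides of the claimed formula are continuous functionals on this space, it suffices to treat such product test functions. Fubini then reduces the statement to the analogous one on each simple factor.

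Next I would apply the Plancherel inversion formula to each simple factor $G_i$ separately. In the Archimedean case this is Harish-Chandra's celebrated Plancherel theorem, which explicitly expresses $\varphi_i(e)$ as an integral against the Plancherel measure supported on the tempered dual. In the $p$-adic characteristic zero case the corresponding result is due to Harish-Chandra, with Waldspurger supplying the complete determination of the measure. In both settings $G_i$ is of Type I, so the right regular representation on $L^2(G_i, \mu_{G_i})$ admits a direct integral decomposition over $\widehat{G_i}$. For $\varphi_i \in C_c^\infty(G_i)$ the operator $\pi(\varphi_i)$ is trace class for $\nu^{G_i}$-almost every $\pi$, and one has the pointwise identity
$$\varphi_i(e) = \int_{\widehat{G_i}} \mathrm{trace}\, \pi(\varphi_i) \, d\nu^{G_i}(\pi).$$

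The main obstacle lies not in the conceptual structure of the argument but in ensuring that the inversion holds as a genuine pointwise formula for smooth compactly supported $\varphi$, rather than merely as an $L^2$-level equality. This is precisely the content of the deep Plancherel theorems cited above, and is highly non-trivial in both the Archimedean and the $p$-adic cases. Within the present section the role of this formula is to be combined with the Sauvageot density principle in order to derive the Benjamini--Schramm convergence of relative Plancherel measures; here it is essentially used as a black box.
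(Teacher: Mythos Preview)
Your approach is correct in outline, but the paper takes a shorter and cleaner route. Rather than decomposing $G$ into simple factors, decomposing $\nu^G$, $\widehat{G}$, and the test functions as tensor products, and then invoking the explicit Harish-Chandra and Harish-Chandra--Waldspurger Plancherel theorems on each factor, the paper simply cites the abstract Fourier inversion formula valid for any unimodular l.c.s.c.\ group of type~I (as in Folland's text). All that remains is to check that $G$ is of type~I: the Archimedean factors are type~I by Harish-Chandra, the non-Archimedean factors by Bernstein, and type~I is preserved under finite direct products. This bypasses the product-decomposition bookkeeping entirely, and in particular sidesteps the issue you flag yourself, namely that $G$ is only an \emph{almost} direct product, so the identifications $\widehat{G} \cong \prod_i \widehat{G_i}$ and $\nu^G = \bigotimes_i \nu^{G_i}$ require some care with the shared finite center. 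Your approach can certainly be made to work (e.g.\ by passing to the simply connected cover, which is a genuine direct product), but the abstract type~I argument is both shorter and avoids this wrinkle.
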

\begin{proof}
This is the Fourier inversion formula, being  a consequence of the Plancherel formula. It holds true for unimodular l.c.s.c. groups of type I \cite[ 7.44]{folland2016course}.

A semisimple analytic group is type I by \cite{harish_chandra_type_I} and \cite{bernstein1974all} in the Archimedean and non-Archimedean cases, respectively.  The property of being type I is preserved under taking a  direct product of finitely many factors \cite[7.25]{folland2016course}.
% Note that $\pi(\varphi)$ has finite rank for every $\varphi \in C^\infty_c(G)$ by \cite{bernstein1974all}. According to the work of Glimm  \cite{glimm1961type} this fact implies that $G$ has type I.
\end{proof}

\begin{theorem}[Sauvageot density principle]
\label{thm:Sauvageot density principle}
Let $\nu_n$ be a sequence of measures on the unitary dual $\widehat{G}$. If $\nu_n(\widehat{\varphi}) \to \nu^G(\widehat{\varphi})$ for every $\varphi \in C_c^\infty(G)$ then  $\nu_n(E) \to \nu^G(E)$ for every  relatively quasi-compact $\nu^G$-regular subset $E \subset \widehat{G}$.
\end{theorem}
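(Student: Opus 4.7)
The plan is a sandwich argument on both sides. Given a subset $E \subset \widehat G$ as in the statement and $\varepsilon > 0$, I would aim to produce a pair $\varphi^\pm \in C_c^\infty(G)$ whose Fourier transforms obey $\widehat{\varphi^-} \le \mathbf{1}_E \le \widehat{\varphi^+}$ pointwise on $\widehat G$ and satisfy $\nu^G(\widehat{\varphi^+} - \widehat{\varphi^-}) < \varepsilon$. Once such a pair is available, applying the hypothesis $\nu_n(\widehat\varphi) \to \nu^G(\widehat\varphi)$ to $\varphi^\pm$ sandwiches $\nu_n(E)$ between $\nu^G(\widehat{\varphi^-})$ and $\nu^G(\widehat{\varphi^+})$ in the limit, and both of these differ from $\nu^G(E)$ by at most $\varepsilon$. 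Letting $\varepsilon \to 0$ concludes.

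The key tool for producing such test functions is a positivity trick. For $\psi \in C_c^\infty(G)$ define $\psi^*(g) = \overline{\psi(g^{-1})}$, so that $\varphi = \psi^* \ast \psi \in C_c^\infty(G)$ has Fourier transform $\widehat\varphi(\pi) = \mathrm{trace}\,\pi(\psi)^*\pi(\psi) = \|\pi(\psi)\|_{HS}^2 \ge 0$. Theorem \ref{thm:Plancherel inversion} then gives $\nu^G(\widehat\varphi) = \varphi(e) = \|\psi\|_{L^2(G)}^2$, so this family of non-negative Fourier transforms comes with a computable Plancherel mass. Differences $\widehat{\varphi_1} - \widehat{\varphi_2}$ of such functions supply a $\RR$-lattice of real-valued test functions on $\widehat G$ from which one hopes to approximate $\mathbf{1}_E$.

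The crux is then an approximation lemma: for every relatively quasi-compact open $U \subset \widehat G$ and every $\eta > 0$, there should exist a positive-type $\varphi$ with $\widehat\varphi \ge \mathbf{1}_U$ and $\nu^G(\widehat\varphi) \le \nu^G(U) + \eta$, and dually an upper approximation of quasi-compact $K \subset \widehat G$ from above by $1_K \ge \widehat\varphi$ with $\nu^G(\widehat\varphi)\ge\nu^G(K)-\eta$. The $\nu^G$-regularity of $E$, which I would interpret as $\nu^G(\partial E) = 0$ in the Fell topology, then lets me pick a compact $K \subset E^\circ$ and a relatively quasi-compact open $U \supset \overline E$ with $\nu^G(U \setminus K) < \varepsilon$; the lemma, applied separately to $U$ and $K$, supplies the desired pair $\varphi^\pm$.

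The main obstacle is the approximation lemma itself, which is not a soft fact. Its proof rests on the explicit description of the tempered dual and the Plancherel decomposition of $L^2(G)$ --- Harish-Chandra's theory in the Archimedean case and the Harish-Chandra/Waldspurger theory in the non-Archimedean zero characteristic case --- and this is precisely why positive characteristic has to be excluded in the hypothesis. For a general semisimple analytic $G$ a direct-product factorization of both Haar measure and Plancherel measure, analogous to the type~I step in Theorem \ref{thm:Plancherel inversion}, reduces the lemma to each simple analytic factor, where one approximates the characteristic function of a regular piece of the tempered dual by $\widehat{\psi^*\ast\psi}$ for suitable $\psi\in C_c^\infty$, using density of such matrix-coefficient expressions in appropriate Schwartz-type spaces on the tempered dual.
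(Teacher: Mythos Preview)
The paper does not actually prove this theorem; its ``proof'' consists of a citation to Sauvageot's work \cite{sauvageot1997principe} (Theorem 7.3 and Corollary 7.4) together with Shin's observation \cite{shin2012automorphic} that the openness hypothesis on $E$ can be dropped. So there is nothing in the paper to compare your argument against beyond the references themselves.

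That said, your outline is a fair sketch of how Sauvageot's argument actually proceeds. The sandwich strategy and the positivity trick $\widehat{\psi^*\ast\psi}\ge 0$ are exactly right, and you correctly identify the approximation lemma as the substantive step requiring the explicit description of the tempered dual (and hence the zero-characteristic restriction). Two small remarks: first, Sauvageot's actual lemma is phrased slightly differently --- given a bounded $f$ on $\widehat G$ whose discontinuity set has $\nu^G$-measure zero and whose support is relatively quasi-compact, one finds $\varphi,\psi\in C_c^\infty(G)$ with $|f-\widehat\varphi|\le\widehat\psi$ everywhere on $\widehat G$ and $\nu^G(\widehat\psi)<\varepsilon$; your pair $\varphi^\pm=\varphi\pm\psi$ then follows. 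Second, it is essential (and you do state it) that the pointwise inequalities hold on all of $\widehat G$, not merely on the tempered dual: the measures $\nu_n$ may well charge non-tempered representations, so the sandwich for $\nu_n(E)$ needs the global inequality. Your wording in the second half of the approximation lemma (``upper approximation \ldots\ from above by $1_K\ge\widehat\varphi$'') is garbled, but the intent is clear.
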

\begin{proof}
See Theorem 7.3 and Corollary 7.4 of Sauvageot's work \cite{sauvageot1997principe}. The subset $E$ is assumed to be open in \cite{sauvageot1997principe}, however Shin observes \cite{shin2012automorphic} that this assumption can be removed.
\end{proof}

\subsection{Uniform lattices and relative Plancherel measure}
Let $\Gamma$ be a uniform lattice in $G$. Consider a smooth compactly supported function $\varphi \in C_c^\infty(G)$ so that $\rho_\Gamma(\varphi)$ is a compact operator on the Hilbert space  $L^2(\Gamma\backslash G, \mu_G)$ \cite[\S 1.2.2]{gelfand1969representation}. Moreover
$$ 
 \textrm{trace} \; \rho_\Gamma(\varphi) = 
 \sum_{\pi \in \widehat{G}} m(\pi,\Gamma) \textrm{trace} \; \pi(\varphi) = 
 \mathrm{vol}(\Gamma \backslash G) \nu_{\Gamma} (\widehat{\varphi}),
$$
where $\nu_\Gamma$ is the relative Plancherel measure of $G$ with respect to the lattice $\Gamma$, as introduced in Definition \ref{def:relative plancherel} above.

Let $\mathrm{Sub}_d(G) $ denote the Chabauty subset of all the discrete subgroups in $G$.
Given a function $\varphi \in C_c^\infty(G)$ we define a map $ \Theta_\varphi$ 
$$  \Theta_\varphi : \mathrm{Sub}_d(G) \to \RR,  \quad \Theta_\varphi(\Lambda) = \sum_{\lambda \in \Lambda} \varphi(\lambda)   $$
for every discrete subgroup $\Lambda$ in $G$ discrete. In particular
$$ 
 \Theta_\varphi(\{e\}) = \varphi(e). 
$$

Given an identity neighborhood $U \subset G$ let $\mathrm{Sub}_{d,U}(G)$ denote the subset of $\mathrm{Sub}_d(G)$ consisting of those subgroups $\Delta$ satisfying $\Delta \cap U = \{e\}$.

\begin{prop}
\label{prop:Theta admits a continuous extension}
For  every $\varphi \in C_c^\infty(G)$ the map $\Theta_\varphi$ is continuous on $\mathrm{Sub}_{d,U}(G)$ and  admits a continuous extension to all of $\mathrm{Sub}(G)$.
\end{prop}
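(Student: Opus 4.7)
The plan splits into a continuity statement on the subspace $\mathrm{Sub}_{d,U}(G)$ followed by a general extension. For the first part, I would exploit uniform discreteness: choosing an identity neighborhood $V$ with $VV^{-1}\subseteq U$, any $\Lambda\in\mathrm{Sub}_{d,U}(G)$ meets each translate $gV$ in at most one point, because two such elements would differ by an element of $\Lambda\cap U = \{e\}$. Covering the compact support $K = \mathrm{supp}(\varphi)$ by finitely many translates of $V$ produces a uniform bound $|\Lambda \cap K|\le m$, so $\Theta_\varphi(\Lambda) = \sum_{\lambda\in\Lambda\cap K}\varphi(\lambda)$ is a finite sum with boundedly many terms.

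Given $\Lambda_n\to\Lambda$ in the Chabauty topology within $\mathrm{Sub}_{d,U}(G)$, I would enumerate $\Lambda\cap K = \{\lambda^{(1)},\ldots,\lambda^{(k)}\}$, fix $\varepsilon>0$ smaller than half the minimal pairwise distance among these points and small enough that $B_\varepsilon B_\varepsilon^{-1}\subseteq U$, and use the Chabauty definition to produce $\lambda_n^{(i)}\in\Lambda_n\cap B_\varepsilon(\lambda^{(i)})$ for large $n$. Uniform discreteness forces each $B_\varepsilon(\lambda^{(i)})$ to contain at most one element of $\Lambda_n$. A compactness argument --- any further element of $\Lambda_n\cap K$ staying at distance $\ge\varepsilon$ from every $\lambda^{(i)}$ would, along a subsequence, converge to a point of $\Lambda\cap K$ not on the list --- shows that these elements exhaust $\Lambda_n\cap K$ for $n$ large. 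Continuity of $\varphi$ then yields $\Theta_\varphi(\Lambda_n)\to\Theta_\varphi(\Lambda)$.

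For the extension, I would observe that $\mathrm{Sub}_{d,U}(G)$ is closed in $\mathrm{Sub}(G)$: since $U$ is open and $\{e\}$ is closed, $U\setminus\{e\}$ is open, and $\mathrm{Sub}_{d,U}(G)$ is the complement of the Chabauty-open set $\mathcal{O}_2(U\setminus\{e\})$. The function $\Theta_\varphi$ is bounded on this subspace by $m\|\varphi\|_\infty$, and $\mathrm{Sub}(G)$ is compact metrizable and hence normal, so the Tietze extension theorem delivers a continuous extension to all of $\mathrm{Sub}(G)$. The main obstacle is the "no new elements" claim in the previous paragraph: uniform discreteness is precisely what precludes $\Lambda_n\cap K$ from acquiring spurious accumulation points, and it is the reason the hypothesis $\Lambda\in\mathrm{Sub}_{d,U}(G)$ cannot be dropped (indeed, $\Theta_\varphi$ fails to be continuous on the full space of discrete subgroups, as one sees already in $G=\mathbb{R}$ with $\tfrac{1}{n}\mathbb{Z}\to\mathbb{R}$).
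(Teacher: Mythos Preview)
Your argument is correct and follows essentially the same strategy as the paper: bound the number of terms in the sum via uniform discreteness, prove continuity on $\mathrm{Sub}_{d,U}(G)$, observe that this set is Chabauty-closed as the complement of $\mathcal{O}_2(U\setminus\{e\})$, and apply Tietze. One small remark: your choice of $V$ with $VV^{-1}\subseteq U$ is cleaner than the paper's, which invokes the decomposition $G = C\times D$ and special properties of each factor to build $V$; that structure is not actually needed here, and your more elementary choice suffices (just make sure $V$ is symmetric, or require $V^{-1}V\subseteq U$, so that the ``differ by an element of $U$'' step goes through).
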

\begin{proof}
Write $G = C \times D$ where $C$ and $D$ are the Archimedean and zero characteristic non-Archimedean semisimple analytic factors of $G$, respectively.  $D$ admits a compact open subgroup $V_D$ such that every discrete subgroup in $D$ interests $V_D$ trivially \cite[LG 4.27, Th. 5]{sere} and $C$ admits an relatively compact identity neighborhood $V_C$ without small closed subgroups and so that $V_C^2 \times V_D \subset U$. Denote $V = V_C \times V_D$.

%$\Delta \cap U_C \times V_D = \{e\}$ and moreover $U_C$ has no closed subgroups. Let $V_C$ be an identity neighborhood in $C$ so that $V_C^2 \subset U_C$ and denote $V = V_C \times V_D$. Then 
%$$ \Omega_\Delta = \mathcal{O}_1( (U_C \setminus V_C) \times V_D) $$
%is a Chabauty neighborhood of $\Delta$ and $\Delta' \cap U_C \times V_D = \{e\}$ for every $\Delta' \in \Omega$.

Fix a compactly supported smooth function  $\varphi \in C_c^\infty(G)$ and denote $Q = \mathrm{supp}(\varphi)$ so that $Q$ is compact. Cover $Q$ by finitely many translates $V_1,\ldots, V_n$ of $V$ for some $n \in \NN$, with $V_1=V$. Note that every closed subgroup belonging to $\mathrm{Sub}_{d,U}(G)$ intersects each translate $V_i$ in at most one point. 

%Let $V$ be a symmetric compact open identity neighborhood in $G$ so that $V^2 \subset U$.
We first show  that $\Theta_\varphi$ is continuous on $\mathrm{Sub}_{d,U}(G)$. Let $\Delta$ be any subgroup of $G$ with $\Delta \cap U = \{e\}$. 
Assume without loss of generality that $\Delta \cap V_i = \{v_i\} $  for all $ 1 \le i \le m$ with some $1 \le m \le n$ and $\Delta \cap V_i = \emptyset $ for all $ m < i \le n$. Up to shrinking the neighborhood $V$  if necessary, we may moreover assume that in fact $\Delta \cap \overline{V}_i  = \emptyset$ for all $ m < i \le n$. Taking $W$ to be an sufficiently small identity neighborhood in $G$,  the value of $\Theta_\varphi$ on the Chabauty-open neighborhood
$$ \Omega = \bigcap_{i=1}^m \mathcal{O}_2(Wv_i) \cap \bigcap_{i>m}^n \mathcal{O}_1(\overline{V}_i)  $$
of $\Delta$ can be made arbitrary close to $\Theta_\varphi(\Delta)$. So that $\Theta_\varphi$ is continuous on $\mathrm{Sub}_{d,U}$ and is moreover bounded in absolute value by $n \norm{\varphi}_\infty$.

Note that $\mathrm{Sub}_{d,U}(G)$  is a Chabauty-closed subset of $\mathrm{Sub}(G)$, as  its complement is nothing but the Chabauty-open subset $\mathcal{O}_2(U \setminus \{e\})$.
Therefore $\Theta_\varphi$ can be extended to a continuous function   all of $\mathrm{Sub}(G)$ by Tietze's extension theorem.
\end{proof}

%Of course $\text{Sub}(G)$ is the direct limit

\begin{prop}
\label{prop:a computation of trace in terms of kernel}
Let $\Gamma$ be a uniform lattice in $G$. Let  $\mu_\Gamma$ be the invariant random subgroup and $\nu_\Gamma$ the relative Plancherel measure on $\widehat{G}$ associated to $\Gamma$.  Then
$ \nu_{\Gamma} (\widehat{\varphi}) = \mu_\Gamma(\Theta_\varphi) $ for every function $\varphi \in C_c^\infty(G)$.
\end{prop}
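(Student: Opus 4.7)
The plan is to express both sides of the desired identity in terms of the kernel of the convolution operator $\rho_\Gamma(\varphi)$ on $L^2(\Gamma\backslash G,\mu_G)$ and then match them using the compactness of $\Gamma\backslash G$ and unimodularity of $G$.

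First, I would write $\rho_\Gamma(\varphi)$ as an integral operator on $L^2(\Gamma\backslash G,\mu_G)$ with kernel
$$ K_\varphi(x,y) = \sum_{\gamma \in \Gamma} \varphi(x^{-1}\gamma y), \qquad x,y \in \Gamma\backslash G.$$
Since $\varphi \in C_c^\infty(G)$ and $\Gamma$ is discrete, the sum is locally finite and defines a smooth function on $(\Gamma\backslash G) \times (\Gamma\backslash G)$ (in the sense appropriate to the analytic structure of $G$). Uniformity of $\Gamma$ makes $\Gamma\backslash G$ compact, so $K_\varphi$ is continuous and bounded there. Standard functional analysis then shows that $\rho_\Gamma(\varphi)$ is trace class and that its trace is computed by integrating the kernel along the diagonal,
$$ \mathrm{trace}\,\rho_\Gamma(\varphi) = \int_{\Gamma\backslash G} K_\varphi(x,x)\,d\mu_G(x) = \int_{\Gamma\backslash G} \sum_{\gamma \in \Gamma}\varphi(x^{-1}\gamma x)\,d\mu_G(x).$$

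Next, I would match this with $\mu_\Gamma(\Theta_\varphi)$. By definition $\mu_\Gamma$ is the push-forward to $\Sub{G}$ of the unique $G$-invariant probability measure on $G/\Gamma$ under $g\Gamma \mapsto g\Gamma g^{-1}$, so
$$ \mu_\Gamma(\Theta_\varphi) = \frac{1}{\mathrm{vol}(\Gamma\backslash G)}\int_{G/\Gamma} \sum_{\gamma \in \Gamma} \varphi(g\gamma g^{-1})\, d\mu_G(g).$$
Using unimodularity of $G$ and applying the change of variable $g \mapsto g^{-1}$, which exchanges the right and left quotients and preserves Haar measure, one obtains
$$ \mu_\Gamma(\Theta_\varphi) = \frac{1}{\mathrm{vol}(\Gamma\backslash G)}\int_{\Gamma\backslash G} \sum_{\gamma \in \Gamma} \varphi(x^{-1}\gamma x)\, d\mu_G(x) = \frac{1}{\mathrm{vol}(\Gamma\backslash G)}\mathrm{trace}\,\rho_\Gamma(\varphi).$$

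Finally, I would invoke the identity from just before the statement, namely
$$\mathrm{trace}\,\rho_\Gamma(\varphi) = \mathrm{vol}(\Gamma\backslash G)\,\nu_\Gamma(\widehat{\varphi}),$$
to conclude that $\mu_\Gamma(\Theta_\varphi) = \nu_\Gamma(\widehat{\varphi})$ as required.

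The main obstacle is the trace-class/trace-equals-diagonal-integral step. For compact quotients with smooth bounded kernels this is classical, but one should cite or verify it in the generality of a semisimple analytic group (using that $G$ is unimodular and of type I, as already invoked in Theorem \ref{thm:Plancherel inversion}); the remaining bookkeeping is a routine unfolding and a change of variable.
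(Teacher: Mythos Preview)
Your proposal is correct and follows essentially the same route as the paper. The paper's own proof simply cites the computation on pages 37--38 of \cite{7S} (after noting that $\Theta_\varphi$ extends Chabauty-continuously via Proposition \ref{prop:Theta admits a continuous extension}), and what you have written is precisely that computation spelled out: the Selberg-type kernel formula for $\rho_\Gamma(\varphi)$, the diagonal trace integral on the compact quotient, and the unfolding identifying this with $\mathrm{vol}(\Gamma\backslash G)\,\mu_\Gamma(\Theta_\varphi)$.
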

\begin{proof}
In view of Proposition \ref{prop:Theta admits a continuous extension} we may extend $\Theta_\varphi$ to a Chabauty-continuous function on $\textrm{Sub}(G)$.
The equality $ \nu_{\Gamma} (\widehat{\varphi}) = \mu_\Gamma(\Theta_\varphi) $ follows from the computation on p. 37-38 of \cite{7S}. This argument of \cite{7S} is general and applies to uniform lattices in any l.c.s.c. group.
\end{proof}

\subsection{Proof of convergence for relative Plancherel measures}

We conclude this section by providing proofs of Theorem \ref{thm:BS convergence implies plancherel convergence} and Corollary \ref{cor:BS convergence implies ptwise plancherel convergence}.

Let $G$ be a  semisimple analytic group in zero characteristic. Let $\Gamma_n$  be a uniformly discrete sequence of lattices in $G$ so that the corresponding invariant random subgroups $\mu_{\Gamma_n}$ converge to $\delta_{\{e\}}$. Let $\nu^G$ and $\nu_n$ denote  the Plancherel measure of $G$ and the relative normalized Placherel measures associated to the $\Gamma_n$'s, respectively.

\begin{proof}[Proof of Theorem \ref{thm:BS convergence implies plancherel convergence}]
In view of the Sauvageot density principle to prove Theorem \ref{thm:BS convergence implies plancherel convergence} we need to show that $\nu_n(\widehat{\varphi}) \to \nu^G(\widehat{\varphi})$ for every function $\varphi \in C_c^\infty(G)$.
Consider a fixed function $\varphi \in C_c^\infty(G)$ and regard $\Theta_\varphi$ as a continuous function on $\mathrm{Sub}(G)$ making use of Proposition \ref{prop:Theta admits a continuous extension}. In light of the Plancherel formula, Proposition \ref{prop:a computation of trace in terms of kernel}, and the   weak-$*$ convergence of $\mu_{\Gamma_n}$ to $\delta_{\{e\}}$  we obtain that
$$ \nu_{n}(\widehat{\varphi}) = \mu_{n}(\Theta_\varphi)  \xrightarrow{n \to \infty} \delta_{\{e\}}(\Theta_\varphi) = \Theta_\varphi(\{e\}) = \varphi(e) = \nu^G(\widehat{\varphi}) $$
 as required.
 \end{proof}

\begin{proof}[Proof of Corollary \ref{cor:BS convergence implies ptwise plancherel convergence}]
To deduce the corollary from Theorem \ref{thm:BS convergence implies plancherel convergence} we need to show that every singleton $\{\pi\} \subset \widehat{G}$ is relatively compact and $\nu^G$-regular.

The group $G$ is \emph{liminal}\footnote{A l.c.s.c. group $G$ is type I or is liminal if and only if $\widehat{G}$ is $T_0$ or  $T_1$ in the Fell topology, respectively. In particular being liminal is a stronger property than type I. } in the sense that $\pi(\varphi)$ is a compact operator for every function $\varphi \in C_c^\infty(G)$.  It is shown in \cite{glimm1961type} that the dual space $\widehat{G}$ of a liminal group is $T_1$. In particular every singleton $\{\pi\} \subset \widehat{G}$ is closed and so relatively compact. 

Since $G$ is l.c.s.c. it follows from the definitions that Fell's topology on $\widehat{G}$ is first countable.  Recall that in a $T_1$-space every subset is equal to the intersection of the open sets containing it. Therefore the singleton $\{\pi\}$ is equal to the intersection of a countable family of open sets. Moreover Fell's topology is locally compact  and compact subsets have finite Plancherel measure \cite[18.1.2, 18.8.4]{dixmier1977c}. Therefore $\{\pi\}$ is  $\nu^G$-regular, as required.
\end{proof}

\bibliographystyle{alpha}
\bibliography{qp_betti}

\end{document}